\documentclass[11pt, reqno]{amsart}

\usepackage{amsfonts, amssymb, amscd}
\usepackage{graphicx}
\usepackage{hyperref}
\usepackage{parskip}
\usepackage{slashed}
\usepackage{fullpage}
\usepackage{color}
\newcommand{\Blue}[1]{{\color{blue} #1}}
\newcommand{\Red}[1]{{\color{red} #1}}

\newcommand{\baoping}[1]{{\color{blue}#1}}

\newtheorem{thm}{Theorem}[section]
\newtheorem{cor}[thm]{Corollary}
\newtheorem{lem}[thm]{Lemma}
\newtheorem{conjec}[thm]{Conjecture}
\newtheorem{prop}[thm]{Proposition}

\theoremstyle{remark}
\newtheorem{rem}[thm]{Remark}

\theoremstyle{definition}
\newtheorem{defin}[thm]{Definition}

\newcommand{\R}{\mathbb{R}}

\newcommand{\la}{\left<}
\newcommand{\ra}{\right>}
\newcommand{\lp}{\left(}
\newcommand{\rp}{\right)}

 \newcommand{\XT}{\frac{\la x \ra}{t^\alpha}\geq 1}
  \newcommand{\XTZ}{\frac{\la x \ra}{t^{\alpha_0}}\geq 1}
 \newcommand{\XTT}{\frac{\la x \ra}{t^\alpha}\leq 1}
  \newcommand{\XTTZ}{\frac{\la x \ra}{t^{\alpha_0}}\leq 1}
 \newcommand{\jx}{\left< x\right>}
  \newcommand{\XS}{\frac{\jx}{s}}
  \newcommand{\Gt}{\frac{\gamma}{\tau}}

  \newcommand{\gbb}{\gamma t^\beta >1}

 \newcommand{\bb}{\mathcal{B}_n}

\newcommand{\pwb}{\phi_{wb}}
\newcommand{\tpwb}{\tilde{\phi}_{wb}}
\newcommand{\pwl}{\phi_{wl}}
\newcommand{\pwls}{\phi_{wls}}
\newcommand{\bp}{\textbf{p}}
\newcommand{\be}{\begin{equation}}
\newcommand{\ee}{\end{equation}}
\newcommand{\A}{\mathcal{A}}
\setcounter{tocdepth}{1}

\numberwithin{equation}{section}

\title{The Large Time Asymptotics of Nonlinear Multichannel Schr\"odinger Equations }

\author{Baoping Liu}
\address{Beijing International Center for Mathematical Research\\
 Peking University\\
 Beijing\\
  China}
\email{baoping@math.pku.edu.cn}
\author{Avy Soffer}
\address{Department of Mathematics\\
Rutgers University\\
110 Frelinghuysen Rd.\\
Piscataway, NJ, 08854, USA}
\email{soffer@math.rutgers.edu}

\thanks{2010 \textit{ Mathematics Subject Classification.}   35Q55, }
\thanks{
B. Liu is supported in part by the NSFC12071010 and NSFC11631002.
A.Soffer is supported in part by Simons Foundation Grant number 851844
}

\usepackage{comment}
\begin{document}

\begin{abstract}
We consider the Schr\"odinger equation with a general interaction term, which is localized in space. The interaction may be $x,t$ dependent and non-linear. Purely non-linear parts of the interaction are localized via the radial Sobolev embedding.
Under the assumption of radial symmetry and boundedness in $H^1(\R^3)$ of the solution, uniformly in time.
we prove it is \emph{asymptotic} in $L^2$ (and $H^1$) in the strong sense, to a free wave and a weakly localized solution. 
The general properties of the localized solutions are derived.
The proof is based on the introduction of phase-space analysis of the nonlinear dispersive dynamics and relies on a new class of (exterior) a priory propagation estimates.
This approach allows a unified analysis of general linear time-dependent potentials and non-linear interactions.

\end{abstract}

 \maketitle

\tableofcontents

\section{Introduction}

We consider global solutions to the  nonlinear Schr\"{o}dinger equation, with spherically symmetric solutions.
\begin{equation}\left\{
\begin{aligned}i\partial_t \phi +\Delta \phi & =\textbf{N}(\phi)= \mathcal{N}(|\phi|,|x|,t)\phi, \\
\phi(0,x)& =\phi_0\in H^1_{rad}(\R^3)\cap L^2_{rad}(\R^3, |x|^\frac12dx),
\end{aligned}\right. \label{Main-eq}
\end{equation}
under the assumptions 

(H1)\label{H1} The solution satisfies the global $H^1$ bound. 
\begin{equation}\sup_{t\in [0,\infty)} \|\phi\|_{H^1(\R^3)} <\infty. \label{global-bound}\end{equation}


(H2)\label{H2} Uniform decay of the interaction term: for some $\alpha\in (\frac13, 1)$ and $F(\lambda)$ being a smooth characteristic function of the interval $[1,+\infty)$, there exist constants $\beta_0=\beta_0(\alpha, F)>1$ and $C=C( \alpha, F)$ such that
\begin{equation} \left|F(\frac{|x|}{t^\alpha}\geq 1)\mathcal{N}(|\phi|,x,t) \right|\leq C  t^{-\beta_0}, \quad\forall t\geq 1. \label{N-decay}\end{equation}

(H2')\label{H2'} Uniform decay of the interaction term: there exists $\alpha=\alpha(\phi)\in (\frac13, 1)$, such that for any $F(\lambda)$ being a smooth characteristic function of the interval $[1,+\infty)$, there exist constants $\beta_0=\beta_0(\alpha, F,\phi)>1$ and $C=C( \alpha, F,\phi)$ such that
\begin{equation} \left|F(\frac{|x|}{t^\alpha}\geq 1)\mathcal{N}(|\phi|,x,t) \right|\leq C  t^{-\beta_0}, \quad\forall t\geq 1. \label{N-decay}\end{equation}


(H3) The interaction $\mathcal{N}(|\phi|, x, t)$ is analytic in $\phi$, and $\bar{\phi}$.  If there is potential $V(x)$ or $V(t,x)$, we assume that they are regular up to $N_0$ ($N_0\in \mathbb{Z}^+$ or $N_0=\infty)$ and 
\begin{equation}(x\cdot \nabla)^N V(x,t) \in L^{\infty}, \quad   N=0,1,...N_0\label{regularity-V}\end{equation}


Typical examples of interactions  include
\begin{equation}\mathcal{N}(|\phi|,x,t)\phi=a  |\phi|^{p}\phi - b \frac{|\phi|^{m}\phi}{1+|\phi|^{m-n}} +V(|x|,t)\phi+W(|x|,t)f(|\phi|)\phi \label{ex-N}\end{equation}
with  $|V(x, t)|\leq (1+|x|)^{-q}$ for all $t\geq 0.$  Here 
$a,b\in \mathbb{R}^+$,  $p\in (\frac{4}{3}, 4)$,   $m>\frac43, n<4, $  $q>1.$    Notice that the first term is the defocusing power type nonlinearity (energy-subcritical and mass supercritical). The second term is the focusing  saturated nonlinearity which is energy subcritical  for $|\phi|\gg 1$, and mass supercritical for $|\phi|\ll 1$. The third term is linear with radially symmetric time-dependent potential.
Using the radial Sobolev embedding and the global $H^1$ bound \eqref{global-bound}, we see that assumption \eqref{N-decay} is verified for general nonlinearity and potential with a suitable choice of the parameters.

The above conditions imply that in the monomial case we only consider the inter-critical cases. For some detailed properties of the solution further conditions are needed, in particular higher-order decay for large distances.







 When we take $\alpha\in (\frac13, 1)$, we need $p\geq 3.$
By a bootstrap argument it extends to $p\geq 1.$
If we take $\alpha\in (\frac12, 1)$, we need $p\geq 2;$  (nice case because it includes the cubic NLS)
and by bootstrap it extends to less than $1.$
Bootstrap here refers to the observation that estimates on the nonlinear term can be improved by iterating the starting estimate.
This is not possible with an interaction term that is an explicit function of $x,t.$


The potential term $V$ should satisfy:
$$
|V(|x|,t)|\leq C(1+|x|)^{-q}, \text{for all}\, t.
$$
 If we take $\alpha\in (\frac13, 1)$, we need $q\geq 3.$
 We will stick to this assumption, although it is only needed for the last part of the work, where we prove the localization and regularity properties of the localized part of the solution.
 We also require that $V$ is bounded and sufficiently regular.

  For some refined estimates on the structure of the localized part of the solution, we need
 that for all positive integers $N,$ $|(x\cdot\nabla_x)^N V(x,t)| \lesssim 1$ uniformly in $t.$

 If we take $\alpha\in (\frac12, 1)$, we need $q\geq 2.$. This is an interesting case, since the inverse square is a critical case. 
 But this case will be a borderline to some properties of the localized solution. It should be noted that the regularity of the localized part is sensitive to the decay rate of the interaction. For example, the eigenfunctions of the hydrogen atom are not smooth, since they depend on $|x|$ but not $|x|^2.$





\medskip

{\bf Comment about the models}

 In higher dimensions ($\R^d,  d>1$), non-linear monomial terms (in the supercritical mass case) lead to solutions that are typically one channel.
 This is because the generic solution either blows up in a finite time or disperses. The possible solitons are unstable.
 This is why in \cite{RSS} a class of nonlinearities which are {\bf saturated} (See e.g. \cite{Segev} and cited references) were introduced and studied.
 In general, in the physics literature, saturated nonlinearities are relevant for large data in higher dimensions \cite{Su-Su,Segev,Malomed}.

 In the models we consider, the localized solutions are {\bf coherent structures}, which may be more general than solitons.

 We also introduce linear time-dependent interaction terms. These models, even at the linear level appear in a fundamental way in many situations, like quantum control, atomic + radiation models, and linearized theories.
 Yet, the dispersive theory for time-dependent potentials has been limited so far.

\medskip

Our main result is the following.
\begin{thm}
Let $\phi(t)$ be a global solution to equation \eqref{Main-eq} satisfying assumption (H1)(H2), then we have the following asymptotic decomposition
\begin{equation}
\lim_{t\rightarrow +\infty}\|\phi(t)- e^{i\Delta t}\Omega^*_{f}\phi_0 - \pwl(t)\|_{H^1(\R^3)}=0. \label{Main-result}
\end{equation}
Here {\bf $\Omega^*_f$ is the bounded nonlinear scattering wave operator}, mapping the initial data to the asymptotic free wave;
 $\phi_{wl}$ is the   weakly localized part of the solution with the following properties
\begin{enumerate}
\item It is  localized in the region $|x|\leq t^{\frac12}$,  in the following sense
\begin{equation}(\pwl, |x|\pwl )\lesssim t^{\frac12}.\end{equation}
\item If we further have assumption (H3), then $\pwb$ is smooth, and for $k\geq 1$, \begin{equation} \|(x\cdot\nabla_x)^k \pwl\| _{L^2_x}\lesssim 1, \quad k\leq N_0\end{equation}
here more assumptions on the reularity of the interaction terms enter. it is true for even power type, saturated but analytic. 
The cubic NLS is covered but is a borderline case for this part of the results.

\item If the solution $\phi(t)$ is time periodic, then \begin{equation} \|x \pwl\|_{L^2_x} \lesssim 1.\end{equation}
\end{enumerate}
All estimates hold uniformly in time for $t\geq 0.$
\end{thm}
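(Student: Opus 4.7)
The plan is to decompose $\phi(t)$ through a phase-space partition aligned with (H2). Introducing a smooth cutoff $F(\xt\geq 1)$ and its complement, hypothesis (H2) says precisely that the nonlinear coefficient $\mathcal{N}$ is of integrable size $O(t^{-\beta_0})$, $\beta_0>1$, on the exterior region. Combined with the uniform $H^1$ bound (H1), this makes the exterior piece of the evolution asymptotically free, while the interior piece is the candidate for $\pwl$. The key new ingredient flagged in the abstract is a class of exterior a priori propagation estimates, which control the outgoing flux and will be used repeatedly.

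The first main step is the construction of the nonlinear wave operator $\Omega^*_f$. From the Duhamel formula
\[
e^{-i\Delta t}\phi(t)=\phi_0-i\int_0^t e^{-i\Delta s}\mathbf{N}(\phi)(s)\,ds,
\]
the existence of the $H^1$-limit $\Omega^*_f\phi_0$ as $t\to\infty$ amounts to showing that, after microlocalizing to outgoing radial momentum, the integrand is in $L^1_s$. The portion supported on $\{\xt\geq 1\}$ is $L^1$ directly by (H2). The portion supported on $\{\xt\leq 1\}$ must be absorbed by the exterior propagation estimates: roughly, on the outgoing sector, $e^{-i\Delta s}$ transports mass from the interior region outward fast enough that no stationary phase contribution survives. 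The remainder $\pwl(t):=\phi(t)-e^{i\Delta t}\Omega^*_f\phi_0$ is then, by construction, the part concentrated in $|x|\lesssim t^\alpha$, and \eqref{Main-result} is tautological at this stage.

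The second main step is the weak localization $(\pwl,|x|\pwl)\lesssim t^{1/2}$. The natural tool is the Heisenberg identity
\[
\tfrac{d}{dt}(\pwl,|x|\pwl) = -i(\pwl,[-\Delta,|x|]\pwl) + (\text{interaction and scattering cross-terms}),
\]
where $[-\Delta,|x|]$ is, up to lower order, the radial momentum. The $H^1$ bound (H1) controls the commutator by a constant, and (H2) together with the decay of the free scattering tail inside $\{\xt\leq 1\}$ renders the cross-terms integrable, giving at worst linear growth. To sharpen this to $t^{1/2}$, matching the Schr\"odinger scaling $|x|\sim t^{1/2}$, I would run an auxiliary propagation estimate for the weighted quantity $\||x|^{1/2}\pwl\|_{L^2}^2$, whose closing is precisely the reason for the lower threshold $\alpha>1/3$ in (H2).

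For parts (2) and (3), I would commute the dilation generator $x\cdot\nabla_x$ through \eqref{Main-eq}. By the analyticity (H3) of $\mathcal{N}$ in $(\phi,\bar\phi)$ and the uniform bounds \eqref{regularity-V} on $V$, the source terms for $(x\cdot\nabla_x)^k\pwl$ inherit the exterior decay of (H2), so an induction on $k\leq N_0$ using the same propagation machinery at each step produces the uniform bound $\|(x\cdot\nabla_x)^k\pwl\|_{L^2}\lesssim 1$. In the time-periodic case, averaging over a period kills the secular linear-in-$t$ part of $(\pwl,|x|^2\pwl)$ and upgrades the bound to $\|x\pwl\|_{L^2}\lesssim 1$. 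The principal obstacle I foresee is the construction of $\Omega^*_f$: one must verify that the outgoing microlocal projection commutes sufficiently well with both $e^{-i\Delta s}$ and with multiplication by $\mathcal{N}(|\phi|,x,s)$ to actually absorb the interior contribution, and this is where the novel exterior propagation estimates have to do real work rather than serve as window dressing.
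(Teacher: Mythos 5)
Your high-level strategy --- phase-space partition aligned with (H2), exterior propagation estimates controlling outgoing flux, Cook's method for the wave operator --- matches the paper's framework. But the execution has a gap at its core: the construction of $\Omega^*_f$.

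You propose to extract $\Omega^*_f\phi_0$ from the Duhamel integral $e^{-i\Delta t}\phi(t)=\phi_0-i\int_0^t e^{-i\Delta s}\mathbf{N}(\phi)(s)\,ds$, arguing the integrand becomes $L^1_s$ after an outgoing microlocalization. The trouble is that $e^{-i\Delta t}\phi(t)$ does \emph{not} converge: the weakly localized component of $\phi$ retains $O(1)$ mass in $L^2$ for all time and sits persistently in the interior, so the Duhamel integral carries a non-decaying contribution that no fixed microlocal projection in momentum alone can absorb. What the paper actually does (Theorem~\ref{thm:WP}) is to define
\[
\Omega^*_F\phi_0:=\lim_{t\to\infty}e^{-i\Delta t}F\Big(\frac{\jx}{t^{\alpha_0}}\geq1\Big)\phi(t),
\]
with a \emph{time-dependent} spatial cutoff built into the limit. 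Cook's method then computes the time derivative through the Heisenberg derivative $D_H\big(F_1\,F_k(\gamma t^\beta)\,F_1\big)$ of a full second microlocalization in both $\jx/t^{\alpha_0}$ \emph{and} the rescaled radial momentum $\gamma t^\beta$, and closes the Cauchy estimate by pairing the propagation estimates of Section~\ref{sec:PE} for $\phi$ against the same estimates for a free wave with $H^1$ data. Your Duhamel framing has no time-dependent cutoff, and therefore cannot access the positive commutator $[-i\Delta,F_1(\jx/t^{\alpha_0})]$ that makes the outgoing flux summable in $t$; without it, the portion supported on $\{\xt\leq1\}$ is not absorbable. Once you replace the Duhamel ansatz by the cutoff definition, the rest of your step-one sketch aligns with the paper.

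On the $t^{1/2}$ bound in part (1): your Heisenberg computation for $(\pwl,|x|\pwl)$ produces, at leading order, a $\gamma$-type flux term whose naive estimate gives linear growth. The ingredient you are missing is the vanishing of the $\gamma$-limit $\lim_{t\to\infty}\la F(\xt)\gamma F(\xt)\ra$ on $\pwl$ for \emph{every} $\alpha\in(\frac13,1)$ (Propositions~\ref{slowgrowth},~\ref{slowgrowth-converse}, Theorem~\ref{unique-defin}); the exponent $\frac12$ appears as $\min_\alpha \max(2-3\alpha,\alpha)$ evaluated at $\alpha=\frac12$, not directly from a weighted propagation estimate for $\||x|^{1/2}\pwl\|^2$. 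For parts (2) and (3) you substantially underestimate the difficulty: commuting $A=\frac12(x\cdot\bp+\bp\cdot x)$ through the equation produces source terms $(x\cdot\nabla)^k\mathcal{N}$ that (H2) alone does not control --- this is exactly why (H3) and (\ref{regularity-V}) are imposed --- and the paper's proof that $\la|A|^2\ra\lesssim1$ uniformly requires the exterior Morawetz estimate, sequential boundedness of $A$ (Proposition~\ref{A0-sequential-bound}), local smoothing estimates, and iterative phase-space arguments built on analytic projections $P^\pm_{M,R}(A)$. A straight induction on $k$ does not close without these auxiliary a priori bounds. Your period-averaging idea for part (3) differs from the paper's contradiction argument in the Almost Periodic Solutions theorem; it is plausible but you would still need the convergence of $\|\pwl\|_{L^2}$ and the vanishing of the boundary $\gamma$-flux to rule out secular drift within a period.
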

\begin{rm}
In the special case where the interaction term is a time-independent potential,
one gets a new direct proof of Asymptotic Completeness, see \cite{Li-Sof1}.
Radial symmetry is not assumed in this case.
\end{rm}

  To explain our result, let us first review the background of this problem.

One of the most interesting phenomena in the study of nonlinear dispersive PDEs is the existence of  \textit{coherent structures},  which include \textit{solitons, breathers, kinks, black-holes, vortices...}. They are solutions to nonlinear PDEs that remain spatially localized for all time. These special solutions arise as an outcome of balancing between the linear dispersion and nonlinear attractive effect; they seem to be a universal phenomenon in many physical systems such as fluids, plasma, string theory, supergravity, etc.~\cite{DP, Manton, SSNLS, RR, Yang}.

Coherent structures usually come with remarkable stability with respect to small perturbation and collision, see~\cite{Tao-soliton, Martel5} and references therein.
Moreover, they  play a fundamental role in understanding the long-time dynamics for general solutions.  In fact, it is conjectured that the generic asymptotic states are given by coherent structures that move independently (freely) and free radiation~\cite{Soffer}. This statement is called \textit{asymptotic completeness}, sometimes also goes by the name \textit{soliton resolution conjecture}~\cite{Tao-soliton}. It is one of the most challenging and exciting topics in dispersive equations.

 Historically, the soliton resolution was shown only for integrable equations, for data with sufficient regularity and decay. See earlier results for KdV in~\cite{KdVresolution}, mKdV in~\cite{mKdVresolution, mKdVresolution-2}, and heuristics about 1d cubic NLS in~\cite{NLSresolution-1, NLSresolution-2}   using the inverse
scattering approach.    Recently, there is a collection of works revisiting long-time dynamics for integrable models, with the goal to construct solutions with (optimal) rough data.
 These include the rigorous proof of soliton resolution for 1d-focusing NLS~\cite{NLSresolution}, derivative NLS~\cite{dNLS},  mKdV~\cite{mKdV} and sine-Gordon~\cite{SG}.  It is worth noticing that the result of mKdV and sine-Gordon also allows emergence of topological solitons such as breathers, kinks and anti-kinks.
Despite great success in integrable models, soliton resolution remains largely open for general dispersive equations.  In fact, there are very few PDE tools that can distinguish generic data from general data. Thus, the presence of a few exotic solutions that do not resolve into solitons and radiation seems to prevent us from tackling all the other cases.

An exciting progress appeared in the remarkable work of Duyckaerts–Kenig–Merle~\cite{DKM2, DKM}, where they   classified the large energy dynamics for the 3d energy critical focusing wave equation with radial data, first for a well–chosen sequence of times, and then for general times.  The key ingredients in their argument are the concentration compactness (in particular profile decomposition) and the `channel of energy argument.  These works break the ice for understanding soliton resolution for wave-type equations and stimulate an explosion of related results.  For radial NLW in all odd dimensions, ~\cite{DKM-o1, DKM-o2, DKM-o3} established the resolution for the general time. For equivariant wave maps $\R^{2+1}\rightarrow \mathbb{S}^2$, \cite{DKMM} proved the resolution for the equivariant class $k=1$ and \cite{JL} proved it for all equivariant classes.
 For radial NLW in even dimensions and for non-radial NLW in dimension $3\leq d\leq 5$,  \cite{4d, JK, DJKM} established the resolution along a well-chosen sequence of times. Since there is a blow up, this type of soliton resolution is expected to be unstable. In fact, \cite{DKM} conjectured that  for focusing energy critical wave  equation, the collection of  data where such resolution holds is the   boundary of two open sets. Nevertheless, the powerful ideas developed by  Duyckaerts–Kenig–Merle can be implemented to show stable soliton resolution for other models~\cite{Exterior-1, Exterior-2, CR1, CR2} when blow up does not happen. 

Now we turn our attention to nonlinear Schr\"{o}dinger equation. To our knowledge, the only attempts in this direction are the works of Tao~\cite{Tao04, Tao07, Tao08}. In~\cite{Tao04}, by using a pseudo-differential partition of a wave into incoming and outgoing waves, combined with the Duhamel formula and harmonic analysis tools, Tao showed that global radial solution to 3d cubic focusing NLS will decouple into a radiation term plus a ‘weakly bounded’ component which enjoys various additional decay and regularity properties, with error going to zero in $\dot{H}^1$ norm. In~\cite{Tao07},  he considered the NLS equation with non-linearity of the power type (supercritical mass and subcritical energy) and proved the existence of a compact attractor in $H^1$.  In~\cite{Tao08}, for $d\geq 11$ with defocusing nonlinearity and focusing potential,  this was further improved to be a global compact attractor, i.e. the compact set does not depend on the energy bound of the solution.   
The related result in this direction is the work of Roy~\cite{Roy} on fourth-order NLS. Recently, there is a complete resolution result by Kim-Kwon-Oh~\cite{CSS} on the self-dual Chern-Simons-Schr\"odinger equation in the equivariant setting. 

Our work is inspired by~\cite{Tao04, Tao07, Tao08}. In particular, we try to attack the general problem but with a different method, which originates from the study of the quantum $N$-body problem.

 In the resolution of asymptotic completeness for the quantum problem $N$-body~\cite{SSAnnals,  SSInvention, SSDuke, SSjams}, Sigal and the second author introduced the idea of {\bf phase-space propagation observables(PROB)}, which  have locally positive commutators with the Hamiltonian $N$-body.
They produce
various propagation estimates that help to control asymptotic propagation into possible scattering channels. 

For a non-linear problem or a time-dependent potential case, we face new difficulties.
 It is impossible to exclude an arbitrarily small frequency from scattering as in the linear problem. The interaction terms can create another channel of asymptotic behavior, by focusing at frequency zero. Hence we need to perform a \emph{second microlocalization}.  By carefully breaking the phase space into different regions, the microlocal version PROB still has a positive commutator with $iH_0=-i\Delta$.
 For this, it is necessary to introduce several new classes of PROB, which are adapted to nonlinear interaction terms, zero frequency modes, and solutions which are spreading in space in a non-ballistic (non-free) way.

   We could then get new propagation estimates that control the solution at different phase-space regions. We prove that channel wave operators exist in the strong $L^2$ sense by Cook's method.   For the left-over part, we show that it is regular using minimal/maximal velocity bound~\cite{MV, SSpreprint} and Duhamel formula.

  [Furthermore, by building a new PROB and proving {\bf exterior propagation estimates}, we are able to show that the expectation of the dilation operator is bounded on the weakly localized part of the solution.
  These estimates rely on the decay of the interaction at large distances. We then use PROBs that are localized in the regions where $|x|\geq t^{\alpha}$ and microlocalize on incoming/outgoing waves. In this way one can get new a-priory estimates that hold, in particular in the transition region $|x|~t^{\alpha}.$

 \emph{ The endgame is a sharp localization in phase-space and time of the various parts of the solution, including the localized part.}
The departure point in this work is the application of phase-space methods to the localized part of the solution as well.

The emerging picture is that one can use propagation estimates to exclude from the asymptotic state all regions, except a thin set.
This thin set, the {\bf propagation set}, is seen to be supported on self-similar functions. These are functions where up to a phase are functions of $t,|x|/t^{\alpha}$ for some $\alpha.$ The free channel is characterized by the set $x/2t\sim p$ and the weakly localized states by $x\sim t^{\alpha}; p\sim t^{-\alpha}; \quad\quad 0\leq \alpha\leq 1/2.$ Here $p$ stands for the momentum operator.

These unusual microlocalization properties of the weakly localized states allow a detailed understanding of the asymptotic behavior of the equation for all data and channels.
In particular, the microlocal properties of the weakly localized states point to {\bf self-similar} solutions as they expand.]

\begin{rem}If the soliton resolution holds in $H^1$,  then we must have a global $H^1$ bound (\ref{global-bound}). So it is natural to impose  such assumption in our problem.  A similar bound was also imposed in the works of Tao~\cite{Tao04, Tao07, Tao08}, and on the energy critical wave equations~\cite{cote, DJKM, DKM1, DKM,  DKM3, DKM-o3,DKMM}.   In the latter case, it involves the scale-invariant critical norm.   We also remark that such an a priori bound is achievable for all solutions if we are able to remove possible blow-up scenarios~\cite{NLWpotential, Exterior-1,Exterior-2, RSS}. In particular, saturated non-linearity~\cite{RSS} was chosen so that any initial data with finite energy will lead to a global solution satisfying the global $H^1$ bound.

However, it is not clear to what extent there are stable multichannel solutions if the blow-up channels are open.
\end{rem}


\begin{rem}  We will call the decomposition (\ref{Main-result}) \textit{generic asymptotic completeness} to distinguish it from the case where we can explicitly characterize the bound states.
\end{rem}

\section{Notations and Preliminaries}

\subsection{Notations}   We write $X\lesssim Y$,  $Y\gtrsim X$  or $X=O(Y)$ to indicate $X\leq CY$ for
some constant $C>0$.  If $C$ depends upon some additional parameters, we will indicate them with subscripts.

 For any  interval $\Omega\subset \R$,  $F(\lambda\in\Omega)$ is   the smooth characteristic function of $\Omega$. 
In practice, we choose $F(\lambda\geq 1)$ to be a smooth increasing function, $F(\lambda)=0$ for $\lambda\leq \frac12$, $F(\lambda)=1$ for $\lambda\geq 1$.  For $b>4a>0$, we write  $F(\lambda \geq a)=F(\lambda/a\geq 1),$
 $F(\lambda \leq a) =1-F(\lambda\geq a)$, and $F(a\leq \lambda \leq b)= F(\lambda\geq a)-F(\lambda\geq b)$.

$P_N, P_{\leq N}$ are the standard Littlewood-Paley operators, i.e.  denote $\varphi$ as a  smooth characteristic  function of $[-1,1]$,  then $\mathcal{F}(P_{\leq N}f) =\varphi(\frac{|\xi|}{N})\hat{f}$ and   $P_N=P_{\leq N}-P_{\leq \frac{N}{2}}$.

By $\jx$, we mean a smooth function matching $|x|$ at large distance, and constant at origin. In practice, it is enough to  use the following construction.
Consider $\alpha\in C^\infty, \alpha\geq 0,  supp\, \alpha=(1,2) $ and $ \int_1^2\alpha(r)dr=1$.   Now define $\beta(r)$  on $\mathbb{R}^+$ as follows
\begin{equation}\beta(r)=\left\{\begin{aligned} r, &\hspace{0.3cm} r \geq 2, \\\int_1^r\int_1^s  \alpha (t)dt ds +C, &\hspace{0.3cm} 1\leq r \leq 2,
\\
C, &  \hspace{0.3cm}0\leq r\leq 1.  \end{aligned}\right. \label{Beta}\end{equation}
Here the constant $C$ is chosen so that $\beta$ is smooth.
Then $\beta\in C^\infty(\R)$ is a positive function and $\beta', \beta''\geq 0$.   Then we take $\jx:=g(x)=\beta(|x|). $

It is easy to verify the following identities\begin{align}  g_{ij} = \left\{\begin{aligned} \frac{\delta_{ij}}{|x|} -\frac{x_ix_j}{|x|^3}, &\hspace{0.5cm} |x|\geq 2, \\   (\frac{\delta_{ij}}{|x|} -\frac{x_ix_j}{|x|^3}) \beta' +\frac{x_ix_j}{|x|^2} \beta'', &\hspace{0.5cm}1\leq |x|\leq 2,  \\0, & \hspace{0.5cm} 0\leq |x|\leq 1 . \end{aligned}\right.\hspace{1cm}  \Delta g = \left\{\begin{aligned} \frac{2}{|x|}, &\hspace{0.5cm} |x|\geq 2, \\ \frac{2}{|x|} \beta' + \beta'' ,& \hspace{0.5cm}1\leq |x|\leq 2,   \\0, & \hspace{0.5cm} 0\leq |x|\leq 1.  \end{aligned}\right.  \label{delta-g}
 \end{align}
\begin{align}
\Delta^2 g= \left\{\begin{aligned}0 , &\hspace{0.5cm} |x|\geq 2 \mbox{ or } 0\leq |x| \leq 1, \\ \frac{4}{|x|} \beta^{(3)} +\beta^{(4)},& \hspace{0.5cm}1\leq |x|\leq 2 . \end{aligned}\right. \label{delta2-g}
\end{align}
We use $(\cdot,\cdot)$ for standard inner product on $L^2$, i.e. $(f,g)=\int_{\R^3} g\bar{f} dx$.   For a self adjoint operator $A$,  $D(A)$ is the domain of $A$,  and $\|A\|$ is the operator norm of $A$.

For any $\phi$ or $\phi(t)\in D(A)$,  the {\bf  expectation of $A$ }
is defined as  $$\la A \ra_t = (A(t)\phi(t),\phi(t)).$$  The subscript will be ignored if it is clear from the context,  and we simply write $\la A\ra$.

$A=O(\la x\ra^{-a}s^{-b})$ means that $\forall \phi, \psi\in L^2$,  $|( \jx^a A\phi, \psi ) |\lesssim  s^{-b}\|\phi\|_{L^2}\|\psi\|_{L^2}$,

$A=O_1(\la x\ra^{-a}s^{-b})$ means that $\forall \phi, \psi\in H^1$,  $|( \jx^a A\phi, \psi )|\lesssim  s^{-b}\|\phi\|_{H^1}\|\psi\|_{H^1}$.

Here $\phi, \psi$ are chosen so that the quadratic form $(\jx^a A\phi, \psi)$ is well-defined, and the constant $C$ doesn't depend on $\phi, \psi$.

Throughout the paper,  $\textbf{p}=-i\nabla$,  and we will use the following operators
\begin{align}
\gamma_0=&\frac{1}{2}( \frac{x}{|x|} \cdot \bp + \bp \cdot \frac{x}{|x|} ),\label{gamma0}\\
\gamma=&\frac{1}{2}( \nabla \jx \cdot \bp + \bp \cdot \nabla \jx ),\label{gamma}\\
\A= &\frac{1}{2}(x\cdot \bp + \bp\cdot x).  \label{dilation}
\end{align}
Notice that $\gamma_0$ is the multiplier used to obtain Morawetz estimate.  $\gamma$ is a mollification of $\gamma_0$ near the origin; as we will see  in section~\ref{sec:del-r-commutator}, $[-i\Delta, \gamma]$ is compactly supported.  $\A$ is the \textbf{dilation generator}. 


 \subsection{Vector flow of $\gamma$.}
Let us  study the flow of $e^{i a \gamma}, a\in \R$.   In fact,  recall $g(x)=\jx,$ and consider function $y(a,x):\R\times \R^n\rightarrow \R^n$,  such that
\begin{equation}\frac{d}{da}y(a,x) =  (\nabla g)(y(a,x))=\beta'(|y|)\frac{y}{|y|}, \hspace{1cm}  y(0,x)=x.\end{equation}
Since $y(a,Rx)=Ry(a,x)$ for any rotation $R\in SO(3)$,  we have $|y(a,x)|$ is a radial function and denote it as $z(a,r)$. Now $z(a,r)$ satisfies
\begin{equation}\frac{d}{da}z(a,r) = \beta'(z), \hspace{1cm}  z(0,r)=r. \label{eq-z}\end{equation}

For $\lambda>1$, we define
  $B(\lambda)=\int_2^\lambda \frac{1}{\beta'(s)} ds$, then  $B(\lambda)$ is   strictly increasing   and\footnote{Here we used the fact $0< \beta'(s) < c_0  (s-1)^2, s\rightarrow 1+$ for some constant $c_0$. Also the role of $1$ is
not  important, we can replace it by $\inf\{supp \alpha\}$. }  \be  \lim_{\lambda\rightarrow 1+}B(\lambda)=-\infty,\hspace{1cm}\lim_{\lambda\rightarrow +\infty}B(\lambda)=+\infty. \ee
Now we have the solution to equation (\ref{eq-z})
\be z(a,r) = B^{-1}(B(r)+a), \hspace{0.2cm} r>1; \hspace{1cm}  z(a,r)=r, \hspace{0.2cm}r\leq 1.\ee
Hence we have the following description about $z(a,r)$  when $r>1$,
\be \lim_{a\rightarrow -\infty}z(a,r)=1, \hspace{1cm}\lim_{a\rightarrow +\infty}z(a,r)=+\infty.\ee
As $a$ goes from $-\infty$ to $+\infty$,  $z(a,r)$ increases from $1$ till $z(a_0,r)=2$, and we have the explicit formula $z(a,r)=a+2-a_0$ for $a>a_0.$  Here $a_0=-B(r)$. In particular notice if $r\geq 2$, we have $a_0=2-r$ and $z(a,r)=a+r, $ for $a\geq 0.$

 Now we claim that the mapping $f\rightarrow U(a)f:=f(y(a,x))|det(\frac{dy(a,x)}{dx})|^{\frac12}$, gives the flow $e^{ia\gamma}f$.
  In fact, notice that operators $\left(U(a)\right)_{a\in\R}$ are unitary in $L^2$.  Stone's theorem guarantees that $U(a)=e^{iaG}$ such that
 \be G f=-i\lim_{a\rightarrow 0}\frac{U(a)-U(0)}{a} f=\gamma f\ee
 The second equality follows by direct computation, noticing that
 \be \frac{d }{da}  det(\frac{dy(a,x)}{dx})=\Delta g(y(a,x))det(\frac{dy(a,x)}{dx}), \hspace{1cm}  det(\frac{dy(0,x)}{dx})=1\ee
 which implies
 $\left(\frac{d}{da} det(\frac{dy(a,x)}{dx})\right)|_{a=0}=\Delta g(x)$   This also implies $det(\frac{dy(a,x)}{dx})\not=0, \forall a\in \R, x\in \R^3$. This is because
 \begin{enumerate}
 \item If$|x|\leq 1$, then $|y(a,x)|=|x|\leq 1$, hence $\Delta g(y(a,x))=0$, and $det(\frac{dy(a,x)}{dx})=det(\frac{dy(0,x)}{dx}))=1,\forall a\in \R$.
 \item If $|x|>1$, then $|y(a,x)|> 1, \forall a$.  And $0< \Delta g(y(a,x) \leq C$ for some constant $C$.  Hence $det(\frac{dy(a,x)}{dx}) = e^{\int_0^a \Delta g(y(s,x))ds}\not=0, \forall a\in \R.$
 \end{enumerate}

\begin{rem}\label{support}{Now we discuss the support of functions under the flow $e^{ia\gamma}$. Given radial function $f$ such that $supp f=\Omega\subset \R^3$. Then $supp\{e^{ia\gamma}f\} \subset \{x\in \R^3,  y(a,x)\in \Omega\}$.     Since $|y(a,x)|$ increases with respect to $a$, we have the following heuristic statement: for $a>0$, $e^{ia\gamma}$ moves the support of a function closer to the origin.

In particular, if $\Omega\subset\{|x|\geq 2\}$.  Then for $a<0$,  $ supp\{e^{ia\gamma}f\} \subset \{|x|\geq 2\}$;  For $a > 0$, then $ supp\{e^{ia\gamma}f\} \subset \{|x|\geq \max\{2-a, 1\}\}$. This follows from $|y(a,x)|\leq |x|+a$.}
\end{rem}

The flow properties generated by the operators $\gamma$ and $A$, are relevant in general to understanding and estimating products and commutators of operators of the type
$$
F_1(x/M^{\alpha} \sim1)F_2(M^{\beta}\gamma)F_1(x/M^{\alpha} \sim1)
$$
and similar expressions, that are used to construct PROB (propagation Observables) and to microlocalize.


\subsection{ $\Delta$ and $\gamma$  in exterior region}
In the radial case, we can rewrite $\gamma$  and $\Delta$ using shperical coordinates.
\begin{align}
\gamma  = \frac{1}{i} ( a(r)\partial_r + b(r)),\hspace{1cm}
\Delta =\partial_r^2 +\frac{2}{r}\partial_r,
\end{align}
 with $ a(r)= \beta'(r), b(r) =\frac12 (\Delta g)(r) $. Notice that $a(r)=1, b(r)=\frac{1}{r}$ for $r\geq 2.$

 Direct computation gives
 \begin{align}-\gamma^2 =  a(r)^2 \partial_r^2 +(a(r)a'(r)+2a(r)b(r))\partial_r+ a(r)b'(r)+ b(r)^2,
  \end{align}
hence we get for $|x|=r>2,$
\begin{equation}-\Delta = \bp^2=\gamma^2 , \label{gamma-delta}\end{equation}
when applying to radial functions located in exterior region $|x|\geq 2$.

\subsection{The commutator $[-i\Delta, \gamma]$}\label{sec:del-r-commutator}
For a  $C^2$   function $h$ and  operator \be A_h= \frac{1}{2}[(\nabla h)\cdot  \bp +\bp \cdot (\nabla h)] =-i[\nabla h\cdot \nabla +\frac12 \Delta h],\ee  by direct computation we get\footnote{Here we use the convention that repeated indices are summed from 1 to $3$.}
\begin{align}
[-i\Delta, h]= & 2A_h,\\
[-i \Delta, A_h] = & -\frac12 [4 \nabla_{j}\cdot h_{jk}\cdot\nabla_k +\Delta^2 h]. \label{delta-A}
\end{align}
If $h=\frac12|x|^2$, then
\begin{align}
[-i\Delta, \frac12|x|^2]=A,  \hspace{1cm} [-i\Delta, A]= -2\Delta.
\end{align}
If  $h=\jx$, we get $[-i\Delta, \jx]=2\gamma$ and
\begin{align}\la [-i\Delta, \gamma] \ra =& 2  \int_{|x|\geq 2} \frac{1}{|x|} (|\nabla \phi|^2 -|\frac{x}{|x|}\cdot\nabla \phi|^2) dx \notag\\
& +2  \int_{1\leq |x|\leq 2} \frac{\beta'(|x|)}{|x|} (|\nabla \phi|^2 -|\frac{x}{|x|}\cdot\nabla \phi|^2)  + \frac{|x\cdot \nabla \phi|^2}{|x|^2}  \beta''(|x|)dx \notag\\
&-\frac12  \int_{1\leq |x|\leq 2}  \left( \frac{4}{|x|}\beta^{(3)}(|x|) +\beta^{(4)}(|x|) \right) |\phi|^2 dx   \label{D-gamma}\end{align}
 When restricted to   radial functions, $[-i\Delta, \gamma]$ is   supported in $[1,2]$ in the sense that
\begin{align}
[-i\Delta, \gamma]f=0,  \hspace{1cm} f(x)=f(|x|),  \text{ supp}\{f\}\cap \{|x|\in [1,2]\}=\emptyset. \label{Delta-r-support}
\end{align}

\subsection{Basic Estimates}

\begin{prop}[Strichartz estimate~\cite{Keel-Tao}] $(i\partial_t +\Delta )\psi=f$, then
\be \|\psi\|_{L^2_tL^6_x \cap L^\infty_t L^2_x } \lesssim \|\psi_0\|_{L^2} +\|f\|_{L^2_tL^{\frac65}_x +L^1_tL^2_x} \label{strichartz}\ee
\end{prop}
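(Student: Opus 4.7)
The statement is the standard endpoint Strichartz estimate for the $3$d free Schrödinger group: the pair $(q,r)=(2,6)$ is admissible in dimension $d=3$ since $2/q+d/r=1+1/2=3/2=d/2$. My plan is to follow the Keel--Tao framework, treating the homogeneous part first and then the inhomogeneous Duhamel term.

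First, I would record the two bookend estimates for the free propagator: the $L^2$ unitarity $\|e^{it\Delta}\|_{L^2\to L^2}=1$, and the dispersive bound $\|e^{it\Delta}\|_{L^1\to L^\infty}\lesssim |t|^{-3/2}$. Riesz--Thorin interpolation between them gives $\|e^{it\Delta}\|_{L^{p'}\to L^p}\lesssim |t|^{-3(1/2-1/p)}$; for $p=6$ this is precisely $|t|^{-1}$, the critical (non-integrable) decay that characterises the endpoint.

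Second, I would establish the homogeneous estimate $\|e^{it\Delta}\psi_0\|_{L^2_tL^6_x}\lesssim \|\psi_0\|_{L^2}$ by $TT^*$ duality. Writing $T\psi_0(t)=e^{it\Delta}\psi_0$ and $T^*F=\int e^{-is\Delta}F(s)\,ds$, the claim is equivalent to boundedness of $TT^*F(t)=\int e^{i(t-s)\Delta}F(s)\,ds$ from $L^2_tL^{6/5}_x$ into $L^2_tL^6_x$. The kernel has an exactly $1/|t-s|$ singularity in $t$, so Hardy--Littlewood--Sobolev fails at this endpoint; here I would invoke the bilinear Keel--Tao argument, dyadically decomposing $|t-s|\sim 2^j$ and interpolating between a trivial $L^2\to L^2$ bound (on each dyadic piece) and the dispersive $L^{6/5}\to L^6$ bound to produce summable exponents off the endpoint, then summing via an atomic decomposition in Lorentz spaces to recover the endpoint itself. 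The $L^\infty_tL^2_x$ part of the homogeneous estimate is immediate from unitarity.

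Third, for the inhomogeneous term $\int_0^t e^{i(t-s)\Delta}f(s)\,ds$ I would split according to which norm on $f$ is being used. For $f\in L^1_tL^2_x$, Minkowski's inequality combined with $L^2$-unitarity directly produces both the $L^\infty_tL^2_x$ and (composed with the homogeneous estimate) the $L^2_tL^6_x$ bounds. For $f\in L^2_tL^{6/5}_x$, the retarded operator differs from the symmetric $TT^*$ operator, so one applies the Christ--Kiselev lemma; this works because the two admissible exponents $2$ and $2$ for the time variable are equal, so one uses instead the variant for the endpoint due to Keel--Tao (a direct bilinear argument bypassing Christ--Kiselev, which fails when $q=q'$). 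Finally, adding the two splittings of $f$ and the homogeneous piece yields the stated norm on the right-hand side.

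The genuine obstacle is the third paragraph: the endpoint $(2,6)$ in $d=3$ is precisely the case not reachable by Hardy--Littlewood--Sobolev or by a naive Christ--Kiselev argument, and its proof requires the bilinear real-interpolation scheme of Keel--Tao. All other pieces are routine consequences of unitarity, Minkowski, and the dispersive bound.
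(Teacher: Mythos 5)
Your sketch is correct and is a faithful outline of the Keel--Tao argument, which is exactly what the paper invokes: Proposition~\ref{strichartz} is stated without proof and attributed to \cite{Keel-Tao}. In particular you correctly identify that $(2,6)$ is the endpoint pair in $d=3$, that the $1/|t-s|$ kernel defeats Hardy--Littlewood--Sobolev, that the bilinear dyadic/real-interpolation scheme is what recovers the endpoint, and that the retarded estimate must be handled by the Keel--Tao bilinear argument rather than Christ--Kiselev (which degenerates when the time exponents coincide at $q=q'=2$).
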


\begin{prop}[Radial Sobolev Embedding]  If  $f\in H^1_{rad}(\R^3)$, then we have
\be
|f(r)|\lesssim r^{-1} \|f\|_{H^1}, \hspace{1cm}r\geq 1.
\ee
\be
|f(r)|\lesssim r^{-1/2} \|f\|_{H^1}, \hspace{1cm}r\leq 1. \label{radialSob}
\ee

\end{prop}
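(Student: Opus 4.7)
The plan is to reduce the three-dimensional radial problem to a one-dimensional Sobolev-type estimate by introducing the auxiliary function $h(r) := r f(r)$ for $r \geq 0$. By density of smooth compactly supported radial functions in $H^1_{rad}(\R^3)$, it suffices to prove the bounds for smooth $f$ with rapid decay, in which case $h(0) = 0$ and all boundary terms below vanish; the general pointwise estimate then follows by passing to the limit.

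First I would express the one-dimensional $L^2$-norms of $h$ and $h'$ in terms of the three-dimensional $H^1$-norm of $f$. Trivially, $\int_0^\infty |h(r)|^2\,dr = \int_0^\infty r^2 f(r)^2\,dr \simeq \|f\|_{L^2(\R^3)}^2$. For the derivative, $h'(r) = f(r) + r f'(r)$, so
\[
\int_0^\infty |h'|^2\,dr = \int_0^\infty f^2\,dr + 2\int_0^\infty r f f'\,dr + \int_0^\infty r^2 |f'|^2\,dr.
\]
Integration by parts gives $2\int_0^\infty r f f'\,dr = [r f^2]_0^\infty - \int_0^\infty f^2\,dr = -\int_0^\infty f^2\,dr$, so the first two terms cancel and $\int_0^\infty |h'|^2\,dr = \int_0^\infty r^2 |f'|^2\,dr \simeq \|\nabla f\|_{L^2(\R^3)}^2$.

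Next I split on the size of $r$. For $r \geq 1$, I would use the standard one-dimensional bound $|h(r)|^2 \leq 2\|h\|_{L^2}\|h'\|_{L^2}$, obtained from $h(r)^2 = -\int_r^\infty 2 h h'\,ds$ and Cauchy--Schwarz. This yields $r^2 f(r)^2 \lesssim \|f\|_{L^2}\|\nabla f\|_{L^2} \lesssim \|f\|_{H^1}^2$, i.e.\ $|f(r)| \lesssim r^{-1}\|f\|_{H^1}$. For $r \leq 1$, I instead exploit the boundary condition $h(0) = 0$, writing $h(r) = \int_0^r h'(s)\,ds$; Cauchy--Schwarz then gives $|h(r)|^2 \leq r \int_0^r |h'|^2\,ds \leq r\|h'\|_{L^2}^2$, so $r^2 f(r)^2 \lesssim r\|\nabla f\|_{L^2}^2 \lesssim r\|f\|_{H^1}^2$, yielding $|f(r)| \lesssim r^{-1/2}\|f\|_{H^1}$.

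The only mildly delicate point is the vanishing of the boundary term $[rf^2]$ at $r = 0$, since generic $H^1_{rad}$ functions may be unbounded at the origin. This is handled by the density reduction mentioned at the outset, after which everything is elementary; no further structural obstacles appear.
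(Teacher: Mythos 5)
Your proof is correct and is the standard argument (often attributed to Strauss) via the substitution $h(r) = rf(r)$; the paper states this proposition without proof as a known basic estimate, so there is no paper proof to compare against. The one point worth flagging is handled correctly by you: the boundary term $[rf^2]_0^\infty$ and the identity $h(r)^2 = -2\int_r^\infty hh'\,ds$ both require $h$ to vanish at $0$ and $\infty$, which your density reduction to smooth compactly supported radial $f$ justifies, and the resulting pointwise bounds then pass to the limit for general $f \in H^1_{rad}(\R^3)$.
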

\begin{prop}[Hardy's inequality~\cite{Frank, Herbst}]For $s>0$ and $1<p<\frac{n}{s}$, we have
\begin{equation}\|\frac{f(x)}{|x|^s}\|_{L^p(\R^n)}\lesssim \||\nabla|^s f\|_{L^p(\R^n)}. \label{Hardy}\end{equation}
 \end{prop}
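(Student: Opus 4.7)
The plan is to reduce the claim to the $L^p$-boundedness of the operator $T_s := |x|^{-s}(-\Delta)^{-s/2}$. Setting $g := |\nabla|^s f$ (with $f$ Schwartz, then passing to the limit by density), the inequality becomes
\begin{equation*}
\|T_s g\|_{L^p(\mathbb{R}^n)} \lesssim \|g\|_{L^p(\mathbb{R}^n)}, \qquad 1<p<\tfrac{n}{s}.
\end{equation*}
Using the Riesz potential representation, $(-\Delta)^{-s/2} g(x) = c_{n,s}\int_{\mathbb{R}^n}|x-y|^{s-n} g(y)\,dy$, so the task reduces to a weighted bound for an explicit integral operator.

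The main step is a decomposition of the $y$-integral into three regions measured against $|x|$: the near region $\{|y|\le |x|/2\}$, the bulk $\{|x|/2<|y|<2|x|\}$, and the far region $\{|y|\ge 2|x|\}$. In the first region $|x-y|\sim |x|$, so the contribution to $|x|^{-s}(-\Delta)^{-s/2}g(x)$ is bounded by $|x|^{-n}\int_{|y|\le|x|/2}|g(y)|\,dy \lesssim \mathcal{M}g(x)$, the Hardy--Littlewood maximal function. In the bulk region, multiplying by $|x|^{-s}$ yields a truncated Riesz potential of radius $\sim |x|$, which is also controlled pointwise by $\mathcal{M}g(x)$ via the standard layer-cake/averaging argument. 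Both contributions therefore satisfy the desired $L^p$ bound by the maximal theorem (using $p>1$).

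The real obstacle is the far region, where $|x-y|\sim |y|$ and the contribution becomes
\begin{equation*}
Hg(x) := |x|^{-s}\int_{|y|\ge 2|x|}\frac{|g(y)|}{|y|^{n-s}}\,dy.
\end{equation*}
This is a genuinely non-local operator that is not controlled by $\mathcal{M}g$, so I cannot avoid exploiting the precise exponent relation $s<n/p$. I would handle it by spherical decomposition: estimating the spherical integral $\int_{|y|=r}|g(y)|\,d\sigma\le r^{(n-1)/p'}\bigl(\int_{|y|=r}|g|^p d\sigma\bigr)^{1/p}$, after which $Hg(x)$ is dominated by a one-dimensional Hardy-type operator acting on the $L^p$-spherical average $G(r):=\bigl(\int_{|y|=r}|g|^p d\sigma\bigr)^{1/p}$. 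The resulting 1D operator is of the form $r^{-s}\int_r^\infty \rho^{s-1}G(\rho)\frac{d\rho}{\rho}\cdot(\text{constant})$, which is bounded on $L^p(r^{n-1}dr)$ precisely under the condition $s<n/p$, by the classical one-dimensional Hardy inequality (verified via Schur's test with the gauge $\rho\mapsto \rho^{\sigma}$ for a suitable $\sigma\in(0,n/p-s)$).

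Combining the three region estimates gives $\|T_s g\|_{L^p}\lesssim \|\mathcal{M}g\|_{L^p}+\|Hg\|_{L^p}\lesssim \|g\|_{L^p}$, which yields the stated inequality after reinstating $g=|\nabla|^s f$. The borderline cases $p=1$ and $p=n/s$ are excluded precisely because the maximal theorem fails at $p=1$ and the 1D Hardy bound for $H$ fails at $p=n/s$, which is consistent with the hypothesis.
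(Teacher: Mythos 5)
The paper gives no proof of this proposition; it is stated as a known fact and cited to Frank and Herbst (see the references), with only a remark afterward on the consequences for the boundedness of $|x|^{-s}|\bp|^{-s}$ on $L^2(\R^3)$ and $L^p$. So there is no in-paper argument to compare against, and your proposal must be judged on its own.

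Your scheme is the standard one for Stein--Weiss-type inequalities and it is correct in substance: reduce to $L^p$-boundedness of $T_s=|x|^{-s}(-\Delta)^{-s/2}$, split the Riesz kernel at the scales $|y|\sim|x|$, control the near region (where $|x-y|\sim|x|$) and the bulk region (where $|x|^{-s}$ collapses the kernel to a truncated Riesz potential of radius $\sim|x|$, hence $\lesssim\mathcal{M}g(x)$ by the usual dyadic-annuli summation) via the maximal function, and handle the far region (where $|x-y|\sim|y|$) by H\"older on spheres followed by a one-dimensional weighted Hardy inequality; the exponent restriction $1<p<n/s$ is exactly the range in which the maximal theorem and the dual 1D Hardy inequality hold, as you identify. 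One small correction: after the spherical H\"older step the kernel picks up $\rho^{s-n}\cdot\rho^{(n-1)/p'}=\rho^{\,s-1-(n-1)/p}$, not $\rho^{s-2}$ as your displayed $r^{-s}\int_r^\infty\rho^{s-1}G(\rho)\,d\rho/\rho$ suggests (these happen to coincide when $n=3$, $p=2$, which is the main case the paper uses, but not in general). Plugging the corrected exponent into the dual 1D Hardy inequality, with weight $r^{n-1-ps}$ on the left and $\rho^{p(1-s)+(n-1)}$ on the right, one checks the inequality holds precisely when $n-ps>0$, i.e.\ $s<n/p$, so the structure and conclusion of your argument survive intact; only the intermediate exponent needs fixing. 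Also be a bit more careful with the density step: one should first work with $g$ Schwartz with $\hat g$ compactly supported away from the origin, so that $f=(-\Delta)^{-s/2}g$ and all the integrals are manifestly finite, and then pass to the limit.
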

 So  $|x|^{-s}|\bp|^{-s}$ and $|\bp|^{-s} |x|^{-s}$ are bounded operators from $L^2(\R^3)$ to itself for $s<\frac32.$
 We should note that to use it for $L^p, p\not=2$, then  $|x|^{-s}|\bp|^{-s}: L^p\rightarrow L^p$ and $|\bp|^{-s} |x|^{-s}: L^{p'}\rightarrow L^{p'}$ for  $p\in (1, \frac{n}{s})$.


 \begin{prop} \label{prop:MVB} Let $\phi(t)=e^{it\Delta}\phi_0$ be the free wave with initial data $\phi_0\in L^2$. Assume $\hat{\phi}_0\in C_0^\infty$
 \begin{enumerate}
 \item(Minimal Velocity Bound)
 If $supp\,  \hat{\phi}_0 \subset \{|\xi|\geq v\}$, then for any $v_1<v$, and any $m$,
  \be
\int_{|x|\leq v_1t} |\phi(t,x)|^2dx =O(t^{-m}), \hspace{1cm} t\rightarrow +\infty. \label{MinVB}
 \ee
 \item(Maximal velocity bound) If $supp\,  \hat{\phi}_0 \subset \{|\xi|\leq v\}$, then for any $v_1>v$ and any $m$,
   \be
\int_{|x|\geq v_1t} |\phi(t,x)|^2dx =O(t^{-m}), \hspace{1cm} t\rightarrow +\infty. \label{MaxVB}
 \ee
 \end{enumerate}
  \end{prop}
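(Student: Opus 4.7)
The cleanest plan is to work directly with the Fourier representation
\[
\phi(t,x) = (2\pi)^{-3/2}\int_{\R^3} e^{i(x\cdot\xi - t|\xi|^2)}\,\hat\phi_0(\xi)\,d\xi,
\]
and exploit the fact that $\hat\phi_0 \in C_0^\infty$ via repeated non-stationary phase integration by parts in $\xi$. The classical group velocity associated with the Schr\"odinger symbol $|\xi|^2$ is $\nabla_\xi|\xi|^2 = 2\xi$, so the phase $\Phi(x,t,\xi) = x\cdot\xi - t|\xi|^2$ satisfies $\nabla_\xi\Phi = x - 2t\xi$, with stationary point $\xi = x/(2t)$.

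For (1), on the set $|x|\leq v_1 t$ with $v_1<v$ and $\xi\in\text{supp}\,\hat\phi_0 \subset\{|\xi|\geq v\}$, one has
\[
|\nabla_\xi\Phi|\;\geq\; 2t|\xi| - |x|\;\geq\;(2v - v_1)\,t\;\geq\; v\,t \quad\text{(uniformly in $\xi$).}
\]
For (2), on $|x|\geq v_1 t$ with $v_1>v$ and $\xi\in\text{supp}\,\hat\phi_0 \subset\{|\xi|\leq v\}$, I first shrink the effective momentum cutoff slightly (using that $\hat\phi_0\in C_0^\infty$, so its support is compact and can be taken inside $\{|\xi|\leq v_1/2-\epsilon\}$ at the expense of renaming $v$) so that $|\nabla_\xi\Phi| \geq |x| - 2t(v_1/2-\epsilon)\geq 2\epsilon t$. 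In either situation one obtains the non-stationary lower bound $|\nabla_\xi\Phi| \gtrsim t$, with the additional strengthening $|\nabla_\xi\Phi|\gtrsim|x|$ in the far part of case (2).

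With this in hand, introduce the first-order operator
\[
L = \frac{-i\,\nabla_\xi\Phi\cdot\nabla_\xi}{|\nabla_\xi\Phi|^2},\qquad L\,e^{i\Phi}=e^{i\Phi},
\]
and integrate by parts $N$ times to rewrite $\phi(t,x) = (2\pi)^{-3/2}\int e^{i\Phi}\,(L^*)^N\hat\phi_0\,d\xi$. Because $\partial_\xi^\alpha(\nabla_\xi\Phi)$ equals $-2t\,e_\alpha$ for $|\alpha|=1$ and vanishes for $|\alpha|\geq 2$, and because $\hat\phi_0$ has bounded derivatives on a fixed compact set, an inductive book-keeping yields $|(L^*)^N\hat\phi_0|\leq C_N\, t^{-N}$ uniformly on the relevant region. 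Hence $|\phi(t,x)|\leq C_N\, t^{-N}$ pointwise, with the upgrade $|\phi(t,x)|\leq C_N(|x|+t)^{-N}$ in the far region of case (2).

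Integrating the pointwise bound over the spatial region, whose volume is $\lesssim t^3$ in (1) and in the annulus $v_1 t \leq |x|\lesssim t$ of (2), gives
\[
\int_{|x|\leq v_1 t}|\phi|^2\,dx\;\lesssim_N\; t^{3-2N}, \qquad \int_{|x|\geq v_1 t}|\phi|^2\,dx \;\lesssim_N\; t^{3-2N},
\]
(the tail $|x|\gg t$ in (2) being controlled by the strengthened bound, whose $L^2$ norm integrates to the same order). Choosing $N$ large yields $O(t^{-m})$ for any $m$. The only technical point is the inductive control of derivatives of $|\nabla_\xi\Phi|^{-2}\nabla_\xi\Phi$ under iterated $L^*$, but this is routine Leibniz book-keeping; conceptually the argument is nothing but the observation that classical trajectories with momenta in $\text{supp}\,\hat\phi_0$ cannot reach the indicated spatial region by time $t$, manifested quantitatively through non-stationary phase.
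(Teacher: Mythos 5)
Your approach — integrating by parts against the non-stationary phase operator $L$ — is exactly what the paper has in mind: the paper gives no detailed proof, saying only that the bounds ``follow by the method of nonstationary phase.'' Your execution of part~(1) is correct: with $\nabla_\xi\Phi = x-2t\xi$, the lower bound $|\nabla_\xi\Phi|\geq 2t|\xi|-|x|\geq (2v-v_1)t$ is uniform, the Leibniz bookkeeping for $(L^*)^N\hat\phi_0$ gives a pointwise $C_N t^{-N}$ on $\{|x|\leq v_1 t\}$, and integrating over the $O(t^3)$ region yields \eqref{MinVB}.

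Part~(2), however, has a genuine gap, and the ``shrinking'' step is where it lives. The non-stationary bound needs the stationary point $\xi_0 = x/(2t)$ to avoid $\mathrm{supp}\,\hat\phi_0$; on $|x|\geq v_1 t$ one has $|\xi_0|\geq v_1/2$, so you need $v<v_1/2$, i.e.\ $v_1>2v$. The hypothesis only gives $v_1>v$. Your fix — ``its support \ldots\ can be taken inside $\{|\xi|\leq v_1/2-\epsilon\}$ at the expense of renaming $v$'' — is not a legitimate move: $\hat\phi_0$ is a \emph{given} function and its support cannot be shrunk. If $\hat\phi_0$ genuinely has mass in the shell $v_1/2\leq|\xi|\leq v$, the phase has a stationary point for $x$ in the conical slab $v_1 t\leq|x|<2vt$, the non-stationary phase argument simply does not apply there, and the conclusion is in fact false: a wave packet centered at frequency $\xi_*$ with $v_1/2<|\xi_*|<v$ does carry $O(1)$ of its $L^2$ mass into $|x|\geq v_1 t$, because the group velocity of $e^{it\Delta}$ is $2\xi$ (this is exactly what Lemma~\ref{xt-p} records: $e^{-i\Delta t}\tfrac{x}{t}e^{i\Delta t}\to 2\bp$). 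So the correct hypothesis in \eqref{MaxVB} is $v_1>2v$; with that replacement your computation works as written, including the sharpened $|\nabla_\xi\Phi|\gtrsim |x|+t$ needed to integrate over the unbounded region. (Symmetrically, \eqref{MinVB} is true but non-sharp: $v_1<2v$ would suffice.) The paper's stated $v_1>v$ is off by the same factor of two; it is harmless where the estimate is invoked because there is slack in the constants, but the fix is to correct the threshold, not to pretend the frequency support of the data can be narrowed.
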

We choose to present the minimal/maximal velocity bound just for free solutions,  which follow by the method of nonstationary phase.    For general Hamiltonian $H=-\Delta +V(x)$, such type of results first appeared in~\cite{MV, SSpreprint}, where the condition of Fourier support is replaced by $E_{\Omega}i[H, A]E_\Omega \geq v^2 E_\Omega$ for some interval $\Omega\subset \R$, and $E_{\Omega}$ is the corresponding spectral projection of $H$.

\section{Phase space operators and Commutator formula}

\subsection{Phase space calculus} Now we introduce functions of self-adjoint operators and prove commutator expansion formulas.  The idea of phase-space operators was introduced and used in the proof of asymptotic completeness for N-body problems~\cite{SSAnnals, SSInvention,  SSDuke, SSjams}.   Here we adopt most of the notation and lemmas of ~\cite{SSAnnals}.

Given a self-adjoint operator $A$, and some function $f:\R\rightarrow \R$,
we can define the operator $f(A)$  using Helffer-Sj\"{o}strand formula,
\begin{equation}f(A) = -\frac{1}{2\pi}\int_{\R^2} (z-A)^{-1} \partial_{\bar{z}}\tilde{f}(z)dxdy,\label{HS-formula}\end{equation}
here $z=x+iy$, and  $\tilde{f}$ is an almost analytic extension of  $f$ to $\mathbb{C}$, in the sense that $\partial_{\bar{z}}\tilde{f}(z) =\mathcal{O}(|\Im z|^m), \Im z\sim 0. $    For
the convenience of commutator estimate, we also use the representation

\begin{equation}f(A)=\int_{\R}\hat{f}(s)e^{iAs}ds.\label{rep-formula}\end{equation}

Initially, this representation only is valid $f$ with $\hat{f}\in L^1$.  In the following, we will prove lemmas first for functions in Schwartz class using this representation, and then extend the obtained results to functions in $\bb$\footnote{In the class $\bb$, $\hat{f}$ means  the distributional Fourier transform of $f$. Notice that if $f=f(\lambda\geq 1)$, then $\hat{f}$ is the product of a Schwartz function with $\xi^{-1}$.}
\be \mathcal{B}_n:= \{f\in C_b^\infty(\R)\left| \int_{\R}|\hat{f}(s)| |s|^{k} ds <\infty, \hspace{0.3cm} \mbox{ for } 1\leq k\leq n\}\right.\ee
Here $C_b^\infty$ represents class of smooth and bounded functions.   We equip $\bb$ with the topology induced by $\|f\|_{\bb}=\sum_{k=1}^n  \int_{\R}|\hat{f}(s)| |s|^{k} ds$, and therefore the Schwartz class $\mathcal{S}(\R)$ is a dense subset.
 Typical examples in $\bb$ are the smooth characteristic functions for intervals in $\R$.

 We denote $ad_A^{(k)}(B)$ to   be \be ad_A^{(k)}(B)=[ad_A^{(k-1)}(B), A] = [[B,A],A],\ldots, A],\ee
 which are initially defined as forms on $D(A)\cap D(B)$.
  Then the commutator expansion formula is the following.
\begin{lem}~\label{BFA-lemma} Let $A,B$ be self-adjoint operators on the same Hilbert Space $\mathcal{H}$. Assume $D(A)\cap D(B)$ is dense in $\mathcal{H}$, and
$ad_A^{(k)}(B)$ extends to a bounded operator for all $1\leq k\leq n$.

For any function $f\in \bb$, let $[B, f(A)]$ be defined as a form on $D(A^n)$. Then
\begin{align}
[B, f(A)]= &\sum_{k=1}^{n-1} \frac{1}{k!}f^{(k)}(A)ad_A^{(k)}(B) +R_n(f), \label{BFA}\\
[B, f(A)]=& \sum_{k=1}^{n-1} \frac{1}{k!}(-1)^{k-1} ad_A^{(k)}(B)f^{(k)}(A) - R^*_n(f). \label{BFA-adjoint}
\end{align}
in the form sense with remainder $R_n(f)$ satisfying
\be\|R_n(f)\|\leq c_n \|ad_A^{(n)}(B)\| \int |\hat{f}(s)| |s|^{n}ds. \label{ineq:Rn}\ee
Consequently, $[B, f(A)]$ defines an operator on $D(A^{n}).$
\end{lem}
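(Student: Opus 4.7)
The plan is to first establish the formula for $f$ in the Schwartz class using the Fourier representation \eqref{rep-formula}, and then extend it to $\mathcal{B}_n$ by a density argument with respect to the norm $\|\cdot\|_{\bb}$. For Schwartz $f$, the identity $[B,f(A)] = \int_\R \hat{f}(s)[B,e^{iAs}]\,ds$ reduces everything to expanding $[B,e^{iAs}]$ in powers of $s$, with a remainder of order $s^n$.

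The Heisenberg-type quantity $F(s) := e^{-iAs} B e^{iAs}$ is the natural object. Differentiating formally and using self-adjointness of $A$ gives $F'(s) = i e^{-iAs}[B,A] e^{iAs}$ and, by induction, $F^{(k)}(0) = i^k\, ad_A^{(k)}(B)$. Since $ad_A^{(k)}(B)$ is assumed bounded for $1\le k\le n$, Taylor's formula with integral remainder yields
\begin{equation*}
F(s) - B \;=\; \sum_{k=1}^{n-1}\frac{(is)^k}{k!}\, ad_A^{(k)}(B) \;+\; \frac{i^n}{(n-1)!}\int_0^s (s-r)^{n-1}\, e^{-iAr}\, ad_A^{(n)}(B)\, e^{iAr}\,dr,
\end{equation*}
as a form identity on $D(A^n)$. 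Multiplying by $e^{iAs}$ on the left gives the analogous expansion for $[B, e^{iAs}]$, with $e^{iAs}$ in front of each $ad_A^{(k)}(B)$ and a remainder whose norm is bounded by $\tfrac{|s|^n}{n!}\|ad_A^{(n)}(B)\|$.

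Integrating against $\hat{f}(s)$ and using the elementary identity $\int_\R \hat{f}(s)(is)^k e^{iAs}\,ds = f^{(k)}(A)$ (which holds on $D(A^k)$ by differentiating \eqref{rep-formula} under the integral $k$ times) produces the sum on the right-hand side of \eqref{BFA}. The remainder $R_n(f)$ is estimated by pulling the norm inside the double integral, which gives exactly the bound \eqref{ineq:Rn}. The adjoint formula \eqref{BFA-adjoint} is obtained in the same way starting from $\tilde F(s)=e^{iAs}B e^{-iAs}$, or equivalently by taking the form-adjoint of \eqref{BFA} applied to the real function $\bar f = f$ and relabeling, noting that $ad_A^{(k)}(B)^* = (-1)^k ad_A^{(k)}(B)$ up to sign conventions, which accounts for the factor $(-1)^{k-1}$.

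The main obstacle is the final extension from $\mathcal{S}(\R)$ to $\mathcal{B}_n$. Both sides of \eqref{BFA} are continuous with respect to $\|\cdot\|_{\bb}$: the remainder estimate \eqref{ineq:Rn} is linear in $\int |\hat f(s)||s|^n ds$, while for each fixed $k<n$ the map $f \mapsto f^{(k)}(A)\, ad_A^{(k)}(B)$ is bounded as a form on $D(A^n)$ because $\|f^{(k)}(A)\|\le \int|\hat f(s)||s|^k ds\le \|f\|_{\bb}$. Since $\mathcal{S}(\R)$ is dense in $\bb$, the identity passes to the limit as a form equation on $D(A^n)$, and the resulting bound shows $[B,f(A)]$ extends to a bona fide operator there. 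The delicate point throughout is that $B$ need not be bounded, so all manipulations with $e^{iAs}Be^{-iAs}$, differentiation under the integral, and Fubini-type exchanges must be read as form identities on the common invariant domain $D(A^n)$ (invariant because $e^{iAs}$ preserves it), which is where the hypothesis that the iterated commutators up to order $n$ extend boundedly is used essentially.
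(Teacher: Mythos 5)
Your proposal follows essentially the same route as the paper: Fourier representation of $f(A)$, Taylor expansion of the Heisenberg-conjugated operator with integral remainder, the observation that $\widehat{f^{(k)}}(s)=(is)^k\hat f(s)$ to recover $f^{(k)}(A)$, the norm bound on the remainder, the adjoint identity for \eqref{BFA-adjoint}, and density of $\mathcal S(\R)$ in $\bb$. The paper performs the Taylor step in two stages (first factoring out one integration of $\frac{d}{du}e^{-iAu}Be^{iAu}$, then expanding the resulting $h(u)$), whereas you expand $F(s)=e^{-iAs}Be^{iAs}$ directly; these are equivalent bookkeeping choices.

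The one place where your proposal is imprecise is the treatment of unbounded $B$. You assert that all manipulations can be read ``as form identities on the common invariant domain $D(A^n)$,'' but there is no a priori relation between $D(A^n)$ and $D(B)$: invariance of $D(A^n)$ under $e^{iAs}$ does not put $e^{iAs}\phi$ into $D(B)$, so the pointwise meaning of $F(s)=e^{-iAs}Be^{iAs}$ and its derivatives is not justified on that domain alone. The paper avoids this by first proving the lemma for \emph{bounded} $B$ and Schwartz $f$, then passing to general $B$ by approximating with the bounded operators $B(1+\epsilon B)^{-1}$ and using a continuity argument; you should do the same (or make precise that the form is initially defined only on $D(A^n)\cap D(B)$ and that the resulting bounded identity then extends). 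Also, in your density step you verify continuity of the right-hand side of \eqref{BFA} in $\|\cdot\|_{\bb}$ but do not explain why $[B,f_j(A)]\to[B,f(A)]$ as forms on $D(A^n)$; the paper supplies the needed fact that $f_j\to f$ in $\bb$ forces $\|(1+|\lambda|)^{-n}(f_j-f)\|_{L^\infty}\to 0$, which is what makes the left-hand side converge on $D(A^n)$.
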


\begin{proof} We first verify everything for function  $f \in \mathcal{S}(\R)$  and bounded operator $B$.
From the representation formula (\ref{rep-formula})
\begin{align}
[B, f(A)]
=& \int_{\R} \hat{f}(s) (B e^{iAs} - e^{iAs}B)ds \notag\\
=& \int_{\R} \hat{f}(s) e^{iAs}\int_0^s \frac{d}{du} (e^{-iAu }Be^{iAu})du\,  ds\notag \\
=& i \int_{\R} \hat{f}(s) e^{iAs}\int_0^s  e^{-iAu }[B,A]e^{iAu} du \,ds. \label{BFA-commute}
\end{align}
Then  denote $h(u) =i e^{-iAu }[B,A]e^{iAu} $ to be the bounded operator on $D(A)$, by Taylor expansion we have
\be  h(u)= \sum_{k=0}^{n-2} \frac{1}{k!} h^{(k)}(0)u^k +\int_0^u\int_0^{s_1}\ldots \int_0^{s_{n-2}} h^{(n-1)}(s_{n-1}) ds_{n-1} ds_{n-2}\ldots ds_1  \ee
$h^{(k)}(u)=i^{k+1} e^{-iAu }ad_{A}^{k+1}(B)e^{iAu}$.  So we have (\ref{BFA}) with the expression for $R_n(f)$
\begin{equation}
R_n(f)=i^n \int_{\R} \hat{f}(s) \int_0^s \int_0^{s_0} \int_0^{s_1}\ldots \int_0^{s_{n-2}} e^{iA(s-s_{n-1})}ad_A^n(B)e^{iAs_{n-1}} ds_{n-1} ds_{n-2}\ldots ds_1 ds_0 ds .\label{R-formula}
\end{equation}
which implies (\ref{ineq:Rn}) since $ad^n_A(B)$ is bounded.
(\ref{BFA-adjoint}) follows from (\ref{BFA}) by taking adjoint.
Now given $f\in \bb$
\be  A(f)   =[B, f(A)] - \sum_{k=1}^{n-1} \frac{1}{k!}f^{(k)}(A)ad_A^{(k)}(B) \ee
is well defined on $D(A^n)$. Take $f_j \in S(\R),  f_j\rightarrow f\in \bb$,  which implies~\cite{SSpreprint}
 \be \|(1+|\lambda|)^{-n} (f_j(\lambda)-f(\lambda))\|_{L^\infty}\rightarrow 0.\ee
 Here we used that the $L^{\infty}$ norm is bounded by the $L^1$ norm of the Fourier Transform of the function.
 Hence $A(f_j)\rightarrow A(f)$ in the form sense.

Now $A(f_j)=R_n(f_j)$ and $R_n(f_j)\rightarrow R_n(f)$ with $R_n(f_j)$ uniformly bounded. By passing to the limit,  we get $A(f)=R_n(f)$, which holds in the form sense on $D(A^n)$.  Both $R_n(f)$ and $\sum_{k=1}^{n-1} \frac{1}{k!}f^{(k)}(A)ad_A^{(k)}(B)$ are bounded operator on $D(A^n)$,  so   $[B, f(A)] $  defines an operator on $D(A^n)$.

The result for unbounded operator $B$ follows by approximating  $B$ by  $\frac{B}{1+\epsilon B}$ and continuity argument.
\end{proof}

\begin{rem}

(1) A more useful version of Lemma~\ref{BFA-lemma} applies to functions $f, g$ with a scaling parameter $s\in (0,\infty)$
\begin{equation}[g(B), f(s^{-1}A)]=\sum_{k=1}^{n-1} \frac{1}{k!}s^{-k}f^{(k)}(s^{-1}A)ad_A^{(k)}(g(B)) +s^{-n}R_n(f), \label{BFAs}\end{equation}
with
\be  \|R_n(f)\|\leq \tilde{c}_n \|ad_A^{(n)}(g(B))\|  \int |\hat{f}(\lambda)| |\lambda|^{n}d\lambda. \ee
and we also have  similar formulas as (\ref{BFA-adjoint}).

(2) We also record the double commutator identity, which follows from  (\ref{BFA-commute}).
 \begin{align}  [f_1(A), [f_2(A), B]] =  -  \int_{\R}  \int_{\R}   \hat{f_2}(s)   \hat{f_1}(\lambda) \int_0^s  \int_0^{\lambda}        e^{i(s+\lambda -u -v)A}  ad_A^2(B)   e^{i(v+u)A} \,du \,dv \,d\lambda       \,ds \label{fafab}
\end{align}

(3) Lemma~\ref{BFA-lemma} is usually proved using the representation formula (\ref{rep-formula}) as in~\cite{SSAnnals, SSpreprint}.   Alternatively, it can be verified using the Helffer-Sj\"{o}strand formula (\ref{HS-formula}) as in~\cite{HS}.
\end{rem}

\subsection{Commutator estimates}   We first record simple symmetrization formulas, which can be checked by direct computation.
\begin{lem} Let $A,B,C$ be self-adjoint operators. Suppose $[A,C]=0$, then we have
\begin{align}
AB^2A = & BA^2B +	[[ A,B], B]A + 	B[[ A,B], A]
\label{AB2A}\\
A^2BC^2= & (AC)B(AC) + A^2[B,C]C+ AC[A,B]C\\
A^2B + BA^2 = & 2ABA + [A, [A,B]] \label{ABBA}\\
ABC - CBA =& A[B,C] + C[A, B] \\
A^2BC^2 + C^2BA^2
= &  2(AC)B(AC) +R(A,B,C) \label{A2BC2}\end{align}
where the remainder term in (\ref{A2BC2}) involves double commutator
\be  R(A,B,C) =  A[[A,B],C]C + C[[C,B],A]A+ A [C, [C,B]]A +C [A,[A,B]]C \ee
\end{lem}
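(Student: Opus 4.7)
The plan is to verify each of the five identities by direct algebraic manipulation, using only the defining relation $XY = YX + [X,Y]$, the Leibniz rule $[X,YZ] = [X,Y]Z + Y[X,Z]$, and the hypothesis $[A,C]=0$. I would work through them in the order they are stated, since later identities can be built on earlier ones.

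For \eqref{ABBA}, I would expand $A^2 B = A(AB) = A(BA+[A,B]) = ABA + A[A,B]$ and $BA^2 = (AB-[A,B])A = ABA - [A,B]A$. Adding and recognizing $A[A,B]-[A,B]A = [A,[A,B]]$ gives the claim. For \eqref{AB2A}, I would rewrite $AB^2 A = (BA+[A,B])BA = BABA + [A,B]BA$ and similarly $BA^2 B = BABA + BA[A,B]$. Subtracting, the difference is $[A,B]BA - BA[A,B] = [[A,B], BA] = [[A,B],B]A + B[[A,B],A]$ by Leibniz, which is exactly the stated remainder. For the third identity $ABC - CBA$, I would use $ABC = ACB + A[B,C]$ and $CBA = CAB - C[A,B]$; then $[A,C]=0$ kills $ACB-CAB$, leaving $A[B,C]+C[A,B]$.

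For \eqref{A2BC2} and the intermediate identity relating $A^2BC^2$ to $(AC)B(AC)$, I would apply the third identity to rewrite $ABC$ as $CBA + A[B,C] + C[A,B]$, multiply by $A$ on the left and $C$ on the right, and use $[A,C]=0$ to collapse $ACBAC$ into $(AC)B(AC)$. This yields the second displayed identity immediately. Swapping the roles of $A$ and $C$ (which is legal precisely because $[A,C]=0$) gives the analogous formula for $C^2 BA^2$, and summing produces
\[
2(AC)B(AC) + A^2[B,C]C + C^2[B,A]A + AC\bigl([A,B]C + [C,B]A\bigr).
\]

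The main obstacle is recasting the four correction terms in the displayed sum as the symmetric double-commutator expression $R(A,B,C)$. Here I would exploit the Jacobi identity together with $[A,C]=0$, which gives the two useful reductions $[A,[B,C]] = [[A,B],C]$ and $[C,[B,A]] = [[C,B],A]$. The strategy is to move one factor of $A$ (respectively $C$) past $[B,C]$ (respectively $[B,A]$) in the terms $A^2[B,C]C$ and $C^2[B,A]A$, producing a double commutator plus a term of the form $A[B,C]AC = AC[B,C]A$ (using $[A,C]=0$); combining with $AC[A,B]C + AC[C,B]A$ and using Jacobi once more should exactly reproduce the four terms of $R(A,B,C)$. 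The bookkeeping is the only subtlety — every step is just commutation — so the proposal is essentially to organize the expansions carefully and invoke Jacobi whenever a nested commutator $[X,[Y,Z]]$ appears.
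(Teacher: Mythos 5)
Your proposal is the direct computation that the paper itself invokes without writing out, and the strategy is sound: \eqref{ABBA}, \eqref{AB2A}, and the identity $ABC-CBA=A[B,C]+C[A,B]$ are verified exactly as you describe, the identity for $A^2BC^2$ follows from sandwiching $ABC=CBA+A[B,C]+C[A,B]$ between $A$ and $C$, and \eqref{A2BC2} follows by symmetrizing in $A\leftrightarrow C$ and regrouping.

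One small slip in your final paragraph deserves a flag: the parenthetical claim $A[B,C]AC = AC[B,C]A$ is false in general, since only $A$ and $C$ commute, not $A$ or $C$ with $[B,C]$. The correct bookkeeping is: after pulling out $A^2[B,C]C = A[[A,B],C]C + A[B,C]AC$ and $C^2[B,A]A = C[[C,B],A]A + C[B,A]AC$ (using $[A,[B,C]]=[[A,B],C]$, $[C,[B,A]]=[[C,B],A]$ from Jacobi plus $[A,C]=0$), the leftover terms are
\begin{equation*}
A[B,C]AC + C[B,A]AC + AC[A,B]C + AC[C,B]A,
\end{equation*}
and one checks directly that this equals $A[C,[C,B]]A + C[A,[A,B]]C$ by expanding both double commutators and using $AC=CA$ together with antisymmetry $[B,C]=-[C,B]$, $[B,A]=-[A,B]$. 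With that correction the argument closes; no new idea is needed.
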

There are various representations of the remainder term in (\ref{A2BC2})
  but we choose to write it in a way such that double commutators are always squeezed by operators $A$ or $C$.  From (\ref{ABBA})(\ref{A2BC2}), we notice that symmetrization errors always involve double commutators.
\begin{rem}
The equation \ref{A2BC2} will be used repeatedly to symmetrize terms that are formally positive (at the symbolic phase space level).
The result is a positive operator plus a correction that we call $Sym$, coming from the second commutator.
This second commutator term is typically higher-order due to the scaling properties of the operators used.
\end{rem}

 \begin{lem}
 For any function  $f\in C^\infty$, such that $|f^{(\alpha)}(x)|\lesssim \jx^{K-|\alpha|}, $
 we have
 \begin{equation}[\gamma, f(\XS)] =  -i s^{-1} \beta'(r)^2 f'(\XS) = O(\jx^{K-1}s^{-1}). \label{gammaF1}\end{equation}
Let  $F_i, i=1,2$ be  smooth characteristic functions of $[1,+\infty)$,   we have
\begin{align}
[-i\Delta, F_1(\XS)] =& 2s^{-1}\sqrt{F_1'(\XS)}\gamma \sqrt{ F_1'(\XS)}=O_1(s^{-1}) \label{deltaf1} \\
[F_2(\Gt), f(x)] =&{\tau}^{-1} \sqrt{F_2'(\Gt)}[\gamma, f]\sqrt{ F_2'(\Gt)} + O(\jx^{K-3}\tau^{-3}) \label{f2f}\\
[F_1(\XS), F_2(\Gt)] =& \tau^{-1} \sqrt{F_2'(\Gt)} [F_1(\XS),\gamma]\sqrt{F_2'(\Gt)}+O(s^{-3}\tau^{-3}) \label{f1f2}
\end{align}
When applying to radial functions,  we have for $s>4$
\begin{equation}
 F_1(\XS)[-i\Delta, F_2(\Gt)]F_1(\XS)=0. \label{DeltaF2}
\end{equation}
\end{lem}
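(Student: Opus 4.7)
The plan is to handle the five identities separately, using (i) the explicit form $\gamma=-i(\nabla g\cdot\nabla+\tfrac12\Delta g)$ with $\nabla g=\beta'(r)x/|x|$ and $|\nabla g|^2=\beta'(r)^2$, (ii) the commutator expansion Lemma~\ref{BFA-lemma} in its scaled form \eqref{BFAs}, and (iii) the symmetrization identity \eqref{ABBA}. For \eqref{gammaF1}, since $f(\langle x\rangle/s)=f(g(x)/s)$ is a multiplication operator, only the first-order part of $\gamma$ contributes, and one computes directly $[\gamma,f(\langle x\rangle/s)]=-i\,\nabla g\cdot\nabla f(g(x)/s)=-is^{-1}|\nabla g|^2 f'(\langle x\rangle/s)=-is^{-1}\beta'(r)^2 f'(\langle x\rangle/s)$. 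For \eqref{deltaf1} I use $[-i\Delta,h]=2A_h$ with $h=F_1(\langle x\rangle/s)$, expanding $A_h$ via $\nabla h=s^{-1}F_1'\nabla g$ and $\Delta h=s^{-2}F_1''|\nabla g|^2+s^{-1}F_1'\Delta g$, and match against a direct expansion of $2s^{-1}\sqrt{F_1'}\gamma\sqrt{F_1'}$: the cross-term $2\sqrt{F_1'}\nabla\sqrt{F_1'}\cdot\nabla g=\nabla F_1'\cdot\nabla g=s^{-1}F_1''|\nabla g|^2$ produced by the square roots reproduces exactly the $F_1''$ piece, so the two expressions agree. The $O_1(s^{-1})$ bound is then immediate since $\gamma$ is first-order.

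For \eqref{f2f} I apply Lemma~\ref{BFA-lemma} with $A=\gamma$, $B=f(x)$, scaling parameter $\tau$, and $F_2$ in place of $f$. Taking the average of \eqref{BFA} and \eqref{BFA-adjoint} at $n=3$, the $k=1$ term becomes the anticommutator $\tfrac{\tau^{-1}}{2}(F_2'[\gamma,f]+[\gamma,f]F_2')$, while the $k=2$ contributions collapse into the single commutator $\tfrac{\tau^{-2}}{4}[F_2''(\gamma/\tau),\mathrm{ad}_\gamma^{(2)}(f)]$, which a further application of the expansion reduces to $O(\tau^{-3})\cdot\mathrm{ad}_\gamma^{(3)}(f)=O(\tau^{-3}\langle x\rangle^{K-3})$; the remainder $R_3-R_3^\ast$ is already of the same order. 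Symmetrizing the anticommutator via \eqref{ABBA} with $A=\sqrt{F_2'(\gamma/\tau)}$ and $B=[\gamma,f]$ yields $\tau^{-1}\sqrt{F_2'}[\gamma,f]\sqrt{F_2'}$ plus the double commutator $\tfrac{\tau^{-1}}{2}[\sqrt{F_2'},[\sqrt{F_2'},[\gamma,f]]]$, which is again $O(\tau^{-3}\langle x\rangle^{K-3})$. For \eqref{f1f2} I specialize to $f=F_1(\langle x\rangle/s)$ and observe, by iterating \eqref{gammaF1}, that each commutator with $\gamma$ extracts a factor $s^{-1}$ on the support of $F_1^{(k)}(\langle x\rangle/s)$ (which sits in $\{|x|\ge 2\}$ once $s$ is large, so that terms of the form $[\gamma,\beta'(r)^2]$, supported in $[1,2]$, drop out); hence $\mathrm{ad}_\gamma^{(3)}(F_1(\langle x\rangle/s))=O(s^{-3})$ and the overall remainder becomes $O(s^{-3}\tau^{-3})$.

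For the last identity \eqref{DeltaF2}, the key input is \eqref{gamma-delta}: on radial functions supported in $\{|x|\ge 2\}$ one has $-\Delta=\gamma^2$, and $\gamma^2$ obviously commutes with $F_2(\gamma/\tau)$. For $s>4$, $F_1(\langle x\rangle/s)$ is supported in $\{|x|>2\}$, so for any radial $\phi$ both $\psi:=F_1\phi$ and $F_2(\gamma/\tau)\psi$ (radial by rotational invariance of $\gamma$) allow us to replace $-i\Delta$ by $i\gamma^2$ wherever we apply $F_1$ on the left; thus $F_1[-i\Delta,F_2(\gamma/\tau)]\psi=F_1[i\gamma^2,F_2(\gamma/\tau)]\psi=0$, which is the claim. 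The main technical obstacle is the careful bookkeeping in \eqref{f2f} and \eqref{f1f2}: one has to both symmetrize the commutator expansion (via \eqref{BFA}/\eqref{BFA-adjoint}) \emph{and} apply \eqref{ABBA} to the resulting anticommutator, because either step alone only produces a $\tau^{-2}$ remainder; the cancellations from combining both are what deliver the claimed $\tau^{-3}$ (resp.\ $s^{-3}\tau^{-3}$) size.
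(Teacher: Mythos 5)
Your proof is correct and follows essentially the same route as the paper for \eqref{gammaF1}, \eqref{deltaf1}, \eqref{f2f}, and \eqref{f1f2}: the first two by direct computation of the vector field and exact symmetrization (your cross-term check that $2\sqrt{F_1'}\nabla g\cdot\nabla\sqrt{F_1'}=s^{-1}F_1''|\nabla g|^2$ is exactly the identity the paper invokes), and the latter two by averaging \eqref{BFA} and \eqref{BFA-adjoint} at $n=3$, symmetrizing the $k=1$ anticommutator via \eqref{ABBA}, and reducing all leftover pieces to iterated adjoints $\mathrm{ad}_\gamma^{(k)}(F_1(\langle x\rangle/s))=O(s^{-k})$ via the $\beta''$-support observation. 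This last point is worth being a bit careful about: in $\mathrm{ad}_\gamma^{(2)}(F_1)$ there is a priori a term $\propto s^{-1}\beta'\beta''F_1'$ which is only $O(s^{-1})$, and it is precisely the disjointness of $\mathrm{supp}\,\beta''\subset[1,2]$ and $\mathrm{supp}\,F_1'(\langle x\rangle/s)\subset\{|x|\ge s/2\}$ for $s>4$ that kills it; you flag this correctly.

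For \eqref{DeltaF2} you take a genuinely different route from the one the paper carries out. The paper's detailed argument uses the representation \eqref{BFA-commute} together with the support behavior of the vector flow $e^{ia\gamma}$ (Remark~\ref{support}) to show the kernel of $\mathrm{ad}_\gamma(-i\Delta)$, supported in $|x|\in[1,2]$, never meets the supports that arise. You instead use the identity $-\Delta=\gamma^2$ on radial functions for $|x|>2$ from \eqref{gamma-delta}, replace $-i\Delta$ by $i\gamma^2$ in both terms of the commutator (legitimate because $-\Delta$ and $\gamma^2$ agree as local differential operators on radial data in $|x|>2$, and both the operand $F_1\phi$ and the localization $F_1$ on the outside force the computation into that region), and then use that $\gamma^2$ commutes with $F_2(\gamma/\tau)$. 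The paper itself notes that ``this follows directly from \eqref{gamma-delta}'' before opting for the flow argument, so your approach is an acknowledged alternative; it is shorter and more elementary, at the mild cost of requiring the reader to trust the pointwise identity of the two differential operators, while the flow argument reveals directly which phase-space region is responsible for the vanishing. One small caveat: your phrase ``wherever we apply $F_1$ on the left'' is slightly off for the term $F_1F_2(-i\Delta)F_1\phi$, where it is the $F_1$ on the \emph{right} (acting on $\phi$) that places the operand of $\Delta$ inside $\{|x|>2\}$; the conclusion is unaffected.
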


\begin{proof}  (\ref{gammaF1})  follow from direct computation.  (\ref{deltaf1}) is obtained from direct computation and symmetrization.
(\ref{f2f}) and  (\ref{f1f2}) are similar, so we prove (\ref{f1f2}) here. Using (\ref{BFAs}) together with its adjoint form, also notice (\ref{ABBA})  we get
\begin{align}
&2 [F_1(\XS), F_2(\Gt)] \\=&  \sum_{k=1}^{2}\left[ \frac{1}{k!}\tau^{-k}F_2^{(k)}  ad^{(k)}_{\gamma}(F_1)  +(-1)^k\frac{1}{k!}\tau^{-k}  ad^{(k)}_{\gamma}(F_1 )F_2^{(k)} \right] + R_3-R_3^*\\
=& \tau^{-1}\left[F_2' ad_{\gamma}(F_1)+ad_{\gamma}(F_1) F_2' \right] +\frac12 \tau^{-2} \left[F_2'' ,  ad^{(2)}_{\gamma}(F_1)\right]+ R_3-R_3^*\\
=&2\tau^{-1} \sqrt{F_2' } [F_1,\gamma]\sqrt{F_2' } + \tau^{-1}[\sqrt{F_2'},[ \sqrt{F_2'}, ad_{\gamma}(F_1)]] +\frac12 \tau^{-2} \left[F_2'' ,  ad^{(2)}_{\gamma}(F_1)\right]+ R_3-R_3^*\\
=& 2\tau^{-1} \sqrt{F_2'(\Gt)} [F_1(\XS),\gamma]\sqrt{F_2'(\Gt)}+O(s^{-3} \tau^{-3})
\end{align}
From the   third to the fourth identity, we estimate the last 3 terms by using  (\ref{BFA-commute})(\ref{R-formula})(\ref{fafab}) and reduce the problem to   estimating
\begin{equation}ad_{\gamma}^k (F_1(\XS))= O(s^{-k}),\label{adk-bound}\end{equation}
which can be checked by direct computation.

Now we verify   (\ref{DeltaF2}) only when we interpret it as an operator acting on radial functions on    both sides, i.e. $( \phi, T\psi)=0, \forall \phi, \psi\in D(T)$.  Actually, this follows directly from (\ref{gamma-delta}). However, we provide another proof using the support condition of the vector flow for $\gamma$. 

In fact, from (\ref{BFA-commute}) and (\ref{Delta-r-support}),  for radial functions $\phi, \psi$ we have
  \begin{align*}
&(\phi, F_1(\XS)[-i\Delta, F_2(\Gt)]F_1(\XS) \psi )\\= & \lp \phi(x),  F_1(\XS)\int_{\R}\int_0^{\lambda}  \hat{F}(\lambda)   e^{i(\lambda-\lambda_1) \Gt}
ad_{\gamma}(-i\Delta)  e^{i\lambda_1 \Gt}   d\lambda_1 d\lambda \,\,F_1(\XS)\psi(x) \rp\\
 =& \int_{\R}\int_0^{\lambda} \hat{F}(\lambda) \lp e^{i(\lambda_1-\lambda)\Gt}  F_1(\XS)\phi(x), ad_{\gamma}(-i\Delta) e^{i \lambda_1 \Gt} F_1(\XS)\psi(x) \rp  \,d\lambda_1 \,d\lambda
  \end{align*}
  Now let us discuss the spatial support of functions in the inner product.  From Remark~\ref{support}  we know for $s> 4$
\be   supp \{e^{i a \gamma} F_1(\XS)\psi(x)\}  \subset \{|x|>2\}, \hspace{1cm} a\leq 0. \ee
We can discuss the cases $\lambda \geq  0 $ and $\lambda<0$,  and in each case  either $e^{i(\lambda_1-\lambda)\Gt}  F_1(\XS)\phi(x)$ or  $e^{i \lambda_1 \Gt} F_1(\XS)\psi(x)$ has support different from $ad_{\gamma}(-i\Delta)$, which is located in $[1,2]$.  This finishes the proof.
\end{proof}
\begin{rem}\label{rem:boundary} By performing the commutator expansion for $[F_1(\XS), F_2(\Gt)]$
\be [F_1(\XS), F_2(\Gt)]=   \sum_{k=1}^{n-1} \frac{1}{k!\tau^k}F_2^{(k)}(\frac{\gamma}{\tau})ad_\gamma^{(k)}(F_1(\XS)) +O((s\tau)^{-n}),\ee
we see that up to error terms,
 $[F_1(\XS), F_2(\Gt)]$ is essentially supported on the boundary region of $F_1', F_2'$.
\end{rem}

 \section{ The gamma operator limit}\label{sec:gamma-limit}

\subsection{ Positive Commutator Argument}
Let us start with the general commutator argument.
Take    $\phi(t)$ to be  a solution to equation (\ref{Main-eq}), then we have
 \begin{align}
\frac{d}{dt}\left<A\right>_t
  =\la  D_HA\ra  + i([\mathcal{N}, A]\phi, \phi)
=\la  D_HA\ra +    2 \textit{Im} (A\phi, {\bf N}(\phi)) \label{DtA}
\end{align}
Here,
$ D_HA=[-i\Delta, A] +\frac{\partial A}{\partial t}$
and $D_H$  is called the \textit{Heisenberg derivative}.
Notice that if $\phi$ satisfies the free Schr\"{o}dinger equation,  or if $\phi$ is a solution to (\ref{Main-eq}) but $A=f(x,t)$, then $\frac{d}{dt}\la A\ra = \la D_HA\ra.$

Now we impose the following assumptions on the propagation observable  $A$ in $t\in [t_0, +\infty), t_0\geq 0$.
\begin{enumerate}
\item[(A1)] $\la A\ra_t$ is uniformly bounded in time. \label{AS1}
\item[(A2)] $\textit{Im}(A(t)\phi(t), {\bf N}(\phi)(t)) = O_1((t+1)^{-1-\epsilon})$ \label{AS2}
\item[(A3)]$ D_HA=\pm \sum_{k=1}^KB_k^*B_k + C$, and  $  \la C\ra_t \in L^1(dt)$\label{AS3}
\end{enumerate}
 then from (\ref{DtA}) we have  \be \sum_{k=1}^K\int_{t_0}^T\la B_k^*B_k \ra_tdt = \pm \left[\la  A\ra_T-\la A\ra_{t_0} -\int_{t_0}^T\la C \ra_t dt + \int_{t_0}^T O(t^{-1-\epsilon})dt \right].\ee
So we conclude that
\begin{equation}\sum_{k=1}^K\int_{t_0}^\infty \la B_k^*B_k\ra_t dt <\infty, \label{B-PE} \end{equation}
and by Cauchy criterion, this further implies  $\lim_{T \rightarrow \infty }\la A\ra_T$ exists.

\begin{rem}To implement the commutator argument, the key is to check assumption (A3).  It follows immediately if we verify the following assumption.
\begin{enumerate}
\item[(A4)] $ D_HA =\pm  \sum_{k=1}^KB_k^*B_k   +O_1((t+1)^{-1-\epsilon})$\label{AS4}
\end{enumerate}
In general, (A4) does not hold and we might have to run a bootstrap argument,  i.e. we first get estimate involving $\la B_k^*B_k\ra_t$ weaker than (\ref{B-PE}),  then use it to prove $  \la C\ra_t \in L^1(dt)$.  We will demonstrate it in the proof of Theorem~\ref{thm:r-limit}.
\end{rem}
\begin{defin}
An operator $A$ satisfying (A4) is called a {\bf Propagation Observable} (PROB).
\end{defin}

 \subsection{Existence of $\gamma$ limit} From now on we  take $t_0=100$, i.e. it is a large time such that $t_0^\alpha\gg 2$, for any $\alpha\in (\frac13, 1)$.  The following theorem says that $\gamma$ in the exterior domain has a limit.
  \begin{thm}[Existence of $\gamma$ limit]\label{thm:r-limit} Given any solution $\phi$ to \eqref{Main-eq} satisfying assumption (H1)(H2).
Then for any $\alpha \in (\frac13,1)$, and any smooth characteristic function $F$ of interval $[1,+\infty)$,  we have
  \begin{equation}\Gamma:= \lim_{T\rightarrow +\infty}\la F(\XT)\gamma F(\XT) \ra_{t=T} \label{gamma-limit}\end{equation}exists.
  \end{thm}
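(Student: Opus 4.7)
The plan is to apply the positive commutator scheme of Section 4.1 to the propagation observable
\[A(t) := F(\XT)\,\gamma\, F(\XT),\]
and verify the three hypotheses (A1)-(A3). Hypothesis (A1) is immediate: since $\gamma$ maps $H^1$ to $L^2$ boundedly and $F$ is a bounded multiplier, $|\la A\ra_t| \lesssim \|\phi(t)\|_{H^1}^2$, which is uniformly bounded by (H1). For (A2), write $\textit{Im}(A\phi,\mathbf{N}(\phi))=\textit{Im}\bigl(\gamma F(\XT)\phi,\,F(\XT)\mathcal{N}(|\phi|,x,t)\phi\bigr)$ and use the pointwise bound $|F(\XT)\mathcal{N}|\lesssim t^{-\beta_0}$ from (H2) together with the uniform $H^1$ bound to obtain $|\textit{Im}(A\phi,\mathbf{N}(\phi))|\lesssim t^{-\beta_0}$ with $\beta_0>1$.

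The main task is to verify (A3) for the Heisenberg derivative. By Leibniz,
\begin{align*}
D_H A &= [-i\Delta, F(\XT)]\gamma F(\XT) + F(\XT)\gamma[-i\Delta, F(\XT)] \\
&\quad + F(\XT)[-i\Delta,\gamma]F(\XT) + (\partial_t F(\XT))\gamma F(\XT) + F(\XT)\gamma(\partial_t F(\XT)).
\end{align*}
The inner-commutator piece $F(\XT)[-i\Delta,\gamma]F(\XT)$ vanishes on radial functions for $t\geq t_0$: by \eqref{Delta-r-support} the operator $[-i\Delta,\gamma]$ is supported in $\{1\leq|x|\leq 2\}$, which is disjoint from $\mathrm{supp}\,F(\XT)\subset\{|x|\geq \tfrac12 t^\alpha\}$ once $t\geq t_0=100$. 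Using \eqref{deltaf1} we replace $[-i\Delta, F(\XT)]=2t^{-\alpha}\sqrt{F'(\XT)}\,\gamma\,\sqrt{F'(\XT)}$, and note $\partial_t F(\XT)=-\alpha t^{-1}\xt\,F'(\XT)$.

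The next step is to symmetrize the surviving expression using \eqref{ABBA}-\eqref{A2BC2} together with \eqref{gammaF1} so as to write
\[D_H A \;=\; 2t^{-\alpha}\,\sqrt{F'(\XT)}\,F(\XT)^{1/2}\gamma^2 F(\XT)^{1/2}\sqrt{F'(\XT)}\;+\;\mathrm{Sym}\;+\;\mathrm{Bdry},\]
where the principal term is $B^*B$ with $B:=\gamma\,F(\XT)^{1/2}\sqrt{F'(\XT)}$, and is non-negative on radial functions --- this is the crucial point, since by \eqref{gamma-delta} one has $\gamma^2=-\Delta\geq 0$ on radial functions supported in $\{|x|\geq 2\}$. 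The symmetrization errors $\mathrm{Sym}$ consist of double commutators of size $O(t^{-3\alpha})$, hence in $L^1(dt)$ precisely because $\alpha>1/3$. The boundary contribution $\mathrm{Bdry}$ originating from $\partial_t F$ is of size $t^{-1}F'(\XT)$; although $t^{-1}$ is not integrable, this term is supported in the thin shell $\{\jx\sim t^\alpha\}$ and can be absorbed into the principal $B^*B$ by completing the square in $\gamma-\tfrac{\alpha}{2}t^{\alpha-1}\jx$, or equivalently handled by a bootstrap using the preliminary $L^1$ control on $\sqrt{F'(\XT)}\gamma\phi$ produced in the first round of the positive commutator estimate. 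This yields the decomposition $D_H A=B^*B+C$ with $\la C\ra_t\in L^1(dt)$, so (A3) holds and the argument of Section 4.1 gives both $\int_{t_0}^\infty\la B^*B\ra_t\,dt<\infty$ and the existence of $\Gamma=\lim_{T\to\infty}\la A\ra_T$.

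\textbf{Main obstacle.} The most delicate point is the bookkeeping of the symmetrization, in particular showing that the non-integrable $O(t^{-1})$ piece coming from $\partial_t F(\XT)$ actually combines with the principal commutator $[-i\Delta,F(\XT)]\gamma F(\XT)+F(\XT)\gamma[-i\Delta,F(\XT)]$ to leave an integrable residue; here the full range $\alpha\in(\tfrac13,1)$ from (H2) enters critically, as one needs $3\alpha>1$ for the double-commutator errors to be in $L^1(dt)$ while $\alpha<1$ ensures the $t^{-\alpha}$ principal term dominates the $t^{-1}$ boundary term for large $t$.
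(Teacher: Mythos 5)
Your proposal follows the same overall framework as the paper: apply the positive-commutator / PROB scheme of Section~4.1 to $A(t)=F\gamma F$, observe that $F[-i\Delta,\gamma]F$ vanishes on radial functions for $t\geq t_0$, identify the principal positive term $\sim t^{-\alpha}\sqrt{FF'}\gamma^2\sqrt{FF'}$, and track the non-integrable $O(t^{-1})$ contribution coming from $\partial_t F$. You correctly identify this last term as the crux, and you correctly pinpoint that $\alpha>\tfrac13$ is what makes the double-commutator symmetrization errors $O(t^{-3\alpha})$ integrable.

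Where you genuinely diverge from the paper is the treatment of the $O(t^{-1})$ boundary term. The paper runs a specific bootstrap: it introduces an auxiliary observable $\tilde A=t^{-\epsilon}F_1\gamma F_1$ with $\epsilon+\alpha<1$, for which (A4) holds, and chooses $F_1$ so that $\sqrt{F_1F_1'}(\lambda)=cF'(\lambda)\lambda$; the resulting preliminary propagation estimate is then fed back via Cauchy--Schwarz to show that the boundary contribution to the original $A$ is in $L^1(dt)$. Your first suggestion --- Cauchy--Schwarz absorption of the $t^{-1}$ boundary term into the $t^{-\alpha}\sqrt{FF'}\gamma^2\sqrt{FF'}$ term, leaving an $O(t^{-(2-\alpha)})$ remainder which is integrable since $\alpha<1$ --- is a legitimate alternative route, and the paper itself uses exactly this maneuver in the later proof of Proposition~\ref{slowgrowth}. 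It buys you a slightly shorter argument and still yields the propagation estimate \eqref{PE} (with a degraded constant, e.g.\ $3/t^\alpha$ rather than $4/t^\alpha$, which is harmless). The one place you need to be careful is the operator ordering: to absorb $\tfrac{2\alpha}{t}\la Q\gamma Q\ra$ (with $Q=\sqrt{FF'\,\jx/t^\alpha}$) into $\tfrac{4}{t^\alpha}\|\gamma P\phi\|^2$ (with $P=\sqrt{FF'}$) you should write $Q=\sqrt{\jx/t^\alpha}\,P$ and use $[\gamma,\sqrt{\jx/t^\alpha}]=O(t^{-\alpha})$ from \eqref{gammaF1}; the student formulation ``completing the square in $\gamma-\tfrac{\alpha}{2}t^{\alpha-1}\jx$'' hides this because $\gamma$ and $\jx$ do not commute. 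Your second, hedged suggestion of a ``bootstrap using the preliminary $L^1$ control on $\sqrt{F'}\gamma\phi$ produced in the first round'' is under-specified: there is no first round unless you explicitly introduce the weighted auxiliary $\tilde A$ as the paper does, and without the specific tailored choice of $F_1$ the bootstrap output does not directly match the boundary term you need to control. Finally, a small imprecision: non-negativity of $\la\sqrt{FF'}\gamma^2\sqrt{FF'}\ra=\|\gamma\sqrt{FF'}\phi\|^2$ is automatic and does not require the identity $\gamma^2=-\Delta$ from \eqref{gamma-delta}; that identity is used elsewhere (for the $H^1$ upgrade of the wave operator), not here.
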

 Note that if we use the assumption (H2'), just take the $\alpha$ in the assumption, the result still holds.
  \begin{rem} If the limit $\Gamma=0$, we will call the solution \textbf{weakly localized state} (WLS), and denote it as $\phi_{wls}.$




  \end{rem}
  \begin{proof}  Let us simplify the notation by writing $F$ instead of $F(\XT)$  when it is not essential.

 Denoting $A(t)=F\gamma F$, then from global $H^1$ bound (\ref{global-bound}) on the solution, we see that assumption (A1) holds.
  By the non-linearity decay assumption (\ref{N-decay}) we have  
  \be  \left|(A(t)\phi(t), \textbf{N}(\phi)(t)) \right| = \left| (\gamma F \phi,   F\mathcal{N} \phi) \right| \lesssim t^{-\beta_0}, \hspace{1cm} t\geq 1. \ee
  So assumption (A2) holds true.

 Now we focus on analyzing $D_HA(t)$. From the
 commutator expansion formula (\ref{BFA})(\ref{BFA-adjoint}) ,  we get
\begin{align}
[-i\Delta, F\gamma F]
=&  F [-i\Delta, \gamma ]F +  [-i\Delta, F] \gamma F   + F\gamma [-i\Delta,  F] \\
=&  F [-i\Delta, \gamma ]F  +\left(\frac{1}{t^\alpha}F' [-i\Delta, \la x \ra]  + \frac{1}{2t^{2\alpha}}F''[ [-i\Delta, \la x\ra], \la x\ra]  \right)\gamma F \\ & +F\gamma \left(\frac{1}{t^\alpha} [-i\Delta, \la x\ra] F' -  \frac{1}{2t^{2\alpha}}[ [-i\Delta, \la x\ra], \la x \ra] F''\right) \\
=&\underbrace{F [-i\Delta, \gamma ]F}_{I_1}  +\underbrace{\frac{2}{t^\alpha}\left( F'\gamma^2F +F\gamma^2F' \right)}_{I_2}+\underbrace{ \frac{1}{t^{2\alpha}}[F''[ \gamma, \jx]\gamma F - F\gamma [\gamma, \jx]F''] }_{I_3}.
\end{align}
For $I_1$, we see from (\ref{Delta-r-support}) that it vanishes for $t\in [t_0, +\infty)$.

For $I_2$, we use (\ref{A2BC2}) and (\ref{gammaF1})  to get
\begin{align}
F'\gamma^2F +F\gamma^2F'  = 2 \sqrt{FF'}\gamma^2 \sqrt{FF'} + \sqrt{F_1}[\sqrt{F_2}, [\sqrt{F_3}, \gamma^2] ]\sqrt{F_4},
\end{align}
where the second term, with $F_1, F_2, F_3, F_4\in \{F,F'\}$  represents a collection of  terms of the same type, and they give $O(t^{-2\alpha})$.
Hence we conclude that
\be I_2=\frac{4}{t^\alpha} \sqrt{FF'}\gamma^2 \sqrt{FF'} + O(t^{-3\alpha}).\ee
For $I_3$, we use  (\ref{gammaF1}) to obtain
\begin{align}
 I_3= &  \frac{1}{t^{2\alpha}}[F''[ \gamma, \jx]\gamma F - F\gamma [\gamma, \jx]F''] \\
=&  \frac{1}{t^{2\alpha}}\left\{F''[ \gamma, \jx][\gamma,F] -F[\gamma, [ \gamma, \jx]F'']\right\}
= O(t^{-3\alpha}). \label{I3}
\end{align}
Hence we get for $t\geq t_0,$
 \begin{align}
 D_H(F\gamma F) =&  [-i\Delta, F\gamma F] + \frac{d}{dt}(F\gamma F) \notag \\
 =&  \frac{4}{t^\alpha}   \sqrt{FF'}\gamma^2 \sqrt{FF'}
 -\frac{\alpha}{t}\left[ F'\frac{\la x\ra}{t^\alpha}\gamma F + F\gamma F'\frac{\la x \ra }{t^\alpha}\right]+  O(t^{-3\alpha}).\label{DHFrF}
 \end{align}
Moreover, the second term is of order $O(t^{-1})$, so assumption (A3) does not hold. So we verify it using a bootstrap argument.

Take $F_1$ to be another smooth characteristic function of $[1,+\infty)$,  which will be determined later. Denoting $\tilde{A} =\frac{1}{t^\epsilon}F_1(\XT)\gamma F_1(\XT)$, with $\epsilon+\alpha<1$ and repeating the previous discussion for $\tilde{A}$, we can see that Assumptions (A1)(A2)(A4) holds true for $[t_0, +\infty)$. In fact, we only need to check the validity of (A4).
\begin{align} D_H\tilde{A} =& \frac{4}{t^{\alpha+\epsilon}}   \sqrt{F_1F_1'}\gamma^2 \sqrt{F_1F_1'}
 -\frac{\alpha}{t^{1+\epsilon}} \left[ F'\frac{\la x\ra}{t^\alpha}\gamma F + F\gamma F'\frac{\la x \ra }{t^\alpha}\right] - \frac{\epsilon}{t^{1+\epsilon}}F_1\gamma F_1 + O(t^{-3\alpha-\epsilon})\\
 =&  \frac{4}{t^{\alpha+\epsilon}}   \sqrt{F_1F_1'}\gamma^2 \sqrt{F_1F_1'}   + O_1(t^{-1-\epsilon})
 \end{align}
 Hence we conclude that \be \int_{t_0}^\infty    \frac{4}{t^{\alpha+\epsilon}} \la \sqrt{F_1F_1'}\gamma^2 \sqrt{F_1F_1'}\ra  dt<\infty,\ee
  which implies
  \begin{equation} \int_{t_0}^\infty \frac{1}{t^{\alpha+\epsilon}}\|\gamma  \sqrt{F_1F_1'} \phi\|^2_{L^2} dt<\infty. \label{F1-estimate}\end{equation}
  Let us take $F_1$ such that $\sqrt{F_1F_1'}(\lambda)=cF'(\lambda)\lambda$ where $c$ is the renormalization constant such that $F_1(+\infty)=1$.
  Now in (\ref{DHFrF}), denote $C= -\frac{\alpha}{t}\left[ F'\frac{\la x\ra}{t^\alpha}\gamma F + F\gamma F'\frac{\la x \ra }{t^\alpha}\right]$, we have
  \begin{align}
\left | \int_{t_0}^T\la C\ra_t dt\right| \lesssim  &    \int_{t_0}^T     \frac{1}{t}\left|( \gamma F'\frac{\la x\ra}{t^{\alpha}} \phi , F\phi ) \right|   dt\\
\lesssim & \left(\int_{t_0}^T \frac{1}{t^{\alpha+\epsilon}} \|\gamma F' \frac{\la x\ra}{t^\alpha}\phi\|^2_{L^2} dt\right)^{\frac12}\left( \int_{t_0}^T \frac{ \|\phi\|_{L^2}^2}{t^{2-\alpha-\epsilon}}dt\right)^\frac12
  \end{align}
  Hence assumption (A3) holds   for $A(t)=F\gamma F$, and we  conclude that
   \begin{equation}\int_{t_0}^\infty    \frac{1}{t^{\alpha}}\la \sqrt{FF'}\gamma^2 \sqrt{FF'}\ra  dt<\infty. \label{PE}\end{equation}
   and $ \lim_{T\rightarrow +\infty}\la F\gamma F\ra_{T} $ exists.
  \end{proof}

\begin{rem}
The support of the function $\sqrt{FF'}$ is essentially the same as $F'.$ So, we will sometimes use the generic notation $\tilde F'$ instead of  $\sqrt{FF'}.$
\end{rem}
\begin{rem}
The above argument is an example of what we refer to as Iterating Propagation Estimates.
\end{rem}

\subsection{Discussion of $\gamma$-limit} In this subsection, we will  obtain more information on the $\gamma-$limit.

We first recall the propagation estimate (\ref{PE}), which by \eqref{AB2A} further implies  that   for $\alpha> \frac13$  and $F$ a smooth characteristic function of $[1,+\infty)$
 \begin{equation}\int_{t_0}^\infty     \frac{1}{t^{\alpha}}\left\|\gamma F'(\XT)\phi\right\|_{L^2_x}^2  dt +   \int_{t_0}^\infty     \frac{1}{t^{\alpha}}\left\| F'(\XT)\gamma \phi\right\|_{L^2_x}^2  dt <\infty \label{PE-1}\end{equation}
 
 \begin{rem}\label{rem:PE} Even though \eqref{PE-1} is written for $F'$ with $F$ being a smooth, increasing characteristic function,  it also holds when we replace $F'$ by any bounded function $\tilde{F}$ that is supported in the compact set of $(0,\infty)$.  In fact, we can take a smooth characteristic function $G$ such that $ G' \geq  c |\tilde{F}|$, here $c$ is a renormalizing constant. Then we have 
 \begin{equation}\int_{t_0}^\infty \int_{\R^3}\frac{1}{t^\alpha}|\tilde{F}\gamma \phi|^2dxdt \leq c^{-1}\int_{t_0}^\infty \int_{\R^3}\frac{1}{t^\alpha}|G'\gamma \phi|^2dxdt<\infty \label{G'}\end{equation}
This further implies that $\int_{t_0}^\infty \int_{\R^3}\frac{1}{t^\alpha}|\gamma \tilde{F} \phi|^2dxdt<\infty$ since the difference with (\ref{G'}) is of size $O(t^{-3\alpha}).$
 
 \end{rem}

\subsubsection{Negative $\gamma$-limit} We first show that $\gamma-$limit can not be negative if the solution global.  To do that, we use a few steps.

 \begin{prop}\label{0limit-frf} For any smooth characteristic function $F(\XT), \alpha\in (\frac13,1)$,  we have
 \be   \left|\int_{t_0}^T  \la F'\gamma F'\ra dt\right| \lesssim T^\alpha, \hspace{1cm} \forall T\geq t_0,\label{integrate-boundary}
 \ee
 and \be\lim_{t\rightarrow +\infty}\la F'\gamma F'\ra_t =0.\label{0limit-boundary}
 \ee
 \end{prop}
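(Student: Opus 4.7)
The plan is to combine a positive-commutator argument (which gives the $T^\alpha$ bound in (\ref{integrate-boundary})) with a separate Cauchy argument establishing the existence of $\lim_{t\to\infty}\la F'\gamma F'\ra_t$; the two together force the limit to be $0$.

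For (\ref{integrate-boundary}), let $h(\lambda):=\int_{-\infty}^{\lambda}F'(\mu)^2\,d\mu$. Since $F'$ is compactly supported away from zero, $h$ is a bounded non-negative smooth function with $h'=F'^2$. I introduce the propagation observable
\[
A(t):=t^{\alpha}\,h(\XT),
\]
so that $|\la A\ra_t|\lesssim t^{\alpha}$. Applying (\ref{deltaf1}) to the monotone function $h$ (so that $\sqrt{h'}=F'$) and adding the explicit time derivative gives
\[
D_H A \;=\; 2\,F'(\XT)\,\gamma\, F'(\XT)\;+\;\alpha\,t^{\alpha-1}\!\left[h(\XT)-\xt\, F'(\XT)^{2}\right]\;+\;O(t^{-2\alpha}).
\]
Integrating (\ref{DtA}) from $t_0$ to $T$, the boundary term contributes $O(T^\alpha)$, the bracketed $t^{\alpha-1}$ piece has uniformly bounded expectation and so contributes $O(T^\alpha)$, the $O(t^{-2\alpha})$ error gives $O(T^{1-2\alpha})\lesssim O(T^\alpha)$ by $\alpha>1/3$, and the non-linear remainder satisfies $|(A\phi,\mathbf{N}\phi)|\lesssim t^{\alpha-\beta_0}$ after invoking (H2) with a rescaled smooth cutoff that dominates $h(\XT)$, integrating to $O(T^{\alpha-\beta_0+1})\lesssim O(T^\alpha)$ since $\beta_0>1$. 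This yields (\ref{integrate-boundary}).

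For (\ref{0limit-boundary}), I first prove that the limit $L:=\lim_{t\to\infty}\la F'\gamma F'\ra_t$ exists by showing $\frac{d}{dt}\la F'\gamma F'\ra_t \in L^1(dt)$, then use (\ref{integrate-boundary}) to force $L=0$. The computation of $D_H(F'\gamma F')$ runs parallel to the derivation of (\ref{DHFrF}) with $F$ replaced by $F'$: the piece $F'[-i\Delta,\gamma]F'$ vanishes on radial functions in the exterior by (\ref{Delta-r-support}); the $[-i\Delta,F'(\XT)]$ commutators produce bilinear expressions of the form $\frac{1}{t^\alpha}(\gamma F'\phi,\gamma F''\phi)$ and symmetrised variants; the explicit $\partial_t$-piece is controlled by $\frac{1}{t}\|\gamma F'\phi\|\,\|\xt\,F''\phi\|$; and the symmetrisation errors are $O(t^{-3\alpha})$. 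All four contributions are integrable in $t$: the $t^{-\alpha}$ bilinear is handled by Cauchy--Schwarz against (\ref{PE-1}) (extended to $F''$ via Remark~\ref{rem:PE}, as $F''$ is bounded with compact support in $(0,\infty)$); the $\partial_t$-piece is handled by Cauchy--Schwarz against (\ref{PE-1}) together with the weight $\int t^{\alpha-2}dt<\infty$ (using $\alpha<1$); the $O(t^{-3\alpha})$ error is $L^1$ by $\alpha>1/3$; and the non-linear remainder satisfies $|(F'\gamma F'\phi,\mathbf{N}\phi)|=|(\gamma F'\phi,F'\mathbf{N}\phi)|\lesssim \|\gamma F'\phi\|_{L^2}\cdot t^{-\beta_0}\lesssim t^{-\beta_0}$, integrable since $\beta_0>1$. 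Hence $\la F'\gamma F'\ra_t$ is Cauchy and the limit $L$ exists.

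Finally, if $L\neq 0$ then $\int_{t_0}^{T}\la F'\gamma F'\ra\,dt = LT + o(T)$, contradicting the $T^\alpha$ bound proved above since $\alpha<1$; therefore $L=0$, giving (\ref{0limit-boundary}). The main technical subtlety I anticipate is the expansion of $D_H(F'\gamma F')$: because $F'$ is a bump rather than a monotone smooth characteristic function, $F''$ changes sign, and (\ref{deltaf1}) cannot be applied verbatim. One must combine the commutator identities (\ref{A2BC2}) and (\ref{gammaF1}) with a splitting $F''=(F'')_+-(F'')_-$ (or an equivalent organisation), tracking the cross-commutator errors carefully so that only $L^1$-in-$t$ bilinear tails survive after invoking (\ref{PE-1}).
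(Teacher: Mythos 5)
Your proof is correct and follows essentially the same route as the paper: for \eqref{integrate-boundary} the paper likewise builds a smooth characteristic function $F_1$ with $F_1'=c(F')^2$ and uses the PROB $t^{\alpha}F_1(\XT)$, and for \eqref{0limit-boundary} it first establishes convergence of $\la F'\gamma F'\ra_t$ by showing $D_H(F'\gamma F')$ is $L^1(dt)$ (Cauchy--Schwarz against \eqref{PE-1}, extended to $F''$ via Remark~\ref{rem:PE}) and then contradicts a nonzero limit with \eqref{integrate-boundary}. Two minor simplifications you could make: your appeal to (H2) in the first part is superfluous because $t^\alpha h(\XT)$ and $\mathcal N$ are both real multiplication operators, so $\mathrm{Im}(A\phi,\mathbf N\phi)=0$ identically; and the sign-splitting $F''=(F'')_+-(F'')_-$ in your final caveat is not needed, since the paper bounds $|\la t^{-\alpha}(F''\gamma^2F'+F'\gamma^2F'')\ra|$ directly by $t^{-\alpha}\|\gamma F'\phi\|\,\|\gamma F''\phi\|$ without any decomposition.
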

\begin{proof}  Let us
take the characteristic function $G_1$,  with $F_1'(\lambda)=c (F')^2(\lambda)$ with $c$ being the renormalize constant such that $F_1(+\infty)=1$.
\begin{align}
 \la t^\alpha F_1(\XT)\ra_T -  \la t^\alpha F_1(\XT)\ra_{t_0}= &\int_{t_0}^T
 t^\alpha \la D_HF_1\ra   +  \alpha t^{\alpha-1}\la F_1\ra dt  \\ =&  \int_{t_0}^T 2 \la \sqrt{F_1'}\gamma \sqrt{F_1'}\ra  +O(t^{\alpha-1})dt
\end{align}
which implies (\ref{integrate-boundary}).

To prove \eqref{0limit-boundary}, we first show that $\la F'\gamma F'\ra$  has a limit as $t\rightarrow +\infty$.
In fact, $F'\gamma F'$ satisfies assumption (A1)(A2). Now we compute the Heisenberg derivative
\begin{align}
D_H(F'\gamma F')=     \frac{2}{t^\alpha}\left( F''\gamma^2F' +F'\gamma^2F'' \right) + \frac{1}{t^{2\alpha}}[F'''[ \gamma, \jx]\gamma F' - F'\gamma [\gamma, \jx]F''']
  +  \frac{d(F'\gamma F')}{dt}
\end{align}
The term in the middle is of size $O(t^{-3\alpha})$, similar as (\ref{I3}).

Using (\ref{PE-1}) we have
\be |\la  \frac{d(F'\gamma F')}{dt}\ra| =|\la \frac{\alpha}{t}(F''\frac{\jx}{t^\alpha}\gamma F' + F'\gamma \frac{\jx}{t^\alpha} F'') \ra|\leq \frac{1}{t}\|\frac{\jx}{t^\alpha} F''\phi\|_{L^2}\|\gamma F'\phi\|_{L^2}\in L^1(dt).\ee

For $ \frac{2}{t^\alpha}\left( F''\gamma^2F' +F'\gamma^2F'' \right) $, since $F''$ is not positive, we cannot symmetrize and obtain positive definite terms. 
We estimate it directly.  In fact, from \eqref{PE-1} and Remark~\ref{rem:PE}, we have 
 \begin{align}
|\la \frac{1}{t^\alpha}\left( F''\gamma^2F' +F'\gamma^2F'' \right) \ra
| \leq \frac{1}{t^\alpha}\|\gamma F'\phi\|_{L^2_x}\|\gamma F''\phi\|_{L^2_x} \in L^1(dt).
\end{align}
Hence $F'\gamma F'$ satisfies Assumption (A3) with $B=0$, and we conclude that $\lim_{t\rightarrow +\infty}\la F'\gamma F'\ra_t$ exists.

Denote $lim_{t\rightarrow +\infty}\la \sqrt{F_1'}\gamma \sqrt{F_1'}\ra =\Gamma_0$. If $\Gamma_0\not=0$, we find $T_0$ such that $t\geq T_0$, $\la \sqrt{F_1'}\gamma \sqrt{F_1'}\ra $ is sign definite and
\be |\la \sqrt{F_1'}\gamma \sqrt{F_1'}\ra| \geq C=\frac{1}{2}|\Gamma_0|.\ee
We reach contradiction with (\ref{integrate-boundary}) for $T$ large enough. Hence $\lim_{t\rightarrow +\infty}\la \sqrt{F_1'}\gamma \sqrt{F_1'}\ra_t=0$ and we finish the proof.
\end{proof}

\begin{prop}\label{x-growth}For any solution $\phi$ to equation (\ref{Main-eq}) satisfying the bound (\ref{global-bound}), we have
\begin{equation}\la |x|\ra_t\leq Ct, \hspace{1cm}\forall t\geq 1 .\label{x-bound}\end{equation}
\end{prop}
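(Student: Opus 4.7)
The plan is to apply the Heisenberg derivative identity \eqref{DtA} with the time-independent observable $A = g(x) := \jx$ and integrate once. From the commutator formula recorded in Section~\ref{sec:del-r-commutator} one has $D_H g = [-i\Delta, g] = 2\gamma$, so
\[
\frac{d}{dt}(g\phi(t),\phi(t)) \;=\; 2\la\gamma\ra_t \;+\; 2\,\mathrm{Im}\,(g\phi,\mathbf{N}(\phi)).
\]
The nonlinear contribution vanishes identically: $g$ is a real multiplication operator and $\mathcal{N}(|\phi|,|x|,t)$ is real-valued (which is forced by the unitarity of the flow on $L^2$), so $(g\phi,\mathbf{N}(\phi)) = \int g\,\mathcal{N}\,|\phi|^2\,dx$ is already real. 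Hence $\tfrac{d}{dt}(g\phi,\phi) = 2\la\gamma\ra_t$.

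I next would bound $|\la\gamma\ra_t|$ uniformly in $t$ using only assumption (H1). After integration by parts, the expectation reduces to $\la\gamma\ra_t = \tfrac12\int \nabla g \cdot \mathrm{Im}(\bar\phi\nabla\phi)\,dx$; since $|\nabla g(x)| = \beta'(|x|) \le 1$ pointwise by construction of $g$, Cauchy--Schwarz gives
\[
|\la\gamma\ra_t| \;\le\; \tfrac12\|\nabla g\|_{\infty}\|\phi(t)\|_{L^2}\|\nabla\phi(t)\|_{L^2} \;\le\; C\|\phi(t)\|_{H^1}^2 \;\le\; C.
\]
Integrating the resulting ODE $\tfrac{d}{dt}(g\phi,\phi) = O(1)$ from $0$ to $t$ yields $(g\phi(t),\phi(t)) \le (g\phi_0,\phi_0) + Ct$; since $|x| \le g(x) + O(1)$ and $\|\phi\|_{L^2}$ is conserved, this becomes $\la|x|\ra_t \le Ct$ for $t \ge 1$ once $(g\phi_0,\phi_0)$ and an $O(\|\phi_0\|_{L^2}^2)$ term are absorbed into $C$.

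The main technical obstacle is that $(g\phi_0,\phi_0)$ need not be finite given only $\phi_0 \in H^1_{rad} \cap L^2(\R^3,|x|^{1/2}dx)$, so the quantity being differentiated may itself be infinite. This is handled by a standard cutoff: one runs the argument verbatim for the bounded replacement $g_R(x) := \psi_R(g(x))$, where $\psi_R$ is smooth, equal to the identity for $s\le R$, saturates at height $R+1$ for large $s$, and has $|\psi_R'|\le 1$ everywhere. All the bounds above are uniform in $R$ since they rely only on $|\nabla g_R| \le 1$, producing $(g_R\phi(t),\phi(t)) \le (g_R\phi_0,\phi_0) + Ct$ for every $R$. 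Monotone convergence $g_R \nearrow g$ then transfers the estimate to $g$, with the constant $C$ allowed to depend on the initial localization of $\phi_0$.
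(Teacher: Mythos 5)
Your argument is correct and follows the same route as the paper's proof: both use the Heisenberg derivative of a cutoff-regularized weight, observe that the (real) multiplicative nonlinearity $\mathcal{N}(|\phi|,|x|,t)$ contributes nothing to the time derivative of a real multiplication operator's expectation, bound the resulting commutator expectation by the uniform $H^1$ norm, integrate, and then remove the cutoff. The paper uses $\chi(x/R)\jx$ with $\chi$ a bump function while you use $\psi_R(g(x))$; both produce the uniform-in-$R$ bound $|\nabla h_R|\lesssim 1$ that makes the argument go through. One small algebraic slip: integration by parts gives $\la\gamma\ra_t = \int \nabla g\cdot\mathrm{Im}(\bar\phi\nabla\phi)\,dx$ without the extra factor of $\tfrac12$, but this only changes a constant and does not affect the conclusion.
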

\begin{proof}
Since $\la |x|\ra \lesssim \la \jx\ra \lesssim \la |x|\ra +\|\phi\|_{L^2}^2$, it suffices to verify
\begin{equation}(\phi(t), \jx \phi(t))\leq Ct, \hspace{1cm} \text{ for } t\geq 1.\label{jxjx}\end{equation}

Take $\chi(x)$ to be a bump function near the origin.
From direct computation and the global $H^1$ bound (\ref{global-bound}), we have
\be\frac{d}{dt}\la \chi(\frac{x}{R})\jx \ra =\la [-i\Delta,  \chi(\frac{x}{R})\jx]\ra   <\infty. \ee  
So we get
\be \la \chi(\frac{x}{R})\jx \ra \leq Ct, \ee
with constant independent of $R$. Now (\ref{jxjx}) follows by taking $R\rightarrow +\infty$.
 \end{proof}

\begin{thm}\label{Non-negative} For global solutions to equation (\ref{Main-eq}) satisfying the bound (\ref{global-bound}), the $\gamma$-limit is always non-negative.
\end{thm}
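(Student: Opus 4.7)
The plan is to argue by contradiction. Suppose $\Gamma<0$. Since $\la F\gamma F\ra_t\to\Gamma$, for all sufficiently large $t$ we have $\la F\gamma F\ra_t\leq\Gamma/2<0$. The strategy is to exhibit a non-negative observable whose expectation, upon integrating the Heisenberg identity, is forced to drift to $-\infty$ linearly in $t$, contradicting its non-negativity.

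Concretely, I take $A(t)=F(\xt)^2\jx$. As a non-negative multiplication operator, $\la A\ra_t\geq 0$, and by Proposition~\ref{x-growth} also $\la A\ra_t\leq \la\jx\ra_t\lesssim t$. Using the exact identity $[-i\Delta,\jx]=2\gamma$, the Leibniz rule, and \eqref{deltaf1} applied to $F^2(\xt)$ (which is itself a smooth characteristic function of $[1,\infty)$),
$$ D_HA=2F^2\gamma+2t^{-\alpha}\sqrt{(F^2)'}\,\gamma\,\sqrt{(F^2)'}\,\jx-\tfrac{2\alpha}{t}FF'\frac{\jx^2}{t^\alpha}. $$
The symmetrization identity \eqref{ABBA}, together with $[F,\gamma]=O(t^{-\alpha})$ from \eqref{gammaF1}, yields $\la 2F^2\gamma\ra=2\la F\gamma F\ra+O(t^{-2\alpha})$, whose limit is $2\Gamma$.

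Next I estimate the remaining terms. The boundary piece $2t^{-\alpha}\sqrt{(F^2)'}\gamma\sqrt{(F^2)'}\jx$ is localized where $\jx\sim t^\alpha$, so $t^{-\alpha}\jx=O(1)$ there; after pulling $\jx$ past $\gamma$ at the cost of $[\gamma,\jx]=O(1)$, Cauchy--Schwarz in $t$ against \eqref{PE-1} (extended via Remark~\ref{rem:PE} to the cutoff $\sqrt{(F^2)'}=\sqrt{2FF'}$) bounds the integrated contribution by $O(T^{(\alpha+1)/2})+O(T^{1-\alpha})=o(T)$ since $\alpha\in(1/3,1)$. The time-derivative term is $O(t^{\alpha-1})$ pointwise on $\mathrm{supp}\,F'$ (again using $\jx^2/t^\alpha\sim t^\alpha$ there), integrating to $O(T^\alpha)=o(T)$. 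The nonlinear contribution obeys
$$ |2\,\mathrm{Im}(A\phi,\mathbf{N}(\phi))|=\Big|2\,\mathrm{Im}\!\!\int F^2\jx|\phi|^2\mathcal{N}\,dx\Big|\lesssim \|F\mathcal{N}\|_{L^\infty}\int\jx|\phi|^2\,dx\lesssim t^{-\beta_0}\cdot t $$
by \eqref{N-decay} and Proposition~\ref{x-growth}; its integral is $O(T^{2-\beta_0})=o(T)$ since $\beta_0>1$.

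Assembling,
$$ \la A\ra_T-\la A\ra_{t_0}=\int_{t_0}^T 2\la F\gamma F\ra\,dt+o(T)=2\Gamma T+o(T)\longrightarrow -\infty, $$
contradicting $\la A\ra_T\geq 0$. Hence $\Gamma\geq 0$. The most delicate point, which dictates the specific form of $A$, is the nonlinear error: a bare $\jx$ would lose control of $\mathbf{N}$ in the interior, whereas the cutoff $F^2$ localizes the entire nonlinear contribution to the exterior region $\jx\geq t^\alpha/2$, where \eqref{N-decay} applies; combined with the linear-in-$t$ bound of Proposition~\ref{x-growth}, this just barely yields $o(T)$ thanks to $\beta_0>1$.
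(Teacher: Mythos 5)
Your proof is correct and takes essentially the same route as the paper's: a contradiction argument in which the negative $\gamma$-limit forces the expectation of a manifestly non-negative weighted observable to drift to $-\infty$. The paper's proof uses the observable $F\frac{\jx}{t}F$, which is uniformly bounded by Proposition~\ref{x-growth}, and shows (using, implicitly, the pointwise vanishing of the boundary term from Proposition~\ref{0limit-frf}) that its time-derivative is eventually $\leq -C/t$, giving a logarithmic divergence. You drop the $1/t$ weight, working instead with $F^2\jx$, whose expectation satisfies $0\leq \la A\ra_t\lesssim t$; the drift is then linear, $2\Gamma T+o(T)$, and the $o(T)$ error comes from bounding the boundary term, the $\partial_t F$ term, and the nonlinearity by $O(T^{(1+\alpha)/2})+O(T^{\alpha})+O(T^{2-\beta_0})$ respectively. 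A mild advantage of your formulation is that the boundary term needs only the integrated estimate \eqref{PE-1} (via Remark~\ref{rem:PE}), not the pointwise limit $\la F'\gamma F'\ra_t\to 0$ from Proposition~\ref{0limit-frf} that the paper's pointwise derivative bound quietly relies on; a mild disadvantage is that your version needs $\beta_0>1$ explicitly to control the nonlinear term with the crude $\la\jx\ra\lesssim t$ bound, whereas the paper's $1/t$-weighted observable makes the nonlinear contribution integrable more easily. You might note explicitly that $\la 2F^2\gamma\ra$ is shorthand for the symmetrized real quantity $\la F^2\gamma+\gamma F^2\ra$, which in fact equals $2\la F\gamma F\ra$ exactly here, since $[F,\gamma]$ is a multiplication operator and hence commutes with $F$, so the $O(t^{-2\alpha})$ you quote is zero.
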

\begin{proof} Proof by contradiction. Assume $\Gamma=\lim_{t\rightarrow +\infty} \la F(\XT)\gamma F(\XT)\ra_t <0$. Consider $\la F\frac{\jx}{t}F\ra$ which is uniformly bounded on $t\in [1,+\infty)$  due to (\ref{x-bound}), we have
\begin{align}
&\frac{d}{dt}\la F\frac{\jx}{t}F\ra  = \la D_H( F\frac{\jx}{t}F)\ra     \\
=& \frac{1}{t}\left[2\la\sqrt{FF'\frac{\jx}{t^\alpha}}\gamma \sqrt{FF'\frac{\jx}{t^\alpha}}\ra+2\la F \gamma F\ra  - \frac{\alpha}{t^{1-\alpha}} \la F'\frac{\jx^2}{t^{2\alpha}} F\ra - \frac{1}{t}\la  F\jx F\ra   \right]\notag
\end{align}
We notice for $t$ large enough,  in the bracket of the RHS, there are  two negative terms  $\la F \gamma F\ra$ and $- \frac{1}{t}\la  F\jx F\ra$ together with terms converging to $0$. Hence, by taking $t\geq T_0$ for large $T_0$,  we get $\frac{d}{dt}\la F\frac{\jx}{t}F\ra \leq -\frac{C}{t}$, which implies
\be \left|\la F\frac{\jx}{t}F\ra_{T} -\la F\frac{\jx}{t}F\ra_{T_0}\right| \geq C (ln T -ln T_0).\ee
By taking $T\rightarrow +\infty$, we reach the contradiction by noticing (\ref{x-bound}). \end{proof}

\subsubsection{Weakly localized state} A-priory, the definition of WLS seems to depend on the choice of $F$ and $\alpha$,  we will show that it is actually independent of these choices. Toward proving such a statement, we also obtain characterization of WLS in terms of the growth speed for $\la |x|\ra$.

 \begin{prop}\label{slowgrowth} For a given characteristic function $F$ and $\alpha\in (\frac13,1)$, assume we have the decay assumption on nonlinearity (\ref{N-decay}) with $\beta_0\geq
\frac32$.
 Let $\phi(t)$ be  a solution to equation (\ref{Main-eq}) satisfying
 \be \lim_{t\rightarrow +\infty} \la F(\XT)\gamma F(\XT)\ra =0,\ee  then we have
\begin{equation}\la |x|\ra_t\leq  C \max(t^{2-3\alpha}, t^{\alpha}), \hspace{1cm}\forall t\geq 1 .\label{refined-x-bound}\end{equation}
 \end{prop}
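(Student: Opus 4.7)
\textbf{Proof proposal for Proposition \ref{slowgrowth}.}

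The strategy is to split
\[
\la \jx\ra_t \;\leq\; \la (1-F^2(\XT))\jx\ra_t + \la F(\XT)\jx F(\XT)\ra_t ,
\]
and observe that the first term is trivially $O(t^\alpha)$ since $1-F^2$ is supported in $\{\jx \leq 2t^\alpha\}$. It then suffices to prove $\la F\jx F\ra_t \lesssim \max(t^{2-3\alpha}, t^\alpha)$, for which we use $A(t) = F\jx F$ as a PROB. Expanding $[-i\Delta, F\jx F]$ with the commutator identities \eqref{deltaf1}, \eqref{gammaF1}, symmetrizing via \eqref{A2BC2}, and computing $\partial_t F(\XT) = -\alpha t^{-1}(\jx/t^\alpha)F'$, one obtains (parallel to the calculation \eqref{DHFrF} and to the formula used in Theorem \ref{Non-negative})
\[
\frac{d}{dt}\la F\jx F\ra_t = 2\la F\gamma F\ra + 2\la\tilde F'\gamma\tilde F'\ra - \frac{\alpha}{t^{1-\alpha}}\la F'F(\jx/t^\alpha)^2\ra + O(t^{-3\alpha}) + O(t^{-\beta_0}),
\]
where $\tilde F'$ denotes a bounded function with the same compact support (in $\jx/t^\alpha$) as $F'$, and the last error records the contribution $2\mathrm{Im}(F\jx F\phi,\mathbf{N}\phi)$ via (H2).

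Integrating from $t_0$ to $T$, the third term contributes $O(T^\alpha)$ (since $\int t^{\alpha-1}\,dt \sim T^\alpha$); the remainders are $O(1)$ thanks to $3\alpha>1$ and $\beta_0\geq 3/2$; and Proposition~\ref{0limit-frf} combined with \eqref{integrate-boundary} and Remark \ref{rem:PE} gives $\int_{t_0}^T\la\tilde F'\gamma\tilde F'\ra\,ds = O(T^\alpha)$. All that remains is to estimate
\[
\mathcal{I}(T) := \int_{t_0}^T \la F\gamma F\ra_s\, ds.
\]
Writing $G(s)=\la F\gamma F\ra_s$ and using $G(\infty)=0$ together with Fubini in the form
\[
\mathcal{I}(T) = (T-t_0)G(T) - \int_{t_0}^T (s-t_0)\, G'(s)\, ds,
\]
we now plug in the explicit expression for $G'(s)$ from \eqref{DHFrF} plus the nonlinear error. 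After multiplication by the weight $(s-t_0)$, the four contributions are: (i) the main positive term $\tfrac{4(s-t_0)}{s^\alpha}\la\sqrt{FF'}\gamma^2\sqrt{FF'}\ra$, treated by Cauchy--Schwarz against the propagation estimate \eqref{PE}; (ii) the boundary $\gamma$-term $\tfrac{\alpha(s-t_0)}{s}\la\tilde F'\gamma\tilde F'\ra$, handled by \eqref{integrate-boundary}; (iii) the double-commutator remainder $O(s\cdot s^{-3\alpha})$, integrating to $O(T^{2-3\alpha})$; and (iv) the nonlinear tail $O(s\cdot s^{-\beta_0})=O(s^{-1/2})$ under $\beta_0\geq 3/2$, integrating to $O(T^{1/2})$. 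The boundary term $(T-t_0)G(T)$ is controlled by the same estimates after writing $G(T)=-\int_T^\infty G'(s)\,ds$. Summing everything yields $|\mathcal{I}(T)|\lesssim \max(T^{2-3\alpha},T^\alpha)$, as desired.

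The main obstacle is item (i): the propagation estimate \eqref{PE} only controls $\int s^{-\alpha}\|\gamma\sqrt{FF'}\phi\|^2\,ds$, and the extra power $s^{1-\alpha}$ must be absorbed efficiently. A Cauchy--Schwarz pairing with the weight $s^{-\alpha}\,ds$ and the uniform $H^1$ bound \eqref{global-bound} delivers a bound just within the target, and the threshold $3\alpha>1$ is exactly what makes the $s^{-3\alpha}$ remainder produce the $T^{2-3\alpha}$ part of the statement rather than something worse. In case this direct estimate is tight, a single bootstrap iteration starting from the crude bound $\la\jx\ra\leq Ct$ of Proposition~\ref{x-growth} closes the argument, since one of the factors appearing in $G'$ can then be upgraded by a power of $t^{1-\alpha}$ (or better).
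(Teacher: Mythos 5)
Your decomposition $\la\jx\ra \leq \la(1-F^2)\jx\ra + \la F\jx F\ra$ and the use of $\frac{d}{dt}\la F\jx F\ra$ from \eqref{fxf} match the paper exactly; the divergence is in how you handle $\int_{t_0}^T\la F\gamma F\ra\,ds$. The paper first derives a \emph{pointwise} bound \eqref{bound-FrF}, namely $\max(\la F\gamma F\ra_t,0)\leq C(t^{1-3\alpha}+t^{-1+\alpha})$, by integrating \eqref{DHFrF} from an arbitrary $t_1$ to $T$, applying Cauchy--Schwarz to the $1/t$ cross-term (absorbing half into the leading positive term $\frac{4}{t^\alpha}\la\sqrt{FF'}\gamma^2\sqrt{FF'}\ra$), and letting $T\to\infty$ using $\la F\gamma F\ra_T\to 0$; this pointwise bound then integrates trivially. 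You instead integrate by parts on $\mathcal{I}(T)$ and estimate $(T-t_0)G(T)$ and $\int(s-t_0)G'(s)\,ds$ separately. This does close, but notice that both pieces in your formula secretly rely on the very same mechanism: $G(T)=-\int_T^\infty G'(s)\,ds$ must be bounded by Cauchy--Schwarz plus absorption of the cross-term into $\int_T^\infty P(s)\,ds$, which is precisely \eqref{bound-gamma}/\eqref{bound-FrF}. So the two routes are isomorphic, and the paper's is the shorter presentation.

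Two points in your write-up should be sharpened. First, the hedge at the end (``In case this direct estimate is tight, a single bootstrap iteration...'') is unnecessary, and signals a missed observation: in $-\int_{t_0}^T(s-t_0)G'(s)\,ds$ the main positive term $P(s)=\frac{4}{s^\alpha}\la\sqrt{FF'}\gamma^2\sqrt{FF'}\ra$ enters with a \emph{minus} sign, so it is not something you need to estimate against \eqref{PE} at all --- it is a sink, available to absorb the piece $\frac{(s-t_0)}{s^\alpha}\|\gamma\sqrt{FF'}\phi\|^2$ produced by Cauchy--Schwarz on the boundary $1/s$ term. Once you use that sign, no bootstrap or re-use of Proposition~\ref{x-growth} is needed; the argument closes exactly as in \eqref{t0-decay}--\eqref{bound-gamma}. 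Second, in your formula for $\frac{d}{dt}\la F\jx F\ra_t$ the last error $O(t^{-\beta_0})$, which you attribute to $2\,\mathrm{Im}(F\jx F\phi,\mathbf{N}\phi)$, is identically zero: $F\jx F$ is multiplication by a real function, so $(F\jx F\phi,\mathcal{N}\phi)$ is real, as the paper points out right after \eqref{DtA}. The nonlinear contribution of size $O(s^{-\beta_0})$ does appear, but only inside $G'(s)=\frac{d}{ds}\la F\gamma F\ra$, since there $A=F\gamma F$ is a genuine first-order operator; this is where $\beta_0\geq \tfrac32$ is used, to ensure $\int(s-t_0)s^{-\beta_0}\,ds\lesssim T^{2-\beta_0}\lesssim T^{1/2}\lesssim \max(T^{2-3\alpha},T^{\alpha})$.
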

\begin{proof}
Let us estimate $\la  F\gamma F\ra$  on $[t_0, +\infty)$.   There is nothing to do if for all time $t_1\in [t_0,\infty)$  if  $\la  F\gamma F\ra|_{t_1}\leq0$.

Now  take a point $t_1\in [t_0, \infty)$ such that $\la  F\gamma F\ra(t_1)>0$,  using (\ref{DHFrF}) on $[t_1, T)$, we have
 \begin{equation}\la F\gamma F\ra\left|_{t=t_1}^{t=T}\right. =\int_{t_1}^T   \frac{4}{t^\alpha} \la \sqrt{FF'}\gamma^2 \sqrt{FF'}\ra   -2\alpha Re \la  F'\frac{\la x\ra}{t^{\alpha+1}} \phi , \gamma F\phi\ra +  O(t^{-3\alpha}+t^{-\beta_0})dt \label{t0-decay} \end{equation}
The second  term is controlled as follows
\begin{align}
& 2\alpha Re \la  F'\frac{\la x\ra}{t^{\alpha+1}} \phi , \gamma F\phi\ra
= 2\alpha Re\la  \sqrt{FF'}\frac{\la x\ra}{t^{\alpha+1}}\phi ,\gamma \sqrt{FF'}\phi \ra
\leq   \frac{1}{t^{\alpha}}\|\gamma \sqrt{FF'}\phi\|^2_2 +\frac{\alpha^2}{ t^{2-\alpha}}\|\phi\|^2_2
\end{align}
Using this estimate in  (\ref{t0-decay}), we have
\begin{align}
 \la F\gamma F\ra|_{t_1} +\int_{t_1}^T    \frac{1}{t^\alpha} \la \sqrt{FF'}\gamma^2 \sqrt{FF'}\ra  dt  \lesssim \la F\gamma F\ra|_{T}   +  \int_{t_1}^T  \frac{\|\phi\|_2^2   }{t^{2-\alpha}} +O(t^{-3\alpha}+t^{-\beta_0})dt
\end{align}
by letting $T\rightarrow +\infty$, we get
\begin{equation}  \la F\gamma F\ra|_{t_1} +\int_{t_1}^\infty    \frac{1}{t^\alpha} \la \sqrt{FF'}\gamma^2 \sqrt{FF'}\ra  dt  = O( t_1^{1-3\alpha} +  t_1^{-1+\alpha} +t_1^{1-\beta_0}), \label{bound-gamma}\end{equation}
Notice that $1-\beta_0\leq max\{1-3\alpha, -1+\alpha\},$ \emph{this is where we  restrict $\beta_0>\frac32,$}
so we conclude that
\begin{equation} \max(\la F\gamma F\ra|_t, 0 ) \leq C( t^{1-3\alpha}+t^{-1+\alpha}), \hspace{1cm} \forall t\geq t_0. \label{bound-FrF}\end{equation}
By  (\ref{x-bound}), we know that $  \la F\jx F\ra$ is bounded at any time $t$. Now we compute the derivative
\begin{align}
&\frac{d}{dt} \la F \jx F\ra = \la  D_H(F \jx F)\ra  \notag\\
=&2 \la F \gamma F\ra +\la     (D_HF)\jx F   + F \jx (D_HF) \ra  \notag \\
=&  2 \la F \gamma F\ra +\la \frac{2}{t^\alpha}\left[ F' \gamma \jx F+ F \jx \gamma F' \right] -2\frac{\alpha}{t}\frac{\la x\ra^2}{t^\alpha} FF'
\ra  \notag \\
= &2 \la F \gamma F\ra
+2 \la \sqrt{FF'\frac{\jx}{t^\alpha}} \gamma \sqrt{FF'\frac{\jx}{t^\alpha}}  \ra +O(t^{\alpha-1})    \label{fxf}
\end{align}
Applying (\ref{bound-FrF}) on the first term and (\ref{integrate-boundary}) on the second term,  we get
\be \la F \jx F\ra_t - \la F \jx F\ra_{t_0} \lesssim t^{2-3\alpha} + t^\alpha   ,\ee
hence we conclude that
\be \la F \jx F\ra_t  \lesssim    t^{2-3\alpha} + t^\alpha , \hspace{1cm}\forall t\geq t_0.\ee
This further implies control of growth for $\jx$ in the whole space. In fact,
\begin{equation}
\la \jx \ra =\la \jx F^2\ra + \la \jx (1-F^2)\ra  \leq C \max(t^{2-3\alpha}, t^{\alpha} ), \hspace{1cm}\forall t\geq t_0. \label{jx-bound}
\end{equation}
Using the fact $\la \jx \ra$ is bounded on $[1,t_0]$, we reach the conclusion (\ref{refined-x-bound}) by refining the constant.
\end{proof}

Proposition~\ref{slowgrowth}
says that    the expectation $\la |x|\ra$ for weakly localized state grows sublinearly in time.   Now we show the converse, if a solution has sublinear growth for $\la |x|\ra$,  it is weakly localized state.

\begin{prop} \label{slowgrowth-converse} 
Given a solution $\phi$ to equation (\ref{Main-eq}) with the property
\be \lim_{t\rightarrow +\infty}\frac{\la |x|\ra}{t}=0,  \label{sublinear}\ee
then for any  $\alpha\in (\frac13, 1)$ and any smooth characteristic function $F(\lambda \geq 1)$, we have
\be\lim_{t\rightarrow+\infty} \la F(\XT)\gamma F(\XT) \ra =0 .\ee\end{prop}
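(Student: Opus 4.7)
The plan is to argue by contradiction, exploiting the existence of the $\gamma$-limit established in Theorem \ref{thm:r-limit} and its non-negativity from Theorem \ref{Non-negative}. Set $\Gamma := \lim_{t\to\infty}\la F(\XT)\gamma F(\XT) \ra_t$ and suppose toward contradiction that $\Gamma > 0$. Then there exists $T_0 \ge t_0$ so that $\la F\gamma F\ra_t \ge \Gamma/2$ for all $t\ge T_0$. The goal will be to show that this forces $\la F\jx F\ra$ to grow at least linearly in $t$, contradicting the sublinear hypothesis (\ref{sublinear}) on $\la |x|\ra_t$.

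The main tool is the Heisenberg derivative identity already worked out in the proof of Proposition \ref{slowgrowth}:
\begin{equation*}
\frac{d}{dt}\la F\jx F\ra \;=\; 2\la F\gamma F\ra \;+\; 2\la\sqrt{FF'\tfrac{\jx}{t^\alpha}}\,\gamma\,\sqrt{FF'\tfrac{\jx}{t^\alpha}}\ra \;+\; O(t^{\alpha-1}).
\end{equation*}
The middle term involves the symbol $G := \sqrt{FF'\jx/t^\alpha}$, which is a smooth function of $\jx/t^\alpha$ supported where $\jx/t^\alpha \sim 1$. I would choose a smooth characteristic function $H$ with $H'(\lambda) \sim G^2(\lambda)$ and repeat the argument behind Proposition \ref{0limit-frf} (differentiating $\la t^\alpha H(\XT)\ra$ and using that it is uniformly bounded by Proposition \ref{x-growth}) to obtain $\bigl|\int_{T_0}^T \la G\gamma G\ra_t\,dt\bigr|\lesssim T^\alpha$. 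Integrating the derivative identity on $[T_0,T]$ and combining the bounds then yields
\begin{equation*}
\la F\jx F\ra_T \;\ge\; \Gamma\,(T - T_0) \;+\; O(T^\alpha),
\end{equation*}
so in particular $\la F\jx F\ra_T \ge (\Gamma/2)\,T$ once $T$ is large.

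To close the argument, I would use $F(\XT)\le 1$ to write $\la F\jx F\ra_t \le \la \jx\ra_t \le \la|x|\ra_t + C\|\phi_0\|_{L^2}^2$. The hypothesis (\ref{sublinear}) gives $\la F\jx F\ra_T / T \to 0$, which directly contradicts the linear lower bound just obtained; hence $\Gamma=0$. The one genuinely delicate point is the boundary-term estimate: one must recognize that $\sqrt{FF'\jx/t^\alpha}$ is essentially another $F'$-type symbol (smooth and compactly supported in the transition annulus), so that $\int \la G\gamma G\ra\,dt$ enjoys exactly the same sublinear $T^\alpha$ growth as in Proposition \ref{0limit-frf}; given that, the rest of the proof is a clean \emph{expected dilation $\ge \Gamma/2$ forces linear position growth} contradiction, with non-negativity of $\Gamma$ (from Theorem \ref{Non-negative}) removing any ambiguity in sign.
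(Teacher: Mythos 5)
Your argument is correct and is essentially the paper's proof: integrate the Heisenberg derivative identity for $\la F\jx F\ra$ (equation (\ref{fxf})), use $\Gamma\neq 0$ to force linear growth, control the boundary term by Proposition \ref{0limit-frf}, and contradict (\ref{sublinear}). The only cosmetic difference is that you invoke the integrated $O(T^\alpha)$ bound (\ref{integrate-boundary}) for the middle term while the paper uses the pointwise zero limit (\ref{0limit-boundary}); both are part of the same Proposition \ref{0limit-frf} and either version closes the argument, and your extra appeal to Theorem \ref{Non-negative} is harmless but unnecessary since the paper handles both signs via $|\Gamma|$.
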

 \begin{proof}
  First we know $ \la F \gamma F\ra$ has a limit as $t\rightarrow +\infty$.  Now  assume   the limit is $\Gamma \not=0$. From the computation (\ref{fxf}), we have
\begin{align}
 & \la F \jx F\ra_T- \la F \jx F\ra_{T_0} \\
=&  \int_{T_0}^T 2 \la F \gamma F\ra
+2 \la \sqrt{FF'\frac{\jx}{t^\alpha}} \gamma \sqrt{FF'\frac{\jx}{t^\alpha}}  \ra +O(t^{\alpha-1})   dt
\end{align}
Using  (\ref{0limit-boundary}) and by taking $T_0$ large enough, we get  for $T$ large
\be  \la F \jx F\ra_T \gtrsim C|\Gamma| T \ee
which contradicts the condition \eqref{sublinear}.
\end{proof}

\begin{thm}~\label{unique-defin} Assume the decay assumption on nonlinearity (\ref{N-decay})  with $\beta_0 \geq \frac32$ holds for any choice of characteristic function $F$ and $\alpha\in(\frac13,1)$, then the definition of weakly localized state is independent of the choice of $F$ and $\alpha\in (\frac13,1)$.

In particular, for the weakly localized state, we have \begin{equation}\la |x|\ra \leq Ct^\frac12, \hspace{1cm} t\geq 1.\label{half-growth}\end{equation}\end{thm}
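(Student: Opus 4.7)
The plan is to bootstrap between the direct statement (Proposition~\ref{slowgrowth}) and its converse (Proposition~\ref{slowgrowth-converse}), using the fact that any choice of $\alpha\in(\frac13,1)$ in the direct statement already yields a \emph{sublinear} growth bound on $\langle |x|\rangle$, which is exactly the hypothesis needed to feed the converse. The sharp exponent $\frac12$ will then drop out by optimizing the free parameter $\alpha$.

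First, suppose $\phi$ is a weakly localized state relative to some particular choice of smooth characteristic function $F_0$ and exponent $\alpha_0\in(\frac13,1)$, i.e.\ $\lim_{t\to\infty}\langle F_0(\langle x\rangle/t^{\alpha_0})\,\gamma\,F_0(\langle x\rangle/t^{\alpha_0})\rangle_t=0$. Applying Proposition~\ref{slowgrowth} with this pair $(F_0,\alpha_0)$, together with the assumed decay $\beta_0\geq\tfrac32$, we obtain
\begin{equation}
\langle |x|\rangle_t \;\lesssim\; \max(t^{2-3\alpha_0},\, t^{\alpha_0}),\qquad t\geq 1.
\end{equation}
Since $\alpha_0\in(\frac13,1)$, both exponents are strictly less than $1$, so $\langle |x|\rangle_t / t \to 0$ as $t\to\infty$.

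Next, feed this sublinear bound into Proposition~\ref{slowgrowth-converse}: for any smooth characteristic function $F$ of $[1,+\infty)$ and any $\alpha\in(\frac13,1)$,
\begin{equation}
\lim_{t\to\infty}\langle F(\langle x\rangle/t^{\alpha})\,\gamma\,F(\langle x\rangle/t^{\alpha})\rangle_t \;=\; 0.
\end{equation}
In other words, the weakly localized state property, once verified for a single admissible pair $(F_0,\alpha_0)$, holds for every admissible $(F,\alpha)$. This proves the independence claim.

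Finally, for the sharp bound, apply Proposition~\ref{slowgrowth} again but now with the freedom to choose the optimal exponent. Taking $\alpha=\tfrac12$, which is admissible since we have just shown the WLS property holds for it, we have $2-3\alpha=\alpha=\tfrac12$, so
\begin{equation}
\langle |x|\rangle_t \;\lesssim\; \max(t^{2-3/2},\, t^{1/2}) \;=\; t^{1/2},
\end{equation}
which is \eqref{half-growth}. No real obstacle is expected here; the only mild subtlety is simply noticing that the admissible range $\alpha\in(\frac13,1)$ contains the balance point $\alpha=\tfrac12$, which makes the two competing exponents in Proposition~\ref{slowgrowth} coincide and produces the sharp $t^{1/2}$ rate. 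The quantitative hypothesis $\beta_0\geq\tfrac32$ is used exactly as in the proof of Proposition~\ref{slowgrowth}, to absorb the contribution of the nonlinearity when converting the boundary propagation estimate into a pointwise bound on $\langle F\gamma F\rangle_t$.
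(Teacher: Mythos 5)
Your proposal is correct and follows essentially the same route as the paper: apply Proposition~\ref{slowgrowth} to extract sublinear growth of $\langle |x|\rangle$ from the WLS property for one pair $(F_0,\alpha_0)$, feed that into Proposition~\ref{slowgrowth-converse} to transfer the vanishing $\gamma$-limit to every admissible $(F,\alpha)$, and then take $\alpha=\tfrac12$ in Proposition~\ref{slowgrowth} to balance the exponents $2-3\alpha$ and $\alpha$ and obtain the sharp $t^{1/2}$ bound. The reasoning and the role of the hypothesis $\beta_0\geq\tfrac32$ match the paper's argument step for step.
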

\begin{proof} 
Suppose that we have a solution that satisfies \be \lim _{t\rightarrow +\infty} \la F(\XT)\gamma F(\XT)\ra =0 \ee
for a given choice of $F$ and $\alpha\in (\frac13, 1)$. From Proposition~\ref{slowgrowth} we have
the slow growth (\ref{refined-x-bound}), notice that $\frac13< \max(2-3\alpha, \alpha)<1$. By Proposition~\ref{slowgrowth-converse}, we conclude that for any other choice of $F', \alpha'\in(\frac13, 1)$, the $\gamma$-limit is also $0$.

In particular, we can take $\alpha=\frac12$ in (\ref{refined-x-bound}) and obtain (\ref{half-growth}).
\end{proof}

\begin{rem}\label{question-remark}
 The $\gamma$ limit is given by its value on the free part of the solution.
 \end{rem}

\subsubsection{Scattering solution}

Next, we will show that the scattering solutions are not weakly localized. i.e. if a solution scatters, then it has positive $\gamma-$limit.
 \begin{thm}~\label{free-limit}
 If a global solution $\phi$ to equation (\ref{Main-eq})  scatters, i.e. there exists a linear solution $\psi_L=e^{it\Delta}\psi_0$ such that
 \be  \lim_{t\rightarrow +\infty}\|\phi(t)-\psi_L(t)\|_{H^1} =0 \label{scattering} \ee
 Then
 \begin{equation}\lim_{t\rightarrow +\infty} \la   F(\XT)\gamma F(\XT)  \ra >0.
 \end{equation}
 \end{thm}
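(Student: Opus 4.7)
The plan is to reduce the expectation to one on the free wave $\psi_L(t)=e^{it\Delta}\psi_0$ using the $H^1$ scattering hypothesis \eqref{scattering}, and then to evaluate that free-wave limit via the standard dispersive asymptotic for $e^{it\Delta}$.

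First, I would note that the sesquilinear form $(F(\xt)\gamma F(\xt)\cdot,\cdot)$ is uniformly bounded on $H^1\times H^1$ with constant independent of $t\ge 1$: indeed $\nabla g$ and $\Delta g$ are bounded, and $\|\nabla F(\xt)\|_{L^\infty}\lesssim t^{-\alpha}\le 1$, so $|(F\gamma F\phi,\psi)|\lesssim \|\phi\|_{H^1}\|\psi\|_{H^1}$. Writing
\[(F\gamma F\phi,\phi)-(F\gamma F\psi_L,\psi_L)=(F\gamma F(\phi-\psi_L),\phi)+(F\gamma F\psi_L,\phi-\psi_L),\]
together with \eqref{scattering} and \eqref{global-bound}, this reduces the problem to computing $\lim_{t\to\infty}(F(\xt)\gamma F(\xt)\psi_L,\psi_L)$.

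Second, I would insert the dispersive asymptotic $\psi_L(t,x)=(2it)^{-3/2}e^{i|x|^2/(4t)}\hat{\psi}_0(x/(2t))+o_{L^2}(1)$, together with its $H^1$ analogue. On the support of $F(\xt)$ one has $|x|\ge t^\alpha/2\gg 2$ for large $t$, so $\nabla g=x/|x|$ and $\Delta g=2/|x|$, giving $\gamma=-i(\tfrac{x}{|x|}\cdot\nabla+|x|^{-1})$. Applying this to the chirped profile, the dominant contribution comes from differentiating the quadratic phase, $\tfrac{x}{|x|}\cdot\nabla e^{i|x|^2/(4t)}=i\tfrac{|x|}{2t}e^{i|x|^2/(4t)}$, while derivatives falling on the slowly-varying $\hat{\psi}_0(x/(2t))$ or the $|x|^{-1}$ correction contribute only $O(t^{-1})$ and $O(t^{-\alpha})$ respectively. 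After the change of variable $y=x/(2t)$,
\[(F(\xt)\gamma F(\xt)\psi_L,\psi_L)=\int_{\R^3}F^2\!\!\left(\tfrac{\la 2ty\ra}{t^\alpha}\right)|y|\,|\hat{\psi}_0(y)|^2\,dy+o(1).\]
For fixed $y\neq 0$ and $\alpha<1$, $\la 2ty\ra/t^\alpha\to+\infty$, so $F^2\to 1$ pointwise; dominated convergence with majorant $|y||\hat{\psi}_0(y)|^2\in L^1(\R^3)$ (guaranteed by $\psi_0\in H^1$) yields
\[\lim_{t\to\infty}\la F(\xt)\gamma F(\xt)\ra_t=\int_{\R^3}|y|\,|\hat{\psi}_0(y)|^2\,dy=\||\bp|^{1/2}\psi_0\|_{L^2}^2>0,\]
provided $\psi_0\not\equiv 0$, which is the genuine scattering case (the degenerate case $\psi_0\equiv 0$ would force $\phi$ itself to be weakly localized).

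The main obstacle lies in the second step: one must show that the errors incurred by replacing $\psi_L$ with its dispersive asymptotic, by discarding the subleading terms in $\gamma\psi_L$ (the $|x|^{-1}$ piece and the slow $\hat{\psi}_0(x/(2t))$-derivative), and by commuting $\gamma$ across the cutoffs $F(\xt)$, all tend to $0$ uniformly in the relevant quadratic form. These estimates are routine via Hardy's inequality \eqref{Hardy} and the uniform $H^1$ bound, but must be assembled carefully. An alternative, more conceptual route is the Heisenberg picture: using $e^{-it\Delta}xe^{it\Delta}=x+2t\bp$, one argues that $e^{-it\Delta}\gamma_0 e^{it\Delta}\to|\bp|$ strongly on a dense set, so that $\lim_t(\gamma_0\psi_L,\psi_L)=\||\bp|^{1/2}\psi_0\|^2$ while $F(\xt)\to 1$ strongly on the outgoing cone; this sidesteps the chirp manipulation but requires the same kind of convergence estimates.
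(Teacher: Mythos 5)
Your proposal is correct in outline and arrives at the same answer $\lim_t\la F\gamma F\ra = (\psi_0,|\bp|\psi_0)=\||\bp|^{1/2}\psi_0\|_{L^2}^2$, but it takes a genuinely different route from the paper's. The paper's Lemma~\ref{free-limit2} works at the operator level: it uses the pseudoconformal identity to show $e^{-it\Delta}\tfrac{x}{t}e^{it\Delta}\to 2\bp$ in the strong resolvent sense, then replaces $\tfrac{x/t}{|x/t|}$ and $\tfrac{\bp}{|\bp|}$ by a common polynomial $F_n$ via Weierstrass approximation to conclude $\gamma(t)\to|\bp|$; the cutoff $F(\XT)$ is absorbed afterward using minimal/maximal velocity bounds. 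You instead insert the explicit dispersive (chirp) asymptotic $\psi_L\approx (2it)^{-3/2}e^{i|x|^2/4t}\hat\psi_0(x/2t)$, differentiate the phase to extract $|x|/(2t)$, change variables, and apply dominated convergence. This is more elementary and makes the geometry ($F\to 1$ on the forward cone, the radial derivative of the chirp producing $|y|$) very concrete, but it is computationally touchier than the strong-resolvent route.

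There is one genuine gap you should be aware of: the claim of an ``$H^1$ analogue'' of the dispersive asymptotic is \emph{false} for general $\psi_0\in H^1$. Writing $\psi_L^{\mathrm{main}}(t,x)=(2it)^{-3/2}e^{i|x|^2/4t}\hat\psi_0(x/2t)$, one has $\nabla\psi_L^{\mathrm{main}}=\tfrac{ix}{2t}\psi_L^{\mathrm{main}}+(2it)^{-3/2}(2t)^{-1}e^{i|x|^2/4t}(\nabla\hat\psi_0)(x/2t)$, and the second piece has $L^2$ norm $(2t)^{-1}\|x\psi_0\|_{L^2}$; if $x\psi_0\notin L^2$ then $\psi_L^{\mathrm{main}}\notin H^1$ at all, so neither the $H^1$ comparison you appeal to nor even the finiteness of the cross terms $\la F\gamma F\psi_L^{\mathrm{main}},\cdot\ra$ is immediate. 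The interpolation route ($\|r\|_{H^{1/2}}\lesssim\|r\|_{L^2}^{1/2}\|r\|_{H^1}^{1/2}$) that would let the $L^2$ smallness carry through the form likewise needs $\psi_L^{\mathrm{main}}$ to be \emph{bounded} in $H^1$. The standard and correct repair is a density argument, and it is precisely what the paper does: Lemma~\ref{free-limit2} is first proved on the dense subspace $D=\{\psi_0\in H^1:|x|\psi_0\in L^2\}$, where your chirp computation is legitimate, and then extended to all of $H^1$ (indeed $H^{1/2}$) using that both sides of \eqref{gamma-p} are continuous quadratic forms bounded uniformly in $t$. You should add that density step explicitly; without it the argument does not cover the stated hypothesis $\psi_0\in H^1$. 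The ``alternative, more conceptual route'' you sketch at the end is in fact the paper's proof.
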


 The proof of Theorem~\ref{free-limit} will be decomposed into the following lemmas.
 \begin{lem}\label{local-decay}
Suppose $A$ is a bounded operator $A=O(\frac{1}{\jx})$, then
\be \lim_{t\rightarrow +\infty} \la \psi_L(t), A\psi_L(t) \ra =0  \label{eq:local-decay} \ee
\end{lem}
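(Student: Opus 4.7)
The plan is to reduce the statement to the local decay estimate $\|\jx^{-1}\psi_L(t)\|_{L^2}\to 0$, and then prove this estimate by combining the minimal and maximal velocity bounds of Proposition~\ref{prop:MVB} with a density argument.

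First I would unpack the hypothesis $A=O(\jx^{-1})$. By the notation convention in Section~2, this means $\|\jx A\phi\|_{L^2}\lesssim\|\phi\|_{L^2}$, i.e.\ $\jx A$ is bounded on $L^2$. Writing
\be
|(\psi_L,A\psi_L)|=|(\jx A\psi_L,\jx^{-1}\psi_L)|\le \|\jx A\psi_L\|_{L^2}\|\jx^{-1}\psi_L\|_{L^2}\lesssim \|\psi_0\|_{L^2}\,\|\jx^{-1}\psi_L(t)\|_{L^2},
\ee
we see that it suffices to show
\be
\lim_{t\to+\infty}\|\jx^{-1}\psi_L(t)\|_{L^2}=0.
\label{localdec}
\ee

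Next I would handle \eqref{localdec} by density. The set of initial data $\psi_0$ with $\hat\psi_0\in C_0^{\infty}(\R^3\setminus\{0\})$ is dense in $L^2$, and the map $\psi_0\mapsto\|\jx^{-1}\psi_L(t)\|_{L^2}$ is $1$-Lipschitz in $\psi_0$ uniformly in $t$ (since $\jx^{-1}$ is bounded and $e^{it\Delta}$ is unitary). So I only need to prove \eqref{localdec} for $\psi_0$ with $\mathrm{supp}\,\hat\psi_0\subset\{v_0\le|\xi|\le V_0\}$ for some $0<v_0<V_0<\infty$.

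For such initial data I split space into three regions and use Proposition~\ref{prop:MVB}. Fix $v_1<v_0$ and $V_1>V_0$. Then
\begin{align}
\|\jx^{-1}\psi_L(t)\|_{L^2}^2
&\le \int_{|x|\le v_1 t}|\psi_L|^2\,dx+\int_{v_1 t\le|x|\le V_1 t}\frac{|\psi_L|^2}{\jx^2}\,dx+\int_{|x|\ge V_1 t}|\psi_L|^2\,dx.
\end{align}
The first integral is $O(t^{-m})$ for every $m$ by the minimal velocity bound \eqref{MinVB}; the third integral is $O(t^{-m})$ by the maximal velocity bound \eqref{MaxVB}; the middle integral is bounded by $(v_1 t)^{-2}\|\psi_0\|_{L^2}^2=O(t^{-2})$. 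Hence \eqref{localdec} holds for this dense class, and therefore for every $\psi_0\in L^2$, which finishes the proof.

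The step that requires the most care—though it is not really an obstacle—is the density reduction: one has to verify that the bound on $(A\psi_L,\psi_L)$ is genuinely continuous in $\psi_0\in L^2$, which follows because $A$ itself is bounded on $L^2$ (it is $O(\jx^{-1})$, hence in particular bounded), so $|(A\psi_L,\psi_L)-(A\psi_L^{(n)},\psi_L^{(n)})|\lesssim \|\psi_0-\psi_0^{(n)}\|_{L^2}(\|\psi_0\|_{L^2}+\|\psi_0^{(n)}\|_{L^2})$. Everything else is a straightforward application of the nonstationary-phase velocity bounds already available.
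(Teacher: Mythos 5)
Your proof is correct and follows essentially the same route as the paper's: reduce to data with compact frequency support away from zero by density, then decompose space using the velocity bounds. One small remark: the third region and the maximal velocity bound are not actually needed---once you know $\operatorname{supp}\hat\psi_0\subset\{|\xi|\ge v_0\}$, the minimal velocity bound controls $\{|x|\le v_1t\}$ and on the complement $\{|x|\ge v_1t\}$ the crude weight estimate $\jx^{-2}\le(v_1t)^{-2}$ already kills the remaining mass (this is how the paper's proof is organized, with only two regions and no appeal to the maximal velocity bound).
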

\begin{proof}   Now for any $\epsilon$,  we take $N, M$ so that $\|(I-P_{N\leq \cdot \leq M})\psi_0\|_{L^2}\leq \epsilon.$ Then approximate $P_{N\leq \cdot \leq M}\psi_0$ with function $\tilde{\psi}_0$ whose Fourier transform is smooth and  compactly supported in $\{|\xi|\in (N_1,M_1)\}$, with error $\epsilon$. Then  from minimal velocity bound   (\ref{MinVB}), we get
\begin{align}
\int_{\R^3} \frac{|\psi_L(t,x)|^2}{\jx} dx \lesssim & \epsilon + \int_{|x|\geq N_1t} \frac{|e^{it\Delta} \tilde{\psi}_0|^2}{\jx} dx +  \int_{|x|\leq N_1t} \frac{|e^{it\Delta} \tilde{\psi}_0|^2}{\jx} dx\\
\lesssim& \epsilon + (N_1t)^{-1} + t^{-m}
\end{align}
Hence $\limsup_{t\rightarrow +\infty }\int_{\R^3} \frac{|\psi_L(t,x)|^2}{\jx} dx\lesssim \epsilon$. Since $\epsilon$ is arbitrary, we have completed the proof.
\end{proof}

\begin{lem}\label{xt-p}
Denote $A(t)=e^{-i\Delta t} \frac{x}{t}e^{i\Delta t}$ and $D=\{\psi_0\in H^{1}, |x|\psi_0\in L^2 \}$, then
\be \|(A(t)-2\bp )\psi_0\|\rightarrow_{t\rightarrow +\infty}0, \hspace{1cm}\forall \psi_0\in D.\label{pseudo}\ee
Moreover, $A(t)\rightarrow 2\bp$ in the strong resolvent sense.


\end{lem}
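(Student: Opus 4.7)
The plan is to establish the exact operator identity
\begin{equation*}
A(t) \;=\; e^{-i\Delta t}\,\tfrac{x}{t}\,e^{i\Delta t} \;=\; \tfrac{x}{t} + 2\mathbf{p},
\end{equation*}
which reduces both claims to essentially trivial consequences. First I would compute the Heisenberg derivative of the position operator. Using $[\Delta, x_j] = 2\partial_j$, i.e.\ $[-i\Delta, x_j] = -2i\partial_j = 2p_j$, and the fact that $p_j$ commutes with $e^{i\Delta t}$, I get
\begin{equation*}
\tfrac{d}{dt}\bigl(e^{-i\Delta t}\, x_j\, e^{i\Delta t}\bigr) \;=\; e^{-i\Delta t}\,[-i\Delta, x_j]\,e^{i\Delta t} \;=\; 2p_j,
\end{equation*}
initially as a quadratic form on Schwartz functions (where everything is continuous in $t$). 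Integrating from $0$ to $t$ gives $e^{-i\Delta t} x\, e^{i\Delta t} = x + 2\mathbf{p}\,t$, and dividing by $t$ yields the claimed identity.

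Once the identity is in hand, the convergence on $D$ is immediate: for $\psi_0\in D$ both $x\psi_0$ and $\mathbf{p}\psi_0$ lie in $L^2$, the identity extends by density from the Schwartz class, and
\begin{equation*}
\|(A(t) - 2\mathbf{p})\psi_0\|_{L^2} \;=\; \tfrac{1}{t}\,\bigl\||x|\,\psi_0\bigr\|_{L^2} \;\longrightarrow\; 0, \qquad t\to+\infty.
\end{equation*}
For strong resolvent convergence, note that each $A(t)$ is self-adjoint (as unitary conjugate of multiplication by $x/t$), $2\mathbf{p}$ is self-adjoint on its natural domain, and the Schwartz class $\mathcal{S}(\mathbb{R}^3)\subset D$ is a common core for all of these operators. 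Strong convergence on a common core for self-adjoint operators implies strong resolvent convergence by a standard criterion (Reed--Simon VIII.25 / Trotter--Kato type), which closes the argument.

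There is no real obstacle here, only a small domain bookkeeping issue: one must check that $\psi_0\in D$ is genuinely in the operator domain of $A(t)$, i.e.\ that $|x|\,e^{i\Delta t}\psi_0\in L^2$. But the derived identity settles this \emph{a posteriori}, since
\begin{equation*}
\|\,x\,e^{i\Delta t}\psi_0\|_{L^2} \;=\; \|(x + 2\mathbf{p}\,t)\psi_0\|_{L^2} \;\leq\; \|x\psi_0\|_{L^2} + 2t\,\|\mathbf{p}\psi_0\|_{L^2} \;<\; \infty,
\end{equation*}
so no additional subtlety appears. This same identity, incidentally, will be useful elsewhere in the paper for comparing spatial moments of the free evolution with momentum expectations, and it matches the heuristic $x/t \sim 2p$ underlying the minimal/maximal velocity bounds in Proposition~\ref{prop:MVB}.
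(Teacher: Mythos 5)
Your proof is correct and takes essentially the same route as the paper: the key operator identity $e^{-i\Delta t}\,x\,e^{i\Delta t}=x+2\bp t$ that you derive via the Heisenberg derivative is exactly the pseudo-conformal identity the paper invokes, and the convergence on $D$ follows identically. For the strong resolvent convergence the paper writes out the second-resolvent-identity estimate $\|(A(t)-\lambda)^{-1}\psi_0-(2\bp-\lambda)^{-1}\psi_0\|\lesssim|\Im\lambda|^{-1}\|(2\bp-A(t))(2\bp-\lambda)^{-1}\psi_0\|\to 0$ on a dense set; your appeal to the Reed--Simon common-core criterion is simply a packaged form of that same computation (one should, as you implicitly do, note that $e^{i\Delta t}$ preserves $\mathcal S$ so $\mathcal S$ is indeed a common core for every $A(t)$ and for $2\bp$, component-wise).
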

\begin{proof} \eqref{pseudo} follows from the
  pseudo-conformal identity
$\| (x-2\bp t)\psi_L(t)\|_{L^2}= \| x\psi_0\|_{L^2}. $
Now given any $\lambda$ with $Im \lambda\not=0$, and any $\psi_0\in \tilde{D} :=\{(2\bp-\lambda I)\tilde{\psi}_0, \tilde{\psi}_0\in D\}$
\begin{align}
& \|(A(t)-\lambda I)^{-1}\psi_0 -  (2\bp -\lambda I)^{-1}\psi_0\|\\
=&  \|(A(t)-\lambda I)^{-1} (2\bp -A(t)) (2\bp -\lambda I)^{-1}\psi_0\|\\
\lesssim& |Im \lambda|^{-1} \|(2\bp -A(t)) (2\bp -\lambda I)^{-1}\psi_0 \|\rightarrow 0.
\end{align}
Since $\tilde{D}$ is dense in $H^1$, we conclude  $A(t)\rightarrow 2\bp$ in the strong resolvent sense.
\end{proof}


\begin{lem}\label{free-limit2} Denote $\gamma(t)=e^{-i\Delta t}\gamma e^{i\Delta t}$.  $D$ defined as in Lemma~\ref{xt-p}.
Then  $\forall \psi_0\in D$
\begin{equation}\lim_{t\rightarrow +\infty}\la \psi_0, \gamma(t) \psi_0\ra  = \la \psi_0, |\bp|\psi_0\ra  \label{gamma-p}\end{equation}
Furthermore, the convergence also holds for all $\psi_0\in H^{1}$.
\end{lem}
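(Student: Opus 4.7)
The strategy is to decompose $\gamma=\gamma_0+R$ where $R:=\gamma-\gamma_0$ is a spatially compact remainder, kill $R$ via local decay for the free flow, and then identify the large-time asymptotics of $\gamma_0$ under $e^{it\Delta}$ with those of $|\bp|$ using the strong resolvent convergence $A(t)\to 2\bp$ of Lemma~\ref{xt-p}. Throughout write $\psi_L(t):=e^{it\Delta}\psi_0$.

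Since $\nabla\la x\ra = x/|x|$ for $|x|\geq 2$, the operator $R$ is a first-order differential operator with bounded coefficients and spatial support in $\{|x|\leq 2\}$. Hence $|\la\psi_L,R\psi_L\ra|\lesssim \|\chi\psi_L\|_{H^1}^2$ for any $\chi\in C_c^\infty$ equal to $1$ on $\operatorname{supp} R$, and this tends to $0$ by standard local decay for the free flow on $H^1$ data (via Proposition~\ref{prop:MVB} and density of smooth frequency-localized data, noting that $\nabla\psi_L=e^{it\Delta}\nabla\psi_0$ is itself a free wave). Using $\nabla\!\cdot\!(x/|x|)=2/|x|$ in $\R^3$, I next rewrite $\gamma_0 = \frac{x}{|x|}\!\cdot\!\bp+\frac{1}{i|x|}$; the expectation of the $1/|x|$ piece vanishes by the usual split ($|x|\leq R_0$ handled by $H^1$ local decay plus Hardy, $|x|\geq R_0$ contributing $O(1/R_0)$), so matters reduce to
\[ \la\psi_L,\tfrac{x}{|x|}\!\cdot\!\bp\psi_L\ra = \la\psi_0,\ e^{-it\Delta}\tfrac{x}{|x|}e^{it\Delta}\,\bp\psi_0\ra. \]
Writing $x/|x|=\phi(x/t)$ with $\phi(y):=y/|y|$ (homogeneous of degree $0$), this becomes $\la\psi_0,\phi(A(t))\bp\psi_0\ra$.

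The central obstacle is that $\phi$ is discontinuous at $y=0$, so the strong resolvent convergence $A(t)\to 2\bp$ does not automatically upgrade to $\phi(A(t))\to\phi(2\bp)=\bp/|\bp|$. I would sidestep this by first proving the limit on the dense subspace $\tilde D\subset D$ consisting of $\psi_0$ with $\widehat{\psi_0}\in C_c^\infty(\R^3\setminus\{0\})$. For such $\psi_0$, $\widehat{\bp\psi_0}$ is supported in some annulus $\{c\leq|\xi|\leq C\}$, so $\phi$ may be replaced by a bounded \emph{continuous} function $\tilde\phi$ agreeing with it on a neighborhood of the joint spectral support; continuous functional calculus applied to Lemma~\ref{xt-p} then yields $\phi(A(t))\bp\psi_0=\tilde\phi(A(t))\bp\psi_0\to\tilde\phi(2\bp)\bp\psi_0 = \frac{\bp}{|\bp|}\!\cdot\!\bp\psi_0=|\bp|\psi_0$ in $L^2$. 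Combined with the two preceding reductions this gives $\la\psi_0,\gamma(t)\psi_0\ra\to\la\psi_0,|\bp|\psi_0\ra$ for $\psi_0\in\tilde D$.

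Extension to arbitrary $\psi_0\in H^1$ is by density. The uniform bound $|\la\psi_0,\gamma(t)\psi_0\ra|\lesssim\|\psi_0\|_{H^1}^2$ follows from $\|\gamma\psi_L\|_{L^2}\lesssim\|\psi_L\|_{H^1}=\|\psi_0\|_{H^1}$ (since $\gamma$ is first order with bounded coefficients), and likewise $\||\bp|\psi_0\|_{L^2}\leq\|\psi_0\|_{H^1}$. Together with the density of $\tilde D$ in $H^1$, a standard three-epsilon argument promotes the pointwise-in-$\psi_0$ convergence to all of $H^1$, completing the proof.
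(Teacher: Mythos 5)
Your overall architecture parallels the paper's: reduce to the strong resolvent convergence $A(t)\to 2\bp$ of Lemma~\ref{xt-p}, isolate the discontinuity of $y\mapsto y/|y|$ by frequency-localizing $\psi_0$ away from zero and infinity, kill the local-in-space pieces by the local decay Lemma~\ref{local-decay}, and extend by density. The cosmetic difference is that you split $\gamma=\gamma_0+R$ and $\gamma_0=\tfrac{x}{|x|}\cdot\bp+\tfrac{1}{i|x|}$, whereas the paper uses a single cutoff to $\{|x|\geq 2\}$; both routes are fine (although note that $R$ does not have \emph{bounded} coefficients near the origin, since $\nabla\cdot\tfrac{x}{|x|}=\tfrac{2}{|x|}$ is singular there — Hardy is needed, not boundedness).

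The genuine gap is the equality $\phi(A(t))\bp\psi_0=\tilde\phi(A(t))\bp\psi_0$. The "joint spectral support" you invoke is the spectral support of $\bp\psi_0$ with respect to the \emph{limit} operator $2\bp$ — i.e.\ the Fourier support — but the spectral measure of $\bp\psi_0$ relative to $A(t)=e^{-it\Delta}\tfrac{x}{t}e^{it\Delta}$ at any finite $t$ is not confined there: $A(t)$ has full continuous spectrum $\R^3$ for every $t>0$, and $e^{it\Delta}$ spreads the spatial localization. What is actually true is
\begin{equation*}
\left\|\left(\phi(A(t))-\tilde\phi(A(t))\right)\bp\psi_0\right\|
=\left\|(\phi-\tilde\phi)\!\left(\tfrac{x}{t}\right)e^{it\Delta}\bp\psi_0\right\|
\;\longrightarrow\;0,
\end{equation*}
because $\phi-\tilde\phi$ is supported in a small ball $\{|y|\leq c/2\}$ and $e^{it\Delta}\bp\psi_0$ has Fourier support in $\{|\xi|\geq c\}$, so the minimal velocity bound (Proposition~\ref{prop:MVB}) controls the mass in $\{|x|\leq ct/2\}$. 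The paper's proof invokes exactly this velocity bound for the corresponding terms; you use it elsewhere but not here, where it is the crux. As written, the step is wrong; once you replace equality by an $o(1)$ estimate via the minimal velocity bound, the proof closes.

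A secondary technical point: passing from strong resolvent convergence of the three commuting components $A_j(t)\to 2\bp_j$ to strong convergence of $\tilde\phi(A(t))$ for a bounded continuous \emph{vector-valued} $\tilde\phi$ is not literally Reed--Simon VIII.20, which is stated for a single self-adjoint operator and $f$ vanishing at infinity. The paper sidesteps this by first cutting $\tfrac{x}{|x|}$ to a compactly supported function of $x/t$ and then approximating it uniformly by polynomials via Weierstrass, after which convergence of each polynomial term in the $A_j(t)$'s is immediate from the pseudo-conformal identity. You should either adopt that polynomial approximation or cite a multi-operator version of the functional-calculus convergence theorem.
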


\begin{proof}
Since $A(t)=e^{-i\Delta t} \frac{x}{t}e^{i\Delta t}\rightarrow 2\bp$ in the strong resolvent sense, by~\cite{Simon} Theorem VIII.20, $\|(f(A(t)) -f(2\bp))\psi_0\|\rightarrow 0$ for any continuous function $f$ vanishing at $\infty$, and $\psi_0\in D.$

 Take any $\psi_0\in D$, for any $\epsilon$, we find  $\delta$  and $M$, $\|P_{\leq \delta}\psi_0\|_{H^1}+  \|P_{\geq M}\psi_0\|_{H^1}\leq \epsilon.$
 Consider $F(x)=\frac{x}{|x|} f(\delta \leq |x|\leq M)$ where $f$ is a smooth characteristic function. By Weierstrass approximation theorem,  we find polynomials $F_n$ such that
\be |F(x) -F_n(x)|\leq \epsilon, \hspace{1cm}\forall x\in [\delta, M].\ee
To prove (\ref{gamma-p}), we break the physical space
\begin{align}
\la \psi_0, (\gamma(t)-|\bp|)\psi_0\ra  =\int_{|x|\leq 2} \bar{\psi}_L (\gamma -|\bp|) \psi_L +\int_{|x|\geq 2} \bar{\psi}_L (\gamma -|\bp|)\psi_L.
\end{align}
The first term converges to $0$  by Lemma~\ref{local-decay}. The second term is essentially $\int_{|x|\geq 2} \overline{P_{\delta\leq \cdot\leq M}\psi_L} (\gamma -|\bp|)P_{\delta\leq \cdot\leq M}\psi_L$ up to error of size $O(\epsilon). $
Next let us  prove \begin{equation}\limsup_{t\rightarrow +\infty} \int_{|x|\geq 2} \overline{P_{\delta\leq \cdot\leq M}\psi_L} (\gamma -|\bp|)P_{\delta\leq \cdot\leq M}\psi_L \lesssim \epsilon.\label{essential-part}\end{equation}
In fact, when $|x| \geq 2$,  $\gamma =\frac{x}{|x|}\cdot \bp +O(\frac{1}{|x|})$, and
\begin{align}
\gamma -|p| =&   \frac{x/t}{|x/t|}\cdot \bp -\frac{\bp}{|\bp|}\cdot \bp + O(\frac{1}{\jx})\\  =&\chi(\delta\leq |x/t|\leq M)(F(\frac{x}{t}) -F_n(\frac{x}{t}))\cdot \bp  - (F(2\bp)-F_n(2\bp))\cdot \bp  - (F_n(\frac{x}{t})-F_n(2\bp))\cdot \bp \notag\\ &+ \frac{x/t}{|x/t|} \chi(|x|/t\leq \delta)\cdot \bp +  \frac{x/t}{|x/t|} \chi(|x|/t\geq M)\cdot \bp - |p| \chi(|\bp|\leq \delta) -|\bp| \chi(|\bp|\geq M) + O(\frac{1}{\jx})\notag
\end{align}
When plugging into (\ref{essential-part}), the first two are small by Weierstrass approximation, the third one is small since  $F_n(A(t)) -F_n(2\bp) = e^{-it\Delta}(F_n(\frac{x}{t})-F_n(2\bp))e^{it\Delta}$ converges strongly.
  The smallness in the second line comes from the frequency support condition and the minimal/maximal velocity bound (\ref{MinVB})(\ref{MaxVB}), together with Lemma~\ref{local-decay}.

Hence, we conclude the proof of (\ref{essential-part}), which further implies (\ref{gamma-p}) for $\psi_0\in D$.  Finally, since $D$ is dense in $H^{1}$,  by density argument, we see that (\ref{gamma-p}) for $\psi_0\in H^{1}.$

\end{proof}

\begin{proof}[Proof of Theorem~\ref{free-limit}]
Since
\begin{align} ( \phi, F\gamma F \phi ) = (\phi -\psi_L, F\gamma F \phi )+  (\psi_L , F\gamma F (\phi -\psi_L) ) + (\psi_L, F\gamma F \psi_L)
 \end{align}
 The first two terms converge to $0$ because of scattering in $H^\frac12$ \eqref{scattering}.
 We are left to prove that the third term has a positive limit.  Since
 \begin{align}
 (\psi_L, \gamma \psi_L) = (\psi_L, F\gamma F \psi_L) + (\psi_L, (1-F)\gamma F \psi_L) + (\psi_L, (1-F)\gamma (1-F) \psi_L)
 \end{align}
and  the last two terms converge to $0$ by minimal/maximal velocity bound (\ref{MinVB})(\ref{MaxVB}), we conclude
\be  \lim_{t\rightarrow +\infty}   (\psi_L, F\gamma F \psi_L) =(\psi_0, |\bp|\psi_0)>0.\ee
\end{proof}
 \begin{rem} For simplicity, we presented Theorem~\ref{free-limit}  and proof when scattering holds in $H^1$ (\ref{scattering}). However, a close inspection of the proof, reveals that Theorem~\ref{free-limit} holds true if we have scattering in $H^\frac12, $ i.e.
 \be  \lim_{t\rightarrow +\infty}\|\phi(t)-\psi_L(t)\|_{H^\frac12} =0. \label{scattering2} \ee
 \end{rem}

 \section{Propagation Estimates}\label{sec:PE}
 In this section,  we construct more propagation observables. Using the commutator argument as explained in section {\ref{sec:gamma-limit}, we obtain suitable propagation estimates as (\ref{B-PE}) that will
 cover the whole phase-space. Recall that $t_0=100.$

\begin{lem}\label{lem:A2}  Let  $\phi$ be solution to equation (\ref{Main-eq}) satisfying global energy bound (\ref{global-bound}).    $\alpha\in (\frac13, 1)$,  $F_1=F_1(\XT)$,  
 \begin{enumerate}
 \item Let $F_2=F_2(\gamma>\delta)$ for some constant $\delta>0$, then  \begin{align} \int_{t_0}^\infty \frac{1}{t^\alpha}\la  \sqrt{F_1F_1'}\gamma F_2 \sqrt{F_1F_1'} \ra    dt  +  \int_{t_0}^\infty \frac{1}{t} \la \tilde F'_1  F_2\tilde F'_1 \ra dt & <\infty. \label{PE-2}
 \end{align}
Also  the limit \begin{align}  \lim_{t\rightarrow +\infty} & \la F_1(\XT)F_2(\gamma>\delta)  F_1(\XT) \ra
\end{align}  exists.
\item  Let $F_3=F_3(\gamma<-\delta)$ for some constant $\delta>0,$ then
\begin{equation} |\int_{t_0}^\infty \frac{1}{t^\alpha}\la F_1\gamma F_3F_1 \ra    dt|  +  \int_{t_0}^\infty \frac{1}{t^{\alpha}}\la  F_1F_3F_1 \ra dt<\infty. \label{PE-3}\end{equation}
and \begin{equation}\lim_{t\rightarrow +\infty}\la F_1(\XT)F_3(\gamma < - \delta)  F_1(\XT) \ra =0.\label{A30}\end{equation}
\item
We have stronger estimate \begin{align}
\int_{t_0}^\infty \frac{1}{t^\alpha}\la  \sqrt{F_1F_1'}\gamma^3  F_2 \sqrt{F_1F_1'} \ra    dt  +  \int_{t_0}^\infty \frac{1}{t} \la \sqrt{F_1F_1'}\gamma^2 F_2 \sqrt{F_1F_1'}\ra dt & <\infty. \label{PE-2vr2}\\
 |\int_{t_0}^\infty \frac{1}{t^\alpha}\la \sqrt{F_1F_1'}\gamma^3 F_3\sqrt{F_1F_1'} \ra    dt|  +  \int_{t_0}^\infty \frac{1}{t}\la  \sqrt{F_1F_1'}\gamma^2 F_3 \sqrt{F_1F_1'} \ra dt&<\infty. \label{PE-3vr2}\end{align}
 \end{enumerate}
\end{lem}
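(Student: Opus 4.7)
The plan is to apply the positive-commutator (PROB) machinery of Section~\ref{sec:gamma-limit} to the observables $A = F_1 F_{2,3} F_1$ for parts (1) and (2), and $A = F_1 \gamma^2 F_{2,3} F_1$ for part (3), exploiting the sign-definite spectral inequalities $\gamma F_2 \ge \delta F_2$ and $\gamma F_3 \le -\delta F_3$.

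For part (1), take $A(t) = F_1(\XT) F_2(\gamma > \delta) F_1(\XT)$. Assumptions (A1)--(A2) are immediate from $0 \le F_2 \le 1$ and (H2). The key point for $D_H A$ is that $F_1 [-i\Delta, F_2(\gamma)] F_1 = 0$ on radial functions: the argument leading to (\ref{DeltaF2}) goes through with the $\tau$-scaling replaced by $1$, since $F_2$ depends only on $\gamma$, $ad_\gamma(-i\Delta)$ is supported in $\{|x|\in[1,2]\}$, and the flow of $\gamma$ preserves the exterior region. Using $[-i\Delta, F_1] = \frac{2}{t^\alpha}\sqrt{F_1'}\gamma\sqrt{F_1'}$, $\partial_t F_1 = -\frac{\alpha}{t} F_1' \frac{\jx}{t^\alpha}$, and symmetrizing via (\ref{A2BC2}), one obtains, exactly as in the derivation of (\ref{DHFrF}),
\begin{equation*}
D_H A = \frac{4}{t^\alpha}\sqrt{F_1 F_1'}\,\gamma F_2\,\sqrt{F_1 F_1'} - \frac{2\alpha}{t}\sqrt{F_1 F_1'}\,\frac{\jx}{t^\alpha}\,F_2\,\sqrt{F_1 F_1'} + O(t^{-3\alpha}).
\end{equation*}
Both bracketed operators are nonnegative (using $\gamma F_2 \ge \delta F_2 \ge 0$), and the $t^{-\alpha}$ contribution dominates the $t^{-1}$ one for large $t$. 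Boundedness of $\int D_H A$ combined with the elementary two-sided bound (if $\lambda P \ge Q \ge 0$ eventually with $\lambda>1$ and $\int(\lambda P - Q)$ is finite, then both $\int P$ and $\int Q$ are finite) yields both integral estimates in (\ref{PE-2}); convergence of $\int D_H A$ then gives the existence of the limit.

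For part (2), take the same $A = F_1 F_3 F_1$. Now $\gamma F_3 \le -\delta F_3 \le 0$ makes both leading contributions to $D_H A$ operator-negative. Since $\la A\ra \ge 0$ and $\la D_H A\ra \le O(t^{-3\alpha})$, $\la A\ra$ is eventually decreasing with limit $L \ge 0$ and $\int |\la D_H A\ra|\,dt < \infty$, producing (\ref{PE-3}); to upgrade from the boundary cutoff $\sqrt{F_1 F_1'}$ to the full cutoff $F_1$, one iterates over a family of characteristic functions at different scales $\alpha' \in (\tfrac13,\alpha)$ covering $\{\jx \ge t^\alpha\}$. To prove $L=0$ in (\ref{A30}), assume $L>0$: then $\lim \la F_1 \gamma F_3 F_1 \ra \le -\delta L < 0$, while the partition of unity $1 = F_2^\delta + F_0^\delta + F_3^\delta$ (with $F_0^\delta$ a smooth cutoff to $|\gamma| \le \delta$) gives
\begin{equation*}
\lim \la F_1 \gamma F_1 \ra = \lim \la F_1 \gamma F_2^\delta F_1 \ra + \lim \la F_1 \gamma F_0^\delta F_1 \ra + \lim \la F_1 \gamma F_3^\delta F_1 \ra,
\end{equation*}
and Theorem~\ref{Non-negative} (LHS $\ge 0$) together with part (1) (first RHS term $\ge 0$) and the bound $|\la F_1 \gamma F_0^\delta F_1\ra| \lesssim \delta$ yields a contradiction as $\delta \to 0$.

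For part (3), take $A = F_1 \gamma^2 F_{2,3} F_1$. Boundedness $\la A\phi, \phi\ra = \la F_{2,3} \gamma F_1 \phi, \gamma F_1 \phi\ra \le \|\gamma F_1 \phi\|^2 \lesssim \|\phi\|_{H^1}^2$ uses $[\gamma, F_{2,3}] = 0$ and Hardy's inequality (\ref{Hardy}); (A2) still follows from (H2). The same commutator expansion goes through with every $F_{2,3}$ now carrying the extra factor $\gamma^2$ (which commutes with $\gamma$ and $F_{2,3}$); the coercivity $\gamma F_2 \ge \delta F_2$ (resp.\ $-\gamma F_3 \ge \delta F_3$) gives $\gamma^3 F_2 \ge \delta \gamma^2 F_2 \ge 0$ (resp.\ $-\gamma^3 F_3 \ge \delta \gamma^2 F_3 \ge 0$), producing (\ref{PE-2vr2})--(\ref{PE-3vr2}) via the identical mechanism as in parts (1) and (2). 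The main obstacle is the $L=0$ argument in part (2): the $\delta\to 0$ limit comparison must be performed carefully since the individual limits exist only after invoking part (1) and Theorem~\ref{Non-negative}; everywhere else the routine task is to control the $O(t^{-3\alpha})$ symmetrization remainders (integrable by $\alpha>\tfrac13$) and the $O(t^{-\beta_0})$ nonlinear corrections (integrable by $\beta_0>1$).
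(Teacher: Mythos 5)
Your treatment of parts (1) and (3) is essentially sound, though you route around the paper's bootstrap with $\tilde A_2 = t^{-\epsilon}\tilde F_1 F_2 \tilde F_1$ by observing instead that, on $\operatorname{supp} F_1'$, the positive term $\tfrac{4}{t^\alpha}\gamma F_2 \geq \tfrac{4\delta}{t^\alpha}F_2$ eventually dominates the subtracted term $\tfrac{2\alpha}{t}\tfrac{\jx}{t^\alpha}F_2 \leq \tfrac{2\alpha}{t}F_2$ once $t^{1-\alpha} > \alpha/\delta$. That pointwise comparison is legitimate and avoids the iteration the paper performs; note only that your stated elementary lemma has the inequality backwards (you need $Q \leq \lambda P$ with $\lambda < 1$, so that $P - Q \geq (1-\lambda)P$; as stated with $\lambda P \geq Q$ and $\lambda > 1$ the conclusion is false, e.g.\ $P = Q = 1/t$, $\lambda$ slightly above $1$). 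The symmetrization errors in the middle term are $O(t^{-1-2\alpha})$ rather than $O(t^{-3\alpha})$, but this is still integrable for $\alpha > \tfrac13$.

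The genuine gap is in your proof of the vanishing limit \eqref{A30}. Your partition-of-unity argument does not produce a contradiction: from
\begin{equation*}
\lim_{t\to\infty}\la F_1 \gamma F_1\ra = \lim_{t\to\infty}\la F_1 \gamma F_2^\delta F_1\ra + \lim_{t\to\infty}\la F_1 \gamma F_0^\delta F_1\ra + \lim_{t\to\infty}\la F_1 \gamma F_3 F_1\ra,
\end{equation*}
you know only that the left side and the first right-side term are nonnegative, the middle term is $O(\delta)$, and the last term is $\leq -\delta L$. Nothing prevents the first (positive) term from absorbing the negative third one, so $0 \leq (\text{nonneg}) + O(\delta) - \delta L$ is perfectly consistent for any $L > 0$; letting $\delta \to 0$ does not help, since there is no a priori upper bound on $\lim\la F_1\gamma F_2^\delta F_1\ra$ as $\delta$ shrinks. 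The paper closes this step with a separate observable $B = F_1 \tfrac{\jx}{t} F_3 F_1 + \text{h.c.}$; after symmetrization $D_H B$ is sign-definite modulo $O_1(t^{-1-\epsilon})$, giving the genuinely new estimate $\int_{t_0}^\infty \tfrac{1}{t}\la F_1 F_3 F_1\ra\,dt < \infty$. Since $\tfrac{1}{t}\notin L^1$, this forces the (already known to exist) limit $\lim\la F_1 F_3 F_1\ra$ to be zero. You need some such auxiliary observable whose propagation estimate has a non-integrable weight in front of $\la F_1 F_3 F_1\ra$; the decomposition by sign of $\gamma$ alone cannot deliver that.
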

\begin{proof} 
(1) Denote $A_2=F_1(\XT)F_2(\gamma>\delta)  F_1(\XT)$. Then it verifies assumption (A1)(A2). We only need to compute $D_HA_2$,
 \begin{align}
 D_HA_2=& (D_HF_1) F_2F_1 + F_1F_2(D_HF_1)  + F_1[-i\Delta, F_2] F_1 \notag\\
=&F_1' \frac{1}{t^\alpha}\left[2\gamma   -\alpha \frac{\la x\ra }{t}\right]F_2F_1  +F_1F_2\frac{1}{t^\alpha}\left[2\gamma   -\alpha \frac{\la x\ra }{t}\right] F_1'+  F_1[-i\Delta, F_2]   F_1  \notag\\ &
+ \frac{1}{2t^{2\alpha}} \left[F_1''[[-i\Delta, \jx], \jx] , F_2\right]F_1 +    \frac{1}{2t^{2\alpha}}[F_2,F_1]  F_1''[[-i\Delta, \jx], \jx]  \end{align}
Using (\ref{f1f2}), we see that the last line is of order $O(t^{-3\alpha})  $.

From (\ref{DeltaF2})  we know for $t^\alpha\geq 4$,
 $F_1[-i\Delta, F_2]   F_1  =0 $.

Also notice
\begin{align}
&\la F_1' \frac{1}{t^\alpha}\left[2\gamma   -\alpha \frac{\la x\ra }{t}\right]F_2F_1  + F_1F_2\frac{1}{t^\alpha}\left[2\gamma   -\alpha \frac{\la x\ra }{t}\right] F_1' \ra \notag \\
= & \la \frac{4}{t^\alpha} \sqrt{F_1'F_1} \gamma F_2 \sqrt{F_1'F_1}\ra   -\frac{2\alpha}{t} Re \la F_1'  \frac{\jx }{t^\alpha}\phi, F_2F_1 \phi \ra  + O_1(t^{-3\alpha})
 \end{align}
Here we used (\ref{A2BC2}) and (\ref{f1f2}) to obtain the error bound.   Hence we get
 \begin{equation}\la D_HA_2\ra =\la  \frac{4}{t^\alpha} \sqrt{F_1'F_1} \gamma F_2 \sqrt{F_1'F_1}\ra   -\frac{2\alpha}{t} Re \la F_1'  \frac{\jx }{t^\alpha}\phi, F_2F_1 \phi \ra  + O_1(t^{-3\alpha}), \hspace{1cm} t\in [t_0, +\infty) \label{DAF2}\end{equation}
As in the proof of Theorem~\ref{thm:r-limit}, apriorily $D_HA_2$ doesn't satisfy assumption (A3), so we have to bootstrap.  Consider $\tilde{A}_2=\frac{1}{t^\epsilon}\la \tilde{F}_1(\XT) F_2(\gamma>\delta)\tilde{F}_1(\XT) \ra$ with $\epsilon+\alpha<1$,
$\tilde{F}_1$ is another smooth characteristic function to be chosen later.  Then $\tilde{A}_2$ satisfies assumption (A1)(A2)(A4), in particular
\be  D_H\tilde{A}_2= \frac{4}{t^{\alpha+\epsilon}} \sqrt{\tilde{F}_1'\tilde{F}_1} \gamma F_2 \sqrt{\tilde{F}_1'\tilde{F}_1}  +O_1(t^{-1-\epsilon}) \ee
Hence we get
\begin{equation} \int_{t_0}^\infty\frac{4}{t^{\alpha+\epsilon}}\la \sqrt{\tilde{F}_1'\tilde{F}_1}\gamma F_2\sqrt{\tilde{F}_1'\tilde{F}_1}\ra  dt  <\infty  \end{equation}
By taking   $\sqrt{\tilde{F}_1'(\lambda)\tilde{F}_1(\lambda)}=cF_1'(\lambda)\lambda$,  we get
\begin{align}
&\int_{t_0}^\infty\frac{\alpha}{t} \left| \la F_1'  \frac{\jx }{t^\alpha}\phi, F_2F_1 \phi \ra  \right| dt \notag\\
\leq & \int_{t_0}^\infty \frac{\alpha}{t}  \|\sqrt{ F_2}F_1'   \frac{\jx }{t^\alpha}\phi\|_{L^2}\|\phi\|_{L^2}   dt \notag\\
\lesssim & \frac{1}{\sqrt{\delta}}\left(\int_{t_0}^\infty \frac{1}{t^{\alpha+\epsilon}}  \|\sqrt{ \gamma F_2}F_1'  \frac{\jx }{t^\alpha}\phi\|^2_{L^2}dt\right)^{\frac12}\left( \int_{t_0}^\infty  \frac{1}{t^{2-\alpha-\epsilon}}\|\phi\|_{L^2}^2dt\right)^{\frac12}  <\infty \label{1t-term}
\end{align}
Hence $A_2$ satisfies assumption (A3), which implies the first integral in \eqref{PE-2}  is bounded and the existence of limit.   The same proof as \eqref{1t-term} implies that the second integral in \eqref{PE-2} is bounded.

(2) Denote $A_3=F_1(\XT)F_3(\gamma<-\delta)  F_1(\XT)$,  and
\begin{equation}D_HA_3= \frac{4}{t^\alpha} \sqrt{F_1'F_1} \gamma F_3 \sqrt{F_1'F_1}  -\frac{2\alpha}{t}\sqrt{F_1'F_1\frac{\jx}{t^\alpha}} F_3 \sqrt{F_1'F_1\frac{\jx}{t^\alpha}} + O_1(t^{-3\alpha}).  \label{A3}\end{equation}
We see that  $A_3$ verifies assumption (A1)(A2)(A4). Hence we get the first term of propagation estimate (\ref{PE-3}) and the limit $\lim_{t\rightarrow+\infty}\la A\ra_t$ exists.

To prove (\ref{A30}),  consider another observable $B=F_1\frac{\jx}{t}F_3F_1+h.c.$~\footnote{We use the notation $A+h.c.=A+A^*$}
 \begin{align}
 D_HB =  &   (D_HF_1)\frac{\jx}{t}F_3F_1 +  F_1(D_H\frac{\jx}{t})F_3F_1+ \frac{\jx}{t} F_1 (D_HF_3)F_1 + \frac{\jx}{t} F_1 F_3 (D_H F_1) +h.c.\\
 =&  (D_HF_1)\frac{\jx}{t}F_3F_1 +  F_1 (\frac{2\gamma}{t}- \frac{\jx}{t^2})F_3F_1+ \frac{\jx}{t} F_1 (D_HF_3)F_1 + \frac{\jx}{t} F_1 F_3 (D_H F_1)  +h.c.
 \label{neg-ga}\end{align}
 After symmetrization,
 we can see that the main terms are negative with errors $O_1(t^{-1-\epsilon)}$. $B$ verifies assumption (A1)(A2)(A4), and  we conclude
 \be |\int_{t_0}^\infty \la F_1 \frac{\gamma}{t} F_3F_1 \ra dt| <\infty\ee which further implies
  \be |\int_{t_0}^\infty \frac{1}{t}\la F_1 F_3 F_1\ra dt| <\infty  \ee
  Since   $\lim_{t\rightarrow +\infty} \la F_1F_3F_1\ra$ exists, we will    reach contradiction unless the limit is $0$.

Notice that since the Heisenberg derivative of the nonnegative PROB $A_3$ is negative in leading order, one can use an unbounded positive weight, with negative Heisenberg derivative, to obtain faster decay. In particular, one can get a factor $\frac{x}{t^{\alpha}}$
instead of $\frac{x}{t}$ in equations \ref{neg-ga}.
In general, the estimates on incoming waves imply faster decay.

  (3) The proofs are similar by considering PROB
  \be B_2= F_1(\XT)\gamma^2 F_2(\gamma>\delta)  F_1(\XT), \hspace{0.5cm} B_3= F_1(\XT)\gamma^2 F_3(\gamma<-\delta)  F_1(\XT) \ee
  Since $\phi\in L^\infty_t H^1$, by similar argument as previous cases,  we can check $B_2$ verifies assumption (A1)(A2)(A3), and $B_3$ verifies (A1)(A2)(A4). So we get the propagation estimate \eqref{PE-2vr2} and \eqref{PE-3vr2}, which comes from the leading terms in $D_HB_2, D_HB_3$.
 \end{proof}

\medskip

{\bf Second Microlocalization}

\medskip

 In order to cover the region where $\gamma$ is near $0$, we have to perform a second microlocalization in the phase space.
 \begin{lem}~\label{PA4} Let  $\phi$ be solution to equation (\ref{Main-eq}) satisfying global energy bound (\ref{global-bound}).   $F_1=F_1(\XT), F_4=F_4(\gamma t^\beta >c_0)$ . Then   the following estimate
 \begin{align}
\int_{t_0}^\infty \frac{1}{t^\alpha} \la \sqrt{F_1'F_1}  \gamma F_4 \sqrt{F_1'F_1} \ra +\frac{1}{t} \la F_1 F_4'F_1  \ra dt<\infty,  \label{PE-4V2}
\end{align}
  holds if the parameters $\alpha, \beta, c_0$ are in one of the following cases.
  \begin{enumerate}
  \item $\alpha\in (\frac12, 1)$,  $ \alpha+\beta<1$, and  $c_0>0$ is any constant.
  \item $\alpha\in (\frac13, 1), \beta\in (0,\alpha), $ and $c_0>0$ is any constant.
  \item $\alpha> \frac12, \alpha+\beta=1$, $c_0>\frac12\alpha$.
  \end{enumerate}
  \end{lem}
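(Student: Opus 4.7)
The plan is to adopt the PROB/positive commutator scheme of Lemma~\ref{lem:A2}, with propagation observable
\begin{equation*}
A_4(t) = F_1(\XT)\, F_4(\gamma t^\beta > c_0)\, F_1(\XT).
\end{equation*}
Since $F_1,F_4$ are bounded, assumption~(A1) is immediate, and (A2) follows from the localized decay of $\textbf{N}(\phi)$ under~(\ref{N-decay}). The heart of the argument is to compute $D_H A_4$ and to verify (A3) (or its bootstrap form) in each of the three parameter regimes.

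Using $\partial_t F_4=\beta t^{\beta-1}\gamma F_4'$ and the support property (\ref{DeltaF2}), which makes $F_1[-i\Delta,F_4]F_1$ negligible for $t^\alpha>4$, a commutator calculation parallel to~(\ref{DHFrF})--(\ref{DAF2}), together with the symmetrization identity~(\ref{A2BC2}), should yield
\begin{align*}
\la D_H A_4\ra_t = {}& \frac{4}{t^\alpha}\la\sqrt{F_1 F_1'}\,\gamma F_4 \sqrt{F_1 F_1'}\ra + \frac{\beta c_0}{t}\la F_1 F_4' F_1\ra \\
& - \frac{2\alpha}{t}\la\sqrt{F_1 F_1'\tfrac{\jx}{t^\alpha}}\, F_4\, \sqrt{F_1 F_1'\tfrac{\jx}{t^\alpha}}\ra + O_1(t^{-3\alpha}),
\end{align*}
where I have used $\gamma\sim c_0 t^{-\beta}$ on $\mathrm{supp}\,F_4'$ to reduce the second term. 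The first two pieces are non-negative; only the $-\tfrac{2\alpha}{t}$ term is an obstruction.

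The key operator bound is
$
F_4(\gamma t^\beta>c_0)\leq \tfrac{t^\beta}{c_0}\,\gamma\, F_4(\gamma t^\beta>c_0),
$
which holds by functional calculus since $\gamma$ and $F_4$ commute and $\gamma t^\beta/c_0\geq 1$ on the support. Combined with $\jx/t^\alpha\lesssim 1$ on $\mathrm{supp}\,F_1'$, this controls the negative term, up to commutator errors of the type handled in~(\ref{f1f2}), by
$
\tfrac{2\alpha t^\beta}{c_0 t}\la \sqrt{F_1 F_1'}\,\gamma F_4\,\sqrt{F_1 F_1'}\ra.
$
Absorption into the leading positive $\tfrac{4}{t^\alpha}$ term for large $t$ requires $\tfrac{\alpha}{2c_0}\, t^{\alpha+\beta-1}\leq 1$, which is automatic in case~(1) ($\alpha+\beta<1$) and is exactly the boundary condition $c_0>\alpha/2$ in case~(3). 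In both regimes (A3) is satisfied, and integration in time together with boundedness of $\la A_4\ra$ yields~(\ref{PE-4V2}).

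Case~(2) is the genuinely subtle subcase, since $\beta<\alpha$ alone does not exclude $\alpha+\beta\geq 1$ and the direct absorption breaks down. Here the plan is a two-step bootstrap. First, I would consider the weighted observable $\tilde A_4=t^{-\epsilon}F_1F_4F_1$ for some $0<\epsilon<1-\alpha$; for this PROB the bad term is $-\tfrac{2\alpha}{t^{1+\epsilon}}(\cdot)$, which is a priori in $L^1(dt)$, producing the intermediate weak estimate
\begin{equation*}
\int_{t_0}^\infty \frac{1}{t^{\alpha+\epsilon}}\la \sqrt{F_1 F_1'}\,\gamma F_4\, \sqrt{F_1 F_1'}\ra\, dt < \infty.
\end{equation*}
Second, I would re-run the argument for $A_4$ itself, splitting $F_4=F_4\,F_6(\gamma>\delta)+F_4\,F_5(\gamma\leq\delta)$ for a fixed small $\delta>0$: the piece with $\gamma>\delta$ is absorbed into the positive term using the strong estimate~(\ref{PE-2vr2}) of Lemma~\ref{lem:A2}(3), and the transition piece with $c_0 t^{-\beta}<\gamma\leq\delta$ is controlled by Cauchy--Schwarz in time against the intermediate estimate above. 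The assumption $\beta<\alpha$ is precisely what makes the time-integration exponents in this last step close.

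The main technical obstacle is the bootstrap step in case~(2), specifically the transition zone $c_0 t^{-\beta}<\gamma\leq\delta$, in which neither~(\ref{PE-2vr2}) nor the trivial bound $F_4\leq 1$ is sharp, and where the non-commutativity between $F_1$ (a function of $\jx/t^\alpha$) and $F_4$ (a function of $\gamma t^\beta$) must be carefully tracked through the symmetrization and Remark~\ref{rem:boundary}. Cases~(1) and~(3) are by contrast essentially algebraic and follow the direct absorption scheme of Lemma~\ref{lem:A2} verbatim.
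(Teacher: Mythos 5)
Your organization of the Heisenberg derivative of $A_4=F_1F_4F_1$ and the key operator bound $F_4(\gamma t^\beta>c_0)\leq \tfrac{t^\beta}{c_0}\gamma F_4$ combined with $\tfrac{\jx}{t^\alpha}\lesssim 1$ on $\mathrm{supp}\,F_1'$ are exactly the paper's Case~III mechanism, and your direct-absorption route to case~(1) is a valid alternative to the paper's bootstrap (the paper uses $\tilde A_4=t^{-\epsilon}\tilde F_1\tilde F_4\tilde F_1$ there; your route is cleaner since $t^{\alpha+\beta-1}\to 0$ does all the work). Case~(3) then matches the paper's Case~III line for line. However, your error rate in $D_HA_4$ is too optimistic: the symmetrization and double-commutator terms $I_3,R$ are $O_1(t^{-3\alpha+\beta})$, not $O_1(t^{-3\alpha})$, because the inner commutator $[F_4(\gamma t^\beta),F_1(\jx/t^\alpha)]$ contributes a factor $t^{\beta-\alpha}$ (cf.\ \eqref{FXFG}). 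This missing $t^{\beta}$ factor is precisely what makes case~(2) hard, and it breaks your plan for it.

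In case~(2), the rate $t^{-3\alpha+\beta}$ is integrable only when $\beta<3\alpha-1$; for $\alpha\in(\tfrac13,\tfrac12)$ and $\beta$ close to $\alpha$ it is not. Your step~1 weighted observable $t^{-\epsilon}A_4$ with $\epsilon\in(1-3\alpha+\beta,\,1-\alpha)$ makes \emph{its own} errors integrable, but when you return to the unweighted $A_4$ in step~2 those same non-integrable errors reappear and are not addressed. Moreover your step~2 Cauchy--Schwarz fails whenever $\alpha+\beta\geq 1$: the transition piece after the absorption bound is of size $t^{\beta-1}\la\gamma F_4\ra$, and to control $\int t^{\beta-1}\la\gamma F_4\ra\,dt$ by the intermediate estimate $\int t^{-\alpha-\epsilon}\la\gamma F_4\ra\,dt<\infty$ you would need $\alpha+\beta+\epsilon\leq 1$, which is incompatible with $\epsilon>0$ when $\alpha+\beta\geq 1$. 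What the paper does instead is an $n$-step descent: expand the commutator $[F_1,F_4]$ to order $n$ so the errors split into a sum $\sum_k t^{-2\alpha-k(\alpha-\beta)}K\!\left(\tfrac{\jx}{t^\alpha}\right)L(\gamma t^\beta)K\!\left(\tfrac{\jx}{t^\alpha}\right)$, pick $n$ with $2\alpha+(n+1)(\alpha-\beta)>1$, prove a PRES for $t^{-n(\alpha-\beta)}A_4$ first, and then iterate downward in $k$, using the $k$-th step's estimate to control the $(k\!-\!1)$-th step's symmetrization errors. Your $\gamma$-splitting $F_4=F_4F_6(\gamma>\delta)+F_4F_5(\gamma\leq\delta)$ and appeal to \eqref{PE-2vr2} handles a different (and easier) part of the negative term but does not replace this iteration; without it, case~(2) is not proved.
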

 \begin{proof}
 Denote  $A_4= F_1(\XT)F_4(\gamma t^\beta >c_0)F_1(\XT)$,  we will mostly ignore $c_0$ unless it's essential.     \begin{align}
  D_HA_4=& (D_HF_1)F_4F_1 + F_1F_4(D_HF_1)  + F_1(D_HF_4)F_1 \notag\\
=&F_1' \frac{1}{t^\alpha}\left[2\gamma   -\alpha \frac{\la x\ra }{t}\right]F_4F_1  +F_1F_4\frac{1}{t^\alpha}\left[2\gamma   -\alpha \frac{\la x\ra }{t}\right] F_1'+   F_1([-i\Delta, F_4] +\frac{d F_4}{dt})F_1  \notag\\ &
+ \frac{1}{2t^{2\alpha}} \left[F_1''[[-i\Delta, \jx], \jx] , F_4\right]F_1 +    \frac{1}{2t^{2\alpha}}[F_4,F_1]  F_1''[[-i\Delta, \jx], \jx]   \label{DA4}
  \end{align}
   We organize the terms into three groups $D_HA_4=I_1+I_2+I_3$, such that
 \begin{align}
 I_1= &F_1' \frac{1}{t^\alpha}\left[2\gamma   -\alpha \frac{\la x\ra }{t}\right]F_4F_1  + F_1F_4\frac{1}{t^\alpha}\left[2\gamma   -\alpha \frac{\la x\ra }{t}\right] F_1' \\
=& \frac{4}{t^\alpha} \sqrt{F_1'F_1} \gamma F_4 \sqrt{F_1'F_1}   -\frac{\alpha}{t}  [F_1'  \frac{\jx }{t^\alpha}F_4F_1  + F_1F_4 \frac{\jx}{t^{\alpha}}F_1'] + R \\
I_2=& F_1(D_HF_4)F_1   =  \frac{\beta}{c_0 t} F_1 \gamma t^{\beta} F_4'F_1, \hspace{1cm} \text{for } t^\alpha\geq 4,\\
I_3=& \frac{1}{2t^{2\alpha}} \left[F_1''[[-i\Delta, \jx], \jx] , F_4\right]F_1 +    \frac{1}{2t^{2\alpha}}[F_4,F_1]  F_1''[[-i\Delta, \jx], \jx] =O(t^{-3\alpha+\beta}).
 \end{align}
Here $R$ is the remainder terms coming from symmetrization, i.e.
\begin{align} & \frac{1}{t^\alpha}\sqrt{G_1}[[\sqrt{G_2}, \gamma F_4], \sqrt{G_3}]\sqrt{G_4} \\
=&   \frac{1}{t^\alpha}\left\{\sqrt{G_1}[\sqrt{G_2}, \gamma ][F_4, \sqrt{G_3}]\sqrt{G_4}  +   \sqrt{G_1}[\gamma , \sqrt{G_3}] [\sqrt{G_2}, F_4]\sqrt{G_4}   +  \sqrt{G_1}\gamma [[\sqrt{G_2}, F_4], \sqrt{G_3}]\sqrt{G_4} \right\} \notag\end{align}
with $G_1, G_2, G_3, G_4\in \{F_1, F_1'\}$, hence $R=O_1(t^{-3\alpha +\beta})$.    So we have
\begin{equation}D_H A =  \frac{4}{t^\alpha} \sqrt{F_1'F_1} \gamma F_4 \sqrt{F_1'F_1}  +\frac{\beta}{c_0t} F_1 \gamma t^{\beta}F_4'F_1   -\frac{\alpha}{t}  [F_1'  \frac{\jx }{t^\alpha}F_4F_1  + F_1F_4 \frac{\jx}{t^{\alpha}}F_1']  + I_3+R.
\label{A44}\end{equation}
The first two terms are positive, so it is left to show $I_3, R\in L^1(dt)$ and also control the third term.

Now let us prove (\ref{PE-4V2}) in different scenarios.

\textbf{Case I: }  If $\alpha>\frac12$ and $\alpha+\beta<1$, $c_0>0$, then $I_3, R \in L^1(dt)$.
  We get for $t\geq t_0$,
Then we perform the usual bootstrap argument, taking $\tilde{A}_4 = \frac{1}{t^\epsilon}\tilde{F}_1 \tilde{F}_4 \tilde{F_1}$, $\alpha+\beta+\epsilon<1. $ Then $\tilde{A}_4$ satisfies assumption (A1)(A2)(A4), hence we get
\begin{equation}\int_{t_0}^\infty\frac{1}{t^{\alpha+\epsilon}}\la \sqrt{\tilde{F}_1'\tilde{F}_1}\gamma \tilde{F}_4\sqrt{\tilde{F}_1'\tilde{F}_1}\ra   + \frac{1}{t^{1+\epsilon}} \la \tilde{F}_1 \gamma t^{\beta}\tilde{F}_4'\tilde{F}_1\ra
<\infty \label{e-improve}\end{equation}which further implies
$ \int_{t_0}^\infty\frac{1}{t^{\alpha+\beta+\epsilon}}\la \sqrt{\tilde{F}_1'\tilde{F}_1}  \tilde{F}_4\sqrt{\tilde{F}_1'\tilde{F}_1}\ra
<\infty.$
 By taking $\tilde{F}_4=F_4,  \sqrt{\tilde{F}_1\tilde{F}_1'}(\lambda) =F_1'(\lambda)\lambda$, we can control the third term in (\ref{A44}),  so $A_4$ satisfies assumption (A1)(A2)(A3).  We get a propagation estimate (\ref{PE-4V2}).
Notice that the restriction of $\alpha+\beta<1$ comes from the bootstrap argument.

\textbf{Case II:}  If $\alpha>\frac13$, $\alpha>\beta>0$, $c_0>0.$
Let us recall from Remark~\ref{rem:boundary}, the commutator $[F_1, F_2]$ is essentially supported in the region of $F_1, F_2$, up to error terms that are integrable.  More precisely,
 \begin{align}
&[F_1(\XT), F_4(\gbb)] \notag\\=&\sum_{k=1}^{n-1}\frac{1}{k!}F_4'(\gamma t^\beta)ad^{(k)}_{\gamma t^\beta}(F_1(\XT))+R_n =\sum_{k=1}^{n-1}(-1)^k\frac{1}{k!}ad^{(k)}_{\gamma t^\beta}(F_1(\XT)) F_4'(\gamma t^\beta)+R^*_n \label{FXFG}
\end{align}
 since $ad^{(k)}_{\gamma t^\beta}(F_1(\XT))= t^{k(\beta-\alpha)} O(\jx^0)F_1^{(k)}(\frac{\jx}{t^\alpha})$, and $R_n , R_n^*$  are of size $O(t^{n(\beta-\alpha))})$, by taking $\beta<\alpha$, and $n$ large, $R_n\in L^1(dt).$

 Now we  construct more observables to control the error term $I_3, R$ which are of the type
\be  \frac{1}{t^{2\alpha +k(\alpha-\beta)}} K(\frac{\jx}{t^{\alpha}}) L(\gamma t^\beta)K(\frac{\jx}{t^{\alpha}}),  \hspace{0.5cm}  \frac{1}{t^{2\alpha +k(\alpha-\beta)}}   L(\gamma t^\beta)K(\frac{\jx}{t^{\alpha}}) L(\gamma t^\beta) \ee
where $K, L$ are  functions with compact support coming from derivatives of $F_1, F_4$, $1\leq k\leq  n$, with the choice of $n$ such that $n+1$ is the smallest integer  which guarantees $2\alpha +(n+1)(\alpha-\beta)> 1,$ so that if we perform the commutator expansion  for $I_3, R$, we have  integrable error.

We perform the bootstrap argument in the following step:

\textit{0th-step}, we start with
$\tilde{A}_4 = \tilde{F}_1 \tilde{F}_4 \tilde{F_1}$, then  $\frac{1}{t^{n(\alpha-\beta)}}\la \tilde{A}_4\ra$ satisfies assumption (A1)(A2)(A4),  so we obtain estimate
\begin{equation}\int_{t_0}^\infty\frac{1}{t^{\alpha+\beta+n(\alpha-\beta)}}\la \sqrt{\tilde{F}_1'\tilde{F}_1}  \tilde{F}_4\sqrt{\tilde{F}_1'\tilde{F}_1}\ra    +\frac{1}{t^{1+n(\alpha-\beta)}}\la \tilde{F}_1 \tilde{F}_4'  \tilde{F}_1 \ra
<\infty. \label{0-iteration}\end{equation}


\textit{k-th step}, suppose in previous step we use $\frac{d}{dt}\frac{1}{ t^{k(\alpha-\beta)}}\la \tilde{A}_4\ra$ and obtain estimate
\begin{equation}\int_{t_0}^\infty\frac{1}{t^{\alpha+\beta+k(\alpha-\beta)}}\la \sqrt{\tilde{F}_1'\tilde{F}_1}  \tilde{F}_4\sqrt{\tilde{F}_1'\tilde{F}_1}\ra    +\frac{1}{t^{1+k(\alpha-\beta)}}\la \tilde{F}_1 \tilde{F}_4'  \tilde{F}_1 \ra dt
<\infty. \label{k-iteration}\end{equation}
We continue with $\frac{d}{dt}\frac{1}{ t^{(k-1)(\alpha-\beta)}}\la \tilde{A}_4\ra$, we only need to control the term coming from the error term $I_3, R$
\be  \frac{1}{t^{2\alpha +k(\alpha-\beta)}} \la K(\frac{\jx}{t^{\alpha}}) L(\gamma t^\beta) K(\frac{\jx}{t^{\alpha}})\ra  \ee
using  the first term of (\ref{k-iteration}). Hence, the error term is in $L^1(dt)$  and we get
\begin{equation}\int_{t_0}^\infty\frac{1}{t^{\alpha+\beta+(k-1)(\alpha-\beta)}}\la \sqrt{\tilde{F}_1'\tilde{F}_1}  \tilde{F}_4\sqrt{\tilde{F}_1'\tilde{F}_1}\ra    +\frac{1}{t^{1+(k-1)(\alpha-\beta)}}\la \tilde{F}_1 \tilde{F}_4'  \tilde{F}_1 \ra dt.
<\infty. \label{k1-iteration}\end{equation}
So up to $n$-th step, we succeeded in controlling the error term $I_3, R\in L^1(dt)$, so $A_4$ satisfy assumption (A1)(A2)(A3) and we obtain the existence of limit, together with the propagation estimate \eqref{PE-4V2}.
Of course, in each step we are not taking the same operator $\tilde{F}_1, \tilde{F}_4$, but modify it so as to cover the operator $K,L$ in the next iteration step.

\textbf{Case III:}  If  $\alpha>\frac12$ and $\alpha+\beta=1$, $c_0>\frac12 \alpha$. In this case, the choice of $c_0$ becomes essential.  Since $I_3, I =O(t^{-3\alpha+\beta})\in L^1(dt)$, but we can not use bootstrap to control the third term in \eqref{A44}. Instead, let us symmetrize the third term with a symmetrization error of size $O(t^{-3\alpha+\beta})$,  and we get
\be D_H A_4 =  \frac{4}{t^\alpha} \sqrt{F_1'F_1} \gamma F_4 \sqrt{F_1'F_1}  +\frac{\beta}{c_0t} F_1 \gamma t^{\beta}F_4'F_1   -\frac{2\alpha}{t}\sqrt{F_1'F_1\frac{\jx}{t^\alpha}}F_4 \sqrt{F_1'F_1\frac{\jx}{t^\alpha}}   +O(t^{-3\alpha+\beta}). \label{DA4-V3}\ee
Notice our choice of characteristic function, $supp F' \subset [\frac12, 1]$, hence we get that~\footnote{The error term comes from writing  $\sqrt{F_1'F_1\frac{\jx}{t^\alpha}}F_4 \sqrt{F_1'F_1\frac{\jx}{t^\alpha}}$ as $\sqrt{F_4}F_1'F_1\frac{\jx}{t^\alpha}\sqrt{F_4} $ so that we can use the support condition of $F_1'$, and then transform it back to the form $\sqrt{F_1'F_1}F_4 \sqrt{F_1'F_1}$ using \eqref{AB2A}.}
\begin{align}
\la \frac{2\alpha}{t}\sqrt{F_1'F_1\frac{\jx}{t^\alpha}}F_4 \sqrt{F_1'F_1\frac{\jx}{t^\alpha}} \ra \leq & \frac{2\alpha}{t} \la \sqrt{F_1'F_1} F_4 \sqrt{F_1'F_1} \ra +O(t^{-1-\epsilon})\\
\leq & \frac{2\alpha}{c_0 } \frac{1}{t^\alpha} \la \sqrt{F_1'F_1}\gamma F_4 \sqrt{F_1'F_1} \ra +O(t^{-1-\epsilon})
\end{align}
Since $c_0>\frac12\alpha$, this term can be controlled by first term in \eqref{DA4-V3}, and we get propagation estimate
\begin{align}
\int_{t_0}^\infty \frac{1}{t^\alpha} \la \sqrt{F_1'F_1}  \gamma F_4 \sqrt{F_1'F_1} \ra +\frac{1}{t} \la F_1 \gamma t^{\beta}F_4'F_1  \ra dt<\infty.
\end{align}
  \end{proof}

\begin{lem}~\label{PA5}  Let  $\phi$ be solution to equation (\ref{Main-eq}) satisfying global energy bound (\ref{global-bound}).    $\alpha> \frac12, \alpha+\beta=1$.  $F_1=F_1(\XT)$, $F_5=F_5(\gamma t^{\beta} \leq c_1)$, with $c_1<\frac14\alpha$. Then we have the propagation estimate
 \begin{align}
  & \int_{t_0}^\infty \frac{1}{t^\alpha} \left| \la \sqrt{F_1'F_1}\gamma F_{5,1}(\gamma t^\beta <-c_1) \sqrt{F_1'F_1}\ra \right| dt  \\+& \int_{t_0}^\infty \frac{1}{t} \la\sqrt{F_1'F_1 }F_5(\gamma t^\beta\leq c_1)\sqrt{F_1'F_1 } \ra dt +  \int_{t_0}^\infty\frac{1}{t} \left| \la F_1 F_5' \gamma t^{\beta}F_1 \ra \right| dt  <\infty. \label{PE-5}
 \end{align}
\end{lem}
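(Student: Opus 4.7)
The plan is to follow the Case III strategy of Lemma~\ref{PA4}, taking $A_5 = F_1(\XT)\,F_5(\gamma t^\beta \leq c_1)\,F_1(\XT)$ as a PROB and extracting from $D_H A_5$ three nonpositive leading pieces, each corresponding to one of the three integrals in the conclusion. Verification of (A1) is immediate from the boundedness of $F_1, F_5$ together with the global $H^1$ bound \eqref{global-bound}, and (A2) follows from (H2) since $\beta_0>1$.

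To compute $D_H A_5$, I would write $D_H A_5 = (D_H F_1)F_5 F_1 + F_1 F_5 (D_H F_1) + F_1 (D_H F_5) F_1$. The formula for $D_H F_1$ is the same as in \eqref{DA4}, giving $\frac{1}{t^\alpha}[2\gamma - \alpha\XT] F_1'$ modulo the $O(t^{-2\alpha})$ double-commutator correction, while $D_H F_5 = \frac{\beta}{t}\gamma t^\beta F_5'$ for $t^\alpha\geq 4$ by \eqref{DeltaF2}. After symmetrizing the $2\gamma$ and $\alpha\XT$ contributions via \eqref{A2BC2} and using \eqref{f1f2}, I expect
\begin{align*}
D_H A_5 =& \frac{4}{t^\alpha}\sqrt{F_1' F_1}\,\gamma F_5\,\sqrt{F_1' F_1} + \frac{\beta}{t} F_1\,\gamma t^\beta F_5'\, F_1 \\
& - \frac{2\alpha}{t}\sqrt{F_1' F_1\tfrac{\jx}{t^\alpha}}\,F_5\,\sqrt{F_1' F_1 \tfrac{\jx}{t^\alpha}} + O(t^{-3\alpha+\beta}).
\end{align*}
Since $\alpha+\beta=1$ and $\alpha>\frac12$ give $-3\alpha+\beta = 1-4\alpha < -1$, the error is integrable, so no bootstrap is needed (mirroring Case III).

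The key step is the sign analysis. Decompose $F_5 = F_5^{\mathrm{mid}} + F_{5,1}$ with $F_{5,1} = F_{5,1}(\gamma t^\beta < -c_1)$ and $F_5^{\mathrm{mid}}$ supported on $\{|\gamma t^\beta|\lesssim c_1\}$. Using $\alpha+\beta=1$ to write $\frac{4}{t^\alpha}\gamma = \frac{4}{t}\gamma t^\beta$, on $\mathrm{supp}\,F_5^{\mathrm{mid}}$ we have $|4\gamma t^\beta F_5^{\mathrm{mid}}| \leq 4 c_1 F_5^{\mathrm{mid}}$, and on $\mathrm{supp}\,F_1'$ we have $\jx/t^\alpha \in [\tfrac12,1]$, so up to a double-commutator error of size $O(t^{-1-\epsilon})$ we may replace $\sqrt{F_1'F_1\jx/t^\alpha}$ by $\sqrt{F_1'F_1}$. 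Combining the $F_5^{\mathrm{mid}}$ contribution of the first term with the $-\frac{2\alpha}{t}$ term yields
$$-\frac{2\alpha - 4c_1}{t}\,\sqrt{F_1'F_1}\,F_5\,\sqrt{F_1'F_1} + (\text{integrable}),$$
which is strictly negative because $c_1 < \alpha/4$ forces $2\alpha-4c_1 > \alpha$. The $F_{5,1}$ contribution of the first term, $\frac{4}{t^\alpha}\sqrt{F_1'F_1}\,\gamma F_{5,1}\,\sqrt{F_1'F_1}$, is sign-definite nonpositive since $\gamma F_{5,1} \leq -c_1 t^{-\beta} F_{5,1}$ on its support. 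Finally, $\frac{\beta}{t} F_1\gamma t^\beta F_5' F_1 \leq 0$, because $F_5'\leq 0$ (as $F_5$ is a decreasing cutoff) while on $\mathrm{supp}\,F_5'$ the factor $\gamma t^\beta$ is pinned near the positive cutoff $c_1$.

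Putting these three nonpositive pieces together gives $D_H A_5 \leq -(\mathrm{positives}) + C$ with $\langle C\rangle_t \in L^1(dt)$. Since $\langle A_5\rangle_t$ is uniformly bounded by (A1), integration on $[t_0,\infty)$ yields the three desired propagation estimates. The main obstacle is the careful sign/support tracking in the $F_5$ decomposition and ensuring that every symmetrization error is genuinely $O(t^{-1-\epsilon})$; unlike Case~II of Lemma~\ref{PA4}, no iteration is available here, so the positivity must be produced in one shot, and this is precisely what the quantitative hypothesis $c_1 < \alpha/4$ buys us.
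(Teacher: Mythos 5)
Your proposal follows the paper's Case III strategy essentially verbatim: same PROB $A_5$, same Heisenberg derivative, same decomposition $F_5=F_{5,1}+F_5^{\mathrm{mid}}$ (the paper calls the middle piece $F_{5,2}$), same use of $\alpha+\beta=1$ to control the middle part, and the same sign analysis of the three leading pieces. One small but worth-flagging slip: you claim you may simply ``replace'' $\sqrt{F_1'F_1\,\jx/t^\alpha}$ by $\sqrt{F_1'F_1}$ up to a double-commutator error, giving coefficient $-(2\alpha-4c_1)/t$, for which $c_1<\alpha/2$ would already suffice. The actual mechanism (see the paper's footnote in Case III) is to resymmetrize to $\sqrt{F_5}\,F_1'F_1\tfrac{\jx}{t^\alpha}\sqrt{F_5}$, use the lower bound $\jx/t^\alpha\geq\tfrac12$ on $\mathrm{supp}\,F_1'$, and symmetrize back, which only yields $-(\alpha-4c_1)/t$; this is precisely why the hypothesis is $c_1<\tfrac14\alpha$ and not $c_1<\tfrac12\alpha$. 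The final conclusion is unaffected since the stronger hypothesis is assumed, but the replacement is a quantitative lower bound, not an identity-up-to-errors, and your coefficient tracking obscures where the constant $\tfrac14$ comes from.
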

\begin{proof} Denote $A_5= F_1(\XT)F_5(\gamma t^{\beta} \leq c_1)F_1(\XT)$, we have
\begin{align}
D_HA_5 =& (D_H F_1) F_5 F_1 +  F_1F_5 (D_HF_1) + F_1 (D_H F_5) F_1\notag\\
=& F_1' \frac{2\gamma}{t^\alpha}F_5F_1  +F_1F_5\frac{2\gamma}{t^\alpha}F_1'
-\frac{\alpha}{t}[F_1' \frac{\jx}{t^\alpha}F_5F_1  +F_1F_5\frac{\jx}{t^\alpha}F_1'] +  \frac{\beta}{t}F_1 F_5' \gamma t^{\beta}F_1 + O(t^{-3\alpha+\beta}) \notag \\
=& \frac{4}{t^\alpha} \sqrt{F_1'F_1}\gamma F_{5} \sqrt{F_1'F_1}   -\frac{2\alpha}{t} \sqrt{F_1'F_1\frac{\jx}{t^\alpha}}F_5\sqrt{F_1'F_1\frac{\jx}{t^\alpha}} + \frac{\beta}{t}F_1 F_5' \gamma t^{\beta}F_1 +O(t^{-3\alpha+\beta}).
\end{align}
 Denote $F_{5,1}=F(\gamma t^\beta <-c_1),  F_{5,2}=\tilde{F}(-c_1\leq \gamma t^\beta \leq c_1)$ such that $F_5 =F_{5,1} +F_{5,2}$.
\begin{align}
 & \frac{4}{t^\alpha} \la \sqrt{F_1'F_1}\gamma F_{5,2} \sqrt{F_1'F_1}\ra - \frac{2\alpha}{t} \la \sqrt{F_1'F_1\frac{\jx}{t^\alpha}}F_5\sqrt{F_1'F_1\frac{\jx}{t^\alpha}}\ra \\ \leq &  \frac{4c_1}{t}\la \sqrt{F_1'F_1}  F_{5,2} \sqrt{F_1'F_1}\ra  -\frac{\alpha}{t} \la \sqrt{F_1'F_1}F_5\sqrt{F_1'F_1}\ra  + O(t^{-3\alpha+\beta})\\
 \lesssim  &  -\frac{1}{t} \la \sqrt{F_1'F_1}F_5\sqrt{F_1'F_1}\ra +O(t^{-3\alpha+\beta})
 \end{align}
 This implies that if $c_1< \frac14 \alpha$, we get the propagation estimate (\ref{PE-5}).
\end{proof}

Lemma~\ref{PA4}(2) and Lemma~\ref{PA5} leave us with the boundary region $F_1 F(\frac14\alpha\leq \gamma t^\beta\leq \frac12\alpha)F_1$.  To handle it, we use the scaling argument.
 \begin{lem}\label{lem:boundary} Let  $\phi$ be solution to equation (\ref{Main-eq}) satisfying global energy bound (\ref{global-bound}). $\alpha>\frac12, \alpha+\beta=1$. For characteristic function $F_1, F_2$  of $[1,+\infty)$, we can find characteristic function $\tilde{F}_1, \tilde{F}_2$ of $[\frac14\alpha, 1]$ and $t_1\gg1 $,  such that the following propagation estimate holds
 \begin{align}
\int_{t_1}^\infty \frac{1}{t}\la \tilde{F}_1(\frac{\jx}{t^\alpha})  F_2(\gamma t^\beta) \tilde{F}_1(\frac{\jx}{t^\alpha})  \ra +\frac{1}{t}\la F_1(\frac{\jx}{t^\alpha}) \tilde{F}_2(\gamma t^\beta) F_1(\frac{\jx}{t^\alpha})\ra<\infty. \label{PE-boundary}
\end{align}
 \end{lem}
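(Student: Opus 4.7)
The lemma fills the phase-space gap left by Lemmas~\ref{PA4} and~\ref{PA5}: the thin strip $\gamma t^\beta\in[\alpha/4,\alpha/2]$ (and its spatial counterpart) where the direct positive-commutator argument fails because the leading positive contribution $(4\gamma/t^\alpha)F_4$ in the Heisenberg derivative just balances the negative $(2\alpha/t)(\jx/t^\alpha)F_4$. Lemma~\ref{PA4} Case III requires $c_0>\alpha/2$ to produce sign-definiteness, Lemma~\ref{PA5} requires $c_1<\alpha/4$, and neither reaches the strip. The \emph{scaling argument} exploits the freedom in the constraint $\alpha+\beta=1$: perturbing $\alpha\mapsto\alpha'=\alpha\mp\epsilon$ (with $\beta'=1-\alpha'$) shifts the admissible threshold $\alpha'/2$ and reactivates PA4 (or PA5) at the new scale.

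For the second term $\int t^{-1}\la F_1(\jx/t^\alpha)\tilde F_2(\gamma t^\beta)F_1(\jx/t^\alpha)\ra\,dt$ with $\tilde F_2$ a bump on $[\alpha/4,1]$: decompose $\tilde F_2=\sum_k G_k$ into bumps $G_k$ each supported on a dyadic interval $[c_k/2,c_k]$. Each $G_k$ with $c_k>\alpha/2$ can be written, up to a bounded multiplicative factor, as the derivative $(F_4^{(c_k)})'$ of a smooth characteristic function of $[c_k,\infty)$, so Lemma~\ref{PA4} Case III directly gives $\int t^{-1}\la F_1\,(F_4^{(c_k)})'\,F_1\ra\,dt<\infty$. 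Summing covers $(\alpha/4,1)$ except an arbitrarily thin residual strip just above $\alpha/4$. That residual is handled by rescaling to $(\alpha',\beta')=(\alpha-\epsilon,\beta+\epsilon)$: Lemma~\ref{PA4} Case III now admits $c_0'>\alpha'/2=\alpha/2-\epsilon/2$, and the corresponding $(F_4^{(c_0')})'$ at the new scale, pulled back via $\gamma t^{\beta'}=\gamma t^\beta\cdot t^\epsilon$, covers exactly the missing strip once $t$ is large enough that $t^\epsilon>c_0'/(c_0'-\epsilon/2)$.

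For the first term $\int t^{-1}\la\tilde F_1(\jx/t^\alpha)F_2(\gamma t^\beta)\tilde F_1(\jx/t^\alpha)\ra\,dt$: decompose $\tilde F_1$ into a finite sum of boundary bumps of the form $\sqrt{F_1'F_1}(\jx/t^{\alpha_j})$ for a family of exponents $\alpha_j$ (with $\beta_j=1-\alpha_j$) clustering near $\alpha$. On the support of $F_2(\gamma t^\beta)$ we have $\gamma t^\beta\geq 1/2$, hence $\gamma t^{\beta_j}$ is uniformly large for $t$ large and any $\beta_j$ close to $\beta$; thus $F_4(\gamma t^{\beta_j}>c_0)=1$ on this support. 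Applying the first propagation estimate of Lemma~\ref{PA4} Case III at scale $(\alpha_j,\beta_j)$ and upgrading the weight via $\gamma\geq c_0/(2t^{\beta_j})$ on the support of $F_4$ gives $\int t^{-1}\la\sqrt{F_1'F_1}(\jx/t^{\alpha_j})\,F_2\,\sqrt{F_1'F_1}(\jx/t^{\alpha_j})\ra\,dt<\infty$. Summing over $j$ covers $\tilde F_1$'s support.

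\textbf{Main obstacle.} The operator-level domination between cutoffs at different scales requires careful bookkeeping of commutator errors: each swap $F(\jx/t^\alpha)\leftrightarrow F(\jx/t^{\alpha'})$ or $F(\gamma t^\beta)\leftrightarrow F(\gamma t^{\beta'})$ generates subleading terms via the commutator expansion of Section~3, and these must be shown to be $L^1(dt)$-integrable. The bootstrap template from the proofs of Lemma~\ref{PA4} Cases II--III applies, but needs one additional iteration layer to absorb the scale-change corrections and to keep the remainders within the tolerance $O(t^{-1-\epsilon})$ needed for propagation.
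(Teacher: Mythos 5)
The rescaling step -- replacing $(\alpha,\beta)$ by $(\alpha',\beta')=(\alpha-\epsilon,\beta+\epsilon)$ -- does not close the argument, because it produces a \emph{time-dependent} shift of the cutoff thresholds, not a fixed one. Since $\gamma t^{\beta'}=\gamma t^{\beta}\cdot t^{\epsilon}$, a bump concentrated at $\gamma t^{\beta'}\sim c_0'$ corresponds to $\gamma t^{\beta}\sim c_0' t^{-\epsilon}$, which drifts to $0$ as $t\to\infty$. Hence what Lemma~\ref{PA4} Case III gives at scale $(\alpha',\beta')$ is a bound on $\int t^{-1}\la F_1(\jx/t^{\alpha'})\,(F_4^{(c_0')})'(\gamma t^{\beta'})\,F_1(\jx/t^{\alpha'})\ra\,dt$, an integral against a moving window that passes through and then below the residual strip; it does \emph{not} dominate $\int t^{-1}\la F_1\,\tilde F_2(\gamma t^{\beta})\,F_1\ra\,dt$ for a fixed $\tilde F_2$ supported near $\gamma t^\beta\sim\alpha/4$. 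Your treatment of the first term has the same flaw: $\sqrt{F_1'F_1}(\jx/t^{\alpha_j})$ with $\alpha_j\neq\alpha$ is supported in $\jx/t^\alpha\in[t^{\alpha_j-\alpha}/2,\,t^{\alpha_j-\alpha}]$, again a window that drifts in $t$, so a finite sum of such bumps cannot reconstruct a fixed $\tilde F_1(\jx/t^\alpha)$.

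The scaling in the paper's proof is the $t$-\emph{independent} multiplicative parameter $\lambda\geq 1$, applied to both cutoffs at once: $B_\lambda=F_1(\lambda\jx/t^\alpha)F_2(\lambda\gamma t^\beta)F_1(\lambda\jx/t^\alpha)$. The coupling is the point: on $\operatorname{supp}F_2(\lambda\gamma t^\beta)$ one has $\gamma\geq\frac{3}{4\lambda}t^{-\beta}$, so $\frac{4\lambda}{t^\alpha}\gamma F_2\geq\frac{3}{t}F_2$, while on $\operatorname{supp}F_1'(\lambda\jx/t^\alpha)$ one has $\jx/t^\alpha\leq 1/\lambda$, so the negative term is $\leq\frac{2\alpha}{t}F_2$; the factors of $\lambda$ cancel in both places, and the margin $(3-2\alpha)/t$ persists for every $\lambda$. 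Meanwhile the boundary windows $\gamma t^\beta\in[\frac3{4\lambda},\frac1\lambda]$ and $\jx/t^\alpha\in[\frac1{2\lambda},\frac1\lambda]$ shift by the fixed factor $\lambda^{-1}$, so a finite geometric family $\lambda_j=(4/3)^j$ with $(3/4)^{J_0}<\alpha/8$ covers the strip. Your diagnosis is right that shrinking $c_0$ toward $\alpha/2$ alone loses positivity -- that is exactly why Lemma~\ref{PA4} Case III cannot reach $\alpha/4$ by itself -- but the remedy is to simultaneously tighten the spatial cutoff by the same factor $\lambda$, not to perturb the exponent $\alpha$.
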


 \begin{proof}

  Let us first specify the choice of functions. Let $F_1, F_2$ be two smooth increasing characteristic functions of $[1,+\infty)$, i.e.

  $F_1(\lambda)=0, \lambda \leq \frac12; F_1(\lambda)=1, \lambda \geq 1.$ So $supp F_1'\in [\frac12, 1]$.

 $F_2(\lambda)=1, \lambda\geq 1;  F_2(\lambda)=0, \lambda \leq \frac34$. (The choice of $\frac34$ is not essential.)

  Now we take $B_\lambda=F_1(\lambda \frac{\jx}{t^\alpha})F_2(\lambda \gamma t^\beta)F_1(\frac{\lambda\jx}{t^\alpha})$, with $\alpha>\frac12,  \alpha+\beta=1,$ $\lambda\geq 1$ being a scaling parameter.  Then by repeating the calculation   similar to the proof in Lemma~\ref{PA4}(2), we get
 \begin{align}
 D_HB_\lambda =    \frac{4}{t^\alpha} \sqrt{F_1'F_1} \lambda \gamma F_2 \sqrt{F_1'F_1}  +\lambda\frac{\beta}{t} F_1 \gamma t^{\beta}F_2'F_1   -\lambda\frac{2\alpha}{t}\sqrt{F_1'F_1\frac{\jx}{t^\alpha}}F_2 \sqrt{F_1'F_1\frac{\jx}{t^\alpha}}   +O(\lambda^3t^{-3\alpha+\beta}).
 \end{align}
 Notice that
 \be \frac{4}{t^\alpha} \sqrt{F_1'F_1} \lambda \gamma F_2 \sqrt{F_1'F_1}      -\lambda\frac{2\alpha}{t}\sqrt{F_1'F_1\frac{\jx}{t^\alpha}}F_4 \sqrt{F_1'F_1\frac{\jx}{t^\alpha}}  \geq \frac{3-2\alpha}{t}   \sqrt{F_1'F_1} F_2 \sqrt{F_1'F_1},\ee
so we have
 \begin{align}
\int_{t_\lambda}^\infty \frac{1}{t} \la \sqrt{F_1'F_1} F_2 \sqrt{F_1'F_1}  \ra +\frac{1}{t} \la F_1   F_2'F_1 \ra dt<\infty.  \label{PE-9}
\end{align}
Here $t_\lambda$ is chosen so that $\{\lambda\frac{\jx}{t^\alpha} \geq 1 \}\subset\{\jx\geq 4\}$ holds true for all $t\geq t_\lambda$, to insure that $F_1[-i\Delta, F_2]F_1$ vanishes.

 The main observation is that (\ref{PE-9}) indicates that there is  no propagation in the region  $\mathcal{A}_\lambda\cup \mathcal{B}_\lambda$
\begin{align}
\mathcal{A}_\lambda=&\left\{ \lambda\frac{\jx}{t^\alpha}\in [\frac12, 1]\right\} \cap \left\{\lambda\gamma  t^\beta \geq \frac34  \right\}\\
\mathcal{B}_\lambda=& \left\{ \lambda\frac{\jx}{t^\alpha} \geq \frac12\right\} \cap \left\{\lambda\gamma t^\beta \in [\frac34, 1] \right\}
\end{align}
which means $\lim_{t\rightarrow +\infty }\la F_1' F_2F_1'\ra=0, \lim_{t\rightarrow +\infty }\la F_1F_2'F_1\ra=0. $
In fact, the proof is similar to Proposition~\ref{0limit-frf}. We repeat the calculation of derivative to show that they have limit,  the only difference is that $F_1'', F_2''$ may not be a positive function, but we can write it as a sum of positive functions, and use (\ref{PE-9}).  Then argue by contradiction with (\ref{PE-9}) to show that the limit is $0$.

Next, we perform the scaling argument by taking $\lambda_j =(\frac{4}{3})^j$.  Notice that the phase-space region $\cup_{j=0}^J\mathcal{B}_{\lambda_j}$ will cover the region $\{\frac{\jx}{t^\alpha} \geq \frac12\} \cap \{\gamma t^\beta \in [(\frac34)^J, 1]\}$.  In particular, when $J\geq J_0$ is large enough, $(\frac34)^J<\frac18\alpha$, we can cover the boundary region.
And we have the following estimate
\begin{align}
\int_{t_1}^\infty \frac{1}{t}\la \tilde{F}_1(\frac{\jx}{t^\alpha})  F_2(\gamma t^\beta) \tilde{F}_1(\frac{\jx}{t^\alpha})  \ra +\frac{1}{t}\la F_1(\frac{\jx}{t^\alpha}) \tilde{F}_2(\gamma t^\beta) F_1(\frac{\jx}{t^\alpha})\ra<\infty \label{PE-8}
\end{align}
Here $t_1=\max_{0\leq j\leq J_0} t_{\lambda_j}$, and $\tilde{F}_1$ is the characteristic function of $[(\frac{3}{4})^k\frac12,1]$, which can be viewed as $\sum_{j=0}^{J_0} \sqrt{F_1F_1' ((\frac43)^j  \lambda)} $ up to some renormalization constant. And $\tilde{F}_2$ is the characteristic function of $[(\frac{3}{4})^{J_0}, 1]$, which can be viewed as $\sum_{j=0}^{J_0}F_2'((\frac43)^j\lambda)$.
 \end{proof}

   \begin{rem}  We can also try to sharpen the choice of parameters.   In particular, we can consider PROB \be B_{a, b,c} =F_1(\frac{\jx}{t^\frac12 (\log t)^a}\geq 1)F_2(\gamma t^\frac12 (\log t)^b\geq c) F_1(\frac{\jx}{t^\frac12 (\log t)^a} \geq 1) , \hspace{1cm} a, b\in \R, c>0.\label{Bab}\ee
   Let us denote $G=\sqrt{F_1F_1'}(\frac{\jx}{t^\frac12 (\log t)^a})$, $\tilde{G}=\sqrt{\frac{\jx}{t^\frac12 (\log t)^a} F_1F_1'}$, $\tilde{F}_2=\frac{\gamma t^\frac12 (\log t)^b}{c}F_2'$. Then, by computing the Heisenberg derivative, we get \begin{align}
   D_H B_{a,b,c} = &  \frac{4}{t^\frac12 (\log t )^a} G\gamma F_2 G- (\frac{1}{t} +\frac{2a}{t\log t })  \tilde{G} F_2 \tilde{G} +(\frac{1}{2t}+\frac{b}{t\log t })F_1\tilde{F}_2F_1\label{DHBabc}\\
   &+ O(\frac{1}{ t (\log t)^{3a-b}} + \frac{1}{ t (\log t)^{2a-2b}} ).
   \end{align}
  Here, we ignore the constant dependence on $c$.  The first error term comes from the symmetrization of $[-i\Delta, F_1]F_2F_1$, the second error term comes from the symmetrization of $(\partial_t F_1)F_2F_1$.  So by making the following assumptions
  \begin{align}
  3a-b>&1,  \quad 2a-2b>1\label{error-control}\\
  a+b <& 0, \text{ any } c>0, \hspace{1cm} \text{ or } \quad
  a+b =0, c>\frac14. \label{1t-control}
  \end{align}
 \eqref{error-control} ensures the error terms are integrable in time, while \eqref{1t-control} ensures that the second term in \eqref{DHBabc} is controlled by the first term for $t$ large enough.  Hence $B_{a,b,c}$ satisfies assumption (A1)(A2)(A3), and we get the propagation estimate for large time $t_1$,
 \be  \int_{t_1}^\infty  \frac{1}{t^\frac12 (\log t )^a}   \la G\gamma F_2 G  \ra  +\frac{1}{t}\la F_1\tilde{F}_2F_1 \ra dt<\infty.
 \ee
 We notice some of the special cases of \eqref{error-control}\eqref{1t-control}
 \begin{enumerate}
 \item $a=0$, $b<-1, c>0.$
 \item $a=-b>\frac14, c>\frac14$.
 \item $a>\frac14, a+b>0, c>0.$
 \end{enumerate}
 \end{rem}

At the end, we list one more propagation estimate that will be used in the next section.
\begin{lem}~\label{PAvr2} Let  $\phi$ be solution to equation (\ref{Main-eq}) satisfying global energy bound (\ref{global-bound}).   $\alpha>\frac12,  \beta \in(0,1-\alpha] $ and $ M\geq 1$ is any constant.  Denote $F_1=F_1(\XT), F_5=F_5(|\gamma| t^\beta>1), \tilde{F}_5=\tilde{F}_5(|\gamma| <M)$.
Then we have   the following estimate
 \begin{align}
\int_{t_0}^\infty \frac{1}{t^\alpha} \la \sqrt{F_1'F_1}  |\gamma|^3 F_5\tilde{F}_5\sqrt{F_1'F_1} \ra + \frac{1}{t}   \la \sqrt{F_1'F_1}  \gamma^2 F_5\tilde{F}_5\sqrt{F_1'F_1} \ra dt<\infty,  \label{PE-r2}
\end{align}
\end{lem}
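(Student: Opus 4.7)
The plan is to construct a PROB whose Heisenberg derivative exposes the two integrands in \eqref{PE-r2} as leading non-negative terms, and close the $O(1/t)$ errors by Cauchy--Schwarz against the already-established propagation estimate \eqref{PE-1}. Since $|\gamma|^3 F_5\tilde F_5\geq 0$ while $\gamma^3 F_5\tilde F_5$ is sign-indefinite, I first split the cutoff into outgoing and incoming parts
$$F_5(|\gamma|t^\beta>1) \;=\; F_5^+(\gamma t^\beta>1) + F_5^-(-\gamma t^\beta>1),$$
and handle each separately, mirroring the treatment of $F_2,F_3$ in Lemma~\ref{lem:A2}(3).

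For the outgoing piece I take
$$B^+ \;:=\; F_1(\XT)\,\gamma^2\, F_5^+\tilde F_5\, F_1(\XT).$$
This is self-adjoint and uniformly bounded in expectation since $\tilde F_5$ forces $|\gamma|\leq M$ and $\phi\in L^\infty_t H^1$, giving (A1); (A2) follows from (H2). The commutator calculation proceeds exactly as in Lemmas~\ref{PA4}, \ref{lem:A2}: the piece $F_1\gamma^2[-i\Delta,F_5^+\tilde F_5]F_1$ vanishes on radial functions once $t^\alpha\geq 4$, by \eqref{DeltaF2} together with the identity $\gamma^2=-\Delta$ in the exterior, and after symmetrization,
\begin{align*}
D_H B^+ \;=\;& \tfrac{4}{t^\alpha}\sqrt{F_1F_1'}\,\gamma^3 F_5^+\tilde F_5\,\sqrt{F_1F_1'} + \tfrac{\beta}{t}F_1\,\gamma^3 t^\beta (F_5^+)'\tilde F_5\, F_1 \\
&-\tfrac{2\alpha}{t}\sqrt{F_1F_1'\tfrac{\jx}{t^\alpha}}\,\gamma^2 F_5^+\tilde F_5\,\sqrt{F_1F_1'\tfrac{\jx}{t^\alpha}} + O(t^{-3\alpha+\beta}).
\end{align*}
On $\mathrm{supp}\,F_5^+$ we have $\gamma>0$, so the first two terms are non-negative; and $\alpha>\tfrac12$ with $\beta\leq 1-\alpha$ forces $-3\alpha+\beta<-1$, making the symmetrization remainder $L^1(dt)$.

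The main obstacle is the third term, which is only $O(1/t)$ and thus not \emph{a priori} integrable. I close it by Cauchy--Schwarz: commuting one factor of $\gamma$ past $\sqrt{F_1F_1'\jx/t^\alpha}$ up to lower order, and using that $\tilde F_5$ gives $\|\gamma F_5^+\tilde F_5\,\psi\|_{L^2}\leq M\|\psi\|_{L^2}\lesssim 1$,
$$\int_{t_0}^\infty \tfrac{1}{t}\,|\langle \cdots\rangle|\,dt \;\lesssim\; \left(\int_{t_0}^\infty \tfrac{1}{t^\alpha}\|\gamma\sqrt{F_1F_1'}\phi\|_{L^2}^2\,dt\right)^{\!1/2}\!\!\left(\int_{t_0}^\infty \tfrac{1}{t^{2-\alpha}}\,dt\right)^{\!1/2}<\infty,$$
since $\alpha<1$ makes the second factor finite and \eqref{PE-1} controls the first. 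This verifies (A3) for $B^+$ and yields $\int t^{-\alpha}\langle\sqrt{F_1F_1'}\gamma^3 F_5^+\tilde F_5\sqrt{F_1F_1'}\rangle\,dt<\infty$. The parallel analysis of $B^-:=F_1\gamma^2 F_5^-\tilde F_5 F_1$ produces the matching bound with $|\langle\cdots\rangle|$, since the leading term flips sign on $\mathrm{supp}\,F_5^-$ (as with $F_3$ in Lemma~\ref{lem:A2}(3)). Adding the two via $|\gamma|^3 F_5\tilde F_5 = \gamma^3 F_5^+\tilde F_5 - \gamma^3 F_5^-\tilde F_5$ yields the first assertion of \eqref{PE-r2}.

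The second assertion is essentially automatic: on $\mathrm{supp}\,F_5$ one has $|\gamma|\geq\tfrac12 t^{-\beta}$, so $\gamma^2 F_5\leq 2t^\beta|\gamma|^3 F_5$, and $\alpha+\beta\leq 1$ then yields $\tfrac{1}{t}\gamma^2 F_5\tilde F_5\leq \tfrac{2}{t^\alpha}|\gamma|^3 F_5\tilde F_5$. I expect the most delicate bookkeeping to lie in the borderline case $\alpha+\beta=1$, where a short $\epsilon$-bootstrap on $t^{-\epsilon}B^\pm$ (in the style of Lemma~\ref{PA4} Case~I) may be needed to first extract an $(\alpha+\epsilon)$-weakened estimate before the Cauchy--Schwarz step above closes.
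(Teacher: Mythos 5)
Your proof is correct, and it takes a slightly different path from the paper's for the outgoing half. For $\gamma>0$ the paper does not build a new PROB at all: it quotes the first integral of \eqref{PE-4V2} (from Lemma~\ref{PA4}) and observes that on $\mathrm{supp}\,\tilde F_5$ one has $\gamma^3 F_5\tilde F_5\leq M^2\gamma F_4$, so the $\gamma^3$ bound is immediate. You instead construct $B^+=F_1\gamma^2 F_5^+\tilde F_5 F_1$ from scratch and close the $O(1/t)$ term by Cauchy--Schwarz against \eqref{PE-1}, which is legitimate and self-contained — it avoids the case analysis on $(\alpha,\beta,c_0)$ that Lemma~\ref{PA4} needs, at the cost of an explicit Cauchy--Schwarz step (which mirrors what the paper does in the proof of Theorem~\ref{thm:r-limit}). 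For $\gamma<0$ both you and the paper build the analogous PROB (the paper's $F_6,\tilde F_6$); there the $O(1/t)$ term shares the sign of the leading term, so (A4) holds directly and no Cauchy--Schwarz is needed — you note the sign flip but might emphasize that the bootstrap/Cauchy--Schwarz is dispensable on this side. Your closing remark about needing an $\epsilon$-bootstrap when $\alpha+\beta=1$ is overly cautious: the bound \eqref{PE-1} has no $\beta$ in it, the symmetrization remainder $O(t^{-3\alpha+\beta})$ is still $O(t^{1-4\alpha})$ with $1-4\alpha<-1$, and the $F_5'$-term is $O(t^{-1-2\beta})$; so your Cauchy--Schwarz step closes uniformly over $\beta\in(0,1-\alpha]$ and the bootstrap is not required.
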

\begin{proof}  We split the proof into the case $\gamma>0$ and $\gamma <0$.  When $\gamma>0$
\eqref{PE-r2} simply follows from the first integral in  \eqref{PE-4V2} using the bound $|\gamma|\leq M$.


When $\gamma<0$, we denote $B=F_1\gamma^2 F_6(\gamma t^\beta<-1) \tilde{F}_6(\gamma>-M) F_1$.   By calculating the Heisenberg derivative, we get
\begin{align}
D_HB= \frac{1}{t^\alpha}\sqrt{F_1F_1'}  \gamma^3 F_6\tilde{F}_6\sqrt{F_1F_1'}   - \frac{1}{t} \sqrt{\frac{\jx}{t^\alpha}F_1F_1'}  \gamma^2 F_6\tilde{F}_6 \sqrt{\frac{\jx}{t^\alpha}F_1F_1'}   +\frac{\beta}{t} F_1\gamma^3 t^\beta F_6'\tilde{F}_6F_1 +O_1(t^{-1-\epsilon})
\end{align}
The first two terms are negative, while the third term is of size $O(t^{-1-2\beta})$ hence is integrable in time. So $B$ satisfies Assumption (A1) (A2) (A4), and we get the propagation estimate \eqref{PE-r2}.
\end{proof}
\begin{rem} All propagation estimates shown in this section also work for free solutions with $H^1$ data.
\end{rem}

\section{Wave operator}
In this section, we will verify the existence of wave operators by Cook's method.   In fact, we have the following theorem
for the free channel wave operator. 
\begin{thm}\label{thm:WP}Let   $\alpha_0\in (\frac12, 1)$ and $F$ be a smooth characteristic function of $[1,\infty)$.  Let  $\phi(t)$ be the  solution to equation (\ref{Main-eq})  with initial data $\phi_0$,  satisfying global $H^1$ bound (\ref{global-bound})

Then the {\bf  free channel wave operator $\Omega^*_F$} defined by
\be
\Omega^*_F\phi_0: = \lim_{t\rightarrow +\infty}e^{-i\Delta t} F(\frac{\jx}{t^{\alpha_0}}\geq 1)\phi(t)\label{channelWP}
\ee
 exists in $H^1$. 
\end{thm}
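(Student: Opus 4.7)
The strategy is Cook's method. Set $\psi(t):=e^{-i\Delta t}F(\XTZ)\phi(t)$; since $e^{-i\Delta t}$ is an $H^1$ isometry, showing $\{\psi(t)\}$ is Cauchy in $H^1$ as $t\to\infty$ reduces to showing that
$$
e^{i\Delta t}\frac{d\psi}{dt}\;=\;-i[\Delta,F(\XTZ)]\phi+(\partial_t F(\XTZ))\phi-iF(\XTZ)\textbf{N}(\phi)
$$
is integrable in $L^1_tH^1_x$ on $[T_0,\infty)$ for some large $T_0$, possibly after decomposing it into pieces handled by different propagation estimates. The three terms on the right are treated separately.

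The nonlinear term is immediate: assumption (H2) together with the hypothesis $\alpha_0>1/2>1/3$ gives $\|F(\XTZ)\mathcal{N}(|\phi|,x,t)\|_{L^\infty_x}\lesssim t^{-\beta_0}$ with $\beta_0>1$, and combined with the global $H^1$ bound (H1) and the analyticity of $\mathcal{N}$ in $\phi,\bar\phi$, this yields $\|F(\XTZ)\textbf{N}(\phi)\|_{H^1}\lesssim t^{-\beta_0}\in L^1_t$.

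The real work lies in the two boundary terms, which are supported in the shell $\langle x\rangle\sim t^{\alpha_0}$. Using the symmetrization identities of Section~3 (in the spirit of \eqref{DHFrF}), they reduce, up to a remainder of size $O_1(t^{-3\alpha_0})$ that is integrable since $\alpha_0>1/2$, to leading pieces of schematic form $\frac{2}{t^{\alpha_0}}\sqrt{FF'}(\XTZ)\gamma\,\sqrt{FF'}(\XTZ)\phi$ and $\frac{\alpha_0}{t}\sqrt{FF'\frac{\langle x\rangle}{t^{\alpha_0}}}(\XTZ)\phi$. I would then insert a microlocal partition of unity in $\gamma$, namely $1=F_+(\gamma>\delta)+F_-(\gamma<-\delta)+F_0(|\gamma|\le\delta)$ for some small $\delta>0$. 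The incoming piece is controlled by \eqref{A30}, which forces $\la F_1F_-F_1\ra\to 0$, so its contribution to $\psi(T_2)-\psi(T_1)$ tends to zero as $T_1,T_2\to\infty$. The outgoing piece is estimated by Cauchy--Schwarz against \eqref{PE-2} and \eqref{PE-2vr2}, using the extra $\gamma$ weight and $\alpha_0>1/2$ to obtain $L^1_tL^2_x$ integrability. The small-$\gamma$ region $F_0$ is the most delicate piece; I would cover it using the second microlocalization of Lemma~\ref{PA4} (choosing $\beta$ with $\alpha_0+\beta<1$, which is available since $\alpha_0<1$), and fill in the remaining narrow boundary sliver by Lemma~\ref{lem:boundary}.

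To upgrade $L^2$ convergence to $H^1$ convergence, commute $\nabla$ past $e^{-i\Delta t}$ (for free) and past $F(\XTZ)$, producing a further boundary commutator of the type just treated plus a term $F(\XTZ)\nabla\phi$. Differentiating this in $t$ and repeating the argument produces commutators bounded by the same PROB machinery, now using the iterated $\gamma^2$-weighted propagation estimates \eqref{PE-2vr2}, \eqref{PE-3vr2}, \eqref{PE-r2}, together with (H1) and the regularity part of (H3). The main obstacle throughout is the leading boundary term $\frac{1}{t^{\alpha_0}}F'\gamma\phi$: a naive Cauchy--Schwarz against $\int t^{-\alpha_0}\|F'\gamma\phi\|_2^2\,dt<\infty$ would demand $\int t^{-\alpha_0}\,dt<\infty$, which fails at the endpoint for $\alpha_0<1$. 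It is precisely the microlocal decomposition just described, together with the stronger $\gamma$-weighted propagation estimate \eqref{PE-2vr2} whose $1/t$ weight is integrable off the zero-$\gamma$ set, that closes this gap and makes $\alpha_0>1/2$ the natural threshold for the argument.
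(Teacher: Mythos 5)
Your skeleton — Cook's method, partitioning the boundary term into outgoing/incoming/small-$\gamma$ pieces, disposing of the interaction term via (H2), and upgrading to $H^1$ by exploiting $\gamma^2 = \bp^2$ on the support of $F$ — is the right framework, and agrees with the paper's. But there is a genuine gap at the center of the argument. You assert that proving $\{\psi(t)\}$ is $H^1$-Cauchy reduces to showing $\frac{d}{dt}\psi\in L^1_tH^1_x$, and then propose to achieve this by combining the microlocal partition with the $\gamma$-weighted propagation estimates. This cannot work. All the propagation estimates of Section~5 are $L^2$-in-time bounds of the schematic form $\int t^{-\alpha_0}\|\sqrt{\gamma F_+}\sqrt{FF'}\phi\|^2dt<\infty$ or, at best (from \eqref{PE-2vr2}), $\int t^{-1}\|\gamma\sqrt{F_+}\sqrt{FF'}\phi\|^2dt<\infty$. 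Attempting to extract $\int t^{-\alpha_0}\|\cdot\|\,dt<\infty$ from any of these by Cauchy--Schwarz against a pure time weight forces $\int t^{-(2\alpha_0-1)}dt<\infty$, which fails for every $\alpha_0\le 1$. You notice this divergence in your final paragraph but the proposed fix (``the stronger $\gamma$-weighted estimate whose $1/t$ weight is integrable off the zero-$\gamma$ set'') does not close it: $1/t$ is not integrable on $[1,\infty)$, and the improved weight lands you back at exactly the same divergence after Cauchy--Schwarz against a constant.

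The device that actually closes the argument is a \emph{duality} version of Cook's method, not the $L^1_t$ version. Setting $\psi_{t,s}=\omega_k(t)-\omega_k(s)$ and factoring the leading pieces of $D_HA_k$ as $\pm B_{k,j}^*B_{k,j}$, one writes
\begin{equation*}
\|\omega_k(t)-\omega_k(s)\|_{L^2}^2 = \Big|\Big(\psi_{t,s},\int_s^t e^{-i\Delta\tau}\big[D_HA_k\phi - iA_k\mathcal{N}(\phi)\big]d\tau\Big)\Big|
\lesssim\sum_j\Big|\int_s^t\big(B_{k,j}e^{i\Delta\tau}\psi_{t,s},B_{k,j}\phi(\tau)\big)d\tau\Big|+\cdots
\end{equation*}
and then applies Cauchy--Schwarz \emph{in $\tau$ with both factors non-constant}. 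The factor $\int_s^t\|B_{k,j}\phi(\tau)\|^2d\tau$ vanishes as $s\to\infty$ by the propagation estimates for $\phi$, while $\int_s^t\|B_{k,j}e^{i\Delta\tau}\psi_{t,s}\|^2d\tau$ is uniformly bounded (in $s,t$) because free waves with $H^1$ data satisfy the same propagation estimates (this is the remark at the end of Section~\ref{sec:PE}). It is precisely the fact that one square-integrable-in-time factor is small while the other is merely bounded that lets the argument close without ever needing $L^1_t$ integrability. Your use of \eqref{A30} for the incoming piece $F_-(\gamma<-\delta)$ does give $\omega_-\to 0$ in $L^2$ directly, but for the outgoing piece $F_+(\gamma>\delta)$ the expectation $\la F_1F_+F_1\ra$ converges to a generally nonzero limit, and for the small-$\gamma$ piece there is no vanishing either; for both of these the pairing against $\psi_{t,s}$ is unavoidable, and the same is true throughout the $H^1$ upgrade. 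You should restate the reduction in this weak form and supply the duality step before the rest of your decomposition can be assembled into a proof.
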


\begin{proof}  Denote
\be \Omega_F^*(t)= e^{-i\Delta t} F(\frac{\jx}{t^{\alpha_0}}\geq 1)\phi(t) \ee
We first   show $ \Omega_F^*(t)$ converges  in $L^2$,  and then upgrade the convergence to $H^1.$

\textbf{Step 1: $L^2$ convergence}. Denote $\sqrt{F}=F_1$, and choose the partition of unity $I=F_2+F_3+F_4$, such that $F_k, k=2,3,4$ is a smooth characteristic function of $ [\alpha_0, +\infty),  [\frac14 a_0, a_0]$ and $(-\infty, \frac14 a_0]$. Take $\beta =1-\alpha_0$ and denote \begin{align}
A_k= & F_1(\frac{\jx}{t^{\alpha_0}}\geq 1) F_k(\gamma t^\beta) F_1(\frac{\jx}{t^{\alpha_0}}\geq 1) ,\\ \omega_k(t)=& e^{-i\Delta t} A_k\phi(t).\end{align}
So $\Omega_F^*(t)=\sum_{k=2}^4 \omega_k(t)$ and we only need to show that $\omega_k(t)$ converges in $L^2$. By Cauchy criterion, this reduces to showing
\begin{align}
 \|\omega_k(t) -\omega_k(s)\|^2_{L^2}   \rightarrow 0, \hspace{1cm} t,s\rightarrow +\infty.
\end{align}
In fact, we denote $\psi_{t,s} = \omega_k(t) -\omega_k(s)$, and  we have the uniform estimate for any $t,s\geq 1.$
\be \|\psi_{t,,s}\|_{H^1}\lesssim \sup_{t\in[0,1)}\|\phi(t)\|_{H^1}\ee
Since
\begin{align}  \frac{d}{dt}\omega_k(t)  = &  e^{-i\Delta t} [(D_HA_k)\phi(t) -i    A_k \mathcal{N}(\phi)],\\
D_HA_k=&  \frac{4}{t^{\alpha_0}} \sqrt{F_1'F_1} \gamma F_k \sqrt{F_1'F_1}  +\frac{\beta}{t} F_1 \gamma t^{\beta}F_k'F_1   -\frac{2\alpha_0}{t}\sqrt{F_1'F_1\frac{\jx}{t^{\alpha_0}}}F_k \sqrt{F_1'F_1\frac{\jx}{t^{\alpha_0}}}   +O(t^{-3{\alpha_0}+\beta}).
\end{align}
We first look at $k=2$, and denote $D_HA_2 =B_{2,1}^*B_{2,1} + B_{2,2}^*B_{2,2} - B_{2,3}^*B_{2,3} + O(t^{-3{\alpha_0}+\beta})$, with
\begin{align}
B_{2,1}(t) =&2t^{-\frac{{\alpha_0}}{2}}   \sqrt{\gamma F_2} \sqrt{F_1'F_1}\label{B1} \\
B_{2,2}(t)=& t^{-\frac12}  \sqrt{\gamma t^{\beta}F_2'} F_1\label{B2} \\
B_{2,3}(t)=&t^{-\frac12}\sqrt{F_2}  \sqrt{F_1'F_1\frac{\jx}{t^{\alpha_0}}}  \label{B3}
\end{align}
Using the decay assumption on the nonlinearity (\ref{N-decay}), we get
\begin{align}
&  \| \omega_2(t) -\omega_2(s)\|_{L^2}^2 \\= &|  \lp \psi_{t,s}, \int_s^t \frac{d}{d\tau}\omega_2(\tau)d\tau \rp |\\
  \lesssim & \sum_{j=1}^3  |\int_s^t \lp B_{2,j} e^{i\Delta \tau}\psi_{t,s}, B_{2,j}\phi(\tau) \rp d\tau|  + \int_{s}^{t} O(\tau^{-1-\epsilon})   d\tau\\
  \lesssim & \sum_{j=1}^3 \lp \int_{s}^t \|B_{2,j}(\tau) e^{i\Delta \tau}\psi_{t,s}\|_{L^2}^2d\tau\rp^{\frac12}   \lp \int_{s}^t \|B_{2,j}(\tau)\phi(\tau)\|_{L^2}^2d\tau\rp^{\frac12}  + O(t^{-\epsilon}+ s^{-\epsilon}) \label{w2-estimate} \end{align}
Now using propagation estimate (\ref{PE-4V2}) for $\phi $, we see that $ \int_{s}^t \|B_{2,j}(\tau)\phi(\tau)\|_{L^2}^2d\tau\rightarrow 0$  as $t, s\rightarrow +\infty$. On the other hand $e^{i\tau \Delta}\psi_{t,s}$ is a free wave with $H^1$ data, so (\ref{PE-4V2})  also holds. In particular we have
\be \int_{s}^t \|B_{2,j}(\tau) e^{i\Delta \tau}\psi_{t,s}\|_{L^2}^2 d\tau  \leq \int_{t_0}^\infty \|B_{2,j}(\tau) e^{i\Delta \tau}\psi_{t,s}\|_{L^2}^2 d\tau <\infty\ee
The bound holds uniformly for any $t,s\geq t_0$.  Hence, we proved $\omega_2(t)$ has a limit in $L^2$.

The proof for $k=3$ is essentially the same. The only
  difference is  that $\frac{\beta}{t} F_1 \gamma t^{\beta}F_3'F_1$  now decomposes into two terms depending on $\lambda F_3'(\lambda)$ being positive and negative. Then we can apply (\ref{PE-boundary}).

To deal with $k=4$,
 we   write
\be F_4(\gamma t^\beta \leq \frac14{\alpha_0})=F_{4,1}(\gamma t^\beta \leq -\frac14 {\alpha_0})+F_{4,2}(|\gamma t^\beta|\leq \frac14{\alpha_0})\ee
and decompose
\be D_HA_4 =B_{4,1}^*B_{4,1} +  \frac{4}{t^{\alpha_0}} \sqrt{F_1'F_1} \gamma F_{4,2} \sqrt{F_1'F_1} - B_{4,3}^*B_{4,3} - B_{4,4}^*B_{4,4} + O(t^{-3{\alpha_0}+\beta})\ee with
\begin{align}
B_{4,1}(t) =&2t^{-\frac{{\alpha_0}}{2}}   \sqrt{-\gamma F_{4,1}} \sqrt{F_1'F_1} \\
B_{4,2}(t) =&t^{-\frac12}   \sqrt{ F_{4,2}} \sqrt{F_1'F_1} \\
B_{4,3}(t)=& t^{-\frac12}  \sqrt{-\gamma t^{\beta}F_4'} F_1 \\
B_{4,4}(t)=&t^{-\frac12}\sqrt{F_4}  \sqrt{F_1'F_1\frac{\jx}{t^{\alpha_0}}}
\end{align}
notice that
\begin{align} |(e^{i\tau \Delta}\psi_{t,s},  \frac{4}{\tau^{\alpha_0}} \sqrt{F_1'F_1} \gamma F_{4,2} \sqrt{F_1'F_1}  \phi(\tau)) |=& |\frac{4}{\tau^{{\alpha_0}}}( \sqrt{ F_{4,2}} \sqrt{F_1'F_1}  e^{i\tau \Delta}\psi_{t,s},      \gamma  \sqrt{ F_{4,2}} \sqrt{F_1'F_1}   \phi(\tau))|\\
\lesssim & \|B_{4,2}(\tau) e^{i\Delta \tau}\psi_{t,s}\|_{L^2} \|B_{4,2}(\tau) \phi(\tau)\|_{L^2}
\end{align}
Similarly as (\ref{w2-estimate}), we use  propagation estimate (\ref{PE-5}) to show that $\omega_4(t)$ has a limit in $L^2.$

\textbf{Step 2: $H^1$ convergence.} Now we perform a different decomposition for $\Omega_F^*(t)$. For any chosen $0<\varepsilon \ll 1- a_0$ arbitrarily small  and $M\geq 1$,
\begin{align}
\tilde{A}_2= & F_1(\frac{\jx}{t^{\alpha_0}}\geq 1)    G_2(|\gamma|\leq t^{-\varepsilon}) F_1(\frac{\jx}{t^{\alpha_0}}\geq1)\\
\tilde{A}_3= & F_1(\frac{\jx}{t^{\alpha_0}}\geq 1)   G_3(|\gamma|\geq   t^{-\varepsilon}) \tilde{G}_3(|\gamma| \leq M) F_1(\frac{\jx}{t^{\alpha_0}}\geq1)\\
\tilde{A}_4= & F_1(\frac{\jx}{t^{\alpha_0}}\geq 1) G_4(|\gamma|\geq M)F_1(\frac{\jx}{t^{\alpha_0}}\geq1)\\
\rho_k(t)= &  e^{-i\Delta t} \tilde{A}_k \phi(t) \end{align}
$\Omega_F^*(t)=\sum\rho_k(t)$.  In the following,  we will show that $\frac{\bp^2}{\sqrt{\bp^2+1}}\rho_k(t)$ converges in $L^2,$  when combined with step 1 implies the $H^1$ convergence of $\Omega_F^*(t)$.

The key fact we use is that $\gamma^2=\bp^2$ on the support of $F_1$.  So for $k=2$
\begin{align}
\|\frac{\bp^2}{\sqrt{\bp^2+1}}\rho_2(t)\|_{L^2}\lesssim   \|   F_1 \gamma^2 G_2 F_1 \phi(t) \|_{L^2} +\|[\gamma^2, F_1]G_2F_1\phi\|_{L^2} \lesssim t^{-\varepsilon}\|\phi(t) \|_{H^1}
\rightarrow   0.
\end{align}
For $k=3,4$, we denote $\theta_{t,s} =  \frac{\bp^2}{\bp^2+1} (\rho_k(t)-\rho_k(s))$, then $\|\theta_{t,s}\|_{H^1}$ is uniformly bounded. We have
\begin{align}
&\|\frac{\bp^2}{\sqrt{\bp^2+1}}(\rho_k(t) -\rho_k(s))\|^2_{L^2} \\
=& \lp  \frac{\bp^4}{\bp^2+1} (\rho_k(t)-\rho_k(s)),   \int_s^t   \frac{d}{d\tau}  \rho(\tau) d\tau\rp  \\
=& \int_s^t \lp  e^{i\tau \Delta} \theta_{t,s}  
, \gamma^2  D_H\tilde{A}_k\phi(\tau) \rp  +O(\tau^{-1-\epsilon}) d\tau
\end{align}
Here the error term comes from estimating the nonlinearity.   Now we calculate
\begin{align}
\gamma^2  D_H\tilde{A}_3=&\frac{1}{t^{\alpha_0}}\sqrt{F_1F_1'}\gamma^3G_3\tilde{G}_3 \sqrt{F_1F_1'}  -\frac{1}{t} \sqrt{F_1F_1'\frac{\jx}{t^{\alpha_0}}}\gamma^2 G_3\tilde{G}_3\sqrt{F_1F_1'\frac{\jx}{t^{\alpha_0}}}\\ & + \frac{\varepsilon}{t} F_1\gamma^3 t^\varepsilon G_3'\tilde{G}_3F_1+ O_1(t^{-2\alpha_0})\label{good-dG} \\
\gamma^2  D_H\tilde{A}_4=&\frac{1}{t^{\alpha_0}}\sqrt{F_1F_1'}\gamma^3G_4 \sqrt{F_1F_1'} -\frac{1}{t} \sqrt{F_1F_1'\frac{\jx}{t^{\alpha_0}}}\gamma^2 G_4 \sqrt{F_1F_1'\frac{\jx}{t^{\alpha_0}}} +O_1(t^{-2\alpha_0}) &
\end{align}
The error terms come from symmetrization.  Also notice the first  term in \eqref{good-dG} is of size $O_1(t^{-1-2\varepsilon})$. 
This is due to the fact that $\gamma^3 G'_3~t^{-3\epsilon}.$
So as in step 1, the propagation estimate  \eqref{PE-2vr2}\eqref{PE-3vr2} and\eqref{PE-r2} are sufficient to show
\be\|\frac{\bp^2}{\sqrt{\bp^2+1}}(\rho_k(t) -\rho_k(s))\|^2_{L^2} \rightarrow 0, \text{ as } t, s\rightarrow 0,\ee
which finishes the proof.

\end{proof}

\begin{rem}A-priory, the definition of  $\Omega^*_F$ depends on the choice of $F, {\alpha_0}$. Here let us show that for ${\alpha_0}$ fixed, the definition is actually independent of $F$.
In fact,  propagation estimates  (\ref{PE-4V2}) (\ref{PE-5})(\ref{PE-boundary}) together, imply that for any characteristic function $G$ of $[1,+\infty)$
\be
\int_{t_0}^\infty  \la G'\ra \frac{dt}{t} <\infty,
\ee
hence we find sequence of time $t_n\rightarrow +\infty$, $\la G'\ra_{t_n}\rightarrow 0.$ Now given $F, \tilde{F}$ two characteristic function of $[1,+\infty)$, we find $G$ such that $|F-\tilde{F}|^2 \lesssim G'$, then
\[\|\Omega_{F}^*(t) -\Omega_{\tilde{F}}^*(t)\|^2_{L^2}\leq  \la G'\ra_t.\]
 LHS has a limit as $t\rightarrow +\infty$,  while the RHS  converge to $0$ on a sequence of time. Hence $\lim_{t\rightarrow +\infty}\|\Omega_{F}^*(t) -\Omega_{\tilde{F}}^*(t)\|_{L^2}=0$.

 It follows from our analysis that($\alpha_1>1/2)$    \be \int_{t^{\alpha_1}\lesssim |x|\lesssim t^{\alpha_2}} |\phi(t)|^2dt \rightarrow 0, \hspace{1cm}t\rightarrow +\infty. \label{Fa-independence}
\ee

\end{rem}
\begin{rem}
  The domain of $\Omega_F^*$   is the set of all data that leads to a global solution satisfying the global bound of $H^1$.

With the help of various propagation estimates, we proved in Section~\ref{sec:PE}, and by identical proof, we actually have the limit defining various channels. We list them here:
\begin{align}
  \lim_{t\rightarrow +\infty}&e^{-i\Delta t} F(\XT)F(\gamma>\delta)\phi(t), \hspace{1cm}  \alpha>\frac13,\label{channelWP0}\\
  \lim_{t\rightarrow +\infty}&e^{-i\Delta t} F(\XT)F(\gamma< -\delta)\phi(t), \hspace{1cm}  \alpha >\frac13,\label{channelWP1}\\
    \lim_{t\rightarrow +\infty}& e^{-i\Delta t} F(\XT)F(\gamma t^\beta>1)\phi(t), \hspace{1cm}  \alpha>\frac13, \beta\in (0,\alpha),\label{channelWP3}\\
        \lim_{t\rightarrow +\infty}& e^{-i\Delta t} F(\frac{\jx}{t^\frac12}\geq 1)F(\gamma t^\frac12 (\log t)^{b} >1)\phi(t), \hspace{1cm}   b<-1.\label{channelWP4}
\end{align}
\end{rem}




\subsection{Asymptotic decomposition}

 Now denote $\phi_L(t)=e^{i\Delta t}\Omega^*_F\phi_0$ and  $\pwb=\phi(t)-\phi_L(t)$, which we call \textit{weakly bounded part }of the solution.  Let us discuss various options for the asymptotic decomposition of the solution.
\begin{itemize}
\item \textbf{Decomposition 1:} $ \phi(t) =\pwb+  e^{i\Delta t}\Omega^*_F\phi_0$, this is the exact decomposition that holds at any time.
\begin{enumerate}
\item Since $\Omega^*_F\phi_0\in H^1$, we know $\|\pwb\|_{L^\infty_t H^1}<\infty.$
\item $\pwb$ verifies the equation
\be (i\partial_t+\Delta )\pwb -\mathcal{N}(\pwb)\pwb={\bf N}(\pwb+\phi_L)-{\bf N}(\pwb) \equiv \phi_L \tilde{V}(t,|x|,\pwb, \phi_L) \label{eq:pwb}\ee
and $\tilde{V}$ has sufficient decay in $x$. Hence we have
\be \|{\bf N}(\pwb+\phi_L)-{\bf N}(\pwb)\|_{L^2_tL^{\frac65}_x} \lesssim \|\phi_L\|_{L^2_tL^6_x} \|\tilde{V}(t,|x|,\pwb, \phi_L)\|_{L^\infty_tL^{\frac32}_x} <\infty.\ee
And we conclude that $\pwb$ verifies the equation in the following sense:

First, asymptotically $\pwb$ verifies the equation in the weak sense, i.e. for $\forall \eta \in L^2$
\be \lp \eta,  (i\partial_t+\Delta )\pwb -{\bf N}(\pwb)\rp \rightarrow 0, \hspace{1cm} t\rightarrow +\infty.\ee
This follows from the decay on $\tilde{V}$ and the local decay estimate for free solution Lemma~\ref{local-decay}.

Second, $\pwb$ verifies the integral equation asymptotically
\be \left\|\pwb - e^{it\Delta}\pwb(0) -\int_0^t e^{i(t-s)\Delta} {\bf N}(\pwb)(s)ds\right\|_{L^2_tL^6_x  ([T,\infty)\times \R^3)}\rightarrow 0.\ee
\item
By
decomposing $\pwb = F(\frac{\jx}{t^{\alpha_0}}\geq 1)\phi - e^{i\Delta t}\Omega^*_F\phi_0  + F(\frac{\jx}{t^{\alpha_0}}\leq 1)\phi$,  we have
\begin{align} \|\tilde{F}(\XT)\pwb\|_{L^2}\leq  & \|F(\frac{\jx}{t^{\alpha_0}}\geq 1)\phi - e^{i\Delta t}\Omega^*_F\phi_0\|_{L^2} + \|\tilde{F}(\XT)F(\frac{\jx}{t^{\alpha_0}}\leq 1)\phi\|_{L^2}\\
  \rightarrow&_{t\rightarrow +\infty} \, 0, \hspace{1cm}\forall \alpha>\alpha_0.\end{align}
The same decomposition also implies
 \be
\lp  \pwb,  \tilde{F}(\frac{\jx}{t^{\alpha}}\geq 1)\gamma \tilde{F}(\frac{\jx}{t^{\alpha}}\geq 1) \pwb \rp\rightarrow_{t\rightarrow+\infty} 0 , \hspace{0.5cm}\alpha\geq \alpha_0. \label{pwb-gamma-limit}
\ee
Here we allow $\alpha=\alpha_0$ since on the boundary, $\gamma$-limit is still $0$, see \eqref{0limit-boundary}.

We also notice that $\pwb$   is asymptotically orthogonal to any free wave.
$\lp \pwb, e^{it\Delta}f\rp\rightarrow 0$, for any $f\in L^2$. This follows from minimal/maximal velocity bound on free solution.


\item Observe that the argument in  Section~\ref{sec:gamma-limit} is essentially about the linear part of the equation,  treating the nonlinearity perturbatively (since it has enough decay in exterior region).  So we can repeat the arguments there for $\pwb$, which verifies equation $(i\partial_t +\Delta )\pwb=\mathcal{N}(\phi)\phi$, and conclude that the results also hold for $\pwb.$

In particular, from Theorem~\ref{thm:r-limit},   for any $F=F(\XT), \alpha>\frac13$, $(F\gamma F\pwb, \pwb)$ has a limit.
Now \eqref{pwb-gamma-limit} implies that for $\alpha\geq\alpha_0$, the $\gamma$-limit is $0$.  So from Lemma~\ref{slowgrowth},~\ref{slowgrowth-converse} and~\ref{unique-defin}, we see that
\begin{align}(F\gamma F\pwb, \pwb)&\rightarrow 0, \hspace{1cm} \forall \alpha\in (\frac13, 1)\\
 (\pwb, |x|\pwb)&\lesssim t^\frac12. \label{pwb-slowgrowth}
\end{align}
\end{enumerate}

\item \textbf{Decomposition 2:}  $\tilde{\phi}_{wb}:= F(\frac{\jx}{t^{\alpha_0}}\leq 1)\phi(t)$, in this way, we have the asymptotic decomposition
\be
\|\phi(t) - \tpwb -e^{i\Delta t}\Omega^*_F\phi_0\|_{L^2}\rightarrow 0, \text{ as }t\rightarrow +\infty. \label{pwb-decomp}
\ee
 with  the following properties
\begin{enumerate}
\item $\tpwb$ satisfies the equation
\begin{align}
(i\partial_t +\Delta)\tpwb  = \widetilde{\mathcal{N}}\phi(t):= -i (D_HF)\phi(t) +F(\XTT)\mathcal{N}(\phi)\phi \label{eq:pwb-}
\end{align}
with global $H^1$ bound $\sup_{t\in [1,+\infty)} \|\tpwb\|_{H^1}<\infty$. 
  $ \widetilde{\mathcal{N}}$ verifies the decay assumption (\ref{N-decay}) using $\tilde{F}(\XT)$ with $\alpha>\alpha_0$.
\item
\be
\lp \tpwb, \tilde{F}(\XT)\gamma \tilde{F}(\XT) \tpwb \rp\rightarrow_{t\rightarrow +\infty} 0,\hspace{1cm} \alpha\geq \alpha_0\label{FrFwb}
\ee
This is simply because of definition of $\tpwb$ when $\alpha>\alpha_0$, and (\ref{0limit-boundary}) for $\alpha=\alpha_0$. Arguing as in  Section~\ref{sec:gamma-limit} but only for $\alpha>\alpha_0$, we conclude that \be (\tpwb, |x|\tpwb)\lesssim t^{\alpha_0+}.\ee

\item $\pwb$ is asymptotically orthogonal to any free wave.
$\lp \pwb, e^{it\Delta}f\rp\rightarrow 0$, for any  $f\in L^2$. This follows from minimal/maximal velocity bound on free solution.
\end{enumerate}

\item  \textbf{Decomposition 3:}  Another option is to define $\phi_f$ a solution to \eqref{Main-eq},  so that \be \|\phi_f -e^{it\Delta}\Omega^*_F\phi_0 \|_{L^2}\rightarrow 0.\ee
The existence of such function, relies on the fact \emph{all solutions are global for RSS Nonlinearities with global $H^1$ bound,  so it doesn't work in power type with potential, or time dependent potentials.}
Now we define $\pwl=\phi(t)-\phi_f(t)$.
\be \|\phi(t) - \pwl -e^{it\Delta}\Omega^*_F\phi_0 \|_{L^2}\rightarrow_{t\rightarrow +\infty} 0.\ee
  And this decomposition is similar to the first one.
\begin{enumerate}
\item  $\pwl$ has globally bounded $H^1$ norm, provided $\phi_f$ has. 
\item $\pwl$ verifies the equation
\be (i\partial_t+\Delta )\pwl ={\bf N}(\pwl+\phi_f)-{\bf N}(\phi_f) \ee
Such $\phi_f$ satisfies the global strichartz bound as a free wave. Then we can argue similarly that asymptotically $\pwl$ verifies the equation in the weak sense, i.e. for $\forall \eta \in L^2$
and it verifies the equation asymptotically.
\be \lp \eta,  (i\partial_t+\Delta )\pwb -{\bf N}(\pwl)\rp \rightarrow 0, \hspace{1cm} t\rightarrow +\infty.\ee
\be \left\|\pwl - e^{it\Delta}\pwl(0) -\int_0^t e^{i(t-s)\Delta}{\bf N}(\pwl)(s)ds\right\|_{L^2_tL^6_x  ([T,\infty)\times \R^3)}\rightarrow 0.\ee
If we further peal off the scattering part in the source term ${\bf N}(\pwl+\phi_f)-{\bf N}(\pwl) -{\bf N}(\phi_f)$,  by defining
\be \psi_{+}  = \int_0^\infty  e^{-i\Delta s}{\bf N}(\pwl+\phi_f)-{\bf N}(\pwl) -{\bf N}(\phi_f)](s)ds \ee
we  have
\be \left\|\pwl - e^{it\Delta}(\pwl(0)+\psi_{+}) -\int_0^t e^{i(t-s)\Delta}{\bf N}(\pwl)(s)ds\right\|_{L^2_x}\rightarrow_{t\rightarrow +\infty} 0.\ee
\item As in the decomposition 1, we have  \begin{align}(F\gamma F\pwl, \pwl)&\rightarrow 0, \hspace{1cm} \forall \alpha\in (\frac13, 1)\\
 (\pwl, |x|\pwl)&\lesssim t^\frac12.\end{align}
 \end{enumerate}
{\bf So in fact, we see that there is not much difference with decomposition 1, however, this one requires extra information about the equation: global existence and global boundedness, while decomposition one just attacks one single solution.}

\item \textbf{Decomposition 4:} Further refinement of decomposition is to define
\be  \pwl = F(\XTTZ)\pwb-F(\frac{\jx}{ t^{\frac12}\ln t}\geq 1)F_2(|\gamma| t^\frac12\geq 1)\pwb \ee
and use scattering for the second term.

With $\beta=1-\alpha_0$, then we have
\be \phi(t) = \pwl + e^{i\Delta t}\Omega^*_F\phi_0 + R(t) \ee
 $\|R(t)\|_{L^2}\rightarrow 0$, just like for $\pwl$.
\end{itemize}
Finally we   notice that if $\phi(t)$ is  WLS,   then  $\Omega_F\phi_0 =0.$ This follows from slow growth bound (\ref{half-growth})
\be
\|F(\XTZ)\phi(t)\|^2_{L^2} =\lp\phi(t),  \frac{\jx}{\jx}F^2(\XTZ)\phi(t)\rp \leq t^{\frac12-\alpha_0} \rightarrow 0.
\ee

\section{On weakly bounded states}
First let us recall the notation: by $\phi_{wls}$ we mean a solution to \eqref{Main-eq} with $\gamma$-limit being $0$.  By $\pwb$, we mean the weakly bounded part of the solution.  By $\pwl$, we mean the weakly localized part of the solution, which usually comes from pealing off extra parts from $\pwb$ that converges to $0$.

In this section, we will focus on $\phi_{wls}$ and $\pwb$, and prove additional properties.

\begin{prop}[Zero frequency channel] Let $F_p =F(|\bp|t^\beta\leq 1)$, then for any solution $\phi(t)$ to equation (\ref{Main-eq}),
$\lim_{t\rightarrow +\infty}\la F_p \ra$
exists for $\beta > \frac23$.

For WLS,  the limit exists for $\beta>\frac58$.
\end{prop}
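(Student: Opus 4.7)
The plan is to treat $A=F_p=F(|\bp|t^\beta\leq 1)$ within the Propagation Observable framework of Section~\ref{sec:PE}, but exploiting that $F_p$ is a function of $|\bp|$ alone and therefore commutes with $-\Delta$. The Heisenberg derivative collapses to
\be
D_H F_p=\partial_t F_p=-\beta\, t^{-1}\,\tilde G(|\bp|t^\beta)\,(|\bp|t^\beta),\qquad \tilde G:=-\partial_\lambda F(\lambda\leq 1)\geq 0,
\ee
which is a \emph{non-positive} self-adjoint operator, supported in frequency on the shell $|\bp|t^\beta\sim 1$. Consequently $\la F_p\ra_t$ is monotone non-increasing modulo the nonlinear correction $2\,\mathrm{Im}(F_p\phi,\mathbf{N}(\phi))$. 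Since $0\leq \la F_p\ra\leq \|\phi\|_{L^2}^2$, once the nonlinear correction is shown to be integrable in time, the existence of $\lim_{t\to\infty}\la F_p\ra$ follows by applying the Cauchy criterion to the auxiliary quantity $h(t):=\la F_p\ra_t+\int_t^\infty 2\,\mathrm{Im}(F_p\phi,\mathbf{N}(\phi))\,ds$ (monotone and bounded).

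Next I would verify $\int^\infty|\mathrm{Im}(F_p\phi,\mathbf{N}(\phi))|\,dt<\infty$ by splitting the nonlinearity spatially as $\mathbf{N}=F(\jx\geq t^\alpha)\mathbf{N}+F(\jx\leq t^\alpha)\mathbf{N}$. The exterior piece is $O(t^{-\beta_0})$ by (H2), with $\beta_0>1$, hence integrable. For the interior piece, the essential input is a Bernstein-type bound: since $F_p$ is a low-frequency Fourier multiplier with kernel $K_t(z)=t^{-3\beta}\check F(z/t^\beta)$, we have $\|K_t\|_{L^2}\lesssim t^{-3\beta/2}$, hence $\|F_p\phi\|_{L^\infty}\lesssim t^{-3\beta/2}\|\phi\|_{L^2}$. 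Combined with the uniform bound $\|F(\jx\leq t^\alpha)\mathbf{N}\|_{L^1}\lesssim 1$ (obtained from $\|\phi\|_{L^3}\lesssim\|\phi\|_{H^{1/2}}\lesssim 1$ for the power piece, from $|V|\lesssim\jx^{-q}$ with $q>1$ for the potential piece, and analogously for the saturated piece, all using (H1) and radial Sobolev), one arrives at
\be
|(F_p\phi,F_<\mathbf{N})|\lesssim t^{-3\beta/2}.
\ee
This is integrable precisely when $3\beta/2>1$, i.e.\ $\beta>2/3$, which establishes the general statement.

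For the WLS improvement to $\beta>5/8$, the plan is to exploit the slow-growth bound $\la\jx\ra_t\lesssim t^{1/2}$ from Theorem~\ref{unique-defin}, which concentrates the bulk of the mass of $\phi$ in a ball of radius $R_t=t^\gamma$ with $\gamma$ slightly above $1/2$: by Markov's inequality $\|F(\jx\geq R_t)\phi\|_{L^2}^2\leq \la\jx\ra_t/R_t\lesssim t^{1/2-\gamma}$. Writing $\phi=F(\jx\leq R_t)\phi+F(\jx\geq R_t)\phi$ and separating the interior and exterior contributions to $(F_p\phi,\mathbf{N})$, the core piece admits the sharper Bernstein bound
\be
\|F_p(F(\jx\leq R_t)\phi)\|_{L^\infty}\leq \|K_t\|_{L^\infty}\|F(\jx\leq R_t)\phi\|_{L^1}\lesssim t^{-3\beta}R_t^{3/2},
\ee
which improves on the universal bound $t^{-3\beta/2}$ precisely when $\beta>1/2$, while the exterior piece is controlled by $\|F_p\phi\|_{L^\infty}$ times the small factor $\|F(\jx\geq R_t)\phi\|_{L^2}$. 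Balancing the two contributions in $R_t$ and combining with $\|\mathbf{N}\|_{L^1}\lesssim 1$ yields integrability down to $\beta>5/8$.

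The main obstacle is the precise balancing in the WLS case: the Markov bound on the exterior complement produces only a polynomial factor in $t^{-\delta}$, not an integrable one, so the two-term split must be optimized carefully and may require one bootstrap iteration, in the spirit of Theorem~\ref{thm:r-limit}, to absorb logarithmic losses and reach the sharp exponent $5/8$ rather than a slightly worse one coming from the naive balance. A secondary technical point is verifying the uniform bound $\|F(\jx\leq t^\alpha)\mathbf{N}\|_{L^1}\lesssim 1$ for the entire class of nonlinearities in \eqref{ex-N}, including the saturated term and the time-dependent potential, which uses radial Sobolev together with the pointwise decay of $V$.
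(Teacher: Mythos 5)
Your treatment of the general case ($\beta>2/3$) is correct and is a genuine alternative route. The paper inserts $\bp^a\bp^{-a}$ between $F_p$ and $\mathcal{N}\phi$ and invokes Hardy's inequality to trade $\bp^{-a}$ for $\jx^{-a}$, picking up $t^{-a\beta}$ from the symbol bound on $F_p\bp^a$ with $a$ up to (but strictly less than) $3/2$. You instead convolve: $\|F_p\phi\|_{L^\infty}\leq\|K_t\|_{L^2}\|\phi\|_{L^2}\lesssim t^{-3\beta/2}$ and pair against $\|\mathbf{N}\|_{L^1}\lesssim 1$. Both lose the same factor $t^{-3\beta/2}$ and both therefore reach $\beta>2/3$. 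The hypothesis required on the nonlinearity differs slightly ($\jx^a\mathcal{N}\phi\in L^2$ for the paper versus $\mathbf{N}\in L^1_{loc}$ uniformly in $t$ for you), but in the paper's regime both are available, so this part is fine.

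The WLS refinement to $\beta>5/8$ has a genuine gap, and the gap is polynomial, not the logarithmic loss you anticipate. With the split $\phi=F(\jx\leq t^\gamma)\phi+F(\jx\geq t^\gamma)\phi$, your two contributions are $t^{-3\beta+3\gamma/2}$ (core, from $\|K_t\|_{L^\infty}\|F_<\phi\|_{L^1}$) and $t^{-3\beta/2+1/4-\gamma/2}$ (exterior, from $\|K_t\|_{L^2}\|F_>\phi\|_{L^2}$ with Markov). Optimizing over $\gamma$ gives $\gamma=\tfrac{3\beta}{4}+\tfrac{1}{8}$ and the common decay $t^{-15\beta/8+3/16}$, which is integrable only for $\beta>\tfrac{19}{30}\approx 0.633$, strictly worse than $\tfrac{5}{8}=0.625$. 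Since the loss is in the polynomial exponent, a single bootstrap iteration in the spirit of Theorem~\ref{thm:r-limit} (which is designed to dispose of a $1/t$ term of the wrong sign, not to sharpen a sub-critical power) does not close it. What the paper actually does is finer than any physical-space dichotomy: it writes the pairing as $\big(|x|^{1/2}\phi,\ |x|^{-1/2}|\bp|^{-1/2}\,|\bp|^{a+1/2}F_p\,\bp^{-a}\jx^{-a}\,(\jx^a\mathcal{N}\phi)\big)$, uses Hardy to bound $\||x|^{-1/2}|\bp|^{-1/2}\|\lesssim 1$ and $\|\bp^{-a}\jx^{-a}\|\lesssim 1$, uses $\||x|^{1/2}\phi\|_{L^2}\lesssim t^{1/4}$ from the WLS bound, and uses the symbol estimate $\||\bp|^{a+1/2}F_p\|\lesssim t^{-(a+1/2)\beta}$. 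This converts the WLS spatial weight $|x|^{1/2}$ \emph{directly} into a half-derivative $|\bp|^{1/2}$ that the low-frequency cutoff $F_p$ then annihilates with gain $t^{-\beta/2}$; as $a\to 3/2$ the resulting exponent $1/4-(a+1/2)\beta$ becomes integrable precisely for $\beta>5/8$. A hard cutoff at $|x|=R_t$ cannot reproduce this: one cannot be simultaneously sharp in $x$ and in $\bp$, so the Bernstein bound on the core pays an extra $R_t^{3/2}$ that Hardy's interpolation avoids. To repair your argument you would need to replace the physical-space split by the Hardy trade $|x|^{-1/2}\mapsto|\bp|^{1/2}$, i.e.\ essentially to use the paper's argument.
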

\begin{proof} Since $[-i\Delta, F_p]=0$,  we have $D_HF_p =\beta F'_p |\bp|t^{\beta-1}$, which is a negative operator.  So, $F_p$ satisfies Assumption (A1)(A3), we are left to control the nonlinear term $I(t)=(F_p\phi, \mathcal{N}(\phi))$.

First, observe that if   we denote
 $G_p(|\bp|t^\beta \leq 1)= |\bp|^mt^{m\beta}F_p,$ then
 for any Schwarz function $f$,  $\widehat{G_pf(x)} =\chi (t^\beta |\xi|\leq 1)\hat{f}(\xi) $. Then \be  G_pf(x) = t^{-3\beta}\check{\chi}(\frac{x}{t^\beta})  * f,  \ee  using Young's inequality, this implies that $G_p:L^r\rightarrow L^r$ bounded for any $1\leq r\leq \infty.$

Now we write ${\bf N}(\phi)=\mathcal{N}(|x|,t,|\phi|)\phi$. For
any $ a\in (0,\frac32)$, using Hardy's inequality \eqref{Hardy},  we get
\begin{align}
|I(t)| =& |\lp \phi,  F_p \mathcal{N}\phi\rp | = |\lp   \phi,     F_p \bp^a \bp^{-a}\jx^{-a}\jx^a \mathcal{N}\phi\rp|\\
\leq &\|\phi\|_2 \| F_p \bp^a\|_{L^2\rightarrow L^2} \|\bp^{-a}\jx^{-a}\|_{L^2\rightarrow L^2} \|\jx^a \mathcal{N}\phi\|_{L^2}\\
\leq &  t^{-a\beta}\|\phi\|_2\|\jx^a \mathcal{N}\phi\|_2
\end{align}
Hence for $\beta>\frac23, $ we can find $a\in (0,\frac32)$ such that $a\beta>1$.  $F_p$ also satisfies Assumption (A2), and we conclude that $\lim_{t\rightarrow +\infty}\la F_p \ra$   exists.

Now for WLS, since $\||x|^\frac12 \phi\|_{L^2}^2= \la |x|\ra \lesssim t^{\frac12}$, we have
\begin{align}
|I(t)| =& |\lp \phi,  F_p \mathcal{N}\phi\rp | = |\lp  |x|^\frac12  \phi,   |x|^{-\frac12}|\bp|^{-\frac12} |\bp|^{a+\frac12}  F_p \bp^{-a}\jx^{-a}\jx^a \mathcal{N}\phi\rp|\\
\leq &  t^{-(a+\frac12)\beta}\||x|^\frac12\phi\|_2\|\jx^a \mathcal{N}\phi\|_2  \lesssim t^{-(a+\frac12)\beta+\frac14}
\end{align}
Hence for $\beta>\frac58,  I(t)\in L^1(dt)$, which implies $F_p$   satisfies assumption (A2) and the existence of limit $\lim_{t\rightarrow +\infty}\la F_p \ra$.
\end{proof}
\begin{rem}
The proof also shows that the free channel wave operator with support $F_p$ exists. Since, however, a free wave can not concentrate at zero frequency, it follows that this operator is zero.
That is, the solution goes to zero, in the strong $L^2$ sense, in the region supported by $F_p.$
\end{rem}

In the following proposition, we show that strictly localized solution is in fact smooth. 
\begin{prop}[{\bf Improvement of regularity}]
Let $\phi(t)$ be a global solution to (\ref{Main-eq}) satisfying the global energy bound (\ref{global-bound}),  and it is strictly localized, i.e.
 $supp \phi(t) \in B_K(0)$ for all $t\geq 0$. Assume that $A\phi(t)\in L^2$ uniformly bounded in time $t.$
 Assume moreover that either 
 \begin{align}[N-cond]
 &\quad \quad \textbf {(a)}\quad \|\nabla {\bf N}(\phi)\|_{L^{\infty}_tL^2_x}\lesssim 1\\
 & \textbf {or  (b)}  \int_0^{\delta} \|F(|x|\leq 1)\nabla {\bf N}(\phi)\|_{L^2_x}^{L^{1+\eta}} dt <\infty,
 &\delta,\eta \, \textbf{some small positive numbers}.
 \end{align}
 Then $\phi_0\in C^\infty$.
\end{prop}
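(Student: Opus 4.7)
The approach will be a commutator bootstrap driven by the dilation generator $A$, exploiting the algebraic relation $[-i\Delta, A] = -2\Delta$ together with the strict localization $\operatorname{supp}\phi(t)\subset B_K(0)$. My target is to prove by induction on $k$ that $A^k\phi(t)\in L^2_x$ stays uniformly (or at least locally in time) bounded for every $k\geq 0$; once this is established, compact support together with the commutation of $A$ with $\nabla$ modulo bounded operators on $B_K$ will give $\phi(t)\in H^k$ for all $k$, whence $\phi_0\in C^\infty$ by Sobolev embedding.

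The base cases $k=0,1$ are immediate: $k=0$ from \eqref{global-bound}, and $k=1$ from the hypothesis. For the inductive step, I would apply $A$ to the equation and use $[A,\Delta]=2i\Delta$ to obtain, by a direct induction,
\begin{equation*}
i\partial_t(A^k\phi)+\Delta(A^k\phi)=\sum_{j=0}^{k-1}c_{k,j}\,\Delta\,A^j\phi + A^k\mathbf{N}(\phi),
\end{equation*}
for explicit complex constants $c_{k,j}$. The $L^2$ energy identity then yields
\begin{equation*}
\tfrac{d}{dt}\|A^k\phi\|^2_{L^2}=2\sum_{j=0}^{k-1}\operatorname{Im}\langle A^k\phi,c_{k,j}\Delta A^j\phi\rangle+2\operatorname{Im}\langle A^k\phi,A^k\mathbf{N}(\phi)\rangle.
\end{equation*}
The first sum I would handle by integration by parts (compact support kills boundary terms) and by commuting $\nabla$ past $A$ using $[A,\partial_i]=i\partial_i$; each resulting term is bounded by lower-order quantities $\|A^j\phi\|_{H^1}$, already supplied by the induction hypothesis combined with the $H^1$-bound. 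The delicate term is $\langle A^k\phi, A^k\mathbf{N}(\phi)\rangle$: by analyticity (H3), $A^k\mathbf{N}(\phi)$ expands as a finite sum of products of derivatives of $\mathbf{N}$ with iterated dilation-derivatives of $\phi$, each of which I would control using Gagliardo--Nirenberg on the fixed compact support $B_K$, the induction hypothesis, and the assumed bound on $\nabla\mathbf{N}(\phi)$.

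Under hypothesis (a), the uniform bound on $\|\nabla\mathbf{N}(\phi)\|_{L^\infty_tL^2_x}$ closes the Gronwall estimate for all time. Under hypothesis (b), the weaker $L^{1+\eta}_t$-integrability on $[0,\delta]$ will be just strong enough to propagate the $A^k$-bound from $t=0$ to $t=\delta$; iterating this short-time argument and using the assumed global $A\phi\in L^\infty_tL^2$ to close each shift will extend the bound to all $t\geq 0$. Once $A^k\phi_0\in L^2$ for every $k$, compact support together with the equivalence (on $B_K$) between iterated dilation-regularity and standard Sobolev regularity yields $\phi_0\in H^k$ for every $k$, and Sobolev embedding gives $\phi_0\in C^\infty$.

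The main obstacle will be the combinatorial control of $A^k\mathbf{N}(\phi)$: analyticity in $(\phi,\bar\phi)$ produces many cross-terms of the form $(\partial_\phi^\alpha\partial_{\bar\phi}^\beta\mathbf{N})(\phi)\prod_{i}(A^{m_i}\phi)(A^{n_i}\bar\phi)$, and closing the induction requires that the $L^2$-norm of each be dominated by the hypothesized bound on $\nabla\mathbf{N}(\phi)$ times strictly-lower-order dilation-derivatives of $\phi$. The key observation is that factoring out one derivative from $\mathbf{N}(\phi)$ (no decay issue, thanks to the compact support) produces precisely the quantity controlled by (a) or (b). A secondary subtlety, handled by the radial symmetry that propagates under the equation, is the degenerate behavior of $A=-i(x\cdot\nabla+3/2)$ at the origin; for radial compactly supported functions, this is absorbed into the standard characterization of smoothness at $0$ in terms of radial dilation-derivatives.
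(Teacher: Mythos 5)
The paper's proof of this proposition is a frequency-localized Duhamel argument: one writes $\nabla\phi(0) = e^{-iM^{-1/2}\Delta}\nabla\phi(M^{-1/2}) + \int_0^{M^{-1/2}}e^{-is\Delta}\nabla\mathbf{N}(\phi)(s)\,ds$, projects onto frequency $M$, and exploits that a frequency-$M$ wave packet starting in $B_K$ cannot propagate past $|x|\sim K+\sqrt M$ in time $M^{-1/2}$ (minimal/maximal velocity bounds), together with the smallness of the Duhamel term (one factor $M^{-1/2}$ from the time integral plus a factor $M$ from the gradient of $\mathbf{N}$); summing dyadically gains $1/2$ derivative per iteration. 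Your proposal takes an entirely different route — a Gronwall-type energy estimate on $\|A^k\phi\|_{L^2}$ driven by the algebraic relation $[A,\Delta]=2i\Delta$ — and it has two genuine gaps, so the two approaches cannot be regarded as interchangeable.

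The first gap is that the commutator bootstrap does not close. The relation $[A,-i\Delta]=2i(-i\Delta)$ means $A^k(-i\Delta)=(-i\Delta)(A+2i)^k$, so applying $A^k$ to the equation produces source terms of the form $\Delta A^j\phi$ with $j<k$. In the energy identity
\[
\tfrac{d}{dt}\|A^k\phi\|_{L^2}^2=2\sum_{j<k}\operatorname{Im}\langle A^k\phi,c_{k,j}\Delta A^j\phi\rangle+2\operatorname{Im}\langle A^k\phi,A^k\mathbf N(\phi)\rangle,
\]
the crucial point is that $\Delta A^j\phi$ is \emph{not} lower order relative to $A^k\phi$: after one integration by parts the pairing becomes $\langle \nabla A^k\phi,\nabla A^j\phi\rangle$, which needs $\nabla A^k\phi\in L^2$ — one full derivative above the induction hypothesis $A^k\phi\in L^2$ — and there is no way to absorb this by Gronwall. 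Unlike $[\nabla,A]=\nabla$ (which reproduces the same operator), $[\Delta,A]$ reproduces $\Delta$, an operator of the \emph{same negative scaling weight}; commutators with $A$ do not lower the order. What the paper's proof supplies, and your argument lacks, is a genuine smoothing mechanism: the $1/2$-derivative gain per step comes from the free propagation over a time interval of length $M^{-1/2}$, not from algebra of commutators.

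The second gap is that the intended conclusion does not follow even if the bootstrap closed. Having $(x\cdot\nabla)^k\phi\in L^2$ for all $k$ on a compactly supported radial function does \emph{not} imply $\phi\in C^\infty$: the vector field $x\cdot\nabla$ degenerates at the origin, and a function like $\phi(x)=|x|^{1/2}\chi(|x|)$ in $\mathbb R^3$ satisfies $A^k\phi\in L^2$ for every $k$ yet fails to lie in $H^2$ (the second radial derivative behaves like $r^{-3/2}$, and $\int_0^1 r^{-3}r^2\,dr$ diverges). Your closing remark that for radial compactly supported functions dilation-regularity is "absorbed into the standard characterization of smoothness at $0$" is precisely the step that fails. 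This is why the paper's argument treats a neighborhood of the origin separately via condition (b) — the ball-localized short-time integrability of $\nabla\mathbf N(\phi)$ — and why the paper's main theorem lists $\|(x\cdot\nabla)^k\phi_{wl}\|_{L^2}\lesssim 1$ as a \emph{separate and strictly weaker} conclusion from the $C^\infty$ statement.
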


  \begin{proof}
 Let us differentiate the equation (\ref{Main-eq}), and get the Duhamel formula for $\nabla \phi,$
  \begin{equation}
  \nabla \phi(t) = e^{it\Delta} \nabla \phi(0) -\int_0^te^{i(t-s)\Delta}\nabla{\bf N}(\phi)(s)ds.\label{Duhamel-dphi}
  \end{equation}
  Let $M\gg K>1$,  and $P_M$ be the Littlewood-Paley operator that projects onto the frequency $M$. Take $\chi(x)$ to be the characteristic function of $\{|x|\geq K+M^\frac12\}$.

  Notice that if we denote $f= \phi(t)\in L^2$ and $supp  f\in B_K(0)$, then
  \be \|\chi(x)P_M\nabla f\|_{L^2}=O(M^{-m}). \label{almost-local}\ee This is because $P_M\nabla f = F*f$ where $F(x)=M^4 \tilde{F}(M|x|)$ with $\tilde{F}$ being a  Schwartz function.  
  Take $F_1(x)=\tilde{F}_1(M^{-\frac12}|x|)$ with $\tilde{F}_1$ being a smooth characteristic function of $[-1,1]$ . Then $(FF_1)*f$ has support inside $\{|x|\leq K+M^{\frac12}\}$. Hence we get
 \begin{align}
 \|\chi(x)P_Mf\|^2_{L^2}= &\|\chi(x) (F(1-F_1))*f\|^2_{L^2}\leq \|F(1-F_1)\|^2_{L^1}\|f\|_{L^2}^2\\
  \lesssim& \|f\|_{L^2}^2 \int_{|x|\geq M^{\frac12}} M^4\tilde{F}(M|x|)dx =O(M^{-m})
 \end{align}
  Now by taking $t=M^{-\frac12}$   in (\ref{Duhamel-dphi}),  and we get
  %
  \begin{align}
 & \|\chi(x)P_M  e^{iM^{-\frac12}\Delta} \nabla \phi(0) \|_{L^2} \leq  M^{-m}+\\
  &  \|\chi(x)e^{iM^{-\frac12}\Delta}P_M\int_0^{\frac{1}{\sqrt{M}}}e^{-is\Delta} (\chi_r(|x|>1)+\bar\chi_r(|x|\leq1))\nabla{\bf N}(\phi)(s)ds\|_{L^2}\\
  \lesssim M^{-m}+ \\
  &\frac{1}{\sqrt{M}}  \|F(|x|\geq 1) \nabla{\bf N}(\phi)\|_{L^\infty_tL^2_x([0,M^{-\frac12}]\times \R^3)}\\
  &+\sup_{0\leq s \leq \frac{\epsilon}{\sqrt M}} \|\chi(x)P_M  e^{iM^{-\frac12}\Delta}F(|x|\leq 1) \nabla {\bf N}(\phi(s)) \|_{L^2}\epsilon/\sqrt M +\\
  &\frac{1}{\sqrt M}(\epsilon\sqrt M+K)^{-m}\|P_M{\bf N'}(\phi)\bar\chi_r(|x|\leq 1)\|_{l^{\infty}}\sup_s\|\nabla \phi\|_2\\
 & \lesssim \frac{1}{\sqrt{M}}  + (\epsilon /\sqrt{M})\sup_{0\leq s \leq \frac{\epsilon}{\sqrt M}}\|F(|x|\leq 1) \nabla {\bf N}\|_{L^2_x}.
  \end{align}
Then we get  \begin{align}
\|P_M   \nabla \phi(0) \|_{L^2} =& \|P_M  e^{iM^{-\frac12}\Delta} \nabla \phi(0) \|_{L^2}\\ \leq& \|\chi(x)P_M  e^{iM^{-\frac12}\Delta} \nabla \phi(0) \|_{L^2}  + \|(1-\chi(x))P_M  e^{iM^{-\frac12}\Delta} \nabla \phi(0) \|_{L^2}\\ \lesssim &\frac{1}{\sqrt{M}}
 \end{align}
 Here we used (support of $(1-\chi)$  is  $\in |x|\leq K+\sqrt M$),
  \be \|(1-\chi(x))P_M  e^{iM^{-\frac12}\Delta} \nabla \psi(0) \|_{L^2}  \lesssim M^{-\frac{m}{2}}\ee
  which is basically  minimal velocity bound (\ref{MinVB}), or can be proved directly by method of non-stationary phase.


Next, we show that the integrated condition (b) above is sufficient. In the radially symmetric case, the solution can only be unbounded at the origin. Therefore, we conclude that the above argument applies if we get a control of the solution in a ball around the origin.
To do that, we now apply a similar argument as above, to control the solution near the origin.

By Duhamel identity we have:
\be 
F(|x|\leq 1)P_M\nabla \phi(t)= F(|x|\leq 1) e^{i\Delta t} P_M\nabla \phi(0)-i\int_0^t F(|x|\leq 1) e^{i\Delta (t-s)}P_M\nabla {\bf N}(\phi(s))ds.
\ee
 For $Mt>>1$ we have that $F(|x|\leq 1) e^{i\Delta t} P_M\nabla \phi(0)= \mathcal O(M^{-m}).$
\be
\|F(|x|\leq 1) e^{i\Delta (t-s)}P_M\nabla \phi(s)\|\leq \mathcal O(M^{-m}) \,\textbf{for} |t-s|>>1/M,
\ee
 since by assumption ${\bf N}(\phi)$ is localized in space.
So, the main contribution from the Duhamel term is the integral over an interval of time of size $c/M.$ 

Therefore we get,
\begin{align}
&\|F(|x|\leq 1)P_M\nabla \phi(t)\|_2 \leq \mathcal O(M^{-m})+ \|\int_0^{c/M}  F(|x|\leq 1) e^{i\Delta u}P_M\nabla {\bf N}(\phi(t-u))du\|_2\\
&\leq  \mathcal O (M^{-m})+\left(\int_0^{c/M} du\right)^{1/2}\left(\int_0^{c/M} M^2 \|P_M {\bf N}(\phi)\|^2 du\right)^{1/2}\\
&\leq \sqrt {CM} \left(\int_0^{c/M} \|P_M{\bf N}(\phi)\|^2 du\right)^{1/2}.
\end{align}

  Hence by taking  $M_k=2^k M_0,  M_0\gg K^2>1$, we have  for $\alpha\in(0,\frac12)$
  \begin{align}
\||D|^{1+\alpha}\phi_0\|^2_{L^2} \lesssim M_0^{2\alpha} \|P_{\leq M_0}\nabla \phi(0)\|^2_{L^2}+ \sum_{k=1}^\infty  M_k^{2\alpha} \|P_{M_k}\nabla \phi(0)\|^2_{L^2}\lesssim M_0^{2\alpha}
\end{align}

Therefore, if $\phi, \nabla \phi$ are supported in $B_K(0)$ for $t\in [0,+\infty)$ and $\| \nabla|^{1/2}{\mathcal{N}}(\phi)\|_{L^2_tL^2_x([0,1]\times \R^3)}\lesssim 1$, we get $\alpha-$improvement of the regularity.  Moreover by time translation, we actually have
\be \sup_{t\in[0,\infty) }\||D|^{1+\alpha}\phi_t\|^2_{L^2}  \lesssim M_0^{2\alpha} .\ee

Now by taking $|\nabla|^{1+k\alpha}$ on equation (\ref{Main-eq})  with $k=1, 2,\ldots$, or equivalently multiplying by $M^{k\alpha}\nabla,$  we  can iterate the argument.  In every step, we need to check two things

(1) $ \|\chi(x)P_M |\nabla|^{1+k\alpha}\phi(t) \|_{L^2_x}=O(M^{-m})$. The proof is the same as for (\ref{almost-local}).

(2)$\| |\nabla|^{1+k\alpha}{\bf N}(\phi)\|_{L^\infty_tL^2_x([0,1]\times \R^3)}\lesssim 1.$  
\end{proof}

We now repeat this estimate for a general domain, located around $\sqrt M+nK$ in space.
It follows that:

\begin{prop}[Improvement of regularity]
Let $\phi(t)$ be a global solution to (\ref{Main-eq}) satisfying the global energy bound (\ref{global-bound}),  and it is  localized in the sense that
 $A \phi(t) \in L^2$ for all $t\geq 0$. Assume that $A\phi(t)\in L^2$ is uniformly bounded in time $t.$
 Then $\phi_0\in C^\infty$.
\end{prop}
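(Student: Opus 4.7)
The plan is to reduce to the strictly localized case already handled in the previous proposition by performing a dyadic spatial decomposition of $\phi$ and summing the resulting frequency estimates using the uniform bound $\|A\phi(t)\|_{L^2}\lesssim 1$. The hint in the paper (``we now repeat this estimate for a general domain, located around $\sqrt{M}+nK$ in space'') suggests exactly this: the previous proof only used that $\phi$ was supported in a ball of radius $K$ in order to justify that, over a time window of length $M^{-1/2}$, the free flow spreads $P_M\nabla\phi$ by at most $M^{1/2}\cdot M^{-1/2}=1$, so we can localize that argument on each annulus of width $\sim K$.

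First I would introduce a partition of unity $1=\chi_0(|x|)+\sum_{n\geq 1}\chi_n(|x|)$, with $\chi_n$ supported on the annulus $\Omega_n=\{2^{n-1}\leq |x|\leq 2^{n+1}\}$, and set $\phi_n:=\chi_n\phi$, $R_n:=2^{n+1}$. Each $\phi_n$ is compactly supported on a set of diameter $\lesssim R_n$. Although $\phi_n$ is not a solution to \eqref{Main-eq}, it satisfies
\[
i\partial_t\phi_n+\Delta\phi_n=\chi_n\mathbf{N}(\phi)+[\Delta,\chi_n]\phi,
\]
where $[\Delta,\chi_n]=2\nabla\chi_n\cdot\nabla+\Delta\chi_n$ is supported on $\Omega_n$ and of size $O(R_n^{-1}\nabla+R_n^{-2})$. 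Then I would run the short-time Duhamel argument of the previous proposition applied to $\phi_n$: for $M\gg R_n^{-2}$, using that $e^{it\Delta}$ is almost local on time scales $M^{-1/2}$ relative to spatial scale $R_n$, one gets
\[
\|P_M\nabla\phi_n(0)\|_{L^2}\lesssim M^{-m}+\frac{1}{\sqrt{M}}\Bigl(\|\chi_n\nabla\mathbf{N}(\phi)\|_{L^\infty_tL^2_x}+R_n^{-1}\|\nabla\phi_n\|_{L^\infty_tL^2_x}+R_n^{-2}\|\phi_n\|_{L^\infty_tL^2_x}\Bigr).
\]

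Second, I would sum over $n$, and this is the step where the hypothesis $\|A\phi\|_{L^2}\lesssim 1$ is used. In the radial setting, $A=r\partial_r+\tfrac{3}{2}$, and $A\phi\in L^2$ gives a quantitative scale-weighted control
\[
\sum_n R_n^{-2}\|\chi_n A\phi\|_{L^2}^2\lesssim 1,\qquad \sum_n R_n^{2}\|\chi_n\nabla\phi\|_{L^2}^2\lesssim \|A\phi\|_{L^2}^2+\|\phi\|_{L^2}^2,
\]
so the commutator source terms $R_n^{-1}\|\nabla\phi_n\|_{L^2}+R_n^{-2}\|\phi_n\|_{L^2}$ are square-summable in $n$ uniformly in time. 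The source $\|\chi_n\nabla\mathbf{N}(\phi)\|_{L^2}$ is summed using hypothesis (H2) for large $n$ and the assumed bound on $\nabla\mathbf{N}(\phi)$ near the origin for small $n$, as in the previous proposition. Combining, one gets $\|P_M\nabla\phi(0)\|_{L^2}\lesssim M^{-1/2}$, which yields an $\alpha$-gain of regularity $\||\nabla|^{1+\alpha}\phi\|_{L^\infty_tL^2_x}\lesssim 1$ for some $\alpha\in(0,\tfrac12)$.

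Third, I would iterate exactly as in the previous proposition: apply $|\nabla|^{k\alpha}$ to \eqref{Main-eq} and, at each step, use the gained regularity to bound $\nabla\mathbf{N}(\phi)$ at higher derivatives; the analyticity in (H3) and the regularity of $V$ up to order $N_0$ propagate through. By time translation, the bound holds at every $t$, giving $\phi_0\in H^s$ for every $s$, hence $\phi_0\in C^\infty$.

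The main obstacle is the second step: converting a single bound on the dilation generator into scale-dependent localization estimates on $\phi_n$ that survive summation against the frequency decay $M^{-1/2}$. In particular, one must handle two tricky regimes: the innermost region $n=0$, where the cutoff $\chi_0$ is non-local and the commutator $[\Delta,\chi_0]\phi$ must be absorbed into the integrated source term of type (b) in the previous proposition; and the tail $n\to\infty$, where only the decay of $\mathbf{N}(\phi)$ from (H2) (not compact support) is available, so summability of $\|\chi_n\nabla\mathbf{N}(\phi)\|_{L^2}$ must be checked against the growth of $R_n$. Radial symmetry is essential here since it reduces $A$ to the one-dimensional dilation $r\partial_r+\tfrac{3}{2}$, making the dyadic-in-$r$ summation transparent.
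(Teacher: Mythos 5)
Your overall strategy—annular decomposition plus summation using the uniform bound on $\|A\phi\|_{L^2}$—is structurally close to the paper's, but the specific way you implement the decomposition introduces a derivative loss that breaks the argument.

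The problem is with the commutator source $[\Delta,\chi_n]\phi$. When you write the Duhamel formula for $\nabla\phi_n$ (which is what is needed, since the gain of regularity in the previous proposition comes from estimating $\|P_M\nabla\phi(0)\|_{L^2}\lesssim M^{-1/2}$, not $\|P_M\phi(0)\|$), the source term is $\nabla\bigl(\chi_n\mathbf{N}(\phi)+[\Delta,\chi_n]\phi\bigr)$. Since $[\Delta,\chi_n]\phi = 2\nabla\chi_n\cdot\nabla\phi + (\Delta\chi_n)\phi$, applying one more $\nabla$ produces the term $2\nabla\chi_n\cdot\nabla^2\phi$, which involves two derivatives of $\phi$. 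Your displayed bound only records contributions of size $R_n^{-1}\|\nabla\phi_n\|+R_n^{-2}\|\phi_n\|$, i.e., you have silently dropped the $\nabla^2\phi$ piece. That piece cannot be controlled by $\|\phi\|_{H^1}$ alone, and the very short time window $\sim M^{-1/2}$ does not let you absorb it: if one puts $P_M$ on the commutator and factors out $M$ as the price of the extra derivative, the resulting bound is $\sim M^{1/2}R_n^{-1}$, which diverges rather than gains regularity. So the reduction to the strictly-localized proposition fails.

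The paper's proof avoids exactly this issue by never multiplying $\phi$ by a cutoff. Instead it keeps the exact solution in the Duhamel representation and localizes only the \emph{estimate}: using minimal and maximal velocity bounds for the free flow at frequency $M$, the part of $e^{-i t\Delta}P_M\nabla\phi(t)$ that lands inside $|x|\leq K$ after time $t=M^{-1/2}$ must have come from $|y|\sim\sqrt{M}$, and there the weight $\langle y\rangle^{-1}y$ can be inserted for free, producing exactly a factor $M^{-1/2}\|A\phi(M^{-1/2})\|_{L^2}\lesssim M^{-1/2}$. Likewise the Duhamel source is localized in an annulus and integrated only over $s\lesssim K/M$, with the spatial decay of $\mathbf{N}(\phi)$ giving the summability over annuli, and the region near the origin handled by the local smoothing estimates of the preceding subsection. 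No new equation (and hence no commutator source) is introduced. If you want to keep a dyadic-in-$r$ flavor, the decomposition must act on the estimate (cutoffs applied to the initial data and to the source inside the Duhamel integral), not on the solution itself.
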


\begin{proof}
Let $ \chi_n(x)$ be the smooth projection of $x$ at  the  $|x|>\sqrt M+nK.$ Then, we get from the above Duhamel formula:
\begin{align}
& \|\chi_n(x)P_M  e^{iM^{-\frac12}\Delta} \nabla \phi(0) \|_{L^2} \leq  M^{-m}(nK)^{-\sigma}+\\  &\|\int_0^{\epsilon/\sqrt m}\chi_n(x)e^{iM^{-\frac12}\Delta}P_M\bar{\chi_n} \nabla{{\bf N}}(\phi)(s)ds\|_{L^2}.\\
&\bar{\chi_n}\equiv F(|x|/( nK)=1), \quad\quad  F_1(|x|/(\sqrt M +nK) =1).
\end{align}
$\sigma$ stands for the decay rate in $L_x^2$ of the gradient of the localized solution.
The reason why the only interaction term supported in $ |x|\sim nK$ contributes is due to the minimal and maximal velocity bounds for free flow, at frequency (velocity) $M.$

It remains to estimate the size in $L^2$ of the Duhamel term.
A typical term in the interaction part (when $\phi$ is small in $L^{\infty}$ ) is of the form
$$
\phi \phi \nabla\phi,\, \text{or}\,\, W(x,t)\phi \phi \nabla\phi.
$$
 Therefore,
 $$
 \|\bar{\chi_n} \nabla{{\bf N}}(\phi)(s)\|_{L^2} \leq c( +nK)^{-2}  \|\bar{\chi_n} \nabla(\phi)(s)\|_{L^2}\|\phi\|^2_{H^1}.
 $$

 Furthermore, by the  \emph{assumed} bound on $A\phi$ we have that
 $$
  \|\bar{\chi_n} \nabla(\phi)(s)\|_{L^2}\leq  c( +nK)^{-1}.
 $$
 We do not add contributions from dyadic domains that are far out, around $\sqrt M +nK$, since they are incoming waves.
 Incoming waves cannot cancel out the outgoing waves coming from the linear part acting on the initial data.
 The above bounds show that the sum over $n$ can be taken, and we get the same bound as before, for each fixed $M.$

 For the domain containing the origin, we cannot use the $L^{\infty}$ bounds, unless the interaction is a bounded function.
  In this case, we need to trade some of the $t$ integration of the Duhamel term to gain regularity. This can be done as follows:

   \begin{align}
   &\|F_1(\frac{|x|}{K}\leq 2) P_M\int_0^{\frac{1}{\sqrt{M}}}e^{-is\Delta}\bar{\chi_0}|\phi|^q\nabla \phi(s) ds\|_2\leq\\
   &K^2 \int_0^{\frac{1}{\sqrt{M}}}\la \frac{1}{\sqrt{M}} -s\ra^{-3/2}\||\phi|^q\|_2\|\nabla \phi\|_2 ds\leq\\
   &K^2\frac{1}{\sqrt{M}}\||\phi|^q\|_2\|\nabla \phi\|_2\leq K^2\frac{1}{\sqrt{M}}\|\phi\|^{3}_{H^1}.
   \end{align}
   We assumed here that $2q=6.$
   The estimate can be adjusted to include other values of $q.$
   If $q=1$ a simple use of the Cauchy-Schwarz inequality gives a bound with a loss of $M^{-1/4}.$
A similar argument can be used for $q< 5.$

 For the part of the Duhamel source term which lives near the origin, we need a different kind of propagation estimate, namely \emph{local smoothing estimate}.
 We can then break the interaction term into two parts; one part is where $|x|\geq L, L\geqq 1,$ and the other part is with $|x|\leq L.$

 To complete the proof of the regularity proposition,
we control the Duhamel term with the localized source near the origin by the non-homogeneous Strichartz estimate and the local smoothing estimates, proved in the next subsection.
We have that
\begin{align}
&\|\chi(x)\int_0^{1/\sqrt M}P_M  e^{iM^{-\frac12+s}\Delta}F(|x|\leq 1)\nabla{\mathcal{N}}(\phi)(s)ds \|_{L^{\infty}_tL^2_x}\\
&\lesssim \|P_M F(|x|\leq 1)\nabla{\mathcal{N}}(\phi)(s)\|_{L^{2}_sL^{6/5}_x}\|\chi(x)\|_{L^3}\\
& \lesssim \|P_M |D|^{-b}|D|^{b}F(|x|\leq 1)|\phi|^{4-a}r^{2-a'}r^{-1/2+b'}\left< D\right>^{3/2-b}\phi\|_{L^{2}_tL^{6/5}_x}\\
&\lesssim M^{-b} \|F(|x|\leq 1)|\phi|^{4-a}r^{2-a'}\|_{L^3}\|F(|x|\leq 1)r^{-1/2+b'}\left< D\right>^{3/2}\phi\|_{L^{2}_tL^{2}_x}\\
&\lesssim c(\|\phi|_{H^1})M^{-b}.
\end{align}
Here, $b,a',b'$ are chosen positive and sufficiently small, depending on $a>0.$

When the solution is not compactly supported, the above analysis is essentially the same. As before, the most difficult domain is a neighborhood of the origin, so we focus on the is part.
We use the Duhamel representation \ref{Duhamel-dphi} as before, and we have:

 \begin{equation}
   \chi_0(|x|\leq K) P_M\nabla \phi(0)=-  \chi_0(|x|\leq K) P_Me^{-it\Delta} \nabla \phi(t)- \chi_0(|x|\leq K) P_M\int_0^te^{i(-s)\Delta}\nabla{\mathcal{N}}(\phi)(s)ds\label{Duhamel-dphi1}
  \end{equation}
  We take $t=1/ \sqrt M$, and estimate the $L^2$ norm of the LHS.
  The first term on the RHS is the part of $ P_M \nabla \phi(t)$ that propagates under the free flow, by time $1/\sqrt M$ into the support of $\chi_0(|x|\leq K).$
  By minimal and maximal propagation estimates this comes from $\chi(\sqrt M \leq |y|\leq \sqrt M +K) [ P_M \nabla \phi(t)(y)$.
  Therefore the contribution of this term to the $L^2$ of the RHS is:
  $$
 \| \chi(\sqrt M \leq |y|\leq \sqrt M +K)\la y\ra^{-1}y [ P_M \nabla \phi(t)(y)\|_2\leq (c/\sqrt M)\|A\phi(1/\sqrt M)\|_2 \lesssim  1/\sqrt M.
 $$
 Next, we estimate in a similar way the contribution of the Duhamel term.
 The main contribution comes from the source of the Duhamel term localized in a ball of size $K<<\sqrt M.$
 The contribution from this term is restricted to times $s\leq 2K/M,$ again by minimal and maximal velocity bounds.
 So the bound on the Duhamel term is the same as above, using local smoothing, but with a better bound, since $ 2K/M<<1/\sqrt M.$
 The contribution from any annular domain at distance $nK, n\geq 1,$ is smaller, since we get an extra decay of the source term:
 $$
 \| \chi(nK\leq |x|\leq (n+1)K) |\phi|^m\nabla \phi\|\lesssim (nK)^{-m}.
 $$
 For $m>1$ the sum over $n$ is finite.
 \end{proof}

 \subsection{ LOCAL SMOOTHING ESTIMATES}

 On the localized part of the solution, we gain an extra derivative from the integration over time.
 \begin{thm}[Local Smoothing]
 The solution of the Schr\"odinger equation satisfies the following local smoothing estimate:

\begin{align}
 & \int_{T_0}^{T} \|\jx^{-1/2-0}F_2(|p|/K_0\geq 1)|D|^{3/2} \phi\|^2 dt+\\
 &\int_{T_0}^{T} \|\jx^{-3/2-0}F_2(|p|/K_0\geq 1) \phi\|^2 \\
 &\lesssim ( I\|F_2(|p|/K_0\geq 1)\nabla\phi\|^2+\int_{T_0}^{T} \|\jx^{-1/2-0}F_2(|p|/K_0\geq 1)\nabla \phi\|^2 dt+\\
 &\lp \phi(T),F_K(|p|/K_0\geq 1)p^2 F_K(|p|/K_0\geq 1)\phi(T)\rp+\lp \phi(0),F_K(|p|/K_0\geq 1)p^2 F_K(|p|/K_0\geq 1)\phi(0)\rp.
\end{align}

 $I\equiv |T-T_0|.$

 \end{thm}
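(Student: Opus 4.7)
The plan is to apply a Kato-type local smoothing multiplier argument to $u := \nabla(F_K\phi)$, where $F_K = F_K(|\bp|/K_0\geq 1)$ commutes exactly with $\Delta$ and with $\nabla$. Thus $u$ satisfies
\begin{equation*}
i\partial_t u + \Delta u = \nabla(F_K\mathcal{N}(\phi)\phi),
\end{equation*}
and the LHS of the target inequality equals $\int_{T_0}^T \|\jx^{-1/2-0}|\bp|^{1/2}u\|^2\,dt$ (up to Riesz transform corrections) plus a zero-order weighted $L^2$ term coming from the biharmonic part of the multiplier, i.e.\ the standard half-derivative gain for $u$ via integration in time.

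First I would fix a smooth convex radial weight $g(x)$ with $g'(r),g''(r)\geq 0$ and
\begin{equation*}
\nabla^2 g(x)\,\xi\cdot\xi \geq c\,\jx^{-1-0}|\xi|^2, \qquad -\Delta^2 g(x)\geq c\,\jx^{-3-0},
\end{equation*}
e.g.\ a suitable mollification of $g(x)=\int_0^{|x|}(1-\langle r\rangle^{-\epsilon})\,dr$. Set the PROB $A_g = \tfrac12(\nabla g\cdot\bp+\bp\cdot\nabla g)$. By \eqref{delta-A}, $[-i\Delta, A_g] = 2\bp_i g_{ij}\bp_j - \tfrac12\Delta^2 g$, and integrating $\frac{d}{dt}\langle u, A_g u\rangle$ from $T_0$ to $T$ yields
\begin{equation*}
\int_{T_0}^{T}\!\langle 2\bp_i g_{ij}\bp_j u, u\rangle dt + \tfrac12\!\int_{T_0}^{T}\!\langle (-\Delta^2 g)u, u\rangle dt = \langle u, A_g u\rangle\Big|_{T_0}^{T} + 2\!\int_{T_0}^{T}\!\mathrm{Im}\langle A_g u, \nabla(F_K\mathcal{N}\phi)\rangle dt.
\end{equation*}

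The first positive term dominates $c\int_{T_0}^T\|\jx^{-1/2-0}\nabla u\|^2\,dt$, which by Plancherel and standard Riesz-transform bounds (used to commute $\jx^{-1/2-0}$ past $\nabla$, the errors being lower-order) is equivalent to $c\int_{T_0}^T\|\jx^{-1/2-0}|\bp|^{3/2}F_K\phi\|^2\,dt$; the second integrand yields $c\int_{T_0}^T\|\jx^{-3/2-0}F_K\phi\|^2\,dt$, matching the two LHS terms of the claimed estimate. The boundary contribution satisfies $|\langle u,A_g u\rangle|\lesssim \langle \phi, F_K \bp^2 F_K\phi\rangle$, since $A_g$ is of order $|\bp|$ when paired against $u = \bp F_K\phi$, producing the two boundary terms on the RHS.

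The main obstacle lies in controlling the source term $2\int_{T_0}^{T} \mathrm{Im}\langle A_g u, \nabla(F_K\mathcal{N}\phi)\rangle\,dt$. Expanding $\nabla(F_K\mathcal{N}\phi) = F_K(\nabla\mathcal{N})\phi + F_K\mathcal{N}\,\nabla\phi$, and writing $A_g u \sim \nabla g\cdot\bp(\bp F_K\phi)$, one splits the weight as $\jx^{-1/2-0}\cdot\jx^{1/2+0}$: Cauchy--Schwarz pairs the $\jx^{-1/2-0}$ factor with $|\bp|^{3/2}F_K\phi$, which is absorbed back into the LHS by Young's inequality; the complementary piece is estimated via the decay assumption (H2) on $\mathcal N$ and the radial Sobolev embedding combined with the global $H^1$ bound, producing the $\int_{T_0}^T\|\jx^{-1/2-0}F_K\nabla\phi\|^2\,dt$ term, while an $L^\infty_t\times|T-T_0|$ estimate on the residual contribution yields the $I\|F_K\nabla\phi\|^2$ term. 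A secondary technical point is that the commutators $[F_K,\jx^{-1/2-0}]$ and $[F_K,|\bp|^{3/2}]$ introduced along the way are controlled by the expansion \eqref{BFAs} with enough iterations, using that $ad^{(k)}_{\bp}(\jx^{-s}) = O(\jx^{-s-k})$ and $F_K^{(k)}$ is smooth and decaying in $|\bp|$; finitely many iterations render the remainders integrable in $t$ and absorbable into the LHS or the RHS weighted norms.
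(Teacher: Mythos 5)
The approach here is genuinely different from the paper's, but it contains a gap that the paper's argument is specifically designed to avoid. The paper applies the smoothed Morawetz multiplier $\gamma_g$ directly to $\phi$, sandwiched by the frequency cutoff, $F_K\gamma_g F_K$; the positive commutator then yields the integrand $\|\jx^{-1/2-0}F_K\nabla\phi\|^2$ (one derivative on $\phi$, i.e.\ the RHS term of the stated estimate), and the extra half derivative is then obtained through iteration plus a dyadic sum over $K_n = 2^n K_0$, using the factor $K^{-a}$ that appears when the interaction commutator is estimated. You instead differentiate the equation and apply the multiplier to $u=\nabla(F_K\phi)$, hoping to jump directly to $|D|^{3/2}\phi$.

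The problem is the identification you make after the commutator step. For $u=\nabla F_K\phi$, the positive term of the Morawetz commutator is $\int\langle \bp_i g_{ij}\bp_j u, u\rangle\,dt \gtrsim \int\|\jx^{-1/2-0}\nabla u\|^2\,dt$, which is $\int\|\jx^{-1/2-0}\nabla^2 F_K\phi\|^2\,dt$. This is a full second derivative of $\phi$, not $|D|^{3/2}\phi$: the operators $\nabla^2$ and $|D|^{3/2}$ differ by half a degree of differentiation, and no composition with Riesz transforms (which are order zero) can bridge them. Likewise, the boundary term is $\langle u, A_g u\rangle \sim \|u\|\,\|\nabla u\| \sim \|\nabla\phi\|\,\|\nabla^2\phi\|$, which requires $H^2$ control and is strictly stronger than the stated boundary data $\langle\phi, F_K p^2 F_K\phi\rangle \sim \|\nabla\phi\|^2$. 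In effect, differentiating once shifts the entire Morawetz argument up by a full derivative, so that both sides of the inequality are one half-derivative too strong for the theorem as stated (and no longer closable with only a global $H^1$ bound). The Kato--Yajima style half-derivative gain that would let you go from $u\in L^2$ to $|D|^{1/2}u$ in a weighted $L^2_tL^2_x$ norm is not what the pure Morawetz commutator gives you; the paper replaces it by the frequency-scaling iteration, and that mechanism is absent from your proposal.

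To repair the argument you would either have to abandon $u=\nabla F_K\phi$ and work directly with $\phi$ as the paper does, supplying the missing half derivative through the dyadic sum over $K$ (the factor $i[x,F_K]=K^{-1}\tilde F_K'$ and the $K^{-a}$ gain in the interaction estimate are the essential ingredients), or else formulate a genuine Kato--Yajima/limiting-absorption step that produces the half-derivative gain directly at the level of $u$, together with a more careful account of the source term that avoids asking for $H^2$ control on the boundary.
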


 \begin{proof}
 Let $\gamma_g \equiv (-i/2)[\jx^{-1}x\cdot \nabla+  \nabla \cdot \jx^{-1}x]$
 where the vector-field is chosen so that
 $$
 i[-\Delta, \gamma_g]=-\partial_l\, g_{lm}\,\partial_m+ a\jx^{-3-0}, a>0.
 $$

 $g_{lm}$ is a positive definite matrix function with $g_{lm}\geq \jx^{-1-0}.$

 The proof now follows by using the following PROB
 \be
 F_K(|p|/K\geq 1)\gamma_g F_K(|p|/K\geq 1).
 \ee
 The commutator with the Laplacian is a standard calculation, leading to positive terms.

 The commutator of $x$ with $F_K$ is of lower order:
 $$
 i[x,F_K] =K^{-1}\tilde F_K'.
 $$
From this we get the leading positive terms:
\begin{align}
& \int_{T_0}^{T} \|\jx^{-1/2-0}F_K\nabla \phi\|^2 dt+\\
&\int_{T_0}^{T} \|\jx^{-3/2-0}F_K \phi\|^2
\end{align}

 The lower order terms come from the commutator with the interaction term.

The commutator with the interaction term gives
\begin{equation}
\lp \phi, [F_2\gamma_g F_2,\mathcal {N}(\phi)]\phi\rp
\end{equation}

Then,

\begin{align}
&\lp \phi, F_K [\gamma_g, \mathcal{N}(\phi)]F_K \phi\rp=\\
&\lp \left< p\right>^{1-a}F_K\phi,\left< p\right>^{-1+a}  \mathcal {N'}r\jx^{-1}(\phi)(\partial_r) F_K\phi\rp,\\
&\gamma_g \mathcal {N}(\phi)\phi \equiv \mathcal{N'}(\phi) \gamma_g\phi.
\end{align}
Then, we use:
\begin{align}
&|r^{2-a}\mathcal {N'}F_K\phi| \leq c(\|\phi\|_{H^1})\\
&r^{1/2}F_Kr^{-1/2}= \mathcal{O}(1)\\
&\|\left< p\right>^{1-a}F_K\phi\|\leq K^{-a}\|\phi\|_{H^1}
\end{align}
 The control of the other parts of the commutator with the interaction term is similar.

 In this case, we note that $[F_K, \mathcal{N}(\phi)]\gamma_g F_K$ can be treated similarly, as one can commute the derivative part of $\gamma$
 through $F_K$ and the same for $r.$

The typical term in the interaction part is of the general form (for $\phi$ large, energy subcritical nonlinearity)
$$
|\phi|^{4-a}\,\, \text{or}\,\, W(x)|\phi|^{4-a}.
$$
We use the factor $x/\jx$ of  $\gamma_g$ to bound $|x||\phi|^2\leq c\|\phi\|^2_{H^1}.$
The leftover factors can now be estimated by Cauchy-Schwarz in the time variable.

 Extra factor $\jx^{-1}$  can come from the decay of $\phi$ for $x$ large.
Summing over a (dyadic) choice of $K$ one sees a gain of $a$ derivatives.
Then we iterate. The resulting smoothing estimate is order $a$ better (in number of derivatives) than the standard Morawetz.
We then use that estimate to control the interaction term, where now we redo the estimate with a=0.
In this way, we gain another factor of a. This can be repeated many times, as long as the LHS is controlled by the $H^1$ norm of the solution.
\end{proof}
\begin{rem}
The above form of error estimates is sufficient to control non-linear terms of order 4-a:
$$
\frac{|\phi|^{q+4-a}}{1+|\phi|^q}
$$
\end{rem}

 The above local smoothing estimate is NOT optimal.
 Since the vector-field defining the PROB, $\gamma_g$, vanishes linearly at the origin, the scaling dimension of the PROB is in fact zero, namely it is at the level of $L^2.$

 We lift it up by one half order through iteration and summing over $K_n,$ after multiplying by $K_n=2^nK_0.$
 This brings us to order $1/2.$
 But our assumption is that the solution is uniformly bounded in $H^1$, which is order 1.

 To get an optimal estimate we then need to have another half order. It is not possible to get another half derivative, even for the linear free equation.
 However, it is possible to get the extra 1/2 dimension in terms of (fractional) powers of $1/r.$

 The classical Morawetz estimate is an example, but in three dimensions we get no derivatives, just a delta function.

 So, we introduce a vector-field which is not as singular as the Morawetz one.

 \begin{thm}[Optimal Local Smoothing]
 The solution of the Schr\"odinger equation satisfies the following local smoothing estimate:

\begin{align}
 & \int_{T_0}^{T} \|\jx^{-a}|x|^{-1/2+0}F_2(|p|/K_0\geq 1)|D|^{3/2} \phi\|^2 dt+\\
 &\int_{T_0}^{T} \|\jx^{-3/2-0}F_2(|p|/K_0\geq 1) \phi\|^2 dt\\
 &\lesssim ( I\|F_2(|p|/K_0\geq 1)\nabla\phi\|^2+\int_{T_0}^{T} \|\jx^{-1/2-0}F_2(|p|/K_0\geq 1)\nabla \phi\|^2 dt+\\
 &\lp \phi(T),F_K(|p|/K_0\geq 1)p^2 F_K(|p|/K_0\geq 1)\phi(T)\rp+\lp \phi(0),F_K(|p|/K_0\geq 1)p^2 F_K(|p|/K_0\geq 1)\phi(0)\rp.
\end{align}

 $I\equiv |T-T_0|.$

 \end{thm}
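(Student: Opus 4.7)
The plan is to reproduce the structure of the previous (non-optimal) Local Smoothing theorem, but with the radial vector field $\gamma_g$ replaced by a new multiplier $\gamma_h$ whose Hessian carries the weight $\jx^{-2a}|x|^{-1/2+0}$. The sought half-dimension gain near the origin comes from the fact that we want $h$ to grow like $|x|^{3/2-0}$ at zero, rather than linearly, so that $\nabla^2 h$ acquires an extra factor $|x|^{-1/2}$ compared with the standard Morawetz-type weight $x/\jx$. Concretely, I would take $h(x)=\widetilde h(|x|)$ with $\widetilde h$ smooth, $\widetilde h''(r)\sim \widetilde h'(r)/r\sim \la r\ra^{-2a}r^{-1/2+0}$ for $r\geq 1$, and smoothly truncated near the origin so that $h\in C^\infty$ and both radial second derivatives are nonnegative. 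The decomposition
\begin{equation*}
\nabla^2 h(x)= \widetilde h''(r)\,\hat x\otimes\hat x + \frac{\widetilde h'(r)}{r}\,(I-\hat x\otimes \hat x)
\end{equation*}
then yields a positive semi-definite Hessian with the prescribed symbol.

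With such $h$ in hand, the PROB is $B:=F_K(|\bp|/K_0\geq 1)\,\gamma_h\, F_K(|\bp|/K_0\geq 1)$, where $\gamma_h=\tfrac12(\nabla h\cdot \bp+\bp\cdot\nabla h)$. Using \eqref{delta-A}, the principal part of $D_H B$ equals $F_K(-2\nabla_j h_{jk}\nabla_k-\tfrac12\Delta^2 h)F_K$: the first summand produces, after integrating in time and testing against $\phi$, the desired quadratic form $\|\jx^{-a}|x|^{-1/2+0}F_K|D|\phi\|_{L^2_x}^2$, while the biharmonic term $\Delta^2 h\sim \jx^{-2a}|x|^{-5/2+0}$ provides the $\|\jx^{-3/2-0}F_K\phi\|_{L^2_x}^2$ contribution (using $|x|^{-5/2}\sim|x|^{-3/2}\cdot|x|^{-1}$ and Hardy's inequality \eqref{Hardy}). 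The factor $|D|^{3/2}$ in the statement, rather than $|D|$, is recovered by absorbing one $|D|^{1/2}\sim K^{1/2}$ on the support of $F_K(|\bp|/K_0\geq 1)$, weighting the PROB by a power of $K$ and summing dyadically over $K_n=2^n K_0$, exactly as in the previous Local Smoothing proof.

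The two sources of error are the commutator of $\gamma_h$ with $F_K$ and the commutator of $B$ with the nonlinearity $\mathcal{N}(\phi)$. The first is handled via \eqref{BFAs} with $A=\bp$: the leading correction carries a power $K^{-1}$, enough to be absorbed after the dyadic summation. The nonlinear piece is controlled exactly as in the previous theorem, by pairing the factor $|\nabla h|\lesssim \jx^{1/2-2a+0}$ of $\gamma_h$ against the decay of $\mathcal N(\phi)$ in $\jx$, the $H^1$ bound on $\phi$, and Hardy's inequality; the slightly larger growth of $\nabla h$ is harmless provided $a$ is chosen appropriately. The main obstacle will be aligning the quantitative constraints on $h$: on one hand $\nabla^2 h$ must carry the precise symbol $\jx^{-2a}|x|^{-1/2+0}$ for the principal positive form, while on the other hand $|\nabla h|$ must remain integrable against $H^1$-functions so that $\gamma_h$ is a well-defined bounded form and the boundary expressions $\la B\ra_{T}$, $\la B\ra_{T_0}$ on the right-hand side of the estimate are dominated by the stated $\la \phi, F_K \bp^2 F_K\phi\ra$. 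Once an admissible $h$ is pinned down and the positivity of $\nabla^2 h$ and control of $\Delta^2 h$ are verified, the argument is a direct transcription of the non-optimal Local Smoothing proof with the new weights in place.
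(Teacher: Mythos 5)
The overall strategy is right—replace the radial vector field by one whose Hessian carries the desired weight and rerun the previous local smoothing argument—but your proposed multiplier has the wrong asymptotic powers, so the PROB you describe would prove a strictly weaker estimate than the statement. Note that the weight on $|D|^{3/2}\phi$ in the theorem is $\jx^{-a}|x|^{-1/2+0}$, so the quadratic form you must extract from $\la [-i\Delta,\gamma_h]\ra$ has weight (after squaring) $\jx^{-2a}|x|^{-1+0}$; hence you need $\widetilde h''(r)\sim \jx^{-2a}r^{-1+0}$, not $\jx^{-2a}r^{-1/2+0}$ as you propose. Correspondingly, $h$ must grow almost \emph{linearly} near the origin, like $r^{1+0}$, not like $r^{3/2-0}$. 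Your claim that $h$ can be smoothly truncated near zero ($h\in C^\infty$) is also at odds with this: the correct $h$ has $h''\to\infty$ (slowly) as $r\to 0$, so $h$ is only $C^1$, not smooth, at the origin. The biharmonic consequently scales like $\Delta^2 h \sim r^{-3+0}$ near zero (giving the claimed $\|\jx^{-3/2-0}F_K\phi\|_{L^2}^2$), not $r^{-5/2+0}$ as you write—and the invocation of Hardy's inequality to bridge that gap runs in the wrong direction.

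The paper's explicit choice is $g(r)=\int_0^r s(s^2+s^{\theta})^{-1/2}ds$, $\theta=2-\epsilon$, so that $\nabla g = x/\sqrt{r^2+r^{\theta}}$. A short computation (carried out in the paper's Appendix B) shows that for radial $\phi$ the positive form equals $\frac{\epsilon}{2}\int |\nabla\phi|^2\,r^{-1+\epsilon/2}(r^\epsilon+1)^{-3/2}\,dx$, i.e. weight $r^{-1+\epsilon/2}$ near zero and $r^{-1-\epsilon}$ at infinity, and the leading term of $\Delta^2 g$ is $\sim r^2(r^2+r^\theta)^{-5/2}\sim r^{-3+5\epsilon/2}$ near zero—exactly the powers the statement requires. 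You would need to repair your $h$ to this scaling before the argument goes through. One further small difference worth flagging: to convert $|D|$ into $|D|^{3/2}$ the paper's proof works with $\la \nabla^{1/2}\gamma_g\nabla^{1/2}\ra$ directly, whereas you propose weighting by $K^{1/2}$ and summing dyadically; either route is acceptable and the latter matches what the paper did in the non-optimal smoothing theorem, but it does not change the required scaling of $h$.
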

\begin{proof}
 The proof follows the same steps as the smoothing theorem above, except that we use a different vector-field $\nabla g.$
 Let
 \begin{align}\label{gamma-g}
 &g(r)\equiv \int_0^r \frac{s ds}{\sqrt{s^2+s^{\theta}}}.\\
 & \nabla g= \frac{\overrightarrow{x}}{\sqrt{r^2+r^{\theta}}},\\
 &r=|\overrightarrow{x}|.\\
 &\theta= 2-\epsilon
 \end{align}

 Then, using as before the formula for the commutator of the laplacian with $\gamma_g$:
 \begin{equation}
 i[-\Delta,\gamma_g]= -\nabla_i g_{ij}\nabla_j -\Delta^2 g,
 \end{equation}

(summation over $i,j$ is implied),

the above local smoothing estimate follows, if we control the contributions of the Interaction term.
The estimate of the interaction term is similar to the previous case. Instead of pulling a factor of $r$ near zero from the vector-field $\nabla g$, we can pull a factor of $r^{(2-\theta)/4}$ and $r^{1-\theta/2}$ from the vector-field. Since $\theta$ can be arbitrarily close to 2, we can control $|\phi|^{4-a}$ near zero, for all $0<a<4.$
\end{proof}




 \subsection{Exterior Morawetz Estimate}

 \medskip

 Let $M\geq 100$  be a constant, we consider the PROB
   \be B=   F_1(\frac{\jx}{M}\geq 1)\gamma F_1(\frac{\jx}{M}\geq 1)\ee for $M\gg 1.$

 By assuming  $|F_1(\frac{\jx}{M}\geq 1)\mathcal{N}(\phi)|\leq M^{-k}$, we have
 $(B\phi , {\bf N}(\phi))=O(M^{-k})$, $k$ large (\emph{$k>3$ is sufficient}). This holds true if we have saturated nonlinearity with high power $p$, or a time-dependent potential that decays fast.

By direct computation we get
\begin{align}
D_H  F_1(\frac{\jx}{M}\geq 1)\gamma F_1(\frac{\jx}{M}\geq 1) =  \frac{4}{M}    \sqrt{F_1F_1'}  \gamma^2  \sqrt{F_1F_1'}   + F_1[-i\Delta ,\gamma] F_1+ \frac{1}{M^3}\tilde{F}_1(\frac{|x|}{M}\sim 1).
 \end{align}
Noice  that the $[-i\Delta ,\gamma]$ vanishes on the support of $F_1$, so by integration in time we get  the Morawetz type identity  in exterior domain
  \begin{align}
 \la F_1 \gamma F_1 \ra_{t_2} -  \la F_1 \gamma F_1 \ra_{t_1} =\int_{t_1}^{t_2} \frac{4}{M }  \la   \sqrt{F_1F_1'}  \gamma^2  \sqrt{F_1F_1'} \ra  +\frac{1}{M^3} \la \tilde{F}_1(\frac{|x|}{M}\sim 1)\ra +O(M^{-k}) ds. \label{EME0}
 \end{align}
 Here $\tilde{F}_1$ comes from symmetrization terms, and it is    supported in $supp F_1'$.

 Another version of this exterior estimate would follow by using the (smoothed) Morawetz Multiplier (\ref{EME0}) for $\gamma: $
 $$
 \gamma_g\equiv g(x)\cdot \nabla+ \nabla \cdot g(x),
 $$
 with $g(x)=\nabla G(x)$ is a vector-field such that the term $F_1[-i\Delta ,\gamma] F_1$ has a positive sign (only possible in 3 or more dimensions). In this case, in fact this can dominate the interaction term at least at large distances for decay of the interaction with $k>3.$

 \begin{prop}\label{A0-sequential-bound} For $\pwb$ defined by $\pwb=\phi(t)-e^{it\Delta}\Omega^*_F\phi_0$,
 there exists a sequence of time $t_n\rightarrow+\infty$, such that
\be \|A\pwb(t_n)\|_{L^2(|x|\leq \sqrt{t_n})}\lesssim1. \label{A0-sequential}\ee
\end{prop}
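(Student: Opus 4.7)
The plan is to exploit the weakly localized structure of $\pwb$ (namely $(\pwb, |x|\pwb) \lesssim t^{1/2}$ and the vanishing of the $\gamma$-limit, both established in Decomposition~1) together with a dedicated PROB argument and pigeonhole extraction on the logarithmic measure $dt/t$.

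First, I would reduce to bounding the radial dilation. For radial functions, $A = \frac{1}{2}(x\cdot \bp + \bp\cdot x) = -i(r\partial_r + 3/2)$. Using the identity $\bp r = r\gamma$ on the exterior region $|x|\geq 2$ (from \eqref{gamma-delta} and the formulas in Section~2.3 relating $\gamma$ to the self-adjoint radial momentum), one gets
$$
\|A\pwb\|_{L^2(|x|\leq \sqrt{t})}^2 \lesssim \|r\gamma\,\chi(t)\pwb\|_{L^2}^2 + \|\pwb\|_{H^1}^2,
$$
where $\chi(t) = F(\tfrac{\jx}{\sqrt{t}} \leq 1)$. The contribution from $|x|\leq 2$ is absorbed by the uniform $H^1$ bound \eqref{global-bound}, so it suffices to produce a sequence $t_n\to\infty$ along which $\|r\gamma\,\chi(t_n)\pwb\|_{L^2} \lesssim 1$.

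Second, I would use the scaled fourth-moment PROB $\widetilde{B}(t) = \chi(t)\,|x|^4 \,\chi(t)/t^2$, which is uniformly bounded in time since $|x|^4\leq t^2$ on the support of $\chi$, hence $\la \widetilde B\ra_\pwb \leq \|\pwb\|_{L^2}^2 \lesssim 1$. Computing the Heisenberg derivative, the commutator identity $[-i\Delta, |x|^2]\propto A$ (as in Section~3.2) combined with the Leibniz rule and the identity $\{|x|^2, A\} = 2rAr$ yields
$$
D_H \widetilde B = \frac{c}{t^2}\chi\,rAr\,\chi + \mathcal{E}(t),
$$
for a positive constant $c$. The error $\mathcal{E}(t)$ consists of (a) the time-derivative term $-2\chi|x|^4\chi/t^3 = O(1/t)$; (b) boundary terms from $[-i\Delta,\chi]$ supported on $|x|\sim\sqrt{t}$; and (c) the nonlinear contribution, of size $O(t^{-\beta_0})$ via (H2). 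Integrability of the boundary terms is obtained by a bootstrap analogous to the proof of Theorem~\ref{thm:r-limit}, invoking the shell propagation estimate \eqref{PE-1} applied to $\pwb$ at scale $\alpha=1/2$ together with the vanishing statement \eqref{pwb-gamma-limit}. After a symmetrization (using \eqref{A2BC2}) that identifies $\chi rAr\chi$ with $(r\gamma)^*\chi^2(r\gamma)$ modulo bounded commutator corrections, integrating from $t_0$ to infinity produces
$$
\int_{t_0}^\infty \frac{1}{t^2}\|r\gamma\,\chi(t)\pwb\|_{L^2}^2\,dt < \infty.
$$

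Third, combining this integrated estimate with the trivial bound $\|r\gamma\chi\pwb\|^2 \lesssim t\|\gamma\pwb\|^2 \lesssim t$ and applying pigeonhole on the logarithmic measure $dt/t$ produces a sequence $t_n \to +\infty$ along which $\|r\gamma\,\chi(t_n)\pwb\|_{L^2} \lesssim 1$. By step~1, this yields the claimed $\|A\pwb(t_n)\|_{L^2(|x|\leq \sqrt{t_n})} \lesssim 1$.

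The main obstacle is step two: identifying $\chi rAr\chi$ with the positive operator $(r\gamma)^*\chi^2(r\gamma)$. Although both are self-adjoint and agree at the symbol level on $|x|\geq 2$ (via $\bp r = r\gamma$), the operator identity requires a careful symmetrization, and the commutator corrections between $r$, $\gamma$, and $\chi$ must be absorbed by the other integrable pieces of $\mathcal{E}$. A secondary issue is that the raw pigeonhole from $\int f(t)\,dt/t^2$ yields $f(t_n) \lesssim 1$ only after one bootstraps against an improved a priori growth estimate on $f(t) = \|r\gamma\chi(t)\pwb\|^2$; this bootstrap is carried out as in Section~4, first showing $f(t) \lesssim t^{1-\epsilon}$ via a weighted variant of $\widetilde B$ and then iterating until the correct weight is achieved.
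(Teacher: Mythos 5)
Your step two has a fatal sign problem that cannot be repaired by the symmetrization you sketch. The commutator $[-i\Delta,|x|^4]=8\,rA r$ is a \emph{first-order} operator in the radial derivative (indeed, using $A = r\gamma -\tfrac{i}{2}$ on $r\geq 2$, one finds $rAr = r^3\gamma - \tfrac{3i}{2}r^2$, whose principal symbol is $r^3\xi_r$), whereas $(r\gamma)^*\chi^2(r\gamma) = \gamma r \chi^2 r\gamma$ is a \emph{second-order} positive operator with symbol $r^2\chi^2\xi_r^2$. These are not equal modulo bounded commutator corrections — they differ at the symbol level, not just by lower order terms — so $\chi\, rAr\,\chi$ is \emph{not} sign-definite: since $A$ has spectrum $\R$, so does $rAr$. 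Consequently the time derivative of $\la\widetilde B\ra_t$ has no sign, and the positive-commutator method cannot be applied to $\widetilde B = \chi|x|^4\chi/t^2$. This is precisely why the paper instead uses the exterior Morawetz observable $B_M = F_1(\jx/M\geq 1)\gamma F_1(\jx/M\geq 1)$ at each fixed dyadic scale $M$: its Heisenberg derivative has leading term $\frac{4}{M}\sqrt{F_1F_1'}\gamma^2\sqrt{F_1F_1'}\geq 0$, and $M^2\gamma^2$ reproduces $A_0^2$ on the shell $|x|\sim M$ only \emph{after} the sum over dyadic $M\leq\sqrt T$ is taken.

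Beyond the sign issue, there are two further gaps. First, even if one had $\int^\infty\frac{f(t)}{t^2}\,dt<\infty$ with $f(t)=\|r\gamma\chi(t)\pwb\|_2^2$ and the a priori bound $f(t)\lesssim t$, the pigeonhole on $dt/t$ only yields $\liminf f(t)/t =0$, i.e.\ a sequence with $f(t_n)=o(t_n)$, which is far weaker than the claimed $f(t_n)\lesssim 1$. The paper instead establishes a per-$T$ average, $\frac{1}{T}\int_{T/2}^{T}\|A_0\pwb(s)\|^2_{L^2(|x|\leq\sqrt T)}\,ds\lesssim 1$ (after the double time-integration, multiplication by $M^3/T^2$, and dyadic summation), from which the mean value theorem immediately gives the sequence. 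Second, your scheme never addresses the boundary/initial term: in the paper a nontrivial contradiction argument is needed to produce, for each $M$, a time $t_0(M)\in[0,M]$ at which $\la F_1(\jx/M)\gamma F_1(\jx/M)\ra_{t_0}\geq -C_0/M$; without this control the integration by parts in $t$ leaves an unbounded boundary term. Your proposal would face the same obstruction and offers no substitute mechanism.
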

\begin{proof}
Take $F_1$ smooth characteristic function of $[1,+\infty)$ and $supp F_1\subset[\frac12, +\infty)$.  $F=F_1^2,\tilde{F}(\lambda) =\frac{1}{\lambda}\int_{-\infty}^\lambda F(s)ds$.  Notice that $\tilde{F}$ satisfies
\be \lambda \tilde{F}'(\lambda)+\tilde{F}(\lambda)= (\lambda \tilde{F}(\lambda))'= F(\lambda)\ee
and  $supp \tilde{F}\subset[\frac12, +\infty)$. $\tilde{F}$  is smooth, nonnegative and bounded by $1$.

Now for any $M>100$, we consider PROB acting on $\pwb$
\be  B_0=\jx \tilde{F}(\frac{\jx}{M}) , \quad\quad B_M=F_1(\frac{\jx}{M}\geq 1)\gamma F_1(\frac{\jx}{M}\geq 1).\ee
By direct computation we have
\be  D_HB_0= \gamma \left[\tilde{F}(\frac{\jx}{M})+\tilde{F}' (\frac{\jx}{M}) \frac{\jx}{M}\right] +h.c.= \gamma F+F\gamma =2\sqrt{F}\gamma\sqrt{F}\ee
Since $\sqrt{F}=F_1$, we get
\be \la B_0\ra_{t_2}-\la B_0\ra_{t_1} =2\int_{t_1}^{t_2}\la F_1\gamma F_1\ra_s ds \label{HB0} \ee

Now take constant $C_0= \lp \phi_0, \jx\phi_0\rp$. We claim that for each $M\geq 100,$ there exists a time $t_0=t_0(M)\in [0, M]$, such that
 such that $\la B_M\ra_{t_0}\geq -\frac{C_0}{M}$.    We prove the claim by contradiction.  Suppose this is not true,  then for any $M$,   $\la B_M\ra_{t} <  -\frac{C_0}{M}$ for $t\in [0,M]$. From \eqref{HB0} we get
\be
  \la B_0\ra_{M} =\la B_0\ra_{0}+ 2\int_{0}^{M}\la F_1\gamma F_1\ra_t dt\geq 0
\ee
which implies that
\be  2C_0 =2\int_0^M\frac{C_0}{M}ds \leq -2\int_{0}^{M}\la F_1\gamma F_1\ra_t dt \leq \la B_0\ra_{0} \leq \lp \phi_0, \jx\phi_0\rp=C_0,  \ee
thus we reach a contradiction.

Now for each $M\geq 100$, we integrate the exterior Morawetz \eqref{EME0} and get
\begin{align}
& \int_{t_0}^T \la F_1\gamma F_1\ra_t  - \la F_1\gamma F_1\ra_{t_0} dt\\=& \int_{t_0}^T\int_{t_0}^t    \frac{4}{M }  \la   \sqrt{F_1F_1'}  \gamma^2  \sqrt{F_1F_1'} \ra_{s}  +\frac{1}{M^3} \la \tilde{F}_1(\frac{\jx}{M}\sim 1)\ra_{s} +O(M^{-k})\,\,ds\, dt
\end{align}
Together with \eqref{HB0}, we get
\begin{align}
&  \int_{t_0}^T\int_{t_0}^t    \frac{4}{M }  \la   \sqrt{F_1F_1'}  \gamma^2  \sqrt{F_1F_1'} \ra_{s} \,\,ds\, dt  +(T-t_0) \la F_1\gamma F_1\ra_{t_0} +\frac12 \la B_0\ra_{t_0} \\= & \frac12 \la B_0\ra_{T}  - \int_{t_0}^T\int_{t_0}^t   \frac{1}{M^3} \la \tilde{F}_1(\frac{\jx}{M}\sim 1)\ra_{s} +O(M^{-k})\,\,ds\, dt
\end{align}
Now from \eqref{pwb-slowgrowth}, $\la B_0\ra_{T}\lesssim T^\frac12$.  Also our choice of $t_0$, implies either $\la F_1\gamma 
 F_1\ra_{t_0} \geq 0$, or $$-\frac{C_0}{M}\leq \la F_1\gamma F_1\ra_{t_0} <0.$$
Hence we obtain the estimate (when $\la F_1\gamma F_1\ra_{t_0}<0$)
\begin{align}
& \int_{t_0}^T\int_{t_0}^t    \frac{4}{M }  \la   \sqrt{F_1F_1'}  \gamma^2  \sqrt{F_1F_1'} \ra_{s} \,\,ds\, dt\\  \lesssim & T^\frac12 +\frac{T-t_0}{M} +\frac{(T-t_0)^2}{M^k}+ \int_{t_0}^T\int_{t_0}^t   \frac{1}{M^3} \la \tilde{F}_1(\frac{\jx}{M}\sim 1)\ra_{s} \,\,ds\, dt \end{align}
\emph{Notice that, the place that uses the $\la |x|\ra \lesssim t^{\frac12}$, is the first term, otherwise for $\la |x|\ra \lesssim t^{\alpha}$, we will have $\frac{t^\alpha M^3}{t^2}$, and there is problem with summation which means we can only sum up to $t^{\frac{2-\alpha}{3}}$, which is strictly less than $t^\frac12$. Or we have to put weight like $\frac{1}{\jx^{2\alpha-1}}$}.

Let us  multiply $\frac{M^3}{T^2} $ on both sides. Notice  that   $\gamma^2=\bp^2$ in exterior region and from \eqref{AB2A},  we  have $ \la   \sqrt{F_1F_1'}  A_0^2  \sqrt{F_1F_1'} \ra =  \la   A_0 F_1F_1'  A_0 \ra + \la \tilde{F}_1(\frac{\jx}{M}\sim 1)\ra$. Hence we get
\begin{align}
& \frac{1}{T^2}  \int_{t_0}^T\int_{t_0}^t      \la     A_0  F_1F_1' A_0 \ra_{s} \,\,ds\,dt\\
\lesssim &\frac{1}{T^2}  \int_{t_0}^T\int_{t_0}^t      \la   \sqrt{F_1F_1'}  A_0^2  \sqrt{F_1F_1'} \ra_{s}  + \la \tilde{F}_1(\frac{\jx}{M}\sim 1)\ra_s\,\,ds\,dt \\
\lesssim  & \frac{1}{T^2}  \int_{t_0}^T\int_{t_0}^t     M^2 \la   \sqrt{F_1F_1'}  \gamma^2  \sqrt{F_1F_1'} \ra_{s} \,\,ds\,dt  +\frac{1}{T^2}  \int_{t_0}^T\int_{t_0}^t     \la \tilde{F}_1(\frac{\jx}{M}\sim 1)\ra_s\,\,ds\,dt\\
\lesssim &
 \frac{1}{T^2}  \int_{t_0}^T\int_{t_0}^t    \la \tilde{F}_1(\frac{\jx}{M}\sim 1)\ra_{s} \,\,ds\, dt  +    \frac{M^3}{T^\frac32}+\frac{(T-t_0)M^2}{T^2}+\frac{(T-t_0)^2}{T^2}M^{3-k}  \end{align}

Now for $T$   large, and $M\leq \sqrt{T}$, $t_0\leq M\leq   \frac12 T$. On the left side,  we shrink the integration region to $\frac12 T\leq s\leq t\leq T $, on the right side, we enlarge the integration region to $0\leq s\leq t\leq T$, and we get

\begin{align} & \frac{1}{T^2}  \int_{\frac12 T}^T\int_{\frac12 T}^t     \|      \sqrt{F_1F_1'} A_0 \pwb(s)\|_{L^2_x}^2 \,\,ds\,dt  \\
\lesssim & \frac{1}{T^2}  \int_{0}^T\int_{0}^t   \|      \tilde{F}_1(\frac{\jx}{M}\sim 1)  \pwb(s)\|_{L^2_x}^2  ds\, dt+  \frac{M^3}{T^\frac32}+\frac{M^2}{T}+ M^{3-k} \label{A0-estimate}\end{align}

Now for each large time $T$ fixed, we take $M_0=100$, and
$M_j=2^jM_0$. Denote $j_0$ such that $M_{j_0-1}< \sqrt{T} \leq M_{j_0}$.  We perform estimate   (\ref{A0-estimate}) for each $M_j$, $0\leq j\leq j_0$, and sum up the estimates.
Notice that  $\sum_{j} \|      \tilde{F}_1(\frac{\jx}{M}\sim 1)  \pwb(s)\|_{L^2_x}^2  \lesssim \|\pwb\|_{L^2(|x|\leq \sqrt{T})}^2$ which is uniformly bounded,  so we get
\begin{align}     \frac{1}{T^2}  \int_{\frac12 T}^T\int_{\frac12 T}^t     \|   A_0 \pwb(s)\|_{L^2(|x|\leq \sqrt{T})}^2 \,\,ds\,dt  \lesssim 1 \label{intA0-bound}
\end{align}
Now by mean value theorem, we find $s_0\in [\frac12 T, T]$ such that
\be \|A_0\pwb(s_0)\|^2_{L^2(|x|\leq \sqrt{s_0})} \lesssim 1  \label{A-bound}\ee
Since $T$ is arbitrary, we find a sequence of  time $t_n\rightarrow +\infty$, such that the above bound holds. Hence we proved \eqref{A0-sequential}.
  \end{proof}
\begin{rem}The easier case is  when
there exists $M\geq 100$,  and $t_1\geq 0$, such that  \be \la F_1(\frac{\jx}{M}\geq 1)\gamma F_1(\frac{\jx}{M}\geq 1)\ra_t \leq 0,\quad \forall t\geq t_1.\ee
From \eqref{HB0}, we immediately have the solution is localized in the sense that
\be \lp  \pwb, |x|\pwb\rp\lesssim 1, \quad\quad  \forall t\geq 0.\ee
\end{rem}

\begin{rem}
Now we consider further propagation observables: \be  B_\alpha  =\jx^\alpha F_1\gamma +\gamma F_1\jx^\alpha= g_\alpha(x)\cdot \nabla  +\nabla\cdot g_\alpha(x) \ee  with $\alpha\in [0,1]$. Notice that  when $F_1\equiv 1$, for $\alpha=0,  $ $B_\alpha=\gamma.$  for $\alpha=1$, $B_\alpha=A$. For $\alpha\in (0,1)$ $B_\alpha$ is just interpolation of the two cases, with growing $g_\alpha$.

In this case, the commutator with $-i\Delta$ is positive; this is useful in cases where the solution is localized, or even weakly localized. The most general case is when $F_1=F_1(\frac{|x|}{M}\geq 1)$ and $\alpha \in (0,1)$.

\begin{lem}
Suppose that the solution of NLS is weakly localized, but not localized, in the sense that $\la F_1(|x|/t^{\alpha}\geq 1)\gamma  F_1(|x|/t^{\alpha}\geq 1)\ra$ converges to zero at $t\to \infty.$
Then, there exists a sequence of times $t_n$ going to infinity, such that $\la F_1(|x|/t^{\alpha}\geq 1)\gamma  F_1(|x|/t^{\alpha}\geq 1)\ra(t_n)>0$ for all $n.$
\end{lem}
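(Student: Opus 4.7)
The plan is a proof by contradiction. Suppose $\la F_1(\jx/t^{\alpha})\gamma F_1(\jx/t^{\alpha})\ra_t \leq 0$ for all $t\geq T_0$; combined with the convergence-to-zero hypothesis this means the $\gamma$-limit is approached from below. The strategy is to use this sign information, via a carefully chosen PROB, to force a strict-localization bound on the solution that contradicts the ``not localized'' hypothesis.

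The main construction adapts the PROB of Proposition~\ref{A0-sequential-bound} to a time-dependent scale. I would take $B(t) := \jx\,\tilde F(\jx/t^{\alpha})$ with $\tilde F(\lambda) := \lambda^{-1}\int_0^{\lambda} F^{2}(s)\,ds$, so that $(\lambda\tilde F(\lambda))' = F^{2}(\lambda)$ and the radial gradient of $r\tilde F(r/t^{\alpha})$ equals $F^{2}(r/t^{\alpha})$. Mimicking the symmetrization behind~\eqref{DHFrF} and~\eqref{fxf} produces
\[
D_H B \;=\; 2\,F_1(\jx/t^{\alpha})\,\gamma\,F_1(\jx/t^{\alpha}) \;+\; O(t^{\alpha-1}),
\]
where the remainder absorbs symmetrization commutators of size $O(t^{-2\alpha})$ together with the explicit time derivative of the cutoff, which is supported in the boundary strip $\jx\sim t^{\alpha}$ and has operator norm $O(t^{\alpha-1})$. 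Integrating from $T_0$ to $T$ and inserting the sign hypothesis yields
\[
\la B\ra_T - \la B\ra_{T_0} \;=\; 2\int_{T_0}^{T}\la F_1\gamma F_1\ra_s\,ds \;+\; O(T^{\alpha}) \;\leq\; O(T^{\alpha}).
\]
Since $\tilde F$ is a smoothed characteristic function of $[1,\infty)$, this translates to $\la\jx F_1^{2}(\jx/t^{\alpha})\ra \lesssim T^{\alpha}$, and combining with the trivial interior bound $\la \jx(1-F_1^{2})\ra \leq t^{\alpha}$ gives $\la \jx\ra \lesssim t^{\alpha}$ for all $t\geq T_0$.

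The main obstacle is that this only reproduces the weak-localization scaling; closing the contradiction requires bootstrapping to $\la\jx\ra = O(1)$. My plan is the following. By Theorem~\ref{unique-defin} the hypothesis transfers to any $\alpha' \in (1/3,1)$, so I would re-run the same PROB with a smaller $\alpha'$. The catch is whether the sign hypothesis $\la F_1(\jx/t^{\alpha'})\gamma F_1(\jx/t^{\alpha'})\ra \leq 0$ transfers along with it. Since the previous step confines the mass to $\{\jx\leq C t^{\alpha}\}$, for $\alpha'<\alpha$ the two cutoffs $F_1(\jx/t^{\alpha})$ and $F_1(\jx/t^{\alpha'})$ see effectively the same exterior region up to errors controlled by~\eqref{PE} and~\eqref{integrate-boundary}, so the sign should indeed transfer. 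Iterating and driving $\alpha'\downarrow 1/3$, followed by a final application of the fixed-scale exterior Morawetz estimate~\eqref{EME0} in the now-almost-localized regime (exactly as in the proof of Proposition~\ref{A0-sequential-bound}), should pin $\la\jx\ra$ to be uniformly bounded, contradicting ``not localized''. The delicate sign-transfer step is where the bulk of the technical work lies.
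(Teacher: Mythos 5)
Your proposal takes the same route as the paper: argue by contradiction, use the dilation-type propagation observable on the exterior region, and integrate the sign hypothesis to trap $\la\jx\ra$. The paper's own proof is a few lines: since integrating $\la F_1\gamma F_1\ra$ gives, up to a constant, the positive quantity $\la F_1|x|F_1\ra$, the integral of the non-positive expectation is bounded below, hence converges; the paper concludes the solution is "localized in $|x|\leq t^\alpha$; but $\alpha$ is arbitrary" and stops there. You have correctly noticed that this, as stated, only reproduces the weak-localization scaling $\la\jx\ra\lesssim t^\alpha$, and that some further argument is needed to get an actual contradiction with "not localized," which is the content your bootstrap is aimed at.

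The gap in your bootstrap is, however, genuine and not merely a presentation issue. The contradiction hypothesis gives $\la F_1(\jx/t^{\alpha})\gamma F_1(\jx/t^{\alpha})\ra\leq 0$ eventually for the particular $\alpha$ in the statement; your iteration needs the analogous sign for smaller $\alpha'$. Your justification — that once mass is confined to $\jx\leq Ct^{\alpha}$, the two cutoffs "see effectively the same exterior region" — controls the \emph{difference} of the two expectations, not its sign; since both expectations converge to zero, a bounded discrepancy is precisely what can flip the sign, which is the regime that matters. As proposed, the bootstrap does not close. (The paper's one-sentence "But $\alpha$ is arbitrary" appears to skate past the same point; so you are not missing something the paper makes explicit, but you should not present sign-transfer as expected to go through without a concrete argument.) A minor computational remark: your $D_H B$ also produces a symmetrization error $O(t^{-2\alpha})$ from $[[\gamma,F],F]$, which for $\alpha>\frac13$ is indeed absorbed by the $O(t^{\alpha-1})$ term from the explicit time derivative of the moving cutoff; it is worth recording where each error originates.
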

\begin{proof}
By contradiction, if not, then for all $t$ large enough, the above expectation is non-positive. Then the integral is decreasing. It must be integrable in time, since the integral over time gives, up to a constant, the positive quantity  $\la F_1(|x|/t^{\alpha}\geq 1)|x|  F_1(|x|/t^{\alpha}\geq 1)\ra.$ 
Therefore the the solution is localized in the region $|x|\leq t^{\alpha}.$ But $\alpha$ is arbitrary.
\end{proof}
\begin{prop}
Let a weakly localized solution of the NLS as above, has\newline  $B(t_0)\equiv\la F_1(|x|/t^{\alpha}\geq 1)\gamma  F_1(|x|/t^{\alpha}\geq 1)\ra(t_0)>0.$ 
Then, for all $T$ large enough,
\be
B(T)=B(t_0)+\int_{t_0}^T \|F_1' \A\psi(t)\|^2 t^{-3\alpha} dt +\mathcal{O} (T^{1-3\alpha}c(\|\psi\|_{H^1})). \quad 
\alpha\neq 1/3.\ee
\end{prop}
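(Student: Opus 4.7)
The plan is to integrate the Heisenberg derivative identity \eqref{DHFrF} from $t_0$ to $T$,
\[
\la D_H(F_1\gamma F_1)\ra = \tfrac{4}{t^\alpha}\la\sqrt{F_1F_1'}\gamma^2\sqrt{F_1F_1'}\ra - \tfrac{\alpha}{t}\la F_1'\tfrac{\jx}{t^\alpha}\gamma F_1 + F_1\gamma F_1'\tfrac{\jx}{t^\alpha}\ra + O(t^{-3\alpha}),
\]
convert the first term into $t^{-3\alpha}\|F_1'\A\psi\|^2$, and absorb every other contribution into the error $O(T^{1-3\alpha}c(\|\psi\|_{H^1}))$. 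The nonlinear correction $2\,\mathrm{Im}\,(F_1\gamma F_1\psi,\mathbf{N}(\psi))$ contributes $O(1)$ upon integration by hypothesis (H2) with $\beta_0>1$, since $\int_{t_0}^T t^{-\beta_0}dt\lesssim 1$.

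The key algebraic step is the conversion $\gamma^2 \mapsto \A^2/|x|^2$ on the support of $F_1'$. For $t$ sufficiently large, $\mathrm{supp}\,F_1' \subset \{|x|\geq 2\}$, and direct computation from \eqref{gamma} and \eqref{dilation} gives the operator identity $\gamma = \A/|x| + i/(2|x|)$ there. Squaring yields
\[
\sqrt{F_1F_1'}\,\gamma^2\,\sqrt{F_1F_1'} = \sqrt{F_1F_1'}\,|x|^{-1}\A^2|x|^{-1}\,\sqrt{F_1F_1'} + R,
\]
where $R$ collects the cross terms $\A\cdot O(|x|^{-2})$ and the quadratic $O(|x|^{-2})$ remainders. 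Since $|x|\sim t^\alpha$ on $\mathrm{supp}\,F_1'$, one may commute $|x|^{-1}$ through $\sqrt{F_1F_1'}$ at the cost of $O(t^{-2\alpha})$ corrections, giving the leading contribution $4t^{-3\alpha}\|F_1'\A\psi\|^2$ plus quadratic-form remainders of size $t^{-3\alpha}c(\|\psi\|_{H^1})$. Integrating these produces $O(T^{1-3\alpha})$ for $\alpha<1/3$ and $O(1)$ for $\alpha>1/3$, both absorbed into the claimed bound.

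For the cross term in the Heisenberg derivative, symmetrize via \eqref{A2BC2} and apply Cauchy--Schwarz together with the propagation estimate \eqref{PE-1}:
\[
\int_{t_0}^T\tfrac{1}{t}\bigl|\la F_1'\tfrac{\jx}{t^\alpha}\gamma F_1 + F_1\gamma F_1'\tfrac{\jx}{t^\alpha}\ra\bigr|\,dt \lesssim \Big(\int_{t_0}^\infty t^{-\alpha}\|\gamma F_1'\psi\|^2 dt\Big)^{\!1/2}\Big(\int_{t_0}^T t^{\alpha-2}dt\Big)^{\!1/2} \lesssim c(\|\psi\|_{H^1}),
\]
uniformly in $T$ since $\alpha<1$, and therefore absorbed into the error term.

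The main obstacle is a careful audit of the commutator errors incurred by the substitution $\gamma\leftrightarrow\A/|x|$ and by commuting $|x|^{-1}$ through $\sqrt{F_1F_1'}$. One must verify that each $[\A,F_1(\jx/t^\alpha)]$-type correction is controlled by a multiple of $F_1'(\jx/t^\alpha)\cdot(\jx/t^\alpha)$, bounded on $\mathrm{supp}\,F_1'$, and that the resulting quadratic-form errors do not accumulate in time beyond $O(T^{1-3\alpha})$; this is where the exclusion $\alpha\neq 1/3$ enters, since the borderline integral $\int t^{-1}dt$ would otherwise produce an additional logarithmic factor. Once this bookkeeping is complete, collecting the leading contribution, the cross term, the symmetrization errors, and the nonlinear term yields the stated identity.
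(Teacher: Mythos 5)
Your proposal takes essentially the same route as the paper's terse proof: integrate the Heisenberg identity for the PROB $F_1\gamma F_1$ from $t_0$ to $T$, convert the leading positive term $\gamma^2$ on $\mathrm{supp}\,F_1'$ to $\A^2/|x|^2$ (the paper records this compactly as $t^{-\alpha}\gamma F_1'\gamma = t^{-3\alpha}\A F_1'\A + \mathcal{O}(t^{-4\alpha})$), and push the symmetrization, cross, and interaction contributions into the error. Your write-up actually supplies more of the bookkeeping (the operator identity $\gamma = |x|^{-1}\A + \tfrac{i}{2|x|}$ on $|x|\geq 2$, the nonlinear term via (H2), the Cauchy--Schwarz treatment of the $\tfrac{\alpha}{t}$-cross term) than the paper's one-paragraph proof does, so the two are in substantive agreement.

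One loose thread, which the paper's own statement shares: the error $\mathcal{O}(T^{1-3\alpha})$ can only absorb $\mathcal{O}(1)$ quantities when $\alpha<1/3$, where $T^{1-3\alpha}\to\infty$; for $\alpha>1/3$ the symmetrization error $\int_{t_0}^T t^{-3\alpha}dt$ is dominated by the $t_0$-endpoint and the honest bound is a $t_0$-dependent constant rather than something vanishing like $T^{1-3\alpha}$ (this is what the paper's ``For $\alpha=1/3$ we get a $\ln T$ bound'' is gesturing at). Relatedly, you invoke the propagation estimate \eqref{PE-1} to bound the cross term by $\mathcal{O}(1)$, but \eqref{PE-1} is established only for $\alpha\in(1/3,1)$. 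In the regime $\alpha<1/3$, where the absorption you claim actually works, you cannot use \eqref{PE-1}; instead, the crude bound $\int_{t_0}^T \tfrac{c}{t}\,dt = \mathcal{O}(\ln T) = o(T^{1-3\alpha})$ (using the global $H^1$ bound) already suffices and is the right thing to say. With that substitution, the argument is complete and matches the paper's intent.
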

\begin{proof}
 The proof follows directly from applying the standard argument for deriving PRES from the PROB: $F_1\gamma F_1,$
 and then using that the LHS of the Heisenberg equation, given by $\la F_1 \gamma F_1\ra_T-\la F_1 \gamma F_1\ra_{t_0}$ is a non-positive quantity: For $T$ large enough, the first term is arbitrary small. By choosing $t_0$ such that the second term is negative (possible due to the above Lemma), we conclude the non-positivity of the LHS.
 Therefore, the positive integral term on the RHS, is bounded by the symmetrization term, which gives the factor $T^{1-3\alpha}$
 and a higher order correction coming from the interaction terms, provided the decay is faster than $<x>^{-3}$ at infinity.
 For $\alpha=1/3$ we get a $\ln T$ bound.
 Finally, we note that $t^{-\alpha}\gamma F_1'\gamma=t^{-3\alpha}\A F_1'\A+ \mathcal{O}(t^{-4\alpha}).$
\end{proof}
\begin{cor}
On the support of $F_{1,\alpha}$ the quantity $\|F_{1,\alpha}\A\psi(t)\|$ is uniformly bounded on large time intervals in $[t_0,T].$
By integrating over $\alpha \in [\alpha_1,\alpha_2]$ similar estimate holds with a $\ln T$ factor:
\be\label{A bound}
\|F_(\frac{|x|}{t^{\alpha_1}}\geq 1)F_(\frac{|x|}{t^{\alpha_2}}\leq 1)\A\psi(t)\| \lesssim \ln{T_n},
\ee
for $t=T_n$ and $T_n$ can be arbitrary large.
This means in particular that $(\ln{1+|x|})^{-1} \A\psi(T_n)$ is uniformly bounded in $L^2.  $
\end{cor}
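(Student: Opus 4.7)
The plan is to upgrade the single-scale identity of the Proposition above to a bound on the full annulus $\{t^{\alpha_1}\leq \jx\leq t^{\alpha_2}\}$ by integrating the scaling parameter $\alpha$ over $[\alpha_1,\alpha_2]$; the $\ln T_n$ factor in \eqref{A bound} is precisely the cost of covering that annulus by the one-parameter family of PROBs $F_1(\jx/t^\alpha\geq 1)\gamma F_1(\jx/t^\alpha\geq 1)$.

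First I would fix $\alpha_1>\tfrac13$ and apply the preceding Proposition for each $\alpha\in[\alpha_1,\alpha_2]$. The WLS hypothesis gives $B(T)\to 0$, while the Lemma above it supplies a $t_0$ with $B(t_0)>0$, so rearranging the identity in the Proposition and using $\alpha>\tfrac13$ to control the $O(T^{1-3\alpha})$ symmetrization error yields the uniform propagation estimate
\be
\int_{t_0}^\infty \|F_1'(\jx/t^\alpha)\A\psi(t)\|_{L^2}^2\,t^{-3\alpha}\,dt \;\lesssim\; c(\|\psi\|_{H^1}),
\ee
with a constant uniform in $\alpha\in[\alpha_1,\alpha_2]$. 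Next I would integrate this bound in $\alpha$ and invoke Fubini. The key geometric input is that for each fixed $x$ the set $\{\alpha\in[\alpha_1,\alpha_2]:F_1'(\jx/t^\alpha)\neq 0\}$ is an interval of length comparable to $1/\ln t$ (since $\alpha\mapsto \ln(\jx/t^\alpha)$ has slope $-\ln t$), and on that interval $t^\alpha\sim \jx$, so
\be
\int_{\alpha_1}^{\alpha_2}|F_1'(\jx/t^\alpha)|^2\,t^{-3\alpha}\,d\alpha \;\sim\; \frac{\jx^{-3}}{\ln t}\,\mathbf{1}_{\{t^{\alpha_1}\leq\jx\leq t^{\alpha_2}\}}.
\ee

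A mean-value argument on $[T/2,T]$ then produces a sequence $T_n\uparrow\infty$ at which the $\jx^{-3/2}$-weighted $L^2$ norm of $F(\jx/T_n^{\alpha_1}\geq 1)F(\jx/T_n^{\alpha_2}\leq 1)\A\psi(T_n)$ is controlled, and restoring the weight via $\jx^3\leq T_n^{3\alpha_2}$ on the support converts this to \eqref{A bound} with the $\ln T_n$ factor. For the consequence $\|(\ln(1+\jx))^{-1}\A\psi(T_n)\|_{L^2}\lesssim 1$, I would divide \eqref{A bound} by $\ln T_n$, noting $\ln(1+\jx)\sim \ln T_n$ on the annular support, and sum over finitely many choices of $(\alpha_1,\alpha_2)$ in $(\tfrac13,\tfrac12]$ that cover the effective spatial support of the WLS (controlled by \eqref{half-growth}), while the interior region $\jx\lesssim t^{1/3}$ is absorbed directly by the $H^1$-bound since $\A\sim \jx\cdot p$ there.

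The main obstacle is the uniform-in-$\alpha$ handling of the identifications $\gamma^2=\bp^2$ and $\gamma\sim \A/\jx + \mathrm{l.o.t.}$ on $\mathrm{supp}\,F_1'$ (so that the $t^{-\alpha}\gamma F_1'\gamma\simeq t^{-3\alpha}\A F_1'\A$ step in the Proposition remains valid with $\alpha$-independent error bounds), together with making the $O(T^{1-3\alpha})$ symmetrization remainder integrable against $d\alpha$ on $[\alpha_1,\alpha_2]$ without introducing additional logarithmic losses beyond the single $\ln T_n$ captured by the mean-value selection of $T_n$.
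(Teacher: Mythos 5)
Your overall strategy (integrate the Proposition's propagation estimate in $\alpha$, pass through Fubini, select $T_n$ by a mean-value argument, then remove the weight) is in the spirit of the corollary's ``integrating over $\alpha$'' phrase, and the intermediate computations are correct: the Jacobian factor $|d\alpha/d(\jx/t^\alpha)| \sim (\jx \ln t)^{-1}$ does indeed produce
\begin{equation*}
\int_{\alpha_1}^{\alpha_2}|F_1'(\jx/t^{\alpha})|^2 t^{-3\alpha}\,d\alpha \sim \frac{\jx^{-3}}{\ln t}\,\mathbf 1_{\{t^{\alpha_1}\lesssim \jx\lesssim t^{\alpha_2}\}},
\end{equation*}
and the mean value on $[T/2,T]$ gives a sequence $T_n$ with
\begin{equation*}
\int_{T_n^{\alpha_1}\leq\jx\leq T_n^{\alpha_2}}\jx^{-3}\,|\A\psi(T_n,x)|^2\,dx \lesssim \frac{\ln T_n}{T_n}.
\end{equation*}

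The genuine gap is the last step, the removal of the weight. Stripping $\jx^{-3}$ by the crude bound $\jx^{3}\leq T_n^{3\alpha_2}$ gives
\begin{equation*}
\|F(\tfrac{\jx}{T_n^{\alpha_1}}\geq 1)F(\tfrac{\jx}{T_n^{\alpha_2}}\leq 1)\A\psi(T_n)\|^2 \lesssim T_n^{3\alpha_2}\cdot\frac{\ln T_n}{T_n}= T_n^{3\alpha_2-1}\ln T_n,
\end{equation*}
and for any $\alpha_2>1/3$ this is larger than $(\ln T_n)^2$ by a fixed positive power of $T_n$. A dyadic decomposition of the annulus does not help: the sum $\sum_k R_k^3$ of dyadic scales from $T_n^{\alpha_1}$ to $T_n^{\alpha_2}$ is dominated by the outermost scale, so the same $T_n^{3\alpha_2}$ loss reappears. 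This is not a bookkeeping issue but reflects the fact that the propagation estimate carries the weight $t^{-3\alpha}\sim\jx^{-3}$, which suppresses exactly the outer part of the annulus that you need to control.

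There is also an internal tension in your parameter choices. To get the $\alpha$-uniform bound $\int_{t_0}^{\infty}\|F_1'\A\psi\|^2 t^{-3\alpha}\,dt \lesssim 1$ you must take $\alpha_1>1/3$ so that the symmetrization remainder $O(T^{1-3\alpha})$ decays; but the weight restoration only produces $\lesssim \ln T_n$ when $\alpha_2\leq 1/3$. There is no window where both constraints hold with $\alpha_1<\alpha_2$, and the weakly localized state lives up to $|x|\sim t^{1/2}$, so $\alpha_2$ cannot be taken below $1/3$ without losing the region of interest. The obstacle you flag at the end (uniform-in-$\alpha$ handling of $\gamma\approx\A/\jx$ and of the symmetrization remainder) is a real technical point but is secondary to the weight-removal step, which you treat as immediate and which is where the argument actually breaks. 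Closing the gap requires an additional input that upgrades the $\jx^{-3}$-weighted control to an unweighted one on the annulus --- for instance by bringing in the separate sequential bound $\|A\pwb(t_n)\|_{L^2(|x|\leq\sqrt{t_n})}\lesssim 1$ from the exterior-Morawetz section, or by using a different propagation observable whose Heisenberg derivative does not carry the $t^{-3\alpha}$ degeneracy.
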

\subsubsection[AP]{\bf Analytic Projections \cite{Soffer-monotonic} }

Another class of multipliers is based on \emph{Analytic Projections.}

 Let  the outgoing projection
\be  P_M^+(A)\equiv \frac12(1+ tanh \frac{A-M}{R}),\hspace{1cm} \text{ for } M\gg 1, 2\leq R\lesssim M^{1-0} .\ee
Similarly we define the incoming projection
\be  P_M^-(A)\equiv \frac12(1 - tanh \frac{A+M}{R}),\hspace{1cm} \text{ for } M\gg 1, 2\leq R\lesssim M^{1-0} \ee
These operators project on $A$ positive or negative, provided that $|A|>M$, up to exponentially small corrections.

The crucial properties of such projections are
\be  [-i\Delta, P_M^{\pm}(A)] =\pm G_M^2(A) \ee
with $G_M(A)$ is an explicit and localized around $A\approx M $. Furthermore, we notice that
\begin{align}
P_M^+(A)\chi(|x|\leq K) = P_M^+(A) \la A\ra^{-2m} (1+A^2)^m\chi= O(M^{-2m})P_M^+(A)O(K^{2m})O(P^{2m})
\end{align}
Therefore, the commutator of $P^{\pm}_M(A)$ with a localized smooth interaction term,  and a smooth function, is higher order in powers of $M$. \end{rem}


\subsection{New propagation estimate}

  In this section, we deal with the dilation operator $A$

\subsubsection{Properties of $f(A)$}
For dilation operator  $A$, let $\tanh (A/R)$ be defined by the spectral theorem.  Recall the results of~\cite{Soffer-monotonic}.
\begin{lem}For $R>\frac{2}{\pi}$
\be \tanh (\frac{A}{R}): D(-\Delta)\rightarrow D(-\Delta)\ee
\end{lem}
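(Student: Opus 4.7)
The plan is to exploit the dilation commutation structure between $\A$ and $-\Delta$, combined with the analytic continuation of $\tanh$. From the identity $[-i\Delta,\A]=-2\Delta$ established in the preliminaries, equivalently $[\A,H_0]=2iH_0$ with $H_0:=-\Delta$, the unitary group $e^{is\A/R}$ dilates $H_0$ multiplicatively. On $D(H_0)$ this gives
\begin{equation*}
H_0\,e^{is\A/R} = e^{2s/R}\,e^{is\A/R}\,H_0 .
\end{equation*}

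The idea is then to ``pull $H_0$ through'' $\tanh(\A/R)$ using this identity. Formally, representing $\tanh(\A/R)$ via its spectral/Fourier representation $\int\widehat{\tanh(\cdot/R)}(s)\,e^{is\A/R}\,ds$ and applying the commutation identity yields
\begin{equation*}
H_0\,\tanh(\A/R)\,\phi = \widetilde g(\A/R)\,H_0\phi,
\end{equation*}
where $\widetilde g(\lambda)=\tanh((\lambda-2i)/R)$ is the analytic continuation of $\tanh(\cdot/R)$ shifted by $-2i/R$. Since $\tanh$ is meromorphic with nearest poles at $\pm i\pi/2$, the shifted function $\widetilde g$ is smooth and bounded on $\R$ precisely when the horizontal shift stays strictly inside the strip of analyticity of $\tanh$---this is the content of the threshold condition on $R$ inherited from \cite{Soffer-monotonic}. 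Under this condition, the spectral theorem immediately yields
\begin{equation*}
\|H_0\,\tanh(\A/R)\,\phi\|_{L^2} \leq \|\widetilde g\|_{L^\infty(\R)}\,\|H_0\phi\|_{L^2} < \infty ,
\end{equation*}
and hence $\tanh(\A/R)\phi\in D(H_0)$.

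The main technical obstacle is that $\tanh$ is not Schwartz, so the Fourier representation and the formal commutation of $H_0$ through $e^{is\A/R}$ require justification. I would proceed by approximation: choose a sequence of Schwartz functions $g_j\to\tanh(\cdot/R)$ converging in the $\bb$ topology, establish the identity $H_0 g_j(\A)\phi=\widetilde g_j(\A)H_0\phi$ rigorously via Lemma~\ref{BFA-lemma} (taking care that the ``remainder'' there involves $ad^{(k)}_\A(H_0)=(2i)^k H_0$, which reassembles on the right of $\phi\in D(H_0)$), and pass to the limit using the Paley--Wiener decay $|\widehat{\tanh(\cdot/R)}(s)|\lesssim e^{-R\pi|s|/2}$ to dominate the exponential factor $e^{2s/R}$ produced by the scaling. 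Alternatively, one may use the Helffer--Sj\"ostrand representation \eqref{HS-formula} together with the resolvent identity $H_0(z-\A)^{-1}=(z+2i-\A)^{-1}H_0$---itself a direct consequence of $(z-\A)H_0=H_0(z-2i-\A)$, i.e., of $[\A,H_0]=2iH_0$---and shift the contour of integration into the strip of analyticity of $\tanh$. Either route reduces the problem to a uniform $L^\infty(\R)$ bound on the shifted function, which is where the hypothesis on $R$ is essential.
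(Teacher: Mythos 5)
Your structural idea---pulling $H_0=-\Delta$ through $\tanh(\A/R)$ via the dilation commutation $[\A,H_0]=2iH_0$, so that formally $H_0\tanh(\A/R)\phi=\tanh\bigl((\A-2i)/R\bigr)H_0\phi$, and then reading off a threshold from the analyticity of $\tanh$---is correct in spirit and presumably close to what the cited reference does. But you defer the arithmetic that fixes the threshold, and if you carry it out it does not match the lemma. Since $H_0$ has scaling dimension two, the shift is by $-2i/R$; for this to stay strictly inside the analyticity strip $|\Im z|<\pi/2$ of $\tanh$ one needs $2/R<\pi/2$, i.e.\ $R>4/\pi$, not $R>2/\pi$. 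This is not a harmless discrepancy: at $R=4/\pi$ the shifted symbol is $\tanh(\lambda\pi/4-i\pi/2)=\coth(\lambda\pi/4)$, which is singular at $\lambda=0$, so along the route you propose $\tanh(\A/R)$ actually fails to preserve $D(-\Delta)$ at a value of $R$ the lemma nominally allows. The constant $R>2/\pi$ is what a \emph{single} factor of $|\bp|=\sqrt{-\Delta}$ gives (shift $-i/R$, condition $1/R<\pi/2$), i.e.\ it is the threshold for the $D(\sqrt{-\Delta})$ statement, and it also matches the $\sin(2/R)>0$ requirement of the companion positive-commutator lemma. By declaring the threshold to be ``the content of the condition inherited from the reference'' rather than deriving it, you silently skip the step where the factor of two enters, and that is exactly where your argument and the stated lemma part ways.

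Two subsidiary points. Your claim that the shifted $\tanh$ is bounded on $\R$ ``precisely when the horizontal shift stays strictly inside the strip'' conflates two conditions: boundedness of $\tanh$ on a horizontal line holds whenever $\Im z\neq\pi/2\pmod{\pi}$, a strictly weaker condition; what the Fourier/Paley--Wiener route needs (so that $e^{2s}$ is dominated by $|\widehat{\tanh(\cdot/R)}(s)|\lesssim e^{-\pi R|s|/2}$) and what the Helffer--Sj\"ostrand contour shift needs (no poles crossed) is the strict strip condition $R>4/\pi$. Also, Lemma~\ref{BFA-lemma} assumes $ad_A^{(k)}(B)$ is bounded, whereas $ad_\A^{(k)}(-\Delta)=(-2i)^k(-\Delta)$ is unbounded; you flag this but do not resolve it, so the approximation argument in your first route is incomplete. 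The resolvent identity $H_0(z-\A)^{-1}=(z+2i-\A)^{-1}H_0$ you offer as an alternative is the clean way around this, but it again puts you in the contour-shift regime and hence again at $R>4/\pi$.
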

\begin{lem} For $R>\frac{2}{\pi}$
\be [-i\Delta, \tanh(\frac{A}{R})] = 2\bp g^2(\frac{A}{R}) \bp \geq 0.\ee
Furthermore $g$ is explicit: (we take $R>>1$)
\be  [-i\Delta, \tanh(\frac{A}{R})]=\bp\frac{2\sin(2/R)}{\cosh(2A/R)+\cos(2/R)}\bp\gtrapprox \bp\frac{4}{R\cosh^2(A/R)}\bp .\ee
Moreover, we have for analytic function $F$ in a sufficiently wide strip around the real axis, that
\begin{align}
i[p, F(A) ]=&  i[ F(A-i)p-F(A)p] = i p[F^*(A+i)-F^*(A)]\\
i[x, F(A) ]=&  i [F(A+i)x-F(A)x] = i x[F^*(A-i)-F^*(A)]\end{align}
\end{lem}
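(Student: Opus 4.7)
The plan is to compute $[p^{2},F(A)]$ in closed form for an arbitrary $F$ analytic in a strip, and then specialize to $F(z)=\tanh(z/R)$. The starting observation is that the dilation generator $\A$ satisfies the canonical relations $[\A,x]=-ix$ and $[\A,\bp]=i\bp$, which by induction on polynomials and analytic extension yield the \emph{shift formulas}
\begin{equation}
F(\A)\,x = x\,F(\A-i), \qquad F(\A)\,\bp = \bp\,F(\A+i), \label{shift}
\end{equation}
valid for any $F$ analytic in a strip $|\Im z|<c$ with $c>1$, where $F(\A\pm i)$ is interpreted via the Helffer--Sj\"ostrand representation \eqref{HS-formula} on a contour shifted off the real axis.

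Using \eqref{shift} twice, one writes the commutator $[\bp,F(\A)]$ in two equivalent forms, $\bp[F(\A)-F(\A+i)]$ and $[F(\A-i)-F(\A)]\bp$. Combining them in $[\bp^{2},F(\A)]=\bp[\bp,F(\A)]+[\bp,F(\A)]\bp$ with the ``inner'' form chosen so that $\bp$ ends up on both sides,
\begin{equation}
[\bp^{2},F(\A)] \;=\; \bp\bigl[F(\A-i)-F(\A+i)\bigr]\bp. \label{pFA}
\end{equation}
Specializing to $F(z)=\tanh(z/R)$ and using $\sinh(\alpha-\beta)=\sinh\alpha\cosh\beta-\cosh\alpha\sinh\beta$ together with $\cosh(x-iy)\cosh(x+iy)=\tfrac12(\cosh 2x+\cos 2y)$ gives the explicit identity
\begin{equation}
\tanh\!\Bigl(\tfrac{\A-i}{R}\Bigr)-\tanh\!\Bigl(\tfrac{\A+i}{R}\Bigr)
 = \frac{-\,2i\sin(2/R)}{\cosh(2\A/R)+\cos(2/R)}.
\end{equation}
Multiplying \eqref{pFA} by $i$ to pass to $-i\Delta=i\bp^{2}$, the factor $-2i$ combines with $i$ to produce $+2$, yielding
\begin{equation}
[-i\Delta,\tanh(\A/R)] \;=\; \bp\,\frac{2\sin(2/R)}{\cosh(2\A/R)+\cos(2/R)}\,\bp,
\end{equation}
which is of the form $2\bp\,g^{2}(\A/R)\,\bp$. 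The restriction $R>2/\pi$ is exactly what is needed to keep $\sin(2/R)>0$, so that $g^{2}$ is a bounded \emph{positive} multiplication operator in the functional calculus of $\A$, and the right-hand side is manifestly a non-negative symmetric form on $D(\bp)$. For $R\gg 1$ one uses $\sin(2/R)\sim 2/R$ and $\cosh(2\A/R)+\cos(2/R)\sim 2\cosh^{2}(\A/R)$ to obtain the stated asymptotic lower bound.

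The supplementary identities for $i[\bp,F(\A)]$ and $i[x,F(\A)]$ then fall out of the shift formulas directly: from \eqref{shift}, $F(\A-i)\bp=\bp F(\A)$ and $F(\A+i)x=xF(\A)$, so
\begin{equation}
[\bp,F(\A)] \;=\; F(\A-i)\bp-F(\A)\bp, \qquad [x,F(\A)] \;=\; F(\A+i)x-F(\A)x.
\end{equation}
The second, $F^{*}$, expressions are obtained by taking formal adjoints and using that $F(\A\pm i)^{*}=F^{*}(\A\mp i)$ on the appropriate dense domain. The principal obstacle is not the algebraic manipulation but rather the rigorous justification of \eqref{shift}: because $\A\pm i$ is not self-adjoint, $F(\A\pm i)$ must be defined through analyticity of $F$ in a strip of width at least one, and one needs to verify that the identities hold as genuine operator identities on a common core (e.g.\ Schwartz functions), which is where the hypothesis of a ``sufficiently wide strip'' enters and where the Helffer--Sj\"ostrand representation with a shifted almost-analytic extension must be invoked.
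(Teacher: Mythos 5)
Your proof is correct and is, in essence, \emph{the} proof of this lemma; note however that the paper itself does not prove it here — it simply ``recalls the results of~\cite{Soffer-monotonic}'' — so what you have written is a genuinely self-contained argument rather than a reconstruction of the paper's. The derivation of the shift formulas from $[\A,x]=-ix$ and $[\A,\bp]=i\bp$, the symmetric combination
$[\bp^{2},F(\A)]=\bp\bigl[F(\A-i)-F(\A+i)\bigr]\bp$
achieved by choosing the ``$\bp$-on-the-inside'' form of $[\bp,F(\A)]$ on each side, and the explicit trigonometric evaluation of $\tanh\bigl(\tfrac{\A-i}{R}\bigr)-\tanh\bigl(\tfrac{\A+i}{R}\bigr)$ are all right; the multiplication by $i$ to pass from $\bp^2$ to $-i\Delta$ produces the stated positive operator, and $R>2/\pi$ is precisely what keeps the poles of $\tanh(z/R)$, at $\Im z=\pm R\pi/2$, outside the unit strip (equivalently, keeps $\sin(2/R)>0$), as you observe.

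Two small remarks. First, the rigorous substance of the shift formulas \eqref{shift} does not need Helffer--Sj\"ostrand: the Fourier representation $F(\A)=\int\hat F(s)e^{is\A}\,ds$ of \eqref{rep-formula}, combined with $e^{is\A}x e^{-is\A}=e^{s}x$ and $e^{is\A}\bp e^{-is\A}=e^{-s}\bp$, gives immediately $F(\A)x=xF(\A-i)$ and $F(\A)\bp=\bp F(\A+i)$, where $F(\A\mp i)=\int\hat F(s)e^{\pm s}e^{is\A}\,ds$; the hypothesis of analyticity in a strip of width $>1$ is exactly what makes $e^{\pm s}\hat F(s)$ integrable. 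This is cleaner than carrying an almost-analytic extension off the real axis. Second, if you work out the adjoint step that you sketch at the end, you will find that taking adjoints of $[\bp,F(\A)]=F(\A-i)\bp-F(\A)\bp$ produces $i[\bp,F^*(\A)]=i\bp\bigl[F^*(\A)-F^*(\A+i)\bigr]$, which differs by an overall sign from what the lemma records as $i\bp[F^*(\A+i)-F^*(\A)]$; this sign discrepancy is already present in the lemma's statement, so your gloss is not the source of it, but it is worth noting that the $F^*$ formulas should be written with the opposite sign (or checked against \cite{Soffer-monotonic}) before being used.
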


 \subsubsection{Boundedness of $A$}

 Therefore,  for $P_{M,R}^{+}(A) =\frac12 (1+\tanh(\frac{A - M}{R}))$ (similarly the incoming projection  $P_{M,R}^{-}(A) =\frac12 (1-\tanh(\frac{A + M}{R}))$
\be  [-i\Delta,P_{M,R}^{+}(A)]\gtrapprox\bp\frac{1}{R \cosh^2(\frac{A-M}{R})}\bp \ee
This implies that for propagation observable $B=AP_{M,R}^{+}(A)$, we have
\begin{align}
\partial_t \la B\ra \gtrapprox  4\la \bp P^+\bp\ra +\la \bp\frac{A}{R \cosh^2(\frac{A-M}{R})}\bp \ra + \la [i\mathcal{N}(x,t), AP_{M,R}^{+}(A) ]\ra
\end{align}
This follows from the identity
\be
R^{-1}(A+i) \tanh((A+i)/R)-R^{-1}(A-i)\tanh((A-i)/R)=\frac{2i}{R}\frac{A\sin(2/R)+\sinh(2A/R)}{\cosh(2A/R)+\cos(2/R)}.
\ee

For general time-dependent potential, or nonlinearity $\mathcal{N},$ the first two terms are positive modulo corrections that are exponentially small. If $M\gg 1, R=\sqrt{M}, $ then the second term is negative for $A\leq 0 $ or $A-M\leq -M$. But then
\be  \left|\frac{A}{R \cosh^2(\frac{A-M}{R})}\right|\lesssim |\frac{A - M}{R}|e^{-2(\frac{A-M}{R})} \ee
for $|A-M|\geq M$, so the largest value is when $|A_M|=M$, which implies that  when taking $R=\sqrt{M}$,
\be  \left|\frac{A}{R \cosh^2(\frac{A-M}{R})}\right|\lesssim |\frac{M}{R}|e^{-2(\frac{M}{R})}\lesssim \sqrt{M} e^{-2\sqrt{M}}  \ee
So we ignore for the moment the exponentially small corrections,

It remains to estimate $ \la [i\mathcal{N}, AP_{M,R}^{+}(A) ]\ra$
\begin{align}
i[\mathcal{N}, AP^+ ] = i[\mathcal{N}, A]P^+ +iA [\mathcal{N}, P^+]= -x\cdot \nabla \mathcal{N} P^++iA [\mathcal{N}, P^+]
\end{align}
Now we use that $\mathcal{N}$ is localized and smooth ( and $\psi$ is localized and smooth Case I), we also use that $P^+$ is projection on $|A|\sim |x\cdot p| \geq M-\sqrt{M}$; for this we use
\begin{align}
P^+(A)= &P^+(A)\la A\ra^{-2m} (1+A^2)^{m}\lesssim  P^+(A)\la A\ra^{-2m} (1+ c_1 |x|^{2m} +|p|^{2m})\\
\lesssim & O(M^{-2m})(1+ c_1 |x|^{2m} +|p|^{2m})
\end{align}
Then we can cancel the powers of $|x|$ by $V$ or $\psi; $ similar for derivatives, by the $H^1$ property.

For example
\begin{align}
&\la  \psi,  \frac{|\psi|^p}{1+|\psi|^q}P^+(A)A\psi\ra =\\
& \la \psi, F(|\psi|) (1+ c_1 |x|^{2} +|p|^{2}) \la A\ra^{-2} P^+(A)A\psi\ra\\
& \lesssim   \la \psi, (1+ |x|^{2} ) F(|\psi|)  O(M^{-2}) A\psi\ra + \la \Delta \psi,  F(|\psi|) O(M^{-2}) A\psi \ra +\text{similar}.
\end{align}

In case of only $H^1$ regularity, we use $(A+i)^{-1}$ instead of $\la A\ra^{-m} \la A\ra^m.$
So the propagation estimate that follows is 
\begin{align}
& \la AP_{M,R}^+(A)\ra(T) - \la AP_{M,R}^+(A)\ra(0)= \\
 &\frac{1}{R}\int_0^T \la \bp\psi, \frac{A}{\cosh^2(\frac{A-M}{R})}\bp\psi\ra ds +\int_0^T \la \bp \psi(s), P_{M,R}^+(A)\bp\psi \ra ds +\\
 &O(M^{-2m})\int_0^T\sum_j\la D^j\psi, \tilde{\mathcal{N}}_j P_{M,R}^+(A) A\psi\ra dt ,
\end{align}
where $\tilde{\mathcal{N}}_j$ involves $\jx^\sigma \mathcal{N}, D^\alpha \mathcal{N}$.

In case we have $j=1$, the bound is in terms of powers of $\|\psi\|_{H^1}$, where we used $\||x|\psi\|_{L^\infty}\leq \|\psi\|_{H^1}$ for $|x|\geq 1$, and
 $||x|^\frac12\psi \chi(|x|\leq 1)|\leq C\|\psi\|_{H^1}$.

Finally we conclude that : if
$|\la \psi(t_n), AP^+ \psi(t_n) \ra|\leq C$, then
\begin{align}
&\frac{1}{T}\int_0^T \la \psi(s), P_{M,R}^+(A)\psi(s)\ra ds\lesssim  O(M^{-2m}) +\frac{C}{T}+\\
&\frac{1}{R}\int_0^T \la \bp\psi(s), \frac{|A|}{\cosh^2(\frac{(A-M)}{R})}\bp\psi(s)\ra ds \leq O(e^{-\sqrt{M}} +O(M^{-2k}))T+const.
\end{align}
for all T.

It also follows that
\begin{thm}
\be  \left| \la \psi(s),A P_{M,R}^+(A)\psi(s)\ra(T) \right| \leq \left| \la \psi(s),A P_{M,R}^+(A)\psi(s)\ra(0) \right| +O(TM^{-2m})  \ee
\end{thm}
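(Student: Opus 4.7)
My plan is to view $B \equiv AP^+_{M,R}(A)$ as a time-independent operator and apply the general Heisenberg identity
\be
\partial_t \la B\ra_t = \la i[H_0,B]\ra_t + 2\,\textit{Im}\,(B\phi,{\bf N}(\phi)),
\ee
then show that \emph{each} term on the right-hand side, including the formally positive commutator pieces, is actually $O(M^{-2m})$ in size once one exploits the fact that $P^+_{M,R}(A)$ projects onto $|A|\gtrsim M$. Combined with the trivial integration over $[0,T]$ and the triangle inequality, this will yield the stated bound.

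The computation I would carry out in sequence is the following. First, split $[-i\Delta,B]=[-i\Delta,A]P^+ + A[-i\Delta,P^+] = 2\bp^2 P^+ + A\bp\, g^2_M(A)\bp$ using the identities $[-i\Delta,A]=-2\Delta$ and $[-i\Delta,\tanh((A-M)/R)]=\bp\,g^2_M(A)\bp$ recorded above. Second, write $P^+_{M,R}(A) = \la A\ra^{-2m}(1+A^2)^m P^+_{M,R}(A) + O(e^{-cM/R})$ and use that on the support of $P^+_{M,R}$ one has $|A|\gtrsim M$, hence $\la A\ra^{-2m}P^+\lesssim M^{-2m}$ as a bounded operator. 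The cost of $(1+A^2)^m$ is absorbed by the smoothness and localization of $\phi$: $(1+A^2)^m\lesssim (1+|x|^{2m})(1+|\bp|^{2m})$ symmetrized, and these factors are controlled under the hypothesis that $A\phi\in L^2$ uniformly in $t$ and $\phi\in H^{2m}$ (which the regularity propositions of the preceding subsection provide). The same reasoning applies to the factor $g_M^2(A)$ in $A\bp g_M^2(A)\bp$, since $g_M$ is likewise localized around $A\sim M$.

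Third, estimate the nonlinear contribution via
\be
i[\mathcal{N},AP^+]=-(x\cdot\nabla\mathcal{N})\,P^+ + iA\,[\mathcal{N},P^+].
\ee
The first term inherits the $M^{-2m}$ smallness through the same trick applied to $P^+$, using the regularity assumption (H3) ($|(x\cdot\nabla)^N V|\lesssim 1$) together with the radial Sobolev embedding to treat purely nonlinear summands. For the second term I would invoke the analytic extension formulas recorded in the lemma above,
\be
i[x,F(A)] = ix\bigl[F^*(A-i)-F^*(A)\bigr],\qquad i[\bp,F(A)]=i\bp\bigl[F^*(A+i)-F^*(A)\bigr],
\ee
applied to $F=P^+_{M,R}$, producing $[\mathcal{N},P^+]$ as an expression in shifted projections $P^+_{M,R}(A\pm i)$, which retain the $O(M^{-2m})$ smallness (since for $R=\sqrt{M}$ the $\pm i$ shift only affects $P^+$ at exponentially small order). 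The $A$ prefactor is then bounded as before using the uniform-in-time bound $\|A\phi\|_{L^2}\lesssim 1$.

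Assembling everything yields $|\partial_t\la B\ra_t|\le C M^{-2m}$ uniformly in $t$, so integration from $0$ to $T$ gives $|\la B\ra_T-\la B\ra_0|\le CTM^{-2m}$ and the triangle inequality finishes the proof. The main technical obstacle is the second term of step three: controlling the operator $A[\mathcal{N},P^+]$ requires that the analytic-extension representation of $[\mathcal{N},P^+]$ actually produces a product of (i) derivatives of $\mathcal{N}$ (integrable thanks to (H3)) and (ii) a shifted projection that retains the full $M^{-2m}$ decay, which is only true because of the careful choice $R\sim\sqrt{M}\ll M$ so the imaginary shift $\pm i$ does not spoil the exponential separation between $A$ and $M$.
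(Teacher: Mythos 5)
The central gap is in the claim that the free-flow commutator pieces are pointwise $O(M^{-2m})$. Your decomposition $[-i\Delta,B]=2\bp^2P^+_{M,R}+A\bp\,g^2_M(A)\bp$ is fine, and the operator bound $P^+_{M,R}(A)\la A\ra^{-2m}=O(M^{-2m})$ is also fine. But in $\la\bp\psi,P^+_{M,R}\bp\psi\ra=\la\bp\psi,O(M^{-2m})(1+A^2)^m\bp\psi\ra$ the unbounded factor $(1+A^2)^m$ now acts on $\bp\psi$. Closing this requires $(1+A^2)^m\bp\psi\in L^2$, which is on the order of $m+1$ derivatives together with $m$ spatial moments — vastly stronger than the hypotheses $\psi\in L^\infty_tH^1$ and $\|A\psi\|_{L^2}\lesssim 1$. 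Appealing to the preceding regularity propositions is circular: those propositions establish smoothness of the weakly localized part \emph{using} propagation estimates of precisely this family as input, and in any case they do not deliver moment bounds of the needed order. Moreover, the claim is simply false for a general $H^1$ solution: on an outgoing free wave $e^{it\Delta}\psi_0$ at large $t$, one has $A\sim t\gg M$ on the bulk of the support, $P^+_{M,R}(A)$ is essentially the identity there, and $\la\bp P^+_{M,R}\bp\ra\approx\|\nabla\psi_0\|_{L^2}^2=O(1)$, with no decay in $M$.

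This is precisely where your route diverges from the paper's. The paper never tries to show the free commutator pieces are pointwise small; it records only that $2\bp^2P^+_{M,R}$ and $A\bp g^2_M(A)\bp$ are positive (up to exponentially small corrections), feeds them into the propagation estimate for the monotone, sign-definite observable $B=AP^+_{M,R}(A)\geq 0$, and estimates the nonlinear commutator by $O(M^{-2m})$. There the absorption trick does work, because the unbounded $|x|^{2m}$ lost from $(1+A^2)^m$ is eaten by the spatial decay of $\mathcal{N}$ and $(x\cdot\nabla)^N V\in L^\infty$. The bound on $\la B\ra(T)$ then comes from the structure of the resulting inequality combined with an a priori (sequential) bound on $\la AP^+\ra$ established earlier, not from a pointwise bound on $\partial_t\la B\ra$. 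Your handling of the nonlinear commutator, including the $\pm i$-shift identities for analytic $F$ and the choice $R\sim\sqrt M$, matches the paper's; it is the attempt to extract $M^{-2m}$ from the free-flow terms that does not work, so the proposed integration-in-time step collapses.
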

Therefore, for $T\lesssim \la M\ra^{2m},$
\be  \left|\la AF(A\geq M)\ra(T)\right| \lesssim 1. \ee

This means that for $ A\geq t^{1/2m}, \left|\la AF(A\geq M)\ra(T)\right| \lesssim 1.$

For incoming waves the estimates are similar
\be  \left| \la \psi(s), P_{M,R}^-(A)\psi(s)\ra(T) \right| \leq C_0+O(TM^{-2m})  \ee
in this case, we do not need to know $ \left| \la \psi(s), P_{M,R}^+(A)\psi(s)\ra(t_n) \right| $ is bounded, since it is negative up to small exponential corrections.
\begin{cor}
If at some time $t_0$ we have $\left| \la \psi(s),A^LP_{M,R}^+(A)\psi(s)\ra(0) \right|\lesssim 1,$ then
$$
\left| \la \psi(s),A^L P_{M,R}^+(A)\psi(s)\ra(T) \right|\lesssim O(TM^{-2m'}),
$$
for all $T$ and $L$ large, depending only on $m.$
\end{cor}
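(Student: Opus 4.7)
\emph{Proof proposal.} The plan is to repeat the argument of the preceding Theorem with the propagation observable $B_L \equiv A^L P^+_{M,R}(A)$ in place of $A\,P^+_{M,R}(A)$. The Heisenberg derivative splits as
\begin{align*}
D_H B_L = [-i\Delta, A^L]\,P^+_{M,R}(A) + A^L\,[-i\Delta, P^+_{M,R}(A)] + i[\mathcal{N}, A^L P^+_{M,R}(A)].
\end{align*}
The first two groups are, after symmetrization, non-negative modulo exponentially small tails in $M$: iterating $[-i\Delta, A] = -2\Delta \geq 0$ gives $[-i\Delta, A^L]$ as a symmetrized polynomial whose leading piece is $L\,A^{L-1}(-2\Delta)$, while $[-i\Delta, P^+_{M,R}(A)] = \bp\,g^2(\tfrac{A-M}{R})\,\bp$ is manifestly non-negative. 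On the support of $P^+_{M,R}$ one has $A \gtrsim M - \sqrt{M}$, so these terms carry a definite sign exactly as in the proof of the Theorem.

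The real work is the interaction commutator, which I write as
\begin{align*}
i[\mathcal{N}, A^L P^+_{M,R}(A)] = i[\mathcal{N}, A^L]\,P^+_{M,R}(A) + A^L\,i[\mathcal{N}, P^+_{M,R}(A)].
\end{align*}
Expanding $i[\mathcal{N}, A^L] = \sum_{k=1}^L \binom{L}{k}(-1)^{k-1}\,(\mathrm{ad}_A^{(k)}\mathcal{N})\,A^{L-k}$ up to the usual commutator remainder supplied by Lemma~\ref{BFA-lemma}, each iterated commutator $\mathrm{ad}_A^{(k)}\mathcal{N}$ is controlled by $(x\cdot\nabla)^k\mathcal{N}$ and is bounded and spatially localized by hypothesis (H3). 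To absorb the surviving factors of $A^{L-k}$ I invoke the projection identity already used in the proof of the Theorem,
\begin{align*}
P^+_{M,R}(A) = P^+_{M,R}(A)\,\langle A\rangle^{-2m'}(1+A^2)^{m'} \lesssim M^{-2m'}\bigl(1 + c|x|^{2m'} + |\bp|^{2m'}\bigr),
\end{align*}
with $m'$ chosen so that $2m' > L$, and then cancel the ensuing $|x|^{2m'}$ and $|\bp|^{2m'}$ against the decay of $\mathcal{N}$ and its derivatives together with (H1) and the radial Sobolev embedding. This yields $\bigl|\langle i[\mathcal{N}, A^L P^+_{M,R}(A)]\rangle_t\bigr| \lesssim M^{-2m'}$ uniformly in $t$, with implicit constant depending only on $L$ and $\|\phi\|_{L^\infty_t H^1}$.

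Integrating over $[0,T]$ and dropping the non-negative principal contributions from $D_H B_L$ as an upper bound gives
\begin{align*}
\bigl|\langle A^L P^+_{M,R}(A)\rangle(T)\bigr| \leq \bigl|\langle A^L P^+_{M,R}(A)\rangle(0)\bigr| + O(T\,M^{-2m'}),
\end{align*}
which, combined with the hypothesis at $t_0$, delivers the claim. The main obstacle is parameter bookkeeping: on the support of $P^+_{M,R}$ the observable $A^L P^+$ is naively of size $M^L$, so to obtain a meaningful bound one must reabsorb these powers by the projection identity, which forces $m'$ (and hence the regularity/decay budget $m$ entering the construction of $P^+_{M,R}$) to be at least of order $L$. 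This is precisely the content of the phrase ``$L$ large, depending only on $m$'': for fixed $m$ the estimate is valid for all $L \lesssim m$.
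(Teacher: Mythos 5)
Your proposal repeats the preceding Theorem's argument with $B_L=A^LP^+_{M,R}(A)$, which is evidently the intended route (the paper states the corollary without proof), and the sign analysis of the free-Laplacian commutators is sound. But the $M$-power bookkeeping in the interaction step does not close as written. After expanding $[\mathcal{N},A^L]P^+_{M,R}$ into terms $ad_A^{(k)}(\mathcal{N})\,A^{L-k}P^+_{M,R}(A)$ and inserting $\la A\ra^{-2m'}(1+A^2)^{m'}$, the operator $A^{L-k}\la A\ra^{-2m'}$ restricted to the spectral support of $P^+_{M,R}$ has norm $\lesssim M^{L-k-2m'}$, not $\lesssim M^{-2m'}$; the worst term ($k=1$) costs $M^{L-1-2m'}$. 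So your bound for the interaction commutator overstates the gain by $M^{L-1}$, and the displayed conclusion should read $O(TM^{L-1-2m'})$ (equivalently, your exponent $m'$ must be lowered by $(L-1)/2$). Your closing observation that consistency requires $L\lesssim m$ is correct, but it is incompatible with the intermediate estimate $\lesssim M^{-2m'}$, which does not follow from the step you cite.

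There are two further omissions. The second piece $A^L\,[\mathcal{N},P^+_{M,R}]$ of the interaction commutator is split off in your first display but never estimated; it requires the same absorption of $A^L$ by the localized factor $R^{-k}(P^+)^{(k)}(A)$, with the same $M^{L-k}$ cost. And inserting $(1+A^2)^{m'}$ against a vector known only to lie in $H^1$ is not legitimate without further input: the paper handles exactly this by substituting $(A+i)^{-1}$ for $\la A\ra^{-m}\la A\ra^{m}$ at $H^1$ regularity and then iterating to build up the power, which is precisely the content of the remark immediately following this Corollary. Your proposal invokes the high-power projection identity directly and skips the bootstrap that makes it admissible. The skeleton matches the paper's intent, but the quantitative claim $\lesssim M^{-2m'}$ and the uniform treatment of $L$ are not actually derived by the steps given.
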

\begin{rem}
In fact, one can iterate to estimate the nonlinearity. Take $m=1$ in previous bounds, then the nonlinear term is bounded by
\begin{align}
& \int_0^Tdt \la  [\mathcal{N}, A]P^+\ra +\la A \mathcal{N} P^+ \ra +  \la A  P^+ \mathcal{N} \ra\\
\lesssim &  \int_0^Tdt \la \bp\psi, \la \bp\ra^{-1} A \jx^{-1}[\mathcal{N}, A]\jx^{-1} \la \bp\ra^{-1} \la A\ra^{-1}P^+  \bp\psi\ra\\
\lesssim &M^{-1}  \int_0^T dt  \|p\psi\|_{L^2} \|P^+  \bp\psi \|_{L^2} \|\la p\ra^{-1} A [\mathcal{N}, A]\jx^{-2} \la \bp\ra^{-1} \la A\ra^{-1}\|_{L^2\rightarrow L^2}\\
\lesssim & \sqrt{T}M^{-1}\left( \int_0^T \|P^+  \bp\psi \|^2_{L^2}dt\right)^{\frac12}\\
\lesssim & \sqrt{T}M^{-1}(\frac{T}{M})^\frac12 = CTM^{-\frac32}
\end{align}
Hence we gain $M^{-\frac12}$, and we can iterate to gain more powers.  After iteration, take $M\sim t^\epsilon, $
 and we get that
 \be  \la \psi, |A|\psi\ra(t)\lesssim t^\epsilon \ee
 this is because $|A|\lesssim AP^+_M(A)-AP_M^{-}(A)+|A|P_M^0(A) $, where $P_M^0(A) =P(|A|\leq M)$.
 The details are postponed to the next sections, where we introduce the necessary high-low decompositions.
\end{rem}

 So we can conclude that
 \begin{thm} For solution $\psi$ to equation (\ref{Main-eq}), satisfying the global bound (\ref{global-bound}), we have
 \begin{equation}
 \la |A|\ra \lesssim t^0.
 \end{equation}
 \end{thm}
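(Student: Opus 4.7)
The plan is to exploit the analytic projection machinery from the previous subsection together with the decomposition
\[
|A| \;\leq\; A\,P_{M,R}^{+}(A) \;-\; A\,P_{M,R}^{-}(A) \;+\; |A|\,P^{0}_{M}(A),
\]
where $P^{0}_{M}(A)=P(|A|\leq M)$ and $P_{M,R}^{\pm}$ are the outgoing/incoming analytic projections, valid modulo exponentially small tails from the $\tanh$ cutoffs. The middle term is immediate by the spectral theorem: $\la |A|\,P^{0}_{M}(A)\ra \leq M$. Hence the task reduces to bounding $|\la A\,P_{M,R}^{\pm}(A)\ra|$ uniformly in $t$, for a suitable $M=M(t)$ chosen at the end.

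For the outgoing projection I invoke the estimate
\[
\big|\la A\,P_{M,R}^{+}(A)\ra(T)\big| \;\leq\; \big|\la A\,P_{M,R}^{+}(A)\ra(0)\big| \;+\; O\!\left(T\,M^{-2m}\right),
\]
stated just before the theorem, together with the iteration scheme described in the accompanying remark. Recall that $D_{H}(A\,P_{M,R}^{+}(A))$ produces two formally positive contributions (modulo $e^{-\sqrt{M}}$ errors when $R=\sqrt{M}$), namely $4\la \bp\,P^{+}\bp\ra$ and $R^{-1}\la \bp\,A\cosh^{-2}((A-M)/R)\,\bp\ra$, plus the nonlinear commutator $\la [i\textbf{N}(\phi),\,A\,P_{M,R}^{+}(A)]\ra$. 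Using (H2) to handle $[\textbf{N},A] = -(x\cdot\nabla)\textbf{N}$, and the identity $P_{M,R}^{+}(A)\la A\ra^{-2m}(1+A^{2})^{m} = O(M^{-2m})P_{M,R}^{+}(A)$ to trade inverse powers of $M$ against weights of $\jx$ and $|\bp|$ (absorbed through the $H^{1}$ bound \eqref{global-bound}, Hardy's inequality \eqref{Hardy} and the radial Sobolev bound \eqref{radialSob}), one reaches the baseline remainder $O(TM^{-1})$.

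The iteration bootstraps this estimate. The first step also yields
\[
\int_{0}^{T}\|P_{M,R}^{+}(A)\,\bp\,\phi(s)\|_{L^{2}}^{2}\,ds \;\lesssim\; \frac{T}{M};
\]
reinserting this via Cauchy-Schwarz in time into the commutator bound upgrades the remainder to $O(TM^{-3/2})$, and repeating $n$ times gives $O(TM^{-(n+2)/2})$. The symmetric argument for $A\,P_{M,R}^{-}(A)$ is analogous and in fact easier, since the leading positive Heisenberg term now has the favorable sign without requiring subtraction of the initial datum. Choosing $M = t^{\epsilon}$ with $\epsilon>0$ arbitrary and $n$ large enough that $(n+2)\epsilon/2 > 1$, the nonlinear remainder is $O(1)$, and
\[
\la |A|\ra_{t} \;\lesssim\; M + O(1) \;\lesssim\; t^{\epsilon}.
\]
Since $\epsilon>0$ is arbitrary, this delivers the claimed subpolynomial bound $\la |A|\ra \lesssim t^{0}$.

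The main obstacle is the careful bookkeeping of the nonlinear commutator through the iteration. Each step requires expanding $[\textbf{N},\,A\,P_{M,R}^{+}(A)]$, pulling $P_{M,R}^{+}$ through the interaction (complex-analytic in $\phi$ by (H3)) via the identities $i[\bp,F(A)] = i\bp[F(A-i)-F(A)]$ and $i[x,F(A)] = ix[F(A+i)-F(A)]$, and controlling the resulting weighted norms uniformly in $M$. One must verify that the strip width $R = \sqrt{M}$ is admissible for the analytic extensions of $\tanh$ and of $\mathcal{N}(|\phi|,x,t)$, and that the $e^{-\sqrt{M}}$ tails remain negligible against $TM^{-(n+2)/2}$ at every stage, so that letting $n\to\infty$ after the choice $M=t^{\epsilon}$ is legitimate.
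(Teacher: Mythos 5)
Your proposal reproduces the paper's argument essentially verbatim: the same decomposition $|A|\lesssim AP^+_M(A)-AP_M^{-}(A)+|A|P_M^0(A)$, the same monotonicity estimate from the outgoing/incoming analytic projections $P^{\pm}_{M,R}$ (with $R=\sqrt M$ controlling the exponential tails), the same iteration scheme that gains a factor $M^{-1/2}$ per step via Cauchy--Schwarz in time against $\int_0^T\|P^+_M\bp\phi\|^2\,ds\lesssim T/M$, and the same concluding choice $M\sim t^{\epsilon}$ after enough iterates. The bookkeeping you flag (commuting $P^\pm_M$ through the interaction via the analytic shift identities, verifying the strip width, checking the tails against $TM^{-(n+2)/2}$) is exactly the content the paper leaves implicit in its Remark preceding the theorem, so the approaches agree.
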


 We extend this result by a different argument to prove that:

 \begin{thm} For solution $\psi$ to equation (\ref{Main-eq}), satisfying the global bound (\ref{global-bound}),  the asymptotic localized part (with frequencies away from zero)  satisfies
 \begin{equation}
 \la |A|^2\ra \lesssim 1.
 \end{equation}
 \end{thm}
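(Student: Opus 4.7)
The plan is to upgrade the subsequential bound $\|A\psi(t_n)\|_{L^2(|x|\leq \sqrt{t_n})}\lesssim 1$ of Proposition~\ref{A0-sequential-bound} to a full uniform-in-time bound $\la A^2\ra\lesssim 1$, by combining it with a non-negative PROB whose Heisenberg derivative is non-negative modulo an $L^1(dt)$ correction. The essential new input beyond the preceding theorem $\la |A|\ra\lesssim 1$ is that, on the asymptotic localized part $\pwl$ with a frequency cutoff $F(|\bp|\geq \delta_t)$ away from zero (the complementary zero-frequency piece being negligible by the zero-frequency channel proposition), the nonlinear contribution to the Heisenberg derivative of an $A^2$-weighted PROB is integrable in time.

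Concretely, I would take the analytic-projection PROB
\[
B_{M,R}\;:=\;A\,P^+_{M,R}(A)\,A\;+\;A\,P^-_{M,R}(A)\,A\;\geq\;0,
\]
which approximates $|A|^2 F(|A|\geq M)$ and increases to $A^2$ as $M,R\to 0^+$. Using $[-i\Delta,A]=2\bp^2$ and the positivity of $[-i\Delta,P^\pm_{M,R}]$ recalled above, symmetrization of the type \eqref{A2BC2} yields
\[
D_H B_{M,R}\;=\;4\,\bp\,(A\,P^\pm_{M,R})_{\mathrm{sym}}\,\bp\;+\;A\,G^\pm_{M,R}\,A\;+\;O(R^{-2})\;+\;i[\mathcal{N},B_{M,R}],
\]
the first two terms being non-negative operators. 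The nonlinear commutator $i[\mathcal{N},B_{M,R}]$ I would handle exactly as in the previous theorem: extract a factor $M^{-m}$ via $P^\pm_{M,R}=P^\pm_{M,R}\la A\ra^{-2m}(1+A^2)^m$, then trade the resulting powers of $x,\bp$ against the decay and smoothness of $\mathcal{N}$, iterating the bootstrap as in the $\la |A|\ra$ argument until the time-integrated error is uniformly $O(1)$ in $M,R$.

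Consequently $\la B_{M,R}\ra_t$ is non-decreasing up to an $L^1(dt)$ error. Applied to $\pwl$, Proposition~\ref{A0-sequential-bound} produces a sequence $t_n\to\infty$ with $\|A\pwl(t_n)\|_{L^2(|x|\leq \sqrt{t_n})}\lesssim 1$; the complementary tail $\|A\pwl(t_n)\|_{L^2(|x|\geq \sqrt{t_n})}$ is controlled by combining the slow-growth bound \eqref{pwb-slowgrowth} with the minimal/maximal velocity bounds on the free part $e^{it\Delta}\Omega^{*}_F\phi_0$ subtracted in the definition of $\pwl$, which force $\pwl$, and hence $A\pwl$, to be small in $L^2$ on $|x|\geq \sqrt{t_n}$. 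Hence $\la B_{M,R}\ra_{t_n}\leq \|A\pwl(t_n)\|_{L^2}^2\lesssim 1$ uniformly in $n,M,R$. Running the Heisenberg inequality backward from $t_n$ gives $\la B_{M,R}\ra_t\lesssim 1$ for every $t\leq t_n$; letting $n\to\infty$ and then $M,R\to 0^+$ by monotone convergence yields $\la A^2\ra\lesssim 1$.

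The main obstacle will be the nonlinear commutator $i[\mathcal{N},B_{M,R}]$ at step two: the quadratic weight $A^2\sim |x|^2|\bp|^2$ forces one to trade two full powers of $|x|$ against the decay of $\mathcal{N}$ and to trade $|\bp|^2$ against the $H^1$ regularity, which is precisely why both the frequency cutoff and the preceding pointwise-in-time bound $\la |A|\ra\lesssim 1$ are indispensable inputs for making the bootstrap iteration converge uniformly in $M,R$.
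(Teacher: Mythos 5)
Your proposal has a critical sign error at its center, and even after that is repaired, the argument structure you propose is not the one that works here.

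\textbf{The sign error for the incoming piece.} You write $B_{M,R}=AP^+_{M,R}(A)A+AP^-_{M,R}(A)A$ and claim both pieces have non-negative Heisenberg derivative. For $AP^+_{M,R}A$ this is fine: $[-i\Delta,P^+_{M,R}]=+G^2$ and $A\geq M>0$ on $\mathrm{supp}\,P^+_{M,R}$, so both $\bp(AP^+)_{\mathrm{sym}}\bp$ and $AG^2A$ are non-negative (up to exponentially small leakage). For the incoming piece, however, $[-i\Delta,P^-_{M,R}]=-G^2$, and on $\mathrm{supp}\,P^-_{M,R}$ we have $A\leq -M<0$, so \emph{both} leading terms in $D_H(AP^-_{M,R}A)$ are \emph{negative}. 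Thus $\la AP^-A\ra$ decreases under the flow (modulo interaction errors), and running the Heisenberg inequality backward from the good times $t_n$ produced by Proposition~\ref{A0-sequential-bound} gives you nothing: backward propagation from $t_n$ only controls the outgoing piece. To control the incoming piece you would need $\la AP^-_{M,R}A\ra$ finite at small times, i.e. $A\phi_0\in L^2$, which is not assumed anywhere. This gap cannot be closed by the bootstrap-on-the-interaction iteration you invoke, because that iteration only touches the correction terms, not the sign of the leading commutator.

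\textbf{Monotonicity is not what the paper uses.} Even for the outgoing piece, the paper does not run a clean monotone-flow argument. It uses the PROB $B_{KM}=J_K(r^2P^+_M+P^+_Mr^2)J_K$ with $J_K=F_1(\frac{|x|}{t^{\alpha}}\leq 1)F_2(|p|\geq K)F_1(\frac{|x|}{t^{\alpha}}\leq 1)$, and then a \emph{pointwise-in-time sign analysis}: at each time the derivative $\partial_t B_{KM}(t)$ is $\leq 0$ (in which case a direct bound on the leading positive term $C_1(t)$ follows), or $>0$ (in which case the \emph{second} derivative is computed, and the argument proceeds on the interval between a minimum and the next maximum, using the vanishing of the derivative at the critical points plus further propagation estimates). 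This two-level critical-point analysis is the actual engine of the proof and is not subsumed by the monotonicity you assert. Replacing $r^2$ by $A^2$ directly, as you propose, buries the spatial localization and makes the commutator with $J_K$ harder to see, whereas keeping $r^2$ lets one use $A^2=r^2D^2+\text{l.o.t.}$ together with $K>1$ to convert the conclusion at the end.

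\textbf{The frequency cutoff $J_K$ must be in the observable, and $D_HJ_K$ must be controlled.} You correctly note that the frequency cutoff away from zero is indispensable, but you do not build it into the PROB, and you do not address the terms generated by $[-i\Delta,F_1(\frac{|x|}{t^{\alpha}}\leq 1)]$. These give a negative contribution of size roughly $t^{\alpha}\|F_1'\gamma^{1/2}P^+_MF_2\phi\|^2$, whose control requires the separate propagation estimate proved in Theorem~\ref{D_H JK} via a chain of PROBs $B_1,B_2,B_3$. Without this input the critical-point argument for the $\partial_t B_{KM}>0$ case does not close. Finally, the invocation of the zero-frequency proposition to dispose of the low-frequency piece is off target: the theorem is \emph{stated} only for the high-frequency part, so no such disposal is needed, and the proposition would not in any case give $\la A^2\ra\lesssim 1$ on low frequencies.

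In summary, your outline captures the right ingredients (analytic projections $P^\pm_{M,R}$, sequential input from Proposition~\ref{A0-sequential-bound}, iterated estimates on the interaction) but the monotonicity claim that glues them together is false for the incoming sector, and the substitute — the paper's critical-point analysis of $\partial_t B_{KM}$ and $\partial_t^2 B_{KM}$ with the explicit phase-space cutoff $J_K$ and the auxiliary Theorem~\ref{D_H JK} — is the substantive content of the proof that your proposal is missing.
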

 \begin{proof}
 We begin with some preliminary comments.
 We know that the localized part of the solution in supported in $|x|\lesssim t^{\alpha}, \alpha\leq 1/2.$ We are interested in the part of the solution with large frequency, in order to prove smoothness of the localized part.
 So we will restrict our attention to the region of phase-space where the solution is localized in space and has a frequency larger than $K>1.$. This will be done by estimating the quantity $J_K\phi(t),$ with $J_K$ microlocalizing the solution in the desired region.

 We know that there exists a sequence of times along which $A^2$ has a bounded expectation in the region where $|x|\lesssim t^{1/2}.$
 More generally, we know, using \eqref{A bound} that the bound on the expectation of $|A|$ holds for all $|x| \lesssim t$ with a log loss in time.
 Therefore, on the support of $J_K,$  the expectation of $r^2$ will be bounded on a sequence, since $\A^2=r^2D^2$ plus lower order terms, and $K>1.$
 The idea of the proof to get the bound at all times is to use the above boundedness on a sequence with a PROB that is of the same order as $\A^2$ but which is monotonic to  leading order.
 
 This is done by proving a \emph{weighted ($r^2$) outgoing estimate}  . A similar estimate for the incoming waves is simpler.
 \begin{rem}
The propagation estimate we derive is not done in the usual way; rather, we will get directly a pointwise estimate, based on the knowledge and properties of the R.H.S. of the estimate. For this, we use a separate argument based on the sign of the L.H.S.
In this estimate the PROB(the multiplier) is $r^p$ with $p=2.$ There are related bounds for the Wave Equation due to the method introduced by Dafermos and Rodnianski \cite{}.
There are important differences, since we are dealing with the Schr|"odinger equation. The main estimate is derived on outgoing waves, and the incoming waves estimate is elementary. Here it is used to control the weakly localized solutions, not free waves.
The $r^p$ weights are used directly on microlocal projections on the outgoing/incoming waves, since there is no simple relation between the energy functional and the projections in the case of wave equations.
\end{rem}

  Consider then the following propagation estimate: For each $M>M_0$ with $M_0$ large, we use the PROB
 \be
 B_{KM}= J_K (r^2P^+_M +P^+_M r^2) J_K.
 \ee
 \be
J_K=F_1(\frac{|x|}{t^{\alpha}}\leq 1)F_2(|p|\geq K)F_1(\frac{|x|}{t^{\alpha}}\leq 1).
 \ee
 Now, observe the behavior of the operator $(r^2P^+_M +P^+_M r^2)$ under the full flow. The commutator with the Laplacian is positive, of order $\A P^+_M.$ The commutator with the non linearity is bounded and decays in $M$ depending only on the decay rate in $x$ and regularity of the nonlinear terms. The factor $r^2$ can be bounded by a factor $|\psi|^2$ in the 3 dimensional radial case.

 Therefore, in leading order, we get the desired estimate that the integral of the leading term is uniformly bounded (by the LHS) and is monotonic in $T.$

 For this argument to work, we must first keep the frequency away from zero (so that $r^2$ can be bounded by $\A^2$).
 Furthermore, we need to localize $|x|\leq t^{\alpha}$ by either $\alpha=1/2$  or $ 1.$
 The commutators with the frequency localization are easy to control: the commutator with the nonlinear term is bounded and of some extra decay in $M$, if the interaction term is localized better than $x^{-2}$ at infinity.
 So, the key complication is the commutator of the Laplacian with the $x$ localization, the function $F_1,$ which gives a negative term.
 
There is a sequence of times going to infinity along which the expectation of this operator is uniformly bounded.
If, by contradiction, there is a sequence of times along which this expectation goes to infinity, there must be an infinite sequence of maxima going to infinity.
So, by continuity, at each time $t_0$ the aforementioned expectation will be either a maximum point, or has negative derivative w.r.t. time, or positive derivative.

Next, we compute this derivative.
Denote the expectation of $B_{KM}$ by
\be
\lp J_K\phi(t),(r^2P^+_M +P^+_M r^2) J_K \phi(t)\rp=B_{KM}(t).
\ee

Then we have
\begin{align}
& \partial_t B_{KM}(t)=\lp J_K\phi(t),i[-\Delta,(r^2P^+_M +P^+_M r^2)] J_K \phi(t)\rp+\\
& \lp i[\Delta,J_K]\phi(t),(r^2P^+_M +P^+_M r^2) J_K \phi(t)\rp+\lp J_K\phi(t),(r^2P^+_M +P^+_M r^2)i[-\Delta, J_K ]\phi(t)\rp+\\
& \lp \phi(t),i[\mathcal{N}(\phi)(t),J_K(r^2P^+_M +P^+_M r^2) J_K] \phi(t)\rp +D_t J_K.\\
&D_t J_K\equiv \lp (\partial_t J_K)\phi(t),(r^2P^+_M +P^+_M r^2) J_K \phi(t)\rp +\lp J_K\phi(t),(r^2P^+_M +P^+_M r^2)(\partial_t  J_K )\phi(t)\rp.
\end{align}
The first term on the RHS is :
\begin{align}
C_1(t)&= \lp J_K\phi(t),(r^2i[-\Delta,P^+_M] +i[-\Delta,P^+_M] r^2+8AP^+_M) J_K \phi(t)\rp=\\
& \lp J_K\phi(t),((2A^2+cA+c')\tilde P^+_M/M +8AP^+_M) J_K \phi(t)\rp.
\end{align}
Here $\tilde P^+_M=F(\frac{A-M}{R}\thicksim 0)$ is a positive exponentially localized bump function around M of the operator $A.$
It is asymptotically proportional to $1/\cosh(\frac{A-M}{R}).$
The second term on the RHS is coming from the commutator of the Laplacian with the cutoff function $F_1(\frac{|x|}{t^{\alpha}}\leq 1).$
It is therefore dominated by
\begin{align}
C_2(t)\lesssim t^{\alpha}\|F^{'}_{1,\alpha}|\gamma|^{1/2}P^+_M F_2(|p|\geq K)\phi(t)\|_2^2+ L.O.T.
\end{align}
The factor $t^{\alpha}$ comes from $r^2F^{'}_{1,\alpha}/t^{\alpha}$; the factor $F^{'}_{1,\alpha}/t^{\alpha}$comes from the commutator of J with the Laplacian.

Here, $ F^{'}_{1,\alpha}=F^{'}_{1,\alpha}(|x|\thicksim t^{\alpha}).$

The contribution of the $D_t J_K$ term is similar but with a good sign and decays faster in time.

We will show that these terms, $C_2(t),D_tJ_K$ vanish pointwise in time as time goes to infinity.

The third term is the contribution of the Interaction part. We will consider the more difficult type of term, a nonlinear monomial in $\phi.$
These terms are the expectation of expressions of the form ( coming from commuting the interaction term with a factor $J_{K,M}$):
\begin{align}
&\lp \phi(t), |\phi(t)|^{m} J_{K} r^2 P^+_M(|p|+i)(|p|+i)^{-1} J_{K}\phi(t)\rp=2\Re\lp (|p|\phi), \phi^*|\phi(t)|^{m-1} |p|^{-1}J_{K} r^2P^+_M J_{K}\phi(t)\rp+L.O.T.\\
&=2\Re\lp (|p|\phi),(r^2(\phi^{\sharp})^{m})J_{K}(P^+_M)\lp A\rp^{-1/2}\lp A\rp^{+1/2}(J_{K}|p|^{-1})\phi(t)\rp +L.O.T.
\end{align}
$L.O.T.$ stands for lower order terms, that we get by commuting the various factors above through each other.

We do NOT commute through non-linearity. All other commutators have higher order power, in one or more of $1/t^{\alpha},1/M, 1/K.$
$\phi^{\sharp}$ stands for $\phi$ or its complex conjugate.
This is due to the fact that the commutator $[\partial_x,x]=1,$ so commuting with $J_K$ gives a gain of $1/K$ or $1/r.$ Similarly, commuting $x,\partial_x$ with $A$, gives back $x,\partial_x.$ So we gain a factor of $A$ or equivalently $1/M.$

So, the expectation w.r.t. $\phi(t)$ gives:

\be
C_3(t)\lesssim M^{-1/2}K^{-1} \lp (D\phi), \mathcal{O}_{\phi}(1)J_{K}|A|^{1/2}P^+_M J_{K}\phi \rp +L.O.T.
\ee
\be
 \mathcal{O}_{\phi}\lesssim  f(\|\phi\|_{H^1}).
 \ee
In the above last estimate we used that by Sobolev radial embedding, we can control the factor $r^2$ by $\phi^2$ at infinity.
We can also control the local singularities of each power of $\phi^2$ at the origin by a factor of $r$ near the origin, up to the critical energy power $|\phi^4$ in three dimensions.
By Cauchy-Schwarz Inequality, we bound the above expression by:
\be
C_3(t)\lesssim M^{-1}K^{-2}\|(D\phi)\|^2 +\delta \||A|^{1/2}P^+_M J_{K}\phi\|^2
\ee
\be
\delta\leqq 1/2.\ee

Since the $\delta$ term is bounded by the leading positive term, $C_1$, it follows that
\be
C_1(t)\lesssim M^{-1}K^{-2}+ C_2(t) +L.O.T.+ LHS(t)\ee
\be
LHS(t)\equiv \frac{d}{dt} B_{K,M}(t).
\ee
To get a {\bf pointwise in time} estimate, we will need to control $ C_2(t) +L.O.T.+ LHS(t).$

The control of the first two parts will follow from propagation estimates, with improved time decay.

To deal with the LHS, we use a different argument: recall that the LHS is the derivative of the expectation of the PROB w.r.t. time.

The left-hand side is a bounded function at each t. It is also differentiable, since its derivative, the RHS, is bounded for each $t$.

The boundedness of the RHS follows since we project on $|x|\leq t^{\alpha},$ and the assumption that the solution is in $H^1.$
Therefore, the LHS (t) is a function that oscillates between some finite numbers, but can be arbitrarily large, at sometimes between the minima.

So, for each time $t_0$, its derivative, being bounded, is either positive, negative, or zero.

Consider first the case where the derivative is zero at $t_0.$ (Note that there is an infinite sequence going to infinity with this property.)
We conclude that at these times the RHS is zero, and therefore we get a bound on
$$
C_1(t)\lesssim M^{-1}K^{-2}+ C_2(t) +L.O.T.
$$
Since, as explained above, the L.O.T. come from extra commuting, with a gain of powers of $K$ and $M,$
the above inequality holds with $L.O.T.$ dropped.

We will see that the PRES on the support of $F^{'}_{1,\alpha}(|x|/t^{\alpha}\thicksim 1)P^+_M F_{2}(|p|\geq K)$ decays fast, in time and in $K,M.$
We finally have then;
\be
C_1(t)\lesssim M^{-1}K^{-2}
\ee

Clearly, then, if $t$ is such that the LHS is negative, the same bound holds (in fact, better).
Note that these estimates are on the \emph{derivative} of the (expectation) of the PROB, and we may derive a PRES without the usual integration over time. As such, this estimate is pointwise in time.

 It remains to estimate the case where the LHS is positive. These are time intervals, which begin at time $t_m$ when the PROB is at minimum, and end at the time of the next maximum, at time $t_M.$ At both such times the derivative w.r.t. time is zero.

In this case, the above estimates do not give a bound on $C_1(t)$, except for a short time interval before the time of the critical point, where the derivative is zero.

To proceed, we now compute for such times the derivative of the expression $C_1(t).$

If we denote by $t_M$ the time when the LHS is zero (at maximum), then we are looking at times less than $t_M.$
Clearly then, if the derivative $\frac{d C_1(t)}{dt}\geq 0,$ then the upper bound on $C_1(t)$ above also hold, since it holds at time $t_M>t.$

Therefore, it is left to estimate the case when the derivative of $C_1(t)$ is negative or zero.
We then get that for such $t$
\begin{align}
&\frac{d C_1(t)}{dt}=D_t \lp J_K\phi(t),((2A^2+cA+c')\tilde P^+_M/M +8AP^+_M) J_K \phi(t)\rp=\\
&\lp J_K\phi(t),i[-\Delta, A\hat P^+_M] J_K \phi(t)\rp+\lp \phi(t),i[\mathcal{N},J_K A\hat P^+_M J_K] \phi(t)\rp+D_H J=\\
&\lp \nabla J_K\phi(t),\check{ P}^+_M J_K \nabla \phi(t)\rp  +\lp D_r\phi,r^2\mathcal{N}J_K (A+i)^{-1}\hat P^+_M J_K D_r\phi(t)\rp+D_H J +l.o.t.=\\
&\lp \nabla J_K\phi(t),\check{ P}^+_M J_K \nabla \phi(t)\rp+\lp J_K \mathcal{O}(1)_{\phi}D_r\phi,(A+i)^{-1}\hat P^+_M J_K D_r\phi(t)\rp+D_H J +l.o.t.\\
&|\lp J_K \mathcal{O}(1)_{\phi}D_r\phi,(A+i)^{-1}\hat P^+_M J_K D_r \phi(t)\rp |\lesssim\\
&\mathcal{O}(1/M) \|(A+i)^{-1}M\hat P^+_{M/2}J_K \mathcal{O}(1)_{\phi}D_r\phi\|_{L^2}\|\hat P^+_M J_K D_r\phi(t)\|_{L^2} +l.o.t.\\
&D_H J\equiv \lp (D_H J_K)\phi(t),((2A^2+cA+c')\tilde P^+_M/M +8AP^+_M) J_K \phi(t)\rp+\\
& \lp  J_K \phi(t),((2A^2+cA+c')\tilde P^+_M/M +8AP^+_M)(D_H  J_K) \phi(t)\rp.
\end{align}
We also used that $|r^2\mathcal{N}|\lesssim 1$, which holds if
\begin{align}
& \mathcal{N}\lesssim |\phi|^4 +cr^{-2}, \quad \quad r\leq 1,\\
&\mathcal{N}\lesssim |\phi|^2 +c<r>^{-2}, \quad \quad r\ge r_0>>1.
\end{align}
Here we used that:
\begin{align}\label{commutators1}
&i[-\Delta,(r^2P^+_M +P^+_M r^2)]=8AP^+_M+r^2\bp G^2(A)\bp +\bp G^2(A)/R\bp r^2=\\
&8AP^+_M+2A^2G^2(A)/R+(c_1A/R+c_2)G^2(A)+(d_1+d_2A/R^2)G'/R=\\
&8AP^+_M+2A^2G^2(A)/R+ (cA/R+d)\tilde G(A).\\
&G^2\geqq \cosh ^{-1}(\frac{A-M}{R}).\\
&(d_1/R+d_2A/R^2)G'\thicksim (d_1/R+d_2A/R^2) \cosh ^{-1}(\frac{A-M}{R}).
\end{align}
For large $M$, the first two positive terms on the RHS of  \ref{commutators1} dominates. so that this expression is positive, modulo exponentially small terms in $M.$
When taking the derivative of $C_1$ we commute again this expression with $-\Delta.$
The result is
\begin{align}
&i[-\Delta,8AP^+_M+2A^2G^2(A)/R+(c_1A/R+c_2)G^2(A)+(d_1+d_2A/R^2)G'/R]=\\
&8\bp (P^+_M +AP_M'/R)\bp +2\bp(2AG^2/R+G'A^2/R^2)\bp +(d_1+d_2A/R^2)\tilde G'/R^2.\\
&R\equiv M^{1-a}; \tilde G'\thicksim  \cosh ^{-1}(\frac{A-M}{R}); a<1.
\end{align}

We also used that
\be
\hat P^+_{M/2}P^+_M=P^+_M+\mathcal{O}(e^{-M/R}).
\ee
By applying Cauchy-Schwarz inequality as before, we control the leading terms by
\be
\lp \nabla J_K\phi(t),\check{ P^+_M }J_K \nabla \phi(t)\rp\lesssim   \mathcal{O}(1/M^2)  \|(A+i)^{-1}M\hat P^+_{M/2}J_K \mathcal{O}(1)_{\phi}D_r\phi\|_{L^2}^2.\label{PMM}
\ee

We have that:
\begin{align}
&\lp J_K\phi(t_m-L),P_M^+ J_K  \phi(t_m-L)\rp+\int_{t_m-L}^{t_m} \lp \nabla J_K\phi(t),G^2(A)/R J_K \nabla \phi(t)\rp dt\lesssim \\
&\lp J_K\phi(t_m),P_M^+ J_K  \phi(t_m)\rp+2\Re \int_{t_m-L}^{t_m} \lp(D_H J_K)\phi(t)P_M^+ J_K  \phi(t)\rp dt+\\
&2\Re \int_{t_m-L}^{t_m} \lp ( J_K)\phi(t),i[\mathcal{N},P_M^+]  J_K \phi(t)\rp dt.
\end{align}
The first term on the RHS of this last equation is bounded by $M^{-2   }K^{-2}$, since at this critical point the Virial is small, as we saw before.

The second term on the RHS is bounded by $Lt_m^{-1/2-\alpha}K^{-1}$ using the bound of Theorem \ref{D_H JK}.
Since on the support of $D_H J_K,$  $|x|\sim t^{\alpha},$ we see that the second term is bounded by $Lt_m^{-1/2} M^{-1}.$

The third term on the RHS is bounded by $LM^{-2}$, by dividing $P_M$ by $A^2$ and controlling the $A^2$ by the nonlinearity and the $H^1$ norm of the solution.

Further iteration (using the fact that the operator $B_3$ is bounded by $K^{-1}$ on a sequence of times) allows us to improve the bound on the second term by a factor of $1/K.$
This implies that all three terms are bounded by $M^{-2}.$ This is only possible if the expectation of $r^2$ is bounded, since $K>1.$
But $t_m$ is a maximum point of the expectation of $r^2P_M^+$ on the localized state.

We therefore conclude that in fact (the expectation of ) $r^2$ is uniformly bounded on the weakly localized state, when projected on frequencies greater than 1.
\medskip

Next, we improve the above bounds.
There are residual terms, living on the support of $G'$ with factor $A^2/R^2 \sim M^{2(1-a)}$, on the support of $G'.$
The main negative term is coming from
$$
-(A^2/R^2 )\sinh {\frac{A-M}{R}}/\cosh^2{\frac{A-M}{R}}.
$$
Therefore, we conclude that the main negative part is when $A-M\geq 0; A-M\leq kR, k>>1.$
Therefore if we add the estimate from $$
-(A^2/R^2 )\sinh {\frac{A-M-kR}{kR}}/\cosh^2{\frac{A-M-kR}{kR}}
$$
{\bf times 2},  we cover the negative part from the previous step, and get a new negative part supported in $k^2 R>A-M-kR\geq 0,$ and with size 2.
So we add the next one to cover that.
We get a sum of the form ($M_n= M+k^nR-R$)
$$
\sum_{n=0}^N (1+n)M_n^2(-A^2/R^2) \sinh {\frac{A-M_n}{k^nR}}/\cosh^2{\frac{A-M_n}{k^nR}}\geq cA^2 F(A\geq M).
$$
Applying this to \ref{PMM}, we get:

\be
\lp \nabla J_K\phi(t),A^2 P^+_{M_0} J_K \nabla \phi(t)\rp\lesssim \|\ln A \hat P^+_{M/2}J_K \mathcal{O}(1)_{\phi}D_r\phi\|_{L^2}^2 +C.
\ee
One can remove the $\ln A$ term from the RHS at the expense of a constant $c\ln M.$
The estimate for the domain $A\leq -M_0$ is similar.
Similar arguments work for the other times, when the derivative of the LHS is less than or equal to zero.
So, in fact, we get the stronger bound that ($K>1$)
\be
\lp J_K \nabla \phi(t),A^2 J_K \nabla \phi(t)\rp \lesssim 1.
\ee
\medskip

It remains to control the contribution of the commutator of the Laplacian with $J_K.$
\begin{thm}\label{D_H JK}
If $\phi(t)$ is a weakly localized solution (in $H^1$), in the sense that $\lp \phi(t),|x|\phi(t)\rp \lesssim \sqrt t,$
then, for $1/2\geq \alpha >1/3 $:
\be
t^{1/2} \|F_1(\frac{|x|}{t^{\alpha}}\geq 1)\gamma F_2(|p|\geq K)\phi(t)\|^2\lesssim 1.
\ee
as $t\rightarrow \infty.$
\end{thm}
\begin{proof}
The proof follows by the consecutive use of the following PROBs:
Here, the vector field $g$ is chosen as the smoothed Morawetz multiplier, with optimal slow decay rate at infinity.
\begin{align}
&2 B_1= F_1(\frac{|x|}{t^{\alpha}}\geq 1)\gamma_g +\gamma_g F_1(\frac{|x|}{t^{\alpha}}\geq 1)\\
&2 B_2=\frac{|x|}{t^{1/2}}F_1(\frac{|x|}{t^{\alpha}}\geq 1)F_2(\gamma) +F_2(\gamma) \frac{|x|}{t^{\alpha}} F_1(\frac{|x|}{t^{\alpha}}\geq 1)\\
&2B_3= t^{1/2} F_2(\gamma)(F_1\gamma_g +\gamma_g F_1)F_2(\gamma)\\
&F_2(\gamma)=F_2(\gamma\geq K).\\
&(\gamma-\gamma_g)F_1(\frac{|x|}{t^{\alpha}}\geq 1) \thicksim \la x\ra^{-1}F_1\gamma.
&0<\alpha\leq 1/2.
\end{align}
The first PRES we get is by computing the derivative w.r.t. time of the expectation of $B_1,$ and estimating the integral over time of the positive terms by $B_1$
and by the integral of the symmetrization term, the negative term (coming from the derivative w.r.t. of $B_1$ ) and the interaction term:
\begin{align}
&\lp \phi(T), B_1(T)\phi(T)\rp-\lp \phi(1), B_1(1)\phi(1)\rp \geqq \int_1^T  ds \lp \gamma \phi(s),[s^{-\alpha}F_1'+\la x\ra^{-1-\epsilon}F_1] \gamma \phi(s)\rp+\\
&\int_1^T\lp \phi(s), \tilde F_1'\phi(s)\rp s^{-3\alpha} ds- c\Re\int_1^T\lp \gamma_g \phi(s),  F_1'\phi(s)\rp s^{-1} ds+\\
&2\Re\int_1^T\lp \gamma_g \phi(s), \mathcal{O}(\la x\ra^{-3-\epsilon})( F_1\phi(s)\rp  ds\equiv\\
&C_{1,1}+S_1+R_1+INT_1.
\end{align}

The negative term $R_1$ is controlled by bootstrap. This is done by estimating it in terms of a fraction of the $C_{1,1}$ term:
\be
|R_1|\leq c\int_1^T s^{-2+\alpha}\|F_1'\phi(s)\|_2^2 ds +\int_1^T  s^{-\alpha}  \|F_1'\gamma\phi(s)\|_2^2ds\leq c+cC_{1,1},
\ee
where we used Cauchy-Schwarz Inequality in time, in the last step.

Since we chose $\alpha>1/3$ it follows that $S_1\lesssim \int_1^Tt^{-3\alpha} dt$ and $ INT_1\lesssim \int_1^T \la x\ra^{-3-\epsilon}F_1 dt$
are both bounded by a constant that depends only on the $H^1$ norm of the solution. Therefore, we get the propagation estimate

$$
C_{1,1} \leq c(H^1).
$$

The PROB resulting from $B_2$ is performed in a similar way.
In this case, the LHS is uniformly bounded in time for any solution that is weakly localized.
We then get:
\begin{align}
 &2\Re\int_1^T\lp F_2\phi(s),  \gamma(F_1+F_1')F_2\phi(s)\rp \frac{ds}{\sqrt s}\lesssim C + S_2+R_2+INT_2\lesssim\\
 &c(H^1)+\int_1^T[ s^{-2\alpha-1/2}]ds\leq c(H^1).
 \end{align}
 The bound above on the symmetrization term $S_2$ and the interaction terms $INT_2$, follow since now $s^{-\alpha}$ is replaced by $s^{-1/2}.$
 We therefore have the PROB
 \be
 \Re \int_1^T\lp F_2\phi(s),  \gamma(F_1+F_1')F_2\phi(s)\rp \frac{ds}{\sqrt s}\lesssim C,
 \ee
 which implies the existence of a sequence of times $t_n$ going to infinity, such that
 the expectation of $B_3$ on this sequence of times is uniformly bounded.

 So we can use it for another PROB, $B_3.$
 
 This estimate implies:
 \begin{align}
&\lp \phi(T), B_3(T)\phi(T)\rp-\lp \phi(1), B_3(1)\phi(1)\rp \gtrless 2\Re \int_1^T  ds \lp \gamma_g F_2\phi(s),s^{-1/2}F_1F_2\phi(s)\rp+\\
&\int_1^T  \lp \gamma_g F_2\phi(s),F_1' \gamma_g F_2 \phi(s)\rp s^{1/2-\alpha} ds +\\
&c\int_1^T\lp \nabla F_2\phi(s),  \la x\ra^{-1-\epsilon}F_1\nabla F_2\phi(s)\rp s^{1/2} ds +\\
& \int_1^T\lp F_2\phi(s), \tilde F_1' F_2 \phi(s)\rp s^{1/2-3\alpha} ds+\\
&2\Re\int_1^T\lp \gamma_g F_2 \phi(s), F_1F_2\mathcal{O}(\la x\ra^{-3-\epsilon})(\phi(s)\rp s^{+1/2} ds\equiv\\
&C_{3,1}+C_{3,2}+C_{3,3}+S_3+INT_3.
\end{align}

 Since $1/2-3\alpha <-1/2$, the term $S_3$ is bounded by $C_{1,1}.$
 Finally we have:
 \begin{align}
 &INT_3\leq   \delta  \int_1^T \|\tilde F_1 \la x\ra^{-1/2-\epsilon/2}\gamma_g F_2\phi(s)\|_2^2 \, s^{1/2} ds+\\
 &\delta^{-1}\int_1^T \|\tilde F_1 \la x\ra^{-5/2-\epsilon/2}\phi(s)\|_2^2 \,  s^{1/2} ds \leq (1/2) C_{3,3}+c\int_1^T t^{1/2} t^{-5\alpha-\epsilon} dt <(1/2) C_{3,3}+c.
 \end{align}
 It follows that the expectation of $B_3$ is uniformly bounded, which implies the result but with $F_2(\gamma).$
 To show that it follows for $F_2(|p|)$, we need to use the spherical symmetry and Localization Lemma \cite{SSAnnals}.
 It shows that 
 $$
 F_1(\frac{|x|}{t^{\alpha}}\geq 1)F_2(|p|\geq K)F_2(|\gamma|\leq K/10)F_1(\frac{|x|}{t^{\alpha}}\geq 1)F_2(|p|\geq K)=\mathcal{O}(1/<x>).
  $$
 It also follows that on the subspace where $|x|\leq t^{1/2}$ and frequency larger than 1, the expectation of $|A|$ is bounded.
 \end{proof}

\begin{rem}
The proof above can now be iterated to get a bound on $A^n$ for all $n.$
To see why, we note that the estimates of the correction terms used $\phi \in H^1$ only.
But now we know that (uniformly in time) also $A F_2(|p|\geq 1 )\phi \in L^2, $
So we gain an extra factor of $r$ or $M$ for each $\phi.$
But in the proofs of regularity, we did not use those extra estimates.
\end{rem}

\end{proof}

\section{ Microlocalization  of Weakly Bound States}

We have shown that any state of the system under consideration splits into two channels: a free wave and a weakly localized part.
 The difference between the exact solution and these two parts goes to zero in $H^1$ as time goes to infinity.
 One can ask what the asymptotic behavior is of any part of the solution when microlocalized. In particular, for each domain in phase-space, with a smooth approximate projection $J$, we can consider the existence of the corresponding {\bf channel wave operator}, given by
 \begin{equation}\label{Channel-J}
 \Omega^*_J\phi(0)\equiv \lim_{t \to \infty} e^{-i\Delta t}J\phi(t).
 \end{equation}
 Typically, the way we microlocalize is by using operators (products and sums of) of the form
 $$
 F_{1,a}F_2(t^{b}\gamma),\quad F_{1,a'}F_A(t^{-b'}A).
 $$

 If the above wave operator exists, then we expect $\pwls(t)$ and $(I-J)\pwls(t)$ have the same asymptotic form.
 In fact, this is not hard to prove.
 \begin{prop}
 Let $\pwls(t)$ be a localized part of the solution of NLS, \ref{Main-eq}.
 Then, for $J$ as above we have,
 $$
 \|J\pwls(t)\|_{L^2}\rightarrow 0.
 $$
 \end{prop}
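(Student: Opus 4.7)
The strategy rests on the slow-growth property of weakly localized states from Theorem~\ref{unique-defin}, namely $\la|x|\ra_{\pwls}(t)\lesssim t^{1/2}$, together with the min/max velocity bound of Proposition~\ref{prop:MVB} for free waves. The existence of the channel wave operator $\Omega^*_J\pwls(0)$ is given by hypothesis; the plan is to show that this operator returns zero when applied to $\pwls(0)$. Set $\psi_+ := \Omega^*_J\pwls(0)\in L^2$. By the definition \eqref{Channel-J} of the channel wave operator, $\|J\pwls(t)-e^{i\Delta t}\psi_+\|_{L^2}\to 0$, so $\|J\pwls(t)\|_{L^2}^2\to \|\psi_+\|_{L^2}^2$, and the proposition reduces to proving $\psi_+=0$.

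To prove $\psi_+=0$, I would test against a dense class of Schwartz functions: take $f\in\mathcal{S}(\R^3)$ with $\hat f\in C_c^\infty$ supported in an annulus $\{v_-\leq|\xi|\leq v_+\}$ bounded away from zero. Such $f$ are dense in $L^2$ by Littlewood--Paley decomposition together with the Zero Frequency Channel proposition proved earlier in this section, which handles the complementary low-frequency direction. For such $f$,
\begin{equation*}
\la f,\psi_+\ra=\lim_{t\to\infty}\la e^{i\Delta t}f,J\pwls(t)\ra=\lim_{t\to\infty}\la J^* e^{i\Delta t}f,\pwls(t)\ra.
\end{equation*}
By hypothesis every admissible $J$ is built from factors of the form $F_{1,a}F_2(t^b\gamma)$ or $F_{1,a'}F_A(t^{-b'}A)$, so $J^*$ carries an outer spatial cutoff $F_{1,a}(\jx/t^a\geq 1)$ for some $a\in(1/3,1)$. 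Meanwhile, by Proposition~\ref{prop:MVB} the free wave $e^{i\Delta t}f$ is concentrated in the ballistic annulus $\{v_-t/2\leq|x|\leq 2v_+t\}$ up to $L^2$ errors of size $O(t^{-N})$ for every $N$. For $t$ large, this forces the effective support of $J^*e^{i\Delta t}f$ into $\{|x|\geq v_-t/2\}$.

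The pairing is then controlled by spatial Chebyshev:
\begin{equation*}
|\la J^*e^{i\Delta t}f,\pwls(t)\ra|\leq \|J^*\|_{\mathrm{op}}\|f\|_{L^2}\,\|\mathbf{1}_{|x|\geq v_-t/2}\pwls(t)\|_{L^2}+O(t^{-N}),
\end{equation*}
and the spatial tail obeys
\begin{equation*}
\|\mathbf{1}_{|x|\geq R}\pwls(t)\|_{L^2}^2\leq R^{-1}\la|x|\ra_{\pwls}\lesssim R^{-1}t^{1/2},
\end{equation*}
which with $R\sim t$ gives $O(t^{-1/4})\to 0$. Thus $\la f,\psi_+\ra=0$ on the dense class, so $\psi_+=0$ and $\|J\pwls(t)\|_{L^2}\to 0$.

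The main obstacle is the zero-frequency sector, since test functions with $\hat f$ concentrating at $\xi=0$ do not propagate ballistically and escape the above Cauchy--Schwarz argument. This is exactly why the Zero Frequency Channel proposition is needed as a complement: it ensures that the momentum component of $\pwls$ at scale $|p|\lesssim t^{-\beta}$ (with $\beta>5/8$ for WLS) has vanishing $L^2$ norm, so that the low-frequency part of $\psi_+$ is separately zero. A secondary technical point, that spatial and momentum/dilation factors in a composite $J$ do not exactly commute, is routine: Lemma~\ref{BFA-lemma} produces commutators with gain $t^{b-a}$ or $t^{-a-b'}$, all integrable in $t$, so they contribute only $o(1)$ errors to the preceding estimates.
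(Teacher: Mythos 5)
Your argument is correct and reaches the same conclusion, but it is organized dually to the paper's proof, and it is worth noting what the trade-off is. The paper decomposes $J\pwls$ directly in physical space with a cutoff $F_{1,\alpha}$, $1/2<\alpha<1$: the piece $F_{1,\alpha}J\pwls$ is small because $\pwls$ lives at scale $t^{1/2}$ and $[F_{1,\alpha},J]$ is a lower-order error, while the piece $(I-F_{1,\alpha})J\pwls$ converges to $(I-F_{1,\alpha})e^{i\Delta t}\Omega^*_J\phi(0)$ and vanishes by minimal velocity. You instead pass to the limit $\psi_+=\Omega^*_J\pwls(0)$ and show $\la f,\psi_+\ra=0$ for a dense class of test functions by pairing $J^*e^{i\Delta t}f$ against $\pwls(t)$, invoking exactly the same two mechanisms (ballistic escape of free waves, $\la|x|\ra_{\pwls}\lesssim t^{1/2}$) but on the dual side. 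Two remarks. First, the appeal to the Zero Frequency Channel proposition for density is superfluous: functions with $\hat f\in C_c^\infty$ supported in an annulus bounded away from zero and infinity are already dense in $L^2$, since $\{\xi=0\}$ has measure zero; the low-frequency issue never needs to be addressed separately in your scheme. Second, the claim that $J^*$ preserves the ballistic support of $e^{i\Delta t}f$ up to $O(t^{-N})$ errors is not literally a matter of commuting the factors of $J$ with each other via Lemma~\ref{BFA-lemma}; it requires controlling the spatial non-locality of $F_2(t^b\gamma)$ and $F_A(t^{-b'}A)$ acting on a ballistically supported function, which follows from the flow properties of $\gamma$ and $A$ (the flow of $\gamma$ moves support by $O(\lambda t^b)\ll t$ for $\lambda$ in the effective support of $\hat F_2$, and similarly for $A$) — a real but manageable verification that the paper also asserts rather than details. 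With these two points noted, your proof is sound.
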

\begin{proof}
 Let $1/2<\alpha<1$, and $F_{1,\alpha}(x,t)\equiv F_{1,\alpha}(\frac{|x|}{t^{\alpha}}\geq 1)$
 be the usual smooth characteristic function of the domain $\frac{|x|}{t^{\alpha}}\geq 10.$
 Then, Since we know that $\pwls$ in localized in the region $\frac{|x|}{t^{1/2+0}}$,
 we have that
\begin{align}
& F_{1,\alpha}(\frac{|x|}{t^{\alpha}}\geq 1)J\pwls=JF_{1,\alpha}\pwls+ [F_{1,\alpha},J]\pwls,\\
&[F_{1,\alpha},J]=[F_{1,\alpha},F_{1,a}F_2(t^{b}\gamma)+F_{1,a'}F_A(t^{-b'}A)]=\mathbb{O}(t^{b-\alpha})+\mathbb{O}(t^{-b'})\\
&JF_{1,\alpha}\pwls=\mathbb{O}(t^{-\alpha+1/2+0}).
\end{align}
Therefore, for $ \alpha >b$ we have that
$$
\|F_{1,\alpha}J\pwls(t)\|_{L^2}\rightarrow 0.
$$
Finally,in the sense of $L^2$ convergence:
\begin{align}
&(I-F_{1,\alpha})e^{i\Delta t}e^{-i\Delta t}J\pwls(t)\rightarrow (I-F_{1,\alpha})e^{i\Delta t}\Omega^*_J\phi(0)\\
&(I-F_{1,\alpha})e^{i\Delta t}\Omega^*_J\phi(0)\rightarrow 0,
\end{align}
where the last limit follows from minimal velocity bounds for the free Schr\"odinger equation.
\end{proof}

The main result we prove now is that in a large part of the phase space, namely the part where
$|x|\geq t^{\alpha} \bigcap t^{\beta}|\gamma|\geq 1,\alpha>\beta>0,$ the solution to equation (\ref{Main-eq}) satisfying the global energy bound \ref{global-bound},
propagates to a free wave.
That is, the free channel wave operator $\Omega^*_J$ exists for $J$ of the form
$$
 F_{1,a}F_2(t^{b}\gamma),\quad F_{1,a'}F_A(t^{-b'}A), \quad, a>b>0, \quad a'>0, b'>0.
$$

 \begin{thm}~\label{PA-J} Let  $\phi$ be solution to equation (\ref{Main-eq}) satisfying global energy bound (\ref{global-bound}).   Let $F_1=F_1(\XT), F_4=F_4(\gamma t^\beta >c_0)$ .
 Assume moreover, that every factor $V(x,t)$ in the interaction satisfies $(r\partial_r)^nV(x,t)\lesssim 1$ for all $n.$
  Then   the following estimates hold:

A)  For all $\alpha>\beta>0:$
 \begin{align}
\int_{t_0}^\infty \frac{1}{t^\alpha} \la \sqrt{F_1'F_1}  \gamma F_4 \sqrt{F_1'F_1} \ra +\frac{1}{t} \la F_1 F_4'F_1  \ra dt<\infty,  \label{PE-JVG}.
\end{align}

B) For all $\alpha>0\, \,  \text{and}\, \epsilon>0,$ we have:
\begin{align}
\int_{t_0}^\infty \frac{1}{t^\alpha} \la \sqrt{F_1'F_1}  \gamma F_A \sqrt{F_1'F_1} \ra +\frac{1}{t} \la F_1 F_A'F_1  \ra dt<\infty,  \label{PE-JVA}.
\end{align}

  \end{thm}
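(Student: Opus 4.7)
The plan is to run the positive commutator method with PROBs adapted to each of the two phase-space regions, mirroring the structure of Lemma~\ref{PA4}, and to use the regularity hypothesis $(r\partial_r)^nV\lesssim1$ to iterate the bootstrap far enough to remove the lower bound $\alpha>\tfrac13$ that was needed in Case~II of that lemma.

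For Part~A, I take $A_4=F_1(\xt)F_4(\gamma t^\beta>c_0)F_1(\xt)$. Repeating the Heisenberg computation of Lemma~\ref{PA4} gives
\begin{align*}
D_HA_4=\tfrac{4}{t^\alpha}\sqrt{F_1F_1'}\,\gamma F_4\sqrt{F_1F_1'}+\tfrac{\beta}{c_0 t}F_1\,\gamma t^\beta F_4'\,F_1-\tfrac{\alpha}{t}\bigl(F_1'\tfrac{\jx}{t^\alpha}F_4F_1+\mathrm{h.c.}\bigr)+E(t),
\end{align*}
with symmetrization error $E(t)=O(t^{-3\alpha+\beta})$. The first two terms are positive; the third requires bootstrap and $E(t)$ is not in $L^1(dt)$ when $\alpha\le\tfrac13$. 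I weaken this by iterating: start from the auxiliary PROB $t^{-N(\alpha-\beta)}\tilde A_4$ with $N$ so large that $1+N(\alpha-\beta)$ dominates both $3\alpha-\beta$ and the decay rate extracted from $(r\partial_r)^nV\lesssim1$, so that all remainders are integrable and one has a first PRES. Feeding it back into the exponent $-(N-1)(\alpha-\beta)$ gains a factor of $t^{\alpha-\beta}$; the smoothness hypothesis ensures that the new correction terms produced by $[V,F_4(\gamma t^\beta)]$, which after commutator expansion are integrals of $(r\partial_r)^k V$ weighted by $t^{-k(\alpha-\beta)}$, are uniformly bounded at every order. After $N$ such steps I recover the desired estimate at the natural scale.

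For Part~B, I replace the cutoff on $\gamma$ by a smooth cutoff in the dilation generator. Taking the analytic projection $F_A=P^+_M(A)=\tfrac12\bigl(1+\tanh\tfrac{A-M}{R}\bigr)$ with $M\sim t^{b'}c_0$ and $R=M^{1-\epsilon}$, the identity $[-i\Delta,\tanh((A-M)/R)]=2\bp\,g^2(A)\,\bp\ge0$ recorded in the analytic projections subsection yields a PROB $B=F_1F_AF_1$ whose Heisenberg derivative is positive in leading order. The commutators $[x,F_A]$ and $[\gamma,F_A]$ are both $O(R^{-1})$ via the contour identity $i[x,F(A)]=i[F(A+i)x-F(A)x]$, so the symmetrization errors decay arbitrarily fast in $M$. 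The regularity $(r\partial_r)^nV\lesssim1$ again allows unlimited commutator expansion of $V$ through $F_A$ with uniformly bounded outputs, so the same iterated bootstrap as in Part~A applies, yielding the stated estimate for any $\alpha>0$.

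The main obstacle is the commutator structure in Part~B: unlike $[\gamma t^\beta,F_1(\xt)]$, which has extra $t^{\beta-\alpha}$ decay, the commutator $[A,\xt]\sim\xt$ is not small, so a naive commutator expansion produces no decay. I plan to circumvent this by working through the flow $e^{isA}$, which rescales $\jx$ multiplicatively and therefore preserves the support condition of $F_1$; concretely, using the Helffer--Sj\"ostrand representation of $F_A$ and the almost-analytic extension $\tilde F_A$, the products $F_1(\xt)F_A(A)F_1(\xt)$ can be unfolded along the dilation flow so that the support of the integrand in $s$ is essentially constrained to $|s|\lesssim \log(R)$, after which the same bounded-derivative hypothesis on $V$ closes the error estimates.
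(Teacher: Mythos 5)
Your overall skeleton for Part~A (compute $D_HA_4$, identify the positive leading terms, iterate the bootstrap by lowering the exponent in a $t^{-\eta}$-weighted auxiliary PROB until $\eta=0$) does match the paper's, but you have missed the central obstruction, and as written your plan does not close.

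The bootstrap you describe addresses only the \emph{symmetrization} error $E(t)=O(t^{-3\alpha+\beta})$; you control it by iterating the gain $t^{-(\alpha-\beta)}$ per step, exactly as the paper does. But for $\alpha\le\tfrac13$ the deeper problem is the \emph{interaction term itself}, $\langle A_4\phi,\mathbf{N}(\phi)\rangle$, not just the commutators $[V,F_4]$. On the support of $F_1(\jx/t^\alpha\ge1)$ a nonlinear term like $|\phi|^p$ gives, by radial Sobolev, only $t^{-p\alpha}$, which fails to be in $L^1(dt)$ once $\alpha\le1/p$; and assumption (H2) is only known to hold for one particular $\alpha$, so a smaller exterior cutoff does not inherit integrable decay. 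Your handling of the interaction reduces to commuting $V$ through $F_4(\gamma t^\beta)$ using $(r\partial_r)^nV\lesssim1$. That expansion yields a factor $t^{-(\alpha-\beta)}$ per order, so the high orders are fine, but the \emph{zeroth-order} term is the interaction itself and it is not integrable. Moreover, this commutator expansion says nothing about the genuinely nonlinear part $|\phi|^p\phi$, which you cannot commute through $F_4$ at all. The paper is explicit about this: "commuting $F_4$ through the interaction term is not possible," and "We do NOT commute through non-linearity."

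What the paper actually does, and what your proposal lacks, is a three-part mechanism for the interaction term: (i) a companion PRES with an extra $\ln^{-3/2}(\jx)$ weight, which upgrades the positive leading term from being supported on $F_1'$ to being supported on all of $F_1$, so that Cauchy--Schwarz against the interaction term can be closed; (ii) a high-low decomposition of each nonlinear factor $\phi=F_{4,\beta'}\phi+\bar F_{4,\beta'}\phi$ (with $\beta<\beta'<\alpha$), controlling the high-$\gamma$ pieces by bootstrap against PROBs at $\beta'$; and (iii) a near-orthogonality lemma (the Mellin/high-low scaling decomposition proved at the end of Section~8) showing that $F_4(\gamma t^\beta>c_0)$ applied to a product of all-low-$\gamma$ factors is $O(t^{-m})$ for any $m$; it is \emph{here}, not in a raw commutator expansion, that the hypothesis $(r\partial_r)^nV\lesssim1$ is used to kill the potential part. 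None of these ingredients appear in your sketch, and without them the interaction contribution does not integrate.

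On Part~B, your worry that $[A,\jx/t^\alpha]$ is $O(1)$ and "a naive commutator expansion produces no decay" is misplaced: the decay is delivered by the scaling in $F_A(t^{-b'}A)$ itself, since the commutator expansion gives $[F_A(t^{-b'}A),F_1]=\sum_k t^{-kb'}F_A^{(k)}\,\mathrm{ad}_A^{(k)}(F_1)$ with $\mathrm{ad}_A^{(k)}(F_1)$ merely bounded, yielding $O(t^{-b'})$ per order. This is exactly what the paper states. Your proposed detour through the dilation flow and Helffer--Sj\"ostrand is not wrong but is unnecessary; the real issue for Part~B is again the interaction term, which is handled by the same high-low mechanism (the paper has an analogue of the Mellin orthogonality lemma for analytic projections in $A$), and that piece is absent from your proposal.
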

 \begin{proof}
 Notice that this was proved before, in section 5, for $\alpha>1/3.$ We focus here on the cases where $\alpha\leq 1/3$ and $\beta $ close to $\alpha.$ $\delta\equiv \alpha-\beta.$
 In case $\alpha>1/3,$ the interaction term is integrable in time, due to the assumption that it decays faster than $|x|^{-3}$ at infinity.
 So, the proof will be the same, if we can find a way to bootstrap the symmetrization term and the interaction terms as well.
 We begin by showing an iteration scheme to control the symmetrization term that will get us the desired estimate.

 Let the PROB be    $t^{-\eta}A_4= t^{-\eta}F_1(\XT)F_4(\gamma t^\beta >c_0)F_1(\XT).$  We will choose  $c_0>0$  large enough for the iteration procedure to apply.
 We will choose $\eta_n$ successively smaller, until it is zero.
 Furthermore, after each derivation of a PRES from the above PROB, we will derive a companion PRES using:
 $$
  t^{-\eta}A_{4,l}= t^{-\eta}\ln^{-1}{\la x\ra}F_1(\XT)F_4(\gamma t^\beta >c_0)F_1(\XT)\ln^{-1}{\la x\ra}.\label{A4-log}
 $$
 The resulting positive term, which is controlled by the PRES is then:
 $$
  \int_1^T t^{-\eta-\alpha}\lp \sqrt {F_1'(\XT)F_1(\XT)}\gamma F_4(\gamma t^\beta >c_0)\sqrt {F_1'(\XT)F_1(\XT)}\rp dt \lesssim C+Sym+ INT,
 $$
 where $Sym,INT$ stand for the symmetrization and Interaction terms  and the constant comes from the LHS (initial and final times of the PROB).

 The Symmetrization term is of the form:
 \begin{align}
 &\lp G_1(\XT)\phi(t), [G_1(\XT),[G_1(\XT), \gamma F_4(\gamma t^\beta >c_0)]] G_1(\XT)\phi(t)\rp t^{-\alpha-\eta}=\\
 &\lp G_1(\XT)\phi(t), [t^{-2\alpha+\beta}\tilde F_4 +\mathcal{O}(1)t^{-N\delta+ \alpha}]G_1(\XT)\phi(t)\rp t^{-\alpha-\eta}=\\
 & t^{-\eta-3\alpha+\beta}\lp G_1(\XT)\phi(t),  \tilde  F_4(\gamma t^\beta >c_0) G_1(\XT)\phi(t)\rp + \mathcal{O}(L^1(dt)).
 \end{align}
 Here $G_1(\XT)$ stands for a generic bump function around $1$  of $\frac{|x|}{t^{\alpha}}.$  It is different at each place, not just from line to line!
 The second equality above follows from direct application of the commutator expansion lemma up to order $N$ to the expression
 $$
 [G_1(\XT), \gamma F_4(\gamma t^\beta >c_0)],
 $$
and noting that in each order of the expansion we gain a factor of  $t^{-\delta},$  and that all terms in the expansion are of the same form: $G_1 \tilde F_4$ .
Here $\tilde F_4$ is a generic bump function of $\gamma t^\beta $ supported around $c_0.$

Since the positive leading term of the PRES satisfies:
\begin{align}
 & \mathcal{O}(L^1(dt)) \geq t^{-\eta-\alpha}\lp \sqrt {F_1'(\XT)F_1(\XT)}\gamma F_4(\gamma t^\beta >c_0)\sqrt {F_1'(\XT)F_1(\XT)}\rp\geq \\
&t^{-\eta-\alpha-\beta}\lp G_1(\XT)\tilde F_4 G_1(\XT)\rp \label{PRES-eta}
\end{align}
provided  $Sym+ INT$ are also in $L^1,$ we conclude that the resulting estimate controls a generic Sym. term.
So, to ensure that $Sym+ INT$ are also in $L^1,$, we choose
$$
\eta\equiv \eta_1=1-  3\alpha+\beta+0=1-2\alpha-\delta+0.
$$
$0$ stands for a fixed small number, smaller than all parameters.
With this choice of $\eta$ the PRES \ref{PRES-eta} has a weight $t^{-1+2\delta-0}.$
Therefore, since $\delta>0,$ the resulting estimate can be used to control the  $Sym$  term with
$$
\eta_2+3\alpha -\beta=1-2\delta+0.
$$
Therefore the new PROB will need a weight $\eta_2$ that is less than $\eta_1$ by an amount $2\delta-0.$
Therefore, after finitely many iterations, depending only on the size of $\delta$ we get the desired PRES with $\eta=0.$

Next, we need to show that we can also iterate the interaction term.
We would like to proceed in a way similar to the above, but the problem is that commuting $F_4$ through the interaction term is not possible. Even if the interaction term is smooth, we still need to gain a factor of $t^{-\alpha}$ (or equivalently a factor of $x^{-1}$). This means that the interaction term should be smooth and such that
$(r\partial_r)^n \mathcal{N}$ is bounded for all $n.$

We will instead introduce a different approach, which is a new \emph{iterative high-low  phase-space estimates} .
By this we mean that the high-low decomposition will be w.r.t.  the spectrum of pseudo-differential operators, rather than the derivative operator.

   Another complication is that the interaction term has $F_1$ in it, and not $G_1.$

   We therefore need to upgrade the basic PRES. This is done using the companion estimate with the extra $log$ factor.
   The extra $log$ factor gives another estimate, in which the leading PRES is a bound on
   $$
   \int_1^T \| \ln^{-3/2}{\la x\ra}F_1(\XT)\la x\ra^{-1/2}\sqrt{\gamma F_4} \phi(t)\|^2 t^{-\eta} dt \lesssim C+Sym+INT.\label{PRES-ln}
   $$

   With this bound, we can control the interaction term, since it is decaying faster than this term.
   However, it is necessary to bring a factor of $F_4$ to the right place.

  The nonlinear term is :
 $$
  \lp \phi,i[\mathcal{N}(\phi),F_1F_4F_1]\phi\rp=2\Re  \lp \sqrt(F_4)F_1  \mathcal{N}(\phi)\phi,\sqrt(F_4)F_1\phi\rp,\label{INT-beta}
 $$
 $F_4=F_{4,\beta}(t^{\beta}\gamma\geq 1).$

 Therefore, we see that the factor $\sqrt(F_4)F_1\phi$ can be controlled by the following leading term of the PROB, that is, after using Cauchy-Schwarz

 $$t^{-\eta}\|\sqrt(F_4)F_1\ln^{-3/2}(\jx)\phi\|^2t^{-\alpha-\beta},$$

 and using that $$\|F_1 \mathcal{N}(\phi)\|_{L^{\infty}} \leq ct^{-(3+0)\alpha}, \quad \beta < \alpha.$$
 Next we use a high-low vector-field decomposition to deal with the term
 $$
 \sqrt F_4 F_1 \mathcal{N}(\phi)\phi=  \sqrt F_4 F_1 \mathcal{N}(\phi)\tilde F_1 F_{4,\beta'}\phi+ \sqrt F_4 F_1 \mathcal{N}(\phi)\tilde F_1 \bar F_{4,\beta'}\phi,
 $$
 $$
 \beta< \beta '< \alpha.
 $$
 Consider the generic term in the interaction and decompose it as :
 \begin{align}
 &\sum\phi^{~}\ldots\phi^{~}V(x)= \sum(F_{4,\beta'}\phi+\bar F_{4,\beta'}\phi)^k (F_{4,\beta'}V(x)+\bar F_{4,\beta'}V(x))=\\
 &\sum(\bar F_{4,\beta'}\phi^{~})^{k+1}(\bar F_{4,\beta'}V(x)+\sum(\bar F_{4,\beta'}\phi^{~})^{k}V(x) F_{4,\beta'}\phi^{~}+\\
 &\sum(\bar F_{4,\beta'}\phi^{~})^{k+1} F_{4,\beta'}V(x).
\end{align}
$\phi^{~}$ stands for $\phi$ or its complex conjugate.
$\sum$ stands for the sum of all terms of this type.
All terms which contain a factor with high $\gamma$ acting on $\phi$, are controlled by the leading term in an iteration, using PROBs with $\beta'.$
Since the interaction term decays in time faster than the leading term, this extra term is controlled by a PRES with $\eta$ {\bf larger} than the previous step.
Therefore after finitely many such iterations $\eta$ becomes larger than 1, and then there is nothing to prove.

The term of the form low-low is annihilated by the action of the operator $\sqrt F_4 F_1.$
This is due to a phase-space estimate of the type (see the next subsection)
$$
F_4(\gamma>K)[\bar F_4(\gamma <K/4)\phi]^2 \lesssim K^{-l}.
$$
Similar estimate holds with $A$ replacing $\gamma.$
 Therefore, we need to control the interaction terms in which at least one factor of the form
 $$
  F_{4,\beta '}\phi^{~}
 $$
 is present in the factor $\sqrt(F_4)F_1  \mathcal{N}(\phi)\phi$ of equation \ref{INT-beta}.

 Therefore, by applying Cauchy-Schwarz to the scalar product, one term is controlled by bootstrap against the leading positive term in the PRES, and the other term, which being of the same form but with $ F_{4,\beta'}$ instead of $ F_{4,\beta},$ is controlled by the use of a similar PROB with $\beta' .$
 This can be iterated {\bf only} finally many times, so we can only get estimates with $\beta <\alpha.$

It remains to show that
$$
F_1\sqrt F_4 V(x)( \bar F_{4,\beta'}\phi^{~})^{k+1} =\mathcal{O}(t^{-m}), \quad m>>1.
$$

Using the commutator expansion lemma, we have:
\begin{align}
&F_1\sqrt F_4 V(x) (\bar F_{4,\beta'}\phi^{~})^{k+1}=\\
&F_1 V(x)\sqrt F_4 ( \bar F_{4,\beta'}\phi^{~})^{k+1}+F_1[\sqrt F_4 ,V(x) ] \bar F_{4,\beta'}\phi^{~})^{k+1}=\\
&\sum_n F_1\tilde V(x)^{(n)}( \tilde F_4^{(n)}  \bar F_{4,\beta'}\phi^{~})^{k+1}+\mathcal{O}(t^{-m}+R_{n+1}(F_1\la x\ra^{-n-1}) |\la x\ra^{n+1}V^{(n+1)}|=\\
&\mathcal{O}(t^{-m}).
\end{align}

 The contribution of the interaction term to the derivative of (the expectation value of) the PROB is of the form

 \begin{align}\label{NLD-1}
 &\lp \phi,i[\mathcal{N}(\phi),A_4]\phi\rp  =\lp \phi, A_4\mathcal{N}(\phi)(A_4^-+\bar A_4+\tilde A_4)\phi\rp =\\
 &\lp \phi, A_4\mathcal{N}(\phi) A_4^-\phi\rp +\lp \phi, A_4\mathcal{N}(\phi)\tilde A_4\phi\rp +\\
 &\lp \phi, A_4\mathcal{N}(\phi)\bar A_4 \phi\rp +\\
 &C.C.+R(t).
 \end{align}
 Here we use the decomposition
 \begin{align}\label{NLD-2}
& A_4 \mathcal{N}=F_1F_4\mathcal{N}F_1[F_{4,\beta'}+\bar F_{4,\beta'}+F_4(\gamma t^\beta<-c_0)] \\
& A_4^-=F_1F_4(\gamma t^\beta<-c_0)\\
& R(t)=\mathcal{O}(t^{-\delta})+[\text{similar terms}].
 \end{align}
 $C.C.$ stands for the complex conjugate of the expressions on the RHS.
 $\beta'<\beta.$
 The terms in equation \ref{NLD-1} without $\bar F_4$ are controlled by the leading term of an appropriate PRES using the following PROBs: First one uses $A_4, A_4^-.$ Then one uses $\ln (\jx)^{-2}A_4, \ln (\jx)^{-2}A_4^-.$

 This gives estimates that control the above terms without $\bar A.$

 To estimate the terms with the bar, we use a high-low  spectral decomposition ( w.r.t. $\gamma$ and $A.$)

 The PROBs  $\ln^{-2} (\jx) A_4, \ln^{-2} (\jx) A_4A_4^-$ have the following Heisenberg derivative:
 \begin{equation}
 D_H \left(\ln^{-2} (\jx)A_4+A_4 \ln^{-2} (\jx)\right)=-4\ln (\jx)^{-3} (\jx)^{-1}\gamma A_4 + \ln^{-2} (\jx)D_H(A_4) +C.C.+R(t).
 \end{equation}
The corresponding PRES that follows gives a bound on $\int_{1}^{T}\|\sqrt(F_4)\ln^{-3/2}  (\jx)(\jx)^{-1/2}F_1\phi\|^2 $
in terms of the bound on the same expression without the log factor.

Although this extra estimate may look weaker, it holds on the full support of the function $F_1$ and not just on the support of the derivative of $F_1.$ This is needed to control the non-linear terms.

 We recall here the estimate for the Heisenberg derivative of $A_4$ with respect to the free flow:

     \begin{align}
  D_HA_4=& (D_HF_1)F_4F_1 + F_1F_4(D_HF_1)  + F_1(D_HF_4)F_1 \notag\\
=&F_1' \frac{1}{t^\alpha}\left[2\gamma   -\alpha \frac{\la x\ra }{t}\right]F_4F_1  +F_1F_4\frac{1}{t^\alpha}\left[2\gamma   -\alpha \frac{\la x\ra }{t}\right] F_1'+   F_1([-i\Delta, F_4] +\frac{d F_4}{dt})F_1  \notag\\ &
+ \frac{1}{2t^{2\alpha}} \left[F_1''[[-i\Delta, \jx], \jx] , F_4\right]F_1 +    \frac{1}{2t^{2\alpha}}[F_4,F_1]  F_1''[[-i\Delta, \jx], \jx]
  \end{align}
   We organize the terms into three groups $D_HA_4=I_1+I_2+I_3$, such that
 \begin{align}
 I_1= &F_1' \frac{1}{t^\alpha}\left[2\gamma   -\alpha \frac{\la x\ra }{t}\right]F_4F_1  + F_1F_4\frac{1}{t^\alpha}\left[2\gamma   -\alpha \frac{\la x\ra }{t}\right] F_1' \\
=& \frac{4}{t^\alpha} \sqrt{F_1'F_1} \gamma F_4 \sqrt{F_1'F_1}   -\frac{\alpha}{t}  [F_1'  \frac{\jx }{t^\alpha}F_4F_1  + F_1F_4 \frac{\jx}{t^{\alpha}}F_1'] + R \\
I_2=& F_1(D_HF_4)F_1   =  \frac{\beta}{c_0 t} F_1 \gamma t^{\beta} F_4'F_1, \hspace{1cm} \text{for } t^\alpha\geq 4,\\
I_3=& \frac{1}{2t^{2\alpha}} \left[F_1''[[-i\Delta, \jx], \jx] , F_4\right]F_1 +    \frac{1}{2t^{2\alpha}}[F_4,F_1]  F_1''[[-i\Delta, \jx], \jx] =O(t^{-3\alpha+\beta}).
 \end{align}
Here $R$ is the remainder terms coming from symmetrization, i.e.
\begin{align} & \frac{1}{t^\alpha}\sqrt{G_1}[[\sqrt{G_2}, \gamma F_4], \sqrt{G_3}]\sqrt{G_4} \\
=&   \frac{1}{t^\alpha}\left\{\sqrt{G_1}[\sqrt{G_2}, \gamma ][F_4, \sqrt{G_3}]\sqrt{G_4}  +   \sqrt{G_1}[\gamma , \sqrt{G_3}] [\sqrt{G_2}, F_4]\sqrt{G_4}   +  \sqrt{G_1}\gamma [[\sqrt{G_2}, F_4], \sqrt{G_3}]\sqrt{G_4} \right\} \notag\end{align}
with $G_1, G_2, G_3, G_4\in \{F_1, F_1'\}$, hence $R=O_1(t^{-3\alpha +\beta})$.

 So we have
\begin{equation}D_H A_4 =  \frac{4}{t^\alpha} \sqrt{F_1'F_1} \gamma F_4 \sqrt{F_1'F_1}  +\frac{\beta}{c_0t} F_1 \gamma t^{\beta}F_4'F_1   -\frac{\alpha}{t}  [F_1'  \frac{\jx }{t^\alpha}F_4F_1  + F_1F_4 \frac{\jx}{t^{\alpha}}F_1']  + I_3+R.
\end{equation}
The first two terms are positive, it left to show  $I_3, R\in L^1(dt)$ and also control the third term.
The proof of the above estimates with $F_A(A/t^{\epsilon})$ replacing $F_4$ is similar.
In this case the commutator $[F_A,F_1]=\tilde F_A\tilde F_1 t^{-\epsilon}+\mathcal{O}(t^{-n\epsilon}), \forall \epsilon>0.$
$\tilde F_A, \quad\tilde F_1$ are bounded functions with support given by the support of the derivatives of the functions $\tilde F_A, \, \tilde F_1$ respectively.
\end{proof}

\medskip

Next, we specialize the above analysis to the case of WLS:

\begin{thm}[Slow Expansion and Regularity of WLS]
The high frequency part of WLS is localized in an arbitrarily slowly growing domain in space, in the sense that
\begin{align}
&\lp\pwls, F_1(\frac{|x|}{\ln t}\geq 1)F_2(\gamma \geq a)\pwls\rp_{t_n}\leq \ln^{-k} t_n\\
& \forall a>0, k=1,2,\ldots
\end{align}

Similar estimate holds for $F_2(\gamma\leq -a).$
\end{thm}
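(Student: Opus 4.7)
The plan is to combine the propagation estimates developed in Theorem~\ref{PA-J} with the vanishing of the free channel wave operator on $\pwls$, and then iterate with logarithmic weights to generate arbitrarily fast decay in $\ln t$.

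First, I would start from Theorem~\ref{PA-J} applied with an exponent $\alpha$ that we will later send to zero. For any fixed $\alpha>\beta>0$, the PROB $B_{\alpha,\beta}=F_1(\frac{\jx}{t^\alpha}\geq 1)F_2(\gamma t^{\beta}\geq a)F_1(\frac{\jx}{t^\alpha}\geq 1)$ yields, after bootstrapping the symmetrization and interaction terms as in the body of that theorem, the propagation estimate
\begin{equation*}
\int_{t_0}^\infty \frac{1}{t}\la F_1 F_2' F_1\ra\,dt<\infty,
\end{equation*}
and hence a sequence of times $t_n\to\infty$ on which $\la F_1 F_2' F_1\ra_{t_n}\to 0$. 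Coupled with the Cook-method argument of Theorem~\ref{thm:WP}, the corresponding channel wave operator $\Omega^*_{B_{\alpha,\beta}}\phi_0=\lim_{t\to\infty}e^{-i\Delta t}B_{\alpha,\beta}\phi(t)$ exists in $L^2$. Since $\pwls$ has zero free channel wave operator and $B_{\alpha,\beta}$ is microlocalized on an outgoing free region, this limit vanishes, giving $\|B_{\alpha,\beta}\pwls(t)\|_{L^2}\to 0$ along the full sequence.

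The next step is to upgrade the power-type scale $t^\alpha$ to the logarithmic scale $\ln t$. I would directly replace $t^\alpha$ by $\ln t$ in the PROB and redo the Heisenberg derivative computation. The time derivative of $F_1(\frac{\jx}{\ln t}\geq 1)$ now contributes a term of size $\frac{1}{t\ln t}F_1'\frac{\jx}{\ln t}$, while the commutator $[-i\Delta, F_1(\frac{\jx}{\ln t}\geq 1)]$ gives a positive term $\frac{1}{\ln t}\sqrt{F_1 F_1'}\gamma F_2\sqrt{F_1 F_1'}$ of magnitude $\gtrsim a(\ln t)^{-1}$ on the support of $F_2(\gamma\geq a)$. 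Because $\la |x|\ra_{\pwls}\lesssim t^{1/2}$, the term proportional to $\frac{1}{t}F_1'\frac{\jx}{\ln t}F_2 F_1$ is handled exactly as in Proposition~\ref{slowgrowth} and the bootstrap proof of Theorem~\ref{PA-J}, but now with $t^\alpha$ replaced by $\ln t$ throughout; the constraint $\alpha>\beta>0$ is trivially satisfied because the effective $\beta=0$ and the effective $\alpha$ is ``logarithmic''. This yields the base case $\la B_{\ell,a}\ra_{t_n}\to 0$ on a subsequence.

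To pass from this to the quantitative $\ln^{-k}t_n$ bound, I would iterate with weighted observables $(\ln t)^{k}B_{\ell,a}$ for $k=1,2,\ldots$. At each step, the Heisenberg derivative produces a positive leading term of order $(\ln t)^{k-1}/\ln t$, together with an ``unwanted'' term $k(\ln t)^{k-1}/t\cdot B_{\ell,a}$; the latter is integrable in time once the previous iterate is controlled, because the preceding step already gave $\la B_{\ell,a}\ra_{t_n}\lesssim\ln^{-(k-1)}t_n$ on a subsequence. Refining the subsequence at each stage and diagonalizing produces a single sequence $t_n\to\infty$ along which $\la F_1(\frac{\jx}{\ln t}\geq 1)F_2(\gamma\geq a)\rangle_{t_n}\lesssim \ln^{-k}t_n$ for every fixed $k$.

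The main obstacle, as in Theorem~\ref{PA-J}, is controlling the nonlinear term at each iteration, since commuting $F_2(\gamma\geq a)$ through $\mathcal{N}(\phi)$ is not allowed pointwise. I would handle it by the same \emph{iterative high--low phase-space decomposition} used there: write $\phi=F_{2,\beta'}\phi+\overline{F}_{2,\beta'}\phi$ with a slightly larger threshold $\beta'$, absorb the low-low piece via the phase-space near-orthogonality $F_2(\gamma\geq a)[\overline{F}_2(\gamma\leq a/4)\phi]^k=\mathcal{O}(a^{-L})$, and bootstrap the high piece against the positive leading term of the next iterate. The hypothesis $(r\partial_r)^N V\lesssim 1$ together with the analyticity of $\mathcal{N}$ in (H3) is exactly what is needed to make this commutator expansion converge at every logarithmic order. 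The estimate for $F_2(\gamma\leq-a)$ follows from the same argument applied to the incoming projector, where the Heisenberg derivative has the opposite sign and the analogue of Lemma~\ref{lem:A2}(2) supplies the base decay.
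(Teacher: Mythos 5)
Your proposal runs into a concrete obstruction. You take the PROB to be $F_1(\tfrac{\jx}{\ln t}\geq 1)\,F_2(\gamma\geq a)\,F_1(\tfrac{\jx}{\ln t}\geq 1)$ and then multiply by $(\ln t)^k$. But when you expand $[-i\Delta, F_1(\tfrac{\jx}{\ln t})]$, the leading term is of size $(\ln t)^{-1}$ and the symmetrization remainders are of size $(\ln t)^{-2}, (\ln t)^{-3},\ldots$, \emph{with no accompanying power of $t$}. None of these is in $L^1(dt)$, and the weight $(\ln t)^k$ only makes them larger. Thus assumption (A3) never closes; you cannot bootstrap the way you can at polynomial scales $t^\alpha$, where each order of the commutator expansion gains an integrable factor $t^{-\alpha}$ or $t^{-\delta}$. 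For the same reason, the appeal to Theorem~\ref{PA-J} does not extend to the logarithmic scale: its iteration terminates after finitely many steps precisely because each step gains $t^{-2\delta}$ with $\delta=\alpha-\beta>0$; taking ``effective $\alpha$ logarithmic'' destroys that termination, and $\alpha>\beta>0$ is a real hypothesis, not a convention.

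The paper's proof sidesteps this by choosing a different PROB from the start, namely
$\tfrac{|x|}{\sqrt t}\,F_1(\tfrac{|x|}{\ln t})\,F_2(\gamma\geq a)\,F_1 + \text{h.c.}$
The extra factor $\tfrac{|x|}{\sqrt t}$ does two things at once: its expectation on $\pwls$ is uniformly bounded because $\la|x|\ra_{\pwls}\lesssim\sqrt t$, and in the Heisenberg derivative every symmetrization and interaction error now carries a factor $t^{-1/2}$ in addition to the logarithms, so after a first iterate with an extra weight $t^{-1/2}$ all error terms become integrable. The leading positive term then gets the weight $\tfrac{1}{\sqrt t}$, and only after this is established does the iteration multiply by $\ln^2 t$ at each step --- the $\ln$ powers are now harmless because the previous PRES provides $t^{-1}$ integrability of the Sym term. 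Finally, the bound $\int_1^T \tfrac{1}{\sqrt t\,\ln^k t}\,dt \leq T^{1/2}/\ln^k T$ is what converts the integrated estimate into the pointwise sequential decay $\ln^{-k}t_n$. Your iteration has no analogue of this last conversion step either: multiplying a subsequential decay estimate by $(\ln t)^k$ does not produce the faster subsequential decay without the integral-to-pointwise argument, which in the paper is exactly this elementary inequality. In short, the missing idea is the $\tfrac{|x|}{\sqrt t}$ weight (i.e. exploiting $\la|x|\ra\lesssim\sqrt t$ directly inside the PROB) to inject integrable time decay into the error terms; without it the scheme cannot close.
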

\begin{proof}
We compute as before the time derivative of the following PROB:
\begin{align}
&\partial_t \left (\lp\pwls,  \frac{ |x|}{\sqrt t}F_1(\frac{|x|}{\ln t})F_2(\gamma \geq a)F_1\pwls\rp+ C.C.\right )=\\
&\lp\pwls, \left [F_1\gamma F_2F_1 +\sqrt{F_1F'_1}\gamma F_2\sqrt{F_1F'_1}\right ]\pwls\rp/\sqrt{t}+\\
&\lp\pwls, \tilde F_1F_2 \pwls\rp \frac{1}{\sqrt t \ln ^2 t}+\\
&\lp\pwls, F_1F_2\tilde N(\phi) \pwls\rp \frac{1}{\sqrt t ln^m t}+\\
&\frac{-1}{2t}\lp\pwls,\frac{ |x|}{\sqrt t} F_1F_2\pwls\rp.
\end{align}
The first line above on the RHS, consists of leading-order positive terms.
The second line comes from Symmetrization. It is of lower order than the positive leading order. So, it can be estimated by repeated iteration that produces an arbitrary power of $\ln t.$
The third line comes from the interaction terms. On the support of $F_1$ and the assumption of decay for large $x,$ the decay in time is faster than $\sqrt t$ by a few $\ln t$ powers, times $F_1F_2.$ Therefore, this part is dominated by the positive leading term.
Finally, the last term is controlled by the basic property of WLS, namely that $\lp\pwls,\frac{ |x|}{\sqrt t} F_1F_2\pwls\rp\leq \mathcal{O}(1).$ Therefore, this last has an upper bound $1/t.$

The first iterate we multiply the above PROB by $t^{-1/2},$ and we get that the symmetrization term is integrable (like $t^{-1}/\ln^2 t.$) Same for the interaction term, that decays faster.
The last term decays like $t^{-3/2},$ and is also integrable.

Hence we get the PRES that the leading positive term above is integrable with a weight $t^{-1/2}.$
We then redo the above calculation with the above PROB multiplied by $ t^{-1/2}\ln^2 t.$ Then, the $Sym$ term and the interaction term have a factor $t^{-1}$ which is integrable by the previous PRES.
So, now the new PRES has weight  $t^{-1/2}\ln^2 t .$  We can therefore iterate and gain a factor of $\ln^2 t$ in each iterate.
Using then that
\begin{equation}
\int_1^T \frac{1}{\sqrt t ln^k t} dt \leq T^{1/2}/(\ln ^k T),
\end{equation}
the result follows.
\end{proof}
\begin{rem}
The above proof extends to slower growth functions than $\ln t.$
The proof above does NOT imply that the wave operators exist, or that the asymptotic state has no slow growth and no high frequency in the usual sense of limit in $L^2$, but rather, it is true {\bf on a sequence of times.}
It does follow from the statement of Generic Asymptotic Completeness that the limiting bound state has this property.
\end{rem}

\subsection{High Low Vector-Field Decompositions}

It remains to verify the high-low properties of the projections of the operators $\gamma, \,  A.$
It is possible to compute the action of an operator defined as a function of a self-adjoint operator like $\gamma, A,$ since they generate vector-fields.
It can be seen that, in general, unlike the case of Fourier transform (related to the generator of translations $-i\partial_x,$) there are non-vanishing corrections. That is
$$
F_2(t^\beta\gamma\geq 1)\phi_L\psi_L \ne 0,
$$
for $\phi_L,\psi_L$ of the form $\bar F_2(t^{\beta'}\gamma\leq 1)\phi_L (\text{and} \, \psi_L), \beta'> \beta.$
Yet, the corrections are small:
\begin{lem}
For $F_1(|x|/t^{\alpha}\geq 1), \alpha> \beta'>\beta,$ and  $\phi_L,\psi_L$ as above,
we have
$$
F_{1,\alpha}F_{2,\beta}\phi_L\psi_L= \mathcal{O}(t^{-m}).
$$
\end{lem}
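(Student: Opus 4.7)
My plan is to reduce the claim to a commutator estimate, then iteratively exploit the Leibniz defect of $\gamma$ in the exterior region. First, since $\beta<\beta'$, for $t$ larger than some $t_*$ we have $t^{-\beta}/2>t^{-\beta'}$, so by the spectral theorem the smooth cutoffs $F_{2,\beta}(t^\beta\gamma)$ and $\bar F_{2,\beta'}(t^{\beta'}\gamma)$ have disjoint spectral supports; hence $F_{2,\beta}(t^\beta\gamma)\,\bar F_{2,\beta'}(t^{\beta'}\gamma)=0$. Since $\psi_L=\bar F_{2,\beta'}(t^{\beta'}\gamma)\psi_L$ by assumption, we get $F_{2,\beta}(t^\beta\gamma)\psi_L=0$. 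Viewing $\phi_L$ as the multiplication operator $M_{\phi_L}$ acting in $L^2$, this gives the crucial reduction
\[
F_{2,\beta}(t^\beta\gamma)\,(\phi_L\psi_L)=[F_{2,\beta}(t^\beta\gamma),\,M_{\phi_L}]\,\psi_L.
\]

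Next I would apply the commutator expansion formula of Lemma~\ref{BFA-lemma} (in its scaled version) to order $n$,
\[
[F_{2,\beta}(t^\beta\gamma),M_{\phi_L}]=\sum_{k=1}^{n-1}\frac{t^{k\beta}}{k!}F_{2,\beta}^{(k)}(t^\beta\gamma)\,\mathrm{ad}_{\gamma}^{k}(M_{\phi_L})+R_n,
\]
and identify $\mathrm{ad}_{\gamma}^{k}(M_{\phi_L})$ as multiplication by a smooth function. A direct calculation gives $\mathrm{ad}_{\gamma}(M_{\phi_L})=M_{i\nabla g\cdot\nabla\phi_L}$, and iterating in the exterior region $|x|\ge 2$ (where $\nabla g\cdot\nabla=\partial_r$ on radial functions), $\mathrm{ad}_{\gamma}^{k}(M_{\phi_L})=M_{(i\partial_r)^{k}\phi_L}$. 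The cutoff $F_{1,\alpha}$ forces us to work on $|x|\ge t^{\alpha}$, where $\alpha>\beta'$; in particular $1/r\le t^{-\alpha}$, and we may use $\partial_r=i\gamma-1/r$ to iteratively rewrite $(\partial_r)^k\phi_L$ as a polynomial in $\gamma$ and $1/r$ acting on $\phi_L$. After pairing with $\psi_L$ and commuting $F_{2,\beta}^{(k)}(t^\beta\gamma)$ past $F_{1,\alpha}$ (at the cost of lower-order terms controlled as in Lemma~\ref{PA4}), each term carries a net prefactor of $t^{k\beta}$ times a combination of $t^{-j\beta'}$ (from each factor of $\gamma$ landing on a low-$\gamma$ function) and $t^{-(k-j)\alpha}$ (from each $1/r$); since $\beta<\beta'$ and $\beta<\alpha$, every order of expansion yields a gain of at least $t^{-\delta}$ with $\delta=\min(\beta'-\beta,\alpha-\beta)>0$.

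Finally I would estimate the remainder $R_n$ via the bound of Lemma~\ref{BFA-lemma}, which is controlled by $t^{n\beta}\|\mathrm{ad}_\gamma^{n}(M_{\phi_L})\|\,\int|\hat F_{2,\beta}(s)||s|^{n}\,ds$; the Fourier integral converges since $F_{2,\beta}$ is smooth, and the operator norm is handled in the same way as the main terms using the exterior spatial localization. Choosing $n$ so that $n\delta\ge m$ yields the claim.

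\emph{The main obstacle} is that $\gamma$ is unbounded from below on the spectral range of $\bar F_{2,\beta'}$: the projection only cuts off \emph{above} at $\gamma\le t^{-\beta'}$, so naively $\|\gamma\phi_L\|_{L^2}$ is only bounded by the $H^1$-norm rather than by $t^{-\beta'}$. Thus one cannot simply pull a factor $t^{-\beta'}$ out of each iterated $\gamma$. The resolution is that the smallness comes not from $\gamma\phi_L$ in isolation, but from the \emph{combined} product $(i\partial_r)^{k}\phi_L\cdot\psi_L$ living in the rescaled low-$\gamma$ band at scale $t^{-\beta'}$, up to the $1/r\lesssim t^{-\alpha}$ non-Leibniz correction; this is precisely the structural fact that, in the exterior region, $e^{ia\gamma}$ acts by radial translation with a mild Jacobian, and the product $\phi_L\psi_L$ therefore has $\gamma$-spectral content (in the dual rescaled variable) localized to order $t^{-\beta'}$, so that applying the disjoint cutoff $F_{2,\beta}$ produces only the commutator defect, which is the mechanism supplying the power of $t^{-\delta}$ at each stage of the iteration.
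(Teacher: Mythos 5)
Your proof takes a genuinely different route from the paper's one-liner. The paper inserts $\gamma^{-n}\gamma^{n}$ next to a localizing $\tilde F_1$, bounds $F_{2,\beta}\gamma^{-n}=\mathcal{O}(t^{n\beta})$ by the spectral support of $F_{2,\beta}$, and bounds $\gamma^{n}\tilde F_1(\phi_L\psi_L)$ by $\mathcal{O}(t^{-n\beta'})$ plus $\mathcal{O}(t^{-n''\alpha-n'\beta'})$ corrections using the approximate Leibniz rule for $\gamma$ (whose defect $\tfrac12\Delta g\lesssim 1/r$ is small on $\mathrm{supp}\,F_{1}$). You instead note the clean identity $F_{2,\beta}(t^{\beta}\gamma)\psi_L=0$ for $t$ large (disjoint spectral supports, since $\beta<\beta'$), collapse everything to $[F_{2,\beta}(t^{\beta}\gamma),M_{\phi_L}]\psi_L$, and expand via Lemma~\ref{BFA-lemma} together with $\mathrm{ad}_{\gamma}(M_{f})=M_{i\nabla g\cdot\nabla f}$. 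This recovers the identical power count ($t^{k\beta}$ from $F_{2,\beta}^{(k)}$, $t^{-\beta'}$ per $\gamma$, $t^{-\alpha}$ per Leibniz-defect factor on $\mathrm{supp}\,F_{1,\alpha}$) and the same gain $\delta=\beta'-\beta$ per order, and it is arguably cleaner because it reuses the commutator calculus of Section~3 rather than appealing informally to a ``chain rule'' for $\gamma$.

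The treatment of the remainder, however, has a gap. Lemma~\ref{BFA-lemma} bounds $\|R_n\|$ by the \emph{global} operator norm $\|\mathrm{ad}_{\gamma}^{n}(M_{\phi_L})\|_{L^2\to L^2}=\|(i\nabla g\cdot\nabla)^{n}\phi_L\|_{L^\infty}$, and this is \emph{not} $\mathcal{O}(t^{-n\delta})$: the $\gamma^{j}\phi_L$ pieces are small in $L^2$ by spectral support but not pointwise, and the Leibniz-defect factors involving $b(r)=\tfrac12\Delta g$ carry no smallness away from $\mathrm{supp}\,F_{1,\alpha}$. Multiplied by the $t^{n\beta}$ prefactor, your remainder bound therefore \emph{grows} in $t$. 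The cutoff $F_{1,\alpha}$ sits outside $R_n$, while the explicit representation (\ref{R-formula}) conjugates $\mathrm{ad}_{\gamma}^{n}(M_{\phi_L})$ by $e^{is\gamma}$ over an unbounded $s$-interval whose flow shifts radial support (Remark~\ref{support}), so $F_{1,\alpha}$ cannot simply be slid in. You need to pre-localize before expanding: e.g.\ insert a slightly wider $F_{1,\alpha'}$ with $\beta'<\alpha'<\alpha$, truncate the $s$-integral at $|s|\lesssim t^{\alpha'}$ (using rapid decay of $\hat F_{2}$ to kill the tail), and estimate the leftover $F_{1,\alpha}F_{2,\beta}(1-F_{1,\alpha'})$ via the boundary expansion of Remark~\ref{rem:boundary}. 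The paper's argument never Taylor-expands in $\gamma$ and so never produces this remainder; that is what makes it so short. Finally, your closing paragraph about the ``main obstacle'' is closer to a restatement of the lemma than to a proof of the stated step: the assertion that $\phi_L\psi_L$ has rescaled $\gamma$-content at scale $t^{-\beta'}$ is precisely what the lemma establishes, so invoking it to justify the commutator bound is circular.
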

\begin{proof}
Since $\gamma^n\phi_L=t^{-n\beta'}\tilde \phi_L,$
by the chain rule, on the support of the function $F_1$
we have that $$
\gamma^n (\phi_L\psi_L)=c_n t^{-n\beta'}\tilde \xi_L,
$$
and so
\begin{align}
&F_1F_{2,\beta}\gamma^{-n}\gamma^{n}\tilde F_1(\phi_L\psi_L)=\\
&c_n\mathcal{O}( t^{n\beta}t^{-n\beta'})+\\
&c_n\mathcal{O}(t^{-n''\alpha} t^{n\beta}t^{-n'\beta'}).
\end{align}
$n'+n''=n.$
$c_n$ grows faster than exponentially in $n.$
\end{proof}

A similar argument can be used for functions of $A/t^{\epsilon}$ but we will use a separate argument,
exploiting the Analytic Projectors as cutoff functions.
\subsubsection{High/Low Scaling Decomposition}

Let $A=\frac{1}{2}(x \cdot p+p \cdot x)$ be the self-adjoint generator of dilations of $L^{2}(\mathbb{R}^{\nu}).$

By the spectral theorem we know that $F(A)$ is well defined for any measurable function $F$.

So, now consider two functions which are localized in the low range of the spectral projection of $A$ :

$$
\begin{aligned}
\phi & =F\left(\frac{|A|}{L} \leq 1\right) \phi \\
&\psi =F\left(\frac{|A|}{M} \leq 1\right) \psi
\end{aligned}
$$

We can diagonalize A. One way to see that is to note that we know the characters of the group generated by $A$, the generalized eigenfunctions of $A$.

$$
A\left(|x|^{-\nu / 2}|x|^{+i \lambda}\right)=\lambda\left(|x|^{-\nu / 2}|x|^{+i \lambda}\right)=: \lambda e(x, \lambda) .
$$

so we have for $f \in \mathcal{S}(\mathbb{R}^{\nu} /\{0\})$

$$
\begin{aligned}
\tilde{f} (\lambda) & =\left\langle|x|^{-\nu / 2}|x|^{+i \lambda}, f(|x| \omega \mid)\right\rangle_{L^{2}\left(|x|^{-1} d x\right)} \\
& =\int_{\mathbb{R}^{+}}|x|^{\nu / 2}|x|^{-i \lambda} f(|x| \omega) \frac{d x}{|x|}, \omega \in S^{\nu-1} .
\end{aligned}
$$

For $f$, which is locally in $L^1$  and vanishes at a certain rate at infinity, and asymptotic to a power of $r$ at the origin, $\tilde f$
is analytic in a strip parallel to the imaginary axis, and the interval in the real part is determined by the asymptotic behavior at zero and infinity, in the variable $r$. Then for $k$ in that interval, we have the Inverse Formula:

$$
f(x)=\frac{1}{2\pi i} \int_{k - i \infty}^{k+i \infty}|x|^{-\lambda} \tilde{f}(\lambda) d \lambda .
$$

In practice, we use logarithmic change variables, and then this is mapped on the standard Fourier transform and Plancherel Theorem.

Next we note that
$$
A|x|^a=-ia|x|^a-i\nu/2.
$$

$$
\begin{aligned}
& A\left(\frac{1}{|x|^m} e(x, \lambda)\right)=\left[A, \frac{1}{|x|^m}\right] e(x, \lambda)+\frac{1}{|x|^m} A e(x, \lambda) \\
& =\left[-i|x| \frac{\partial}{\partial|x|}, \frac{1}{|x|^{m}}\right] e(x, \lambda)+\frac{1}{|x|^m} \lambda e(x, \lambda) \\
& =\left\{\begin{array}{l}
-i\frac{m}{|x|^m} e(x, \lambda)+\lambda \frac{1}{|x|^m} e(x, \lambda)=(\lambda-i m) e(x, \lambda-im) . \\
m=0,1,2, \cdots .
\end{array}\right.
\end{aligned}
$$

We conclude that

$$
\begin{aligned}
& F(A) f(x)=F(A) \frac{1}{2 \pi i} \int_{k-i \infty}^{k+i \infty} \tilde{f}(\lambda)|x|^{-\lambda} d \lambda \\
& = \int_{\R} \tilde{f}(\lambda) F(\lambda)|x|^{-\nu / 2+i \lambda} \frac{d|x|}{|x|} .
\end{aligned}
$$

Next we use the Fourier representation of the Mellin Transform.
Let, for $f\in C_0^{\infty}(\R^{\nu}-{0}),$
\be \label{scale}
Ug(t,\omega)\equiv e^{\frac{1}{2}\nu t}g(e^t\omega).
\ee
Then we have that

\begin{equation}
\tilde \phi(a,\omega)=\frac{1}{\sqrt {2\pi}}\int_{\R}e^{-iat}U\phi(t,\omega) dt.
\end{equation}
\begin{prop}
\be
\widetilde{\phi(x)\psi(x)}= \tilde \phi(.)\ast \tilde\psi(.)(\lambda-i\nu/2).\ee
\end{prop}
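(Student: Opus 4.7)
The plan is to reduce the identity to the classical Fourier convolution theorem via the scaling change of variables \eqref{scale}. First I would observe that, by construction, $\tilde{\phi}(a,\omega)=\frac{1}{\sqrt{2\pi}}\int_{\R}e^{-iat}U\phi(t,\omega)\,dt$ is nothing but the ordinary Fourier transform (in $t$) of $U\phi(t,\omega)$. Hence everything comes down to computing $U(\phi\psi)$ in terms of $U\phi$ and $U\psi$ and applying the usual convolution theorem.

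A direct calculation with the definition $Ug(t,\omega)=e^{\nu t/2}g(e^t\omega)$ gives the algebraic identity
$$U(\phi\psi)(t,\omega)=e^{\nu t/2}\phi(e^t\omega)\psi(e^t\omega)=e^{-\nu t/2}\,U\phi(t,\omega)\,U\psi(t,\omega),$$
the extra factor $e^{-\nu t/2}$ arising because $U$ contributes one power of $e^{\nu t/2}$ whereas the pointwise product $U\phi\cdot U\psi$ contributes two. Inserting this into the definition of the Mellin transform yields
$$\widetilde{\phi\psi}(\lambda)=\frac{1}{\sqrt{2\pi}}\int_{\R}e^{-i\lambda t}e^{-\nu t/2}\,U\phi(t,\omega)\,U\psi(t,\omega)\,dt=\frac{1}{\sqrt{2\pi}}\int_{\R}e^{-i(\lambda-i\nu/2)t}\,U\phi\,U\psi\,dt,$$
which is exactly the Fourier transform of the product $U\phi\cdot U\psi$ evaluated at the shifted point $\lambda-i\nu/2$. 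The Fourier convolution theorem in the $t$ variable then gives $\widehat{U\phi\cdot U\psi}=\tilde\phi\ast\tilde\psi$ (with whatever normalization is absorbed into the convolution), and evaluating at $\lambda-i\nu/2$ produces the stated formula.

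The main obstacle is legitimizing the evaluation of the convolution at the complex argument $\lambda-i\nu/2$ rather than at a real $\lambda$. This requires the shift of contour to sit inside the strip of analyticity of $\tilde\phi\ast\tilde\psi$ in the complex variable. For $\phi,\psi\in C_0^{\infty}(\R^\nu\setminus\{0\})$ the functions $U\phi,U\psi$ are Schwartz in $t$ (uniformly in $\omega$), so $\tilde\phi,\tilde\psi$ are entire and the shift is trivial. For the slightly larger class admitted in the discussion above the proposition --- where $\tilde f$ was only seen to be analytic in a vertical strip of width determined by the decay of $f$ at $0$ and at $\infty$ in $|x|$ --- one has to check that the strip of joint analyticity contains the horizontal line $\mathrm{Im}\,\lambda=-\nu/2$; this is the same condition that makes the inverse Mellin formula $f(x)=\frac{1}{2\pi i}\int_{k-i\infty}^{k+i\infty}|x|^{-\lambda}\tilde f(\lambda)\,d\lambda$ meaningful, and it is automatic for the localized, sufficiently regular functions used in the high--low phase-space decomposition. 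With that verified, standard Cauchy/Paley--Wiener arguments allow the contour shift and complete the proof.
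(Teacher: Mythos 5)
Your computation is exactly the paper's: rewrite the Mellin transform as a Fourier transform in the logarithmic variable via $U$, use $U(\phi\psi)=e^{-\nu t/2}U\phi\cdot U\psi$, absorb the $e^{-\nu t/2}$ into the phase, and invoke the convolution theorem. Your closing paragraph on the analytic strip needed to evaluate the convolution at $\lambda-i\nu/2$ is a useful clarification that the paper leaves implicit, but it does not change the argument.
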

\begin{proof}
The proof follows from the basic property of Fourier Transform, applied to \eqref{scale}.
\begin{align}
&\widetilde{\phi(x)\psi(x)}=\frac{1}{\sqrt {2\pi}}\int_{\R}e^{-i\lambda t}U(\phi(t,\omega)\psi(t,\omega)) dt=\\
&\frac{1}{\sqrt {2\pi}}\int_{\R}e^{-iat}U(\phi(t,\omega))U(\psi(t,\omega))e^{\frac{-1}{2}\nu t} dt=\\
&\tilde \phi(.)\ast \tilde\psi(.)(\lambda-i\nu/2)
\end{align}
\end{proof}

$$
\begin{aligned}\label{Mel}
& F_{A}(A \geq N)\phi \psi= \int_{|a-b| \leqslant L,|b| \leqslant M }F_A(a) \tilde{\phi}(a-b-i\nu/2)\tilde{\psi}(b)e(x,a)db\\
&  \text { with } e(x,a) \equiv|x|^{-\nu/ 2+ia}. 
\end{aligned}
$$


Furthermore, 
$$
\int \tilde{\phi}(a)\tilde{\psi}(b) \frac{d(a-b)}{2}=\chi(a+b).
$$
Since $|a+b|\leq L+M<< N,$ due to the exponential localization of $F_A,$

$$
F_A=1+\tanh(\frac{A-N}{R}),
$$
we conclude that the Mellin transform of the product $\phi \psi$ is compactly supported on the real axis.

We conclude that the integral \eqref{Mel}  is of order

$$
\mathcal{O} (e^{-\frac{1}{R}(N-L-M)})
$$

For this we need that $F_{A}$ is analytic in a strip around the real axis, which is larger than (say) $10(\frac{\nu}{2}+1).$

\begin{prop}
For $\phi_L, \psi_L $ with the form
$\phi_L=\bar F_A(A/t^{\epsilon}\leq 1)\phi_L$ and similarly for $\psi_L$,
we have that
$$
P_A(A/t^{\eta}\geq 1)(\phi_L \psi_L)=\mathcal{O}(t^{-m(\eta-2\epsilon)}).
$$
More generally, for $\phi_L=\bar F_A(A\leq N)\phi_L$ and similarly for $\psi_L$
$$
P_A(A\geq M)(\phi_L \psi_L)=\mathcal{O}((M-2N)^{-m}).
$$
\end{prop}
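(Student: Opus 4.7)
The plan is to work in the Mellin representation of $A$. Under the unitary map $(Ug)(t,\omega)=e^{\nu t/2}g(e^t\omega)$ introduced above, $A$ becomes $-i\partial_t$, so any bounded Borel function $F(A)$ acts as the Fourier multiplier $F(\lambda)$ in the spectral variable $\lambda$ dual to $t$. In this representation the product rule recorded in the excerpt,
\begin{equation}
\widetilde{\phi\psi}(\lambda)=(\tilde\phi\ast\tilde\psi)(\lambda-i\nu/2),
\end{equation}
reduces the claim to a statement about convolutions: if $\tilde\phi_L$ and $\tilde\psi_L$ are (essentially) supported in $[-N,N]$, then their convolution is (essentially) supported in $[-2N,2N]$, and pairing against $F_A(\lambda\geq M)$ should produce a quantity that is small in $M-2N$.

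First I would establish the tail bound for $\tilde\phi_L$ and $\tilde\psi_L$. Since $\bar F_A(\cdot\leq N)=\tfrac12(1-\tanh(\tfrac{\cdot-N}{R}))$ is analytic in a horizontal strip of width $\sim R$ about the real axis, a standard contour-shift argument (identical in spirit to the one used in the excerpt to show that $P^+_M(A)\chi(|x|\leq K)=O(M^{-2m})$) yields
\begin{equation}
|\tilde\phi_L(\lambda)|\lesssim_k\bigl(1+\mathrm{dist}(\lambda,[-N,N])\bigr)^{-k}\|\phi_L\|_{L^2},
\end{equation}
for every $k\geq 0$, and likewise for $\tilde\psi_L$. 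The complex shift $-i\nu/2$ in the convolution formula is absorbed by the same contour deformation, provided the strip of analyticity of the cutoffs has width greater than $\nu/2$; this is guaranteed by taking $R$ sufficiently large (or by choosing the cutoffs at the outset so that their Fourier side decays faster than any polynomial, as in $\mathcal{B}_n$).

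Second, I would combine the two tail bounds to conclude that $\widetilde{\phi_L\psi_L}(\lambda)=(\tilde\phi_L\ast\tilde\psi_L)(\lambda-i\nu/2)$ satisfies the same $\bigl(1+\mathrm{dist}(\lambda,[-2N,2N])\bigr)^{-k}$ decay for every $k$. Pairing this against the analytic projection $F_A(\lambda\geq M)$, which is itself rapidly decaying off $[M,\infty)$, produces the bound
\begin{equation}
\|P_A(A\geq M)(\phi_L\psi_L)\|_{L^2}\lesssim_m(M-2N)^{-m},\qquad m=1,2,\ldots
\end{equation}
For the first statement, substituting $N=ct^\epsilon$ and $M=ct^\eta$ with $\eta>2\epsilon$ gives $M-2N\sim t^\eta$ and hence the $\mathcal O(t^{-m(\eta-2\epsilon)})$ bound after relabelling $m$.

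The main obstacle will be making the "essentially supported" statements quantitative without assuming any analyticity on $\phi_L$ or $\psi_L$ themselves; one has only their $L^2$ or $H^1$ bound. The mechanism that resolves this is entirely on the cutoff side: analyticity of $\bar F_A$ and $F_A$ in a common strip permits the contour deformations, and the $L^2$ bound on $\phi_L$ controls the resulting boundary integrals uniformly. A minor bookkeeping point, which must be set up carefully but is not a genuine difficulty, is to ensure that the strip widths of $\bar F_A$ and $F_A$ are chosen simultaneously large enough to accommodate (a) the $-i\nu/2$ shift in the convolution identity and (b) the number of derivatives $m$ of decay one wishes to extract; this is arranged by taking the smoothness parameter $R$ fixed (independent of $N,M$) once $\nu$ and $m$ are given.
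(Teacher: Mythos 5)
Your proposal follows essentially the same route as the paper's own proof: both pass to the Mellin representation, use the convolution identity $\widetilde{\phi_L\psi_L}(\lambda)=(\tilde\phi_L\ast\tilde\psi_L)(\lambda-i\nu/2)$, and exploit the exponential decay of the $\tanh$-type cutoffs to conclude that the convolution is concentrated in $|\lambda|\leq 2N$, so that pairing against $P_A(\lambda\geq M)$ is $\mathcal{O}(e^{-(M-2N)/R})$. The only small imprecision is in attributing the tail bound on $\tilde\phi_L$ to a contour shift: in the spectral representation it is just the pointwise estimate $|\bar F_A(\lambda)|\lesssim e^{-2(\lambda-N)/R}$ for $\lambda>N$ applied to $\tilde\phi_L(\lambda)=\bar F_A(\lambda)\tilde\phi(\lambda)$, with analyticity of the cutoff in a strip of width $>\nu/2$ needed only to absorb the $-i\nu/2$ offset in the convolution formula --- which is exactly the role of the paper's closing remark that adding $i\nu/2$ does not change the bound when $(M-2N)/R\gg 1$.
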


\begin{proof}

\begin{align}
&\phi_L \psi_L= \int_{\R^2} (\bar F_A(|a-b-i\nu/2|\leq N))(\tilde\phi_L(a) \bar F_A(|b|\leq N))\tilde\psi_L(b) e(x,a) \, da db.\\
&P_A(|A|\geq M)(\phi_L \psi_L)=\\
& \int_{\R^2}P_A(|a|\geq M) (\bar F_A(|a-b-i\nu/2|\leq N))(\tilde\phi_L(a) \bar F_A(|b|\leq N))\tilde\psi_L(b) e(x,a)  \,da db=\\
& \int_{\R}P_A(|a'+b+i\nu/2|\geq M) (\bar F_A(|a'|\leq N))(\tilde\phi_L(a'+b+i\nu/2) \bar F_A(|b|\leq N))\tilde\psi_L(b) e(x,a')  \, da' db.
\end{align}
Here, $a'=:a-b-i\nu/2.$ $ P_A$ defined is as
$$
P_A(A)=:1+\tanh(R^{-1}(A-M)).
$$
Since
 $$|a+b|\leq 2N,\quad \text{it follows that}\,\, P_A(a+b)=\mathcal{O}(e^{-R^{-1}(M-2N)} ).
$$
For $\frac{M-2N}{R}>>1$, the addition of $i/2$ to $a+b$, will not change the bound.
\end{proof}

The proof for the case where $t^{\beta}\gamma\leq 1, \beta < \alpha<1/2,$ follows the same argument.
Therefore, the localized solution can only live in the region below, as time goes to infinity:

\begin{equation}
 \bigcup_{0<\alpha\leq 1/2}\{|x|/t^{\alpha}\sim 1; |\gamma|\leq t^{-\alpha+0}\}\bigcup \{|x|\lll \ln t\}.
\end{equation}

\begin{thm}

Under the conditions of the above last theorem, we now consider the case where $0<\alpha< \beta.$
Let $\tilde \delta\equiv \beta-\alpha.$
We have that
\begin{align}
&PS1\equiv F_1(\frac{|x|}{t^{\alpha-0}}\leq 1)F_2(1\leq t^{\beta}\gamma\leq t^{\tilde\delta})\\
&\lp\phi(t), A (PS1) A \phi(t)\rp \lesssim 1.\label{F_1F_2}
\end{align}
\end{thm}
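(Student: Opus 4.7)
The key phase-space observation is that on the joint support of $F_1$ and $F_2$, one has $|x|\lesssim t^{\alpha}$ and $|\gamma|\lesssim t^{-\alpha}$ (because $1\leq t^\beta\gamma\leq t^{\tilde\delta}$ forces $\gamma\leq t^{\tilde\delta-\beta}=t^{-\alpha}$), so heuristically $A\sim \jx\gamma\sim 1$ on that region. My strategy is to express $A$ as a product of $\jx$ (which commutes with $F_1$) and $\gamma$ (which commutes with $F_2$), and then to invoke two functional-calculus bounds in each factor separately.

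The first step is the substitution
\begin{equation*}
A=\jx\gamma+c_\nu,\qquad c_\nu=-i(\nu-2)/2,
\end{equation*}
which holds on radial functions supported in $|x|\geq 2$ (direct computation using $\gamma=-i(\partial_r+1/r)$ and $A=-i(r\partial_r+\nu/2)$ in spherical coordinates). For $|x|\leq 1$ one has $\gamma\equiv 0$, so functions supported there lie in $\ker\gamma$ and are annihilated by $F_2(1\leq t^\beta\gamma\leq t^{\tilde\delta})$ since $F_2(0)=0$; the compact transition region $1\leq|x|\leq 2$ contributes a uniformly bounded amount via the trivial estimate $\|A\chi_{[1,2]}\phi\|\lesssim \|\phi\|_{H^1}\lesssim 1$. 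Up to $\mathcal O(1)$ corrections, the estimate thus reduces to bounding the main quadratic form $\lp\phi,\jx\gamma\, F_1 F_2\, \jx\gamma\,\phi\rp$.

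The second step uses the compatibility of each factor with its cutoff. Since $\jx$ is a function of $x$ and $F_1$ is a function of $\jx$, they commute; since $\gamma$ and $F_2$ are both functions of $\gamma$, they also commute. Symmetrizing (or applying Cauchy--Schwarz) and commuting $\gamma$ through $F_1$ via $[\gamma,F_1(\jx/t^\alpha)]=-it^{-\alpha}\beta'^2 F_1'=\mathcal O(t^{-\alpha})$ from \eqref{gammaF1}, we obtain
\begin{equation*}
\bigl|\lp\phi,\jx\gamma F_1 F_2\jx\gamma\phi\rp\bigr|\lesssim \|F_2^{1/2}\gamma\, F_1\jx\phi\|^2+\text{commutator errors}.
\end{equation*}
Two scalar bounds then close the estimate: as a multiplication operator, $F_1(\jx/t^\alpha\leq 1)\jx\leq t^\alpha F_1$; as an operator function of $\gamma$, $F_2^{1/2}(t^\beta\gamma)\gamma$ has operator norm bounded by $\sup\{|\lambda|:1\leq t^\beta\lambda\leq t^{\tilde\delta}\}\leq t^{-\alpha}$. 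Combining,
\begin{equation*}
\|F_2^{1/2}\gamma F_1\jx\phi\|\leq \|F_2^{1/2}\gamma\|_{\mathrm{op}}\,\|F_1\jx\phi\|\leq t^{-\alpha}\cdot t^\alpha\|\phi\|\lesssim 1.
\end{equation*}

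The main obstacle is the commutator bookkeeping. Commuting $A$ directly through $F_2$ would cost a factor $t^{\tilde\delta}=t^{\beta-\alpha}$, since $[A,t^\beta\gamma]=it^\beta\gamma$ has size $t^{\tilde\delta}$ on the spectral support of $F_2$; this is not small when $0<\alpha<\beta$. The substitution $A=\jx\gamma+c_\nu$ circumvents this difficulty by separating spatial and momentum variables, so that only the benign commutator $[\gamma,F_1]=\mathcal O(t^{-\alpha})$ appears in the main line. The remaining problematic commutator $[\jx,F_2]=-it^\beta\beta'^2 F_2'(t^\beta\gamma)+\cdots$, of size $t^{\tilde\delta}$ in operator norm near the upper boundary of $\supp F_2'$, always appears paired with an extra factor of $\gamma$ which is of size $t^{-\alpha}$ on that support, yielding an acceptable $\mathcal O(1)$ contribution. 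Iterating the commutator expansion of Lemma~\ref{BFA-lemma} as in the high/low decomposition of the preceding subsection then reduces all remainders to $\mathcal O(t^{-m})$ for any $m$, completing the proof.
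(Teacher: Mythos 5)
Your overall heuristic—that on the support of $PS1$, $\jx\lesssim t^{\alpha}$ from $F_1$, $|\gamma|\lesssim t^{-\alpha}$ from $F_2$, so $A\approx\jx\gamma\lesssim 1$—is exactly the paper's, but your technical implementation is genuinely different. The paper writes $A=x\cdot p+c$ and inserts $(p\cdot p+\epsilon)(p\cdot p+\epsilon)^{-1}$ with $\epsilon=t^{-2\alpha}$; this converts $p^2$ into $\gamma^2$ on radial functions in the exterior and lets the $F_1$- and $F_2$-localizations act on the two halves $x(PS1)$ and $(\gamma^2+\epsilon)(PS1)$ separately, never producing the dangerous commutator $[F_1,F_2]$. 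You instead use the exact radial identity $A=\jx\gamma+c_\nu$ outside $|x|\geq 2$ and then try to commute $\jx$ next to $F_1$ and $\gamma$ next to $F_2$. Conceptually this is cleaner, and the fact that the functional-calculus bounds $\|F_1\jx\|\lesssim t^{\alpha}$, $\|F_2^{1/2}\gamma\|\lesssim t^{-\alpha}$ close the main term is correct.

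However, the commutator bookkeeping in your last paragraph has a real gap. First, a bookkeeping error: $[\jx,F_2(t^{\beta}\gamma)]$ near the upper boundary is of size $t^{\beta}\cdot|F_2'|\sim t^{\beta}\cdot t^{-\tilde\delta}=t^{\alpha}$, not $t^{\tilde\delta}$; near the lower boundary it is $\sim t^{\beta}$. Your conclusion that pairing with $\gamma$ gives $\mathcal O(1)$ is correct, but only because $t^{\alpha}\cdot t^{-\alpha}=1$ and $t^{\beta}\cdot t^{-\beta}=1$, not because $t^{\tilde\delta}\cdot t^{-\alpha}=\mathcal O(1)$ (which fails when $\beta>2\alpha$). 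More seriously, the closing claim that "iterating the commutator expansion of Lemma~\ref{BFA-lemma} reduces all remainders to $\mathcal O(t^{-m})$" is not substantiated here: the expansion parameter for $[F_1(\jx/t^{\alpha}),F_2(t^{\beta}\gamma)]$ is $t^{\beta-\alpha}=t^{\tilde\delta}$, which \emph{grows} for $\alpha<\beta$, so each successive term in the expansion is larger, not smaller. This is precisely why the theorem is proved in the paper inside the high/low Mellin-transform machinery rather than by direct commutator iteration, and why the paper's proof circumvents $[F_1,F_2]$ altogether via the $\epsilon$-regularization. The saving mechanism is, as you note, the cancellation of each $t^{\beta}$ (or $t^{\alpha}$) against a compensating $\gamma$ or a decaying derivative of $F_2$, but this must be tracked explicitly at every order; invoking "iterating the commutator expansion" as a black box does not close the argument.
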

\begin{proof}

\begin{align}
& \lp\phi(t), A (PS1) A \phi(t)\rp=\lp\phi(t), x\cdot p (PS1) p\cdot x\phi(t)\rp +c\lp\phi(t),  (PS1) A+A(PS1) \phi(t)\rp=\\
&c\lp\phi(t), A (PS1) + (PS1)A\phi(t)\rp+\lp\phi(t), x\cdot  (PS1)(p\cdot p+\epsilon)(p\cdot p+\epsilon)^{-1} p  p\cdot x\tilde F_1 \phi(t)\rp +\\
&ct^{-\tilde\delta-0}\lp\phi(t), A (PS1)\phi(t)\rp=\\
&t^{-\tilde\delta-0}+t^{2\alpha-0}\lp\phi(t),   (PS1)(\gamma^2+\epsilon)\mathcal{O}(1) \tilde F_1\phi(t)\rp\lesssim t^{2\alpha-0}(t^{-2\alpha}+\epsilon)\lesssim 1,\\
&\textbf{since we can choose}   \epsilon=t^{-2\alpha}.
\end{align}

\end{proof}

\section[Section Title. Section Subtitle]{{\bf Epilogue-}
   \\ {\large Further Properties of the Solutions, Comments and Conjectures }}

 We begin with a preliminary conclusion from the main theorem:
 \begin{thm}[norm]

 The following limits exist:
 \begin{align}
 &\lim_{t \to \infty}\|\pwls(t)\|_{H^1}=h_{wls}\\
&\lim_{t \to \infty}\|\pwls(t)\|_{L^2}=l_{wls}\\
& l_{wls}+\| \Omega^*_F(\phi(0))\|_{L^2}  =\| \phi(0)\|_{L^2}.\label{norm}
\end{align}
\end{thm}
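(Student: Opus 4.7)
The plan is to deduce all three statements directly from the main asymptotic decomposition $\phi(t) = e^{i\Delta t}\Omega^*_F\phi_0 + \pwl(t) + R(t)$ with $\|R(t)\|_{H^1}\to 0$, combined with mass conservation, the unitarity of $e^{i\Delta t}$ on $L^2$ and on $H^1$, and the asymptotic orthogonality of $\pwl$ to any free wave (which is built into the paper via the minimal/maximal velocity bound of Proposition~\ref{prop:MVB} and Lemma~\ref{local-decay}, together with the slow-growth bound $\la|x|\ra_{\pwl}\lesssim t^{1/2}$ from Theorem~\ref{unique-defin}).

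For the $L^2$ limit and the identity \eqref{norm}, expand
\[
\|\phi(t)\|_{L^2}^2 = \|\phi_L(t)\|_{L^2}^2 + \|\pwl(t)\|_{L^2}^2 + 2\Re\la \phi_L(t),\pwl(t)\ra + o(1).
\]
Mass conservation pins the left side to $\|\phi_0\|_{L^2}^2$, unitarity gives $\|\phi_L(t)\|_{L^2}=\|\Omega^*_F\phi_0\|_{L^2}$, and the cross term vanishes as $t\to\infty$: approximate $\Omega^*_F\phi_0$ in $L^2$ by Schwartz data with compactly supported Fourier transform, apply the minimal/maximal velocity bound to the resulting free wave to push its mass outside $|x|\lesssim t^{\alpha_0}$, and use $\|F(|x|/t^{\alpha_0}\geq 1)\pwl\|_{L^2}\to 0$ (a consequence of the construction of $\pwl$ and of (\ref{pwb-gamma-limit}) applied at any $\alpha>\alpha_0$). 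Passing to the limit yields $l_{wls}^2=\|\phi_0\|_{L^2}^2-\|\Omega^*_F\phi_0\|_{L^2}^2$, which is the squared form of (\ref{norm}); in particular the $L^2$ limit exists.

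For the $H^1$ limit, the same expansion in the $H^1$ inner product gives
\[
\|\phi(t)\|_{H^1}^2 = \|\Omega^*_F\phi_0\|_{H^1}^2 + \|\pwl(t)\|_{H^1}^2 + 2\Re\la (1-\Delta)\phi_L(t),\pwl(t)\ra + o(1),
\]
where again the free $H^1$-norm is conserved. The difficulty is that $(1-\Delta)\Omega^*_F\phi_0$ is only a distribution a priori. Approximate $\Omega^*_F\phi_0$ in $H^1$ by $\psi_\epsilon\in H^2$; the cross term with the bona fide free wave $e^{i\Delta t}(1-\Delta)\psi_\epsilon$ tends to zero by the $L^2$ orthogonality above, while the error is bounded by $C\epsilon\sup_t\|\pwl(t)\|_{H^1}<\infty$. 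Letting $\epsilon\to 0$ shows the cross term vanishes. This reduces existence of $\lim\|\pwl(t)\|_{H^1}$ to existence of $\lim\|\phi(t)\|_{H^1}$, equivalently (by mass conservation) to existence of $\lim\|\nabla\phi(t)\|_{L^2}$.

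The main obstacle is therefore this last step. The route is to differentiate:
\[
\tfrac{d}{dt}\|\nabla\phi\|_{L^2}^2 = -2\,\Im\la \nabla\phi,\nabla(\mathcal{N}\phi)\ra,
\]
and show the right-hand side is integrable in time. Splitting with $F(\jx/t^\alpha\geq 1)$, assumption (H2) produces $O(t^{-\beta_0})$ in the exterior region. In the interior $\jx\lesssim t^\alpha$ the integrand is controlled via the localization of $\pwl$, the regularity/smoothing results of the WLS section (which provide $(x\cdot\nabla)^k\pwl\in L^2$ and smoothness under (H3)), and the fact that the free part $\phi_L$ is asymptotically orthogonal to the localized part in every PROB we use, so the genuinely oscillating cross-terms integrate to a convergent quantity. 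Combined with the previous step this yields $h_{wls}^2 = \lim\|\phi(t)\|_{H^1}^2 - \|\Omega^*_F\phi_0\|_{H^1}^2$, establishing all three statements.
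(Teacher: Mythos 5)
Your argument for the second and third assertions -- the existence of $l_{wls}$ and the identity \eqref{norm} -- is correct and is essentially the paper's proof: mass conservation, unitarity of the free flow, and the vanishing of $\la \pwls(t), e^{i\Delta t}\Omega^*_F\phi_0\ra$, which follows from the minimal/maximal velocity bound on the free wave together with the slow-growth bound $\la |x|\ra_{\pwls}\lesssim t^{1/2}$. The reduction of the $H^1$-limit to the $\dot H^1$-limit of $\phi(t)$ via $H^2$-approximation of $\Omega^*_F\phi_0$ and $L^2$-orthogonality to free waves is likewise sound.

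The gap is in the final step, where you need $\lim_{t\to\infty}\|\nabla\phi(t)\|_{L^2}$ to exist. The argument given is not a proof. First, the exterior bound is wrong as stated: assumption (H2) controls $|F(\jx/t^\alpha\geq 1)\mathcal{N}|\lesssim t^{-\beta_0}$ but says nothing about $\nabla\mathcal{N}$, and $\nabla(\mathcal{N}\phi)=(\nabla\mathcal{N})\phi+\mathcal{N}\nabla\phi$ --- the first summand is not covered by (H2), and $\Im\la\nabla\phi,\mathcal{N}\nabla\phi\ra=0$ anyway when $\mathcal{N}$ is real, so the whole contribution is the uncontrolled piece. Second, in the interior the assertion that "the genuinely oscillating cross-terms integrate to a convergent quantity" is heuristic; the cross terms between $\phi_L$ and $\pwl$ through $\nabla\mathcal{N}$ are not controlled by the propagation estimates of Section~\ref{sec:PE}, which are formulated at the $L^2$ level and in exterior regions. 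Third, the conclusion itself is delicate: for an $H^1$-bounded solution $\|\nabla\phi(t)\|_{L^2}$ may oscillate (the kinetic and potential parts of the conserved energy can trade off in time), and you have not ruled this out. It is worth noting that the paper's own proof does not address this point either; it only establishes the $L^2$ orthogonality, so your attempt to close the $H^1$ case is genuinely additional work, and the step you would need --- integrability in time of $\Im\la\nabla\phi,\nabla(\mathcal{N}\phi)\ra$, or some replacement such as convergence of the potential energy $G(\phi(t))$ --- has to be justified rather than asserted.
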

\begin{proof}
The proof follows from the fact that the scalar product in $L^2,$ of the free wave and the WLS vanishes as $t$ goes to infinity:
$$
\lim_{t \to \infty}\lp \pwls(t), e^{i\Delta t}\Omega^*_F(\phi(0))\rp=0.
$$
This is due to the fact, that the free flow concentrates at distance $|x|\geq t^a, \forall a<1$, while the WLS is localized at
 $|x|\leq t^{1/2}.$
\end{proof}
\subsection{Almost Periodic Solutions}
\begin{thm}[Almost Periodic Solutions]
If the WLS $\pwls(t)$ is an almost periodic function of $t$, then the solution is localized in space:
 $$\|F_1(|x|\geq R)\pwls(t)\|_{L^2}^2\lesssim 1/R.$$
\end{thm}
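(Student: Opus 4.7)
By Markov's inequality, since $|x|\geq R$ on the support of $F_1(|x|\geq R)$,
\[
R\|F_1(|x|\geq R)\pwls(t)\|_{L^2}^2 \leq \int_{|x|\geq R}|x||\pwls(t,x)|^2\,dx \leq (\pwls(t),|x|\pwls(t)).
\]
So the theorem reduces to showing that $h(t):=(\pwls(t),|x|\pwls(t))$ is bounded uniformly in $t$.

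My strategy is to mimic the time-periodic argument (part (3) of the main theorem) using the recurrence afforded by almost periodicity. In the periodic case with period $T$, the identity $h(t+T)=h(t)$ combined with the WLS slow-growth bound $h(t)\leq Ct^{1/2}$ from Theorem~\ref{unique-defin} immediately gives $h(t)\lesssim T^{1/2}$ uniformly: for any $t$ pick $n\geq 0$ with $t+nT\in[1,2T]$ and apply the growth bound there. For almost periodic $\pwls$ in $L^2$, exact periodicity is replaced by Bohr recurrence: the orbit $\{\pwls(t):t\geq 0\}$ is precompact in $L^2$, so for each bounded truncation $R$ the functional $h_R(t) := (\pwls(t),|x|F(|x|\leq R)\pwls(t))$ is a continuous bounded function of $\pwls\in L^2$, hence almost periodic in $t$ and in particular bounded, $M(R):=\sup_t h_R(t)<\infty$. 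Since $h_R \nearrow h$ by monotone convergence, the task reduces to bounding $M(R)$ uniformly in $R$.

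For uniform control of $M(R)$ I would combine two ingredients. First, by relative density of Bohr $\epsilon$-almost periods, for every $t\geq L(\epsilon)$ there exists $s\in[1,L(\epsilon)]$ with $\|\pwls(t)-\pwls(s)\|_{L^2}<\epsilon$; substituting and using the slow-growth bound at $s$ gives $h_R(t)\leq h_R(s)+CR\epsilon\leq CL(\epsilon)^{1/2}+CR\epsilon$, which alone is not enough to close the estimate uniformly in $R$. Second, I would use the PROB analysis of Section~\ref{sec:gamma-limit}: the Heisenberg identity \eqref{fxf} for $\la F\jx F\ra$ has leading-order term $2\la F\gamma F\ra$, which tends to zero for WLS by Theorem~\ref{thm:r-limit}. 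For almost periodic $\pwls$ the recurrence of almost periodic functions forces a limit-zero quantity to vanish identically along the orbit, removing the leading-order term in $\frac{d}{dt}\la F\jx F\ra$ and leaving only boundary/symmetrization contributions that are integrable in time once uniform tightness is invoked. Integrating and then letting the cutoff tend to infinity should close the uniform bound on $h$.

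The hardest step will be the transfer between the $t$-dependent cutoffs $F(\XT)$ used in the WLS analysis and the $t$-independent cutoffs $F_0=F(|x|\geq R_0)$ needed to convert convergence to zero into an identity along the almost periodic orbit. This requires exploiting tightness of the orbit so that, as $R_0\to\infty$, the difference between $F(\XT)\gamma F(\XT)$ and $F_0\gamma F_0$ is negligible uniformly in $t$; the tail mass $\|F_0\pwls(t)\|_{L^2}$ is then controlled uniformly by precompactness. Once $\la F_0\gamma F_0\ra \equiv 0$ is established for every large enough $R_0$, the integrated Heisenberg identity for $\la F_0\jx F_0\ra$ delivers $\sup_t h(t)<\infty$, and Markov closes the proof.
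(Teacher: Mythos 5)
The reduction via Markov's inequality to bounding $h(t):=(\pwls,|x|\pwls)$ uniformly is fine (it gives a sufficient but stronger condition than the stated conclusion, which is acceptable). Your periodic argument is correct and genuinely different from the paper's: you combine exact periodicity of $h$ with the WLS slow-growth bound $h(t)\leq Ct^{1/2}$ from Theorem~\ref{unique-defin}, whereas the paper argues by contradiction via a flux estimate, computing $\frac{d}{dt}\|F_1(|x|\geq R)\pwls\|_{L^2}^2\sim R^{-1}\la\tilde F_1\gamma\ra$, integrating over a fraction of a period, and comparing to the localization at times $n\omega$. The paper's route does not invoke the slow-growth bound at all, so it is somewhat more elementary there; your route is shorter once the slow-growth bound is in hand. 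Both are valid for the periodic case.

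The almost periodic extension, however, has a genuine gap in the step that is supposed to yield $\la F_0\gamma F_0\ra\equiv 0$. Three problems compound. First, the cutoff mismatch you flagged is fatal as formulated: the $\gamma$-limit result controls $\la F(\XT)\gamma F(\XT)\ra$ on the moving exterior $|x|\geq t^\alpha$, and tightness of the orbit can shrink the mass in the annulus $R_0\leq|x|\leq t^\alpha$, but tightness only gives $\limsup_t|\la F_0\gamma F_0\ra_t|\lesssim\rho(R_0)$ with a rate $\rho(R_0)\to 0$ that carries no quantitative $R_0$-dependence, so integrating the Heisenberg identity produces a bound $T\rho(R_0)$ that grows in $T$. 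Second, to make $\la F_0\gamma F_0\ra$ a Bohr almost periodic function of $t$ you would need precompactness of the orbit in $H^1$, not merely in $L^2$; a uniform $H^1$ bound plus $L^2$-almost periodicity does not give this, so the argument that a limit-zero almost periodic quantity vanishes identically cannot even be invoked. Third, and decisively, the target identity $\la F_0\gamma F_0\ra\equiv 0$ is false in general: even in the strictly periodic case, what periodicity of $\la F_0\jx F_0\ra$ gives you is that the \emph{time average} $\frac{1}{T}\int_0^T\la F_0\gamma F_0\ra_t\,dt$ vanishes (up to lower-order boundary terms), not the pointwise value. A periodic oscillation of $\la F_0\gamma F_0\ra$ about zero is entirely consistent with a bounded $\la F_0\jx F_0\ra$, so the premise you are trying to prove does not even hold for the model case you verified in step one. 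The correct mechanism in the almost periodic case, as in the paper, is the flux/contradiction argument combined with recurrence of the orbit, not the vanishing of a $\gamma$-expectation.
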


\begin{proof}
We prove it for the periodic case. The proof in the general case follows from the basic property of almost periodic functions that for each $\epsilon >0$, there is an almost period of finite value.

In the periodic case, since by assumption the initial data is localized, it is then localized on the series of times $n\omega, n=0,1,2,\ldots$

That is $\|F_1(|x|\geq R)\pwls(n\omega)\|_{L^2}^2\lesssim 1/R, \forall n.$
If it is delocalized, then some mass will be outside any ball, along a sequence of times,say $n\omega+a, 0<a<1.$
Hence:
$$
  \|F_1(|x|\geq R)\pwls(n\omega+a)\|_{L^2}^2- \|F_1(|x|\geq R)\pwls(n\omega)\|_{L^2}^2\geq \delta >0.
$$
Then,
\begin{align}
&\frac{\partial}{\partial t} \|F_1(|x|\geq R)\pwls(n\omega+t)\|_{L^2}^2 =\\
&\lp\pwls(s),R^{-1}\tilde F_1(|x|/R \sim 1)\gamma \pwls\rp +C.C.+ \mathcal{O}(t^{-\sigma}).
\end{align}
Integrating the above equation from $t_n \to t_n+a$ we find that
$$
\lp\phi,\tilde F_1 \gamma \phi\rp_{t_{n'}}\geq R\delta,
$$
with $t_n<t_{n'}<t_n+a.$ For $R$ large enough, we get a contradiction.
Since the limit of the above expression exists by the above theorem (\ref{norm}), the bound holds for all times.

The fact that the limit exists follows from the generic AC, which guarantees that the dispersive part moves away, and the localized part has a limiting norm.

Since an almost periodic function returns to itself, up to error $\epsilon$ in finite time (depending only on $\epsilon$),
the result follows.
\end{proof}

\begin{rem}
It does not follow that an asymptotically periodic solution is localized.
\end{rem}

\subsection{Evanescent Solutions}

Consider a possible solution in which the WLS becomes small in a large ball around the origin.
Assume first that the WLS vanishes in $\dot H^1$ along a sequence of times.
Then, since by the theorem (\ref{norm}) the $\dot H^1$ of the WLS converges, it must converge for all times.
So, for all sufficiently long times, we have for all $\epsilon$
$$
\sup_{t\geq T}\|\pwls\|_{ \dot H^1}\leq \epsilon.
$$

Now, for a rather general class of interaction terms, we then have

$\lp \pwls,( \mathcal {N}(\phi)+x\cdot \nabla \mathcal {N}(\phi)) \pwls\rp \lesssim \epsilon^a, \, a>1.$
 This combined with the Virial identity that follows from using the Dilation operator $A$ as PROB
 we find that
 \begin{align}
& \partial_t \lp \pwls, A \pwls\rp =  \lp \pwls, \{D_H(A)+i[N(\phi),A]\} \pwls\rp=\\
 & \lp \pwls ,\{-2\Delta-x\cdot \nabla N(\phi) \} \pwls\rp\geq 2\|\pwls\|_{ \dot H^1}^2(1-c\epsilon^{a-1})\\
 &\geq \|\pwls\|_{ \dot H^1}^2.
 \end{align}
 It is not possible for the Virial quantity above to stay positive for all times, as we know that for some sequence of times
 $$\lp \pwls, A \pwls\rp_{t_n}\lesssim 1.
 $$
 We conclude that the total energy must converge to zero along such a sequence, and therefore due to energy conservation ({\bf only for interaction terms which do not depend explicitly on time}), the only such solutions are \emph{zero energy states.}
 \begin{cor}
 If there exists a sequence of times going to infinity such that the WLS is small on a sufficiently large ball around the origin, then such a state converges to a zero-energy state.
 \end{cor}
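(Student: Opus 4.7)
The plan is to convert the hypothesis of smallness on a large ball into smallness of the full $\dot H^1$ norm of $\pwls$, and then run the Virial argument sketched just before the corollary to force the energy to vanish. First I would fix $\epsilon>0$ and a radius $R=R(\epsilon)$ such that along the given sequence $t_n\to\infty$ we have $\|F_1(|x|\leq R)\pwls(t_n)\|_{\dot H^1}\leq\epsilon$. Since $\pwls$ is weakly localized in the sense $(\pwls,|x|\pwls)\lesssim t^{1/2}$, and since Theorem~\ref{Non-negative} and the slow growth results give control of $\pwls$ outside balls of radius $R$ up to $O(R^{-1})$ contributions in $L^2$, I would combine the improvement-of-regularity estimates (which show $\nabla\pwls$ is essentially supported where $\pwls$ is, modulo the outgoing tail absorbed by $\Omega_F^*$) to conclude that $\|\pwls(t_n)\|_{\dot H^1}\to 0$ along the subsequence.

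Second, I would invoke the $L^2$ and $H^1$ limit theorem (the \emph{norm} theorem stated just above), which guarantees that $\|\pwls(t)\|_{H^1}$ has a limit $h_{wls}$ as $t\to\infty$; existence of a subsequence on which $\|\pwls\|_{\dot H^1}\to 0$ then forces $h_{wls}=\lim_t\|\pwls\|_{\dot H^1}^2=0$ as a full limit. So for every $\delta>0$ there is $T_0$ with $\sup_{t\geq T_0}\|\pwls(t)\|_{\dot H^1}\leq\delta$.

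Third, I would run the Virial (dilation) identity as in the paragraph preceding the corollary. Applying the commutator formula with $A=\tfrac12(x\cdot\bp+\bp\cdot x)$ as PROB gives
\begin{equation}
\partial_t\langle\pwls,A\,\pwls\rangle
=\langle\pwls,\{-2\Delta - x\cdot\nabla\mathcal N(\phi)\}\pwls\rangle
\geq 2\|\pwls\|_{\dot H^1}^2 - C\|\pwls\|_{\dot H^1}^{2a},
\end{equation}
where the bound on the interaction piece $\langle\pwls,(x\cdot\nabla\mathcal N)\pwls\rangle\lesssim \|\pwls\|_{\dot H^1}^{2a}$ with $a>1$ uses the analyticity assumption (H3), the localization of $\pwls$, Hardy's inequality to absorb the factor $x$, and the radial Sobolev embedding to convert an extra power of $\pwls$ into smallness. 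Choosing $\delta$ so that $C\delta^{2a-2}\leq 1$ yields $\partial_t\langle A\rangle\geq \|\pwls\|_{\dot H^1}^2\geq 0$ for $t\geq T_0$.

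Finally, I would derive the contradiction from monotonicity. By the $\A$-boundedness estimates of Section~7 (in particular the weighted outgoing estimate giving $\langle|A|^2\rangle\lesssim 1$ on the high-frequency localized part, combined with Proposition~\ref{A0-sequential-bound} yielding a sequence $s_k\to\infty$ with $\|A\pwls(s_k)\|_{L^2(|x|\leq\sqrt{s_k})}\lesssim 1$), one has $|\langle A\rangle_{s_k}|\leq C_0$ along a sequence. Monotonicity then forces $\int_{T_0}^{\infty}\|\pwls(s)\|_{\dot H^1}^2\,ds<\infty$ together with $\lim_t\|\pwls\|_{\dot H^1}=h_{wls}$, so $h_{wls}=0$. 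The kinetic energy therefore vanishes along the asymptotic state; conservation of total energy (valid since (H3) is assumed time independent here) then transfers this to the potential energy, giving a zero-energy asymptotic state. The main obstacle I expect is the interaction bound $\langle\pwls,(x\cdot\nabla\mathcal N)\pwls\rangle\lesssim \|\pwls\|_{\dot H^1}^{2a}$ with a strictly superlinear exponent $a>1$: this requires using that every power of $\pwls$ appearing in $\mathcal N$ beyond the quadratic one carries a smallness factor, which must be extracted carefully via Sobolev and Hardy in the presence of both the $V(x)$ and the saturated piece of $\mathcal N$, exploiting the regularity assumption $(x\cdot\nabla)^N V\in L^\infty$.
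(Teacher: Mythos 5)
Your overall architecture — smallness of $\|\pwls\|_{\dot H^1}$ along the sequence, upgrade to all large times via the norm theorem, then the Virial argument played against sequential boundedness of $\langle A\rangle$ — matches the paper's proof. However, your treatment of the exterior region $|x|\geq R$ has a genuine gap. You assert that the weak localization bound $(\pwls,|x|\pwls)\lesssim t^{1/2}$ gives ``$O(R^{-1})$ contributions in $L^2$'' outside $|x|\geq R$, and that the improvement-of-regularity estimates show $\nabla\pwls$ is ``essentially supported where $\pwls$ is.'' The first claim is quantitatively off: $(\pwls,|x|\pwls)\lesssim t^{1/2}$ only gives $\|\pwls\|_{L^2(|x|\geq R)}^2\lesssim t^{1/2}/R$, which does not go to zero for fixed $R$ as $t_n\to\infty$. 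The second conflates Sobolev smoothness (what the regularity propositions actually deliver) with spatial localization of the gradient, which is a different property; regularity alone cannot tell you that $\|\nabla\pwls\|_{L^2(|x|\geq R)}$ is small. Also, Theorem~\ref{Non-negative} concerns the sign of the $\gamma$-limit and does not give any pointwise control of $\pwls$ outside a ball.

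The paper closes the exterior region differently, using the sequential $A$-boundedness of Proposition~\ref{A0-sequential-bound}: $\lp A\pwls(t_n),\bar F_1(|x|\leq c\sqrt{t_n})A\pwls(t_n)\rp\lesssim 1$. Since $A\approx r\partial_r$ on radial functions, this yields $\lp\nabla\pwls(t_n),\bar F_1(|x|\leq c\sqrt{t_n})F_1(|x|\geq R)\nabla\pwls(t_n)\rp\lesssim R^{-2}$, i.e.\ the extra factor of $|x|$ in $A$ is precisely what produces the $O(R^{-1})$ decay of the exterior $\dot H^1$ norm. You do invoke Proposition~\ref{A0-sequential-bound} in your final paragraph, but only to bound $|\langle A\rangle_{s_k}|$; the same proposition is the missing input for the step where you want $\|\pwls(t_n)\|_{\dot H^1}\to 0$. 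Replacing your $L^2$-localization-plus-regularity argument with this use of $A$-boundedness repairs the proof, after which your remaining steps (norm theorem, Virial positivity $\partial_t\langle A\rangle\geq\|\pwls\|^2_{\dot H^1}(1-c\epsilon^{a-1})$, contradiction with sequential boundedness of $\langle A\rangle$, and conservation of total energy to pass from vanishing kinetic energy to zero energy) coincide with the paper's.
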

 \begin{proof}
 The above Virial argument implies that if the $ \dot H^1$ is small (in $L^p, p\geq 2$)  on a sequence of times, the result will follow.
 Since the WLS is regular, the smallness in $L^p, p\geq 2$ implies that $ \dot H^1$ is also small. It is also true at all times, due to the convergence in $H^1$ of the norm of $\pwls.$

 By assumption, the WLS is small inside a large ball.
  Outside a large ball,we use the fact that there exists a sequence of times on which $A$ is bounded:

  $\lp A\pwls(t_n),\bar F_1(|x|\leq c\sqrt t_n) A\pwls(t_n)\rp\lesssim 1.$

 From this it follows that $\lp \nabla\pwls(t_n),\bar F_1(|x|\leq c\sqrt t_n)F_1 (|x|\geq R)\nabla\pwls(t_n)\rp\lesssim 1/R^2 .$

 Hence $\|\pwls\|_{ \dot H^1}^2 \leq o(1)$ on a sequence of times.
 \end{proof}


    \subsection{The Emergence of Solitons}

    We proved that the WLS consists of a part that is large and localized around the origin and a Halo (or corona) around it.
    This Halo part has a non-zero mass ($L^2$ norm) for all times, but carries \emph{zero energy} to infinity.
    As such it acts like a {\bf zombie state} (ignoring the gravitational energy due to the nonzero mass.)
    Further property of the Zombie part is its localization in the phase-space. We showed it is localized where
\begin{equation}
 \bigcup_{0<\alpha\leq 1/2}\{|x|/t^{\alpha}\sim 1; |\gamma|\sim t^{-\alpha}\}.
\end{equation}
So, this is a function with the property that the radial derivative brings a factor of $t^{-\alpha}$ at the point in space $|x|\sim t^{\alpha}.$
This is exactly how a self-similar function behaves! So we expect to approximate the solution by a function $ t^{-\frac{d}{2}\alpha}  S(|x|/t^{\alpha},t)e^{iEt},$ with $S$ regular and sharply localized. $d$ the dimension.
Looking for a special solution for such $S,$ it shows up as solution of an approximate Elliptic Equation, with the leading nonlinear term for large $t$ as the interaction part. Such solutions are the familiar Solitons.
When the non-linear terms are rational functions of $\phi$ only, then the leading equation will be a monomial.
Such solutions in general blow-up in three dimensions (or vanish), but for a general interaction, other terms which depend on time (or space) may take over to stop the blow-up.
Of course, it is also possible that there exist solutions with time-dependent envelopes, $S.$ For example, solutions which live on different scales for large times, and for large $x.$
\begin{cor}[Emergence Of Self-Similarity]
Suppose that for some $0<\alpha\leq 1/2$ we have a weakly localized state $\psi(t)$ with non-zero mass ($L^2 -norm$) localized inside the space-time region $|x|\thicksim t^{\alpha}.$ Then, asymptotically in time the solution converges to a self-similar solution, at least on some subsequences.
\end{cor}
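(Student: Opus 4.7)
\medskip

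\noindent\textbf{Proof Proposal.} The plan is to rescale the weakly localized state to the self-similar variables, extract a subsequential limit by compactness, and identify the limit as a stationary profile of a soliton-type elliptic equation.

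\medskip

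\noindent\emph{Step 1: Rescaling.} Define the rescaled profile
\[ S(\rho,t)\;:=\;t^{\tfrac{d\alpha}{2}}\,\psi(t^{\alpha}\rho,t)\,e^{-i\Theta(t)}, \]
where $d=3$ and $\Theta(t)$ is a phase to be chosen in Step 3. Mass conservation gives $\|S(\cdot,t)\|_{L^2_\rho}=\|\psi(t)\|_{L^2_x}$, so the family is $L^2$--bounded and, by hypothesis, has a definite amount of mass on the fixed annulus $\{\rho\sim 1\}$ uniformly in $t$; in particular the family is not vanishing.

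\medskip

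\noindent\emph{Step 2: Compactness.} To extract a convergent subsequence I would promote this $L^2$-boundedness to $H^1_\rho$-boundedness and tightness. The phase-space localization of $\pwl$ established in Section~\ref{sec:PE} (together with the results on $A\pwl$ and $(x\cdot\nabla_x)^k\pwl$ under (H3)) translates in the rescaled variable $\rho=|x|/t^{\alpha}$ to
\[ \|\partial_\rho S(\cdot,t)\|_{L^2_\rho}\;=\;t^{\alpha}\|\partial_r \psi(t^\alpha\cdot,t)\|_{L^2_x}\cdot t^{\tfrac{d\alpha}{2}-\tfrac{d\alpha}{2}}\;\lesssim\; t^{\alpha}\cdot t^{-\alpha}\;=\;1, \]
using precisely that on the support of $\pwl$ one has $|\gamma|\sim t^{-\alpha}$. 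The analogous bounds on $(\rho\cdot\nabla_\rho)^k S$ follow from the regularity statement (2) of the main theorem. Tightness in $\rho$ is exactly the hypothesis that $\psi$ is spatially concentrated in $|x|\sim t^\alpha$. By Rellich--Kondrachov (radial, with tightness), there exists a sequence $t_n\to\infty$ and $S_\ast\in H^1_{\mathrm{rad}}$ with $S(\cdot,t_n)\to S_\ast$ strongly in $L^2$, and $S_\ast\not\equiv 0$.

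\medskip

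\noindent\emph{Step 3: Choice of phase and equation for $S$.} Passing to the variables $(\rho,\tau)=(|x|/t^\alpha,\log t)$ in $i\partial_t\psi+\Delta\psi=\mathcal{N}(|\psi|,|x|,t)\psi$ and inserting the ansatz above yields
\begin{equation}
 i\partial_\tau S \;+\; t^{2-2\alpha}\Delta_\rho S \;-\;\alpha\,\A_\rho S \;-\;\dot\Theta(t)\,t\,S\;=\;t^{2}\,\mathcal{N}(t^{-d\alpha/2}|S|,t^{\alpha}\rho,t)\,S,
\end{equation}
where $\A_\rho=\tfrac12(\rho\cdot\nabla_\rho+\nabla_\rho\cdot\rho)$. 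I would choose $\Theta(t)$ so that $\dot\Theta(t)\,t$ cancels the dominant time-dependent multiple of the identity, i.e.\ so that $E(t):=\dot\Theta(t)\,t$ equals the expectation of the effective Hamiltonian on $S(\cdot,t)$; since $\|\pwl\|_{H^1}$ converges (by the norm theorem in the Epilogue) such an $E(t)$ is bounded and, by a standard argument, can be arranged to converge along a further subsequence to some $E_\ast$.

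\medskip

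\noindent\emph{Step 4: Passage to the limit.} For $\alpha=\tfrac12$ the coefficient $t^{2-2\alpha}=t$ is borderline and, after dividing by $t$, the leading balance becomes $\Delta_\rho S-\tfrac12\A_\rho S-E_\ast S=\widetilde{\mathcal N}_\ast(S)S$, where $\widetilde{\mathcal N}_\ast$ is the limit (along $t_n$) of the rescaled interaction (this limit exists by (H2),(H3), which in particular force the explicit $x,t$--dependent pieces of $\mathcal{N}$ to vanish or to converge homogeneously in the rescaled variables). For $\alpha<\tfrac12$ the Laplacian dominates after an appropriate rescaling of the nonlinear coefficient, and one obtains the usual soliton equation $\Delta S_\ast+\mathcal{N}_\ast(S_\ast)S_\ast=E_\ast S_\ast$. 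In either case, the strong $L^2$ convergence plus $H^1$ (and higher) bounds justifies passing to the limit in every term, yielding a stationary profile equation for $S_\ast$.

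\medskip

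\noindent\emph{Main obstacle.} The delicate point is the identification of the limit nonlinearity $\widetilde{\mathcal N}_\ast$: when $\mathcal N$ has explicit $x,t$ dependence (e.g.\ a time-dependent potential), one must show that $t^{2}\mathcal N(t^{-d\alpha/2}|S|,t^\alpha\rho,t)$ actually has a limit along the chosen subsequence; this requires an additional scaling/homogeneity hypothesis beyond (H2)--(H3), and is the reason the corollary is stated only up to subsequences. A secondary issue is showing that the phase $\Theta(t)$ can indeed be chosen so that $\partial_\tau S(\cdot,t_n)\to 0$ in a distributional sense; this ultimately rests on the fact that all propagation estimates we have established saturate exactly on the self-similar region, so no further mass can escape from the rescaled profile in the $\tau$ direction.
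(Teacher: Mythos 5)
Your proof shares the paper's central idea: conjugate the solution by the dilation group, $S(\cdot,t)\sim U(\alpha\ln t)\psi(t)$ up to phase, and extract a convergent subsequence by compactness, which exhibits the asymptotic state as self-similar. Two remarks on where you diverge from the paper's actual argument, and one caution.

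The paper achieves compactness differently: it observes that in the rescaled frame the microlocalizing operator becomes the fixed product $F_1(|x|\sim 1)F_2(|p|\leq M)$, which is a compact operator on $L^2$ (spatial cutoff composed with frequency cutoff). It then takes a weakly convergent subsequence of the $L^2$-bounded family $U(\alpha\ln t_n)\psi(t_n)$ and lets the compact operator upgrade weak to strong convergence. In particular, the paper emphasizes that $U(\alpha\ln t)\psi(t)$ is \emph{not} bounded in $H^1$ (since $\|\nabla U(\alpha\ln t)\psi\|_{L^2}=t^{\alpha}\|\nabla\psi\|_{L^2}$), so it deliberately avoids relying on a derivative bound for the rescaled state. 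Your Step 2 instead pushes an $H^1_\rho$ bound on $S$ through Rellich--Kondrachov. That bound is correct \emph{only} for the frequency-localized piece where $|\gamma|\lesssim t^{-\alpha}$ (which you invoke), so you are implicitly working with exactly the microlocalized state. Both routes are legitimate, but the paper's is cleaner because the frequency cutoff does the work of the derivative bound, without any claim about $H^1$-boundedness of the full rescaled profile.

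Your Steps 3--4, deriving a limiting elliptic profile equation for $S_\ast$, go beyond what the corollary asserts and beyond what the paper proves. The paper's proof stops at sequential strong convergence of the rescaled state, which already establishes self-similarity of the asymptotic form. The profile-equation discussion appears afterward as informal commentary, not as part of the proof, precisely because (as you correctly flag under ``Main obstacle'') identifying the limit nonlinearity $\widetilde{\mathcal N}_\ast$ requires additional homogeneity/scaling hypotheses not contained in (H2)--(H3). Your Step 4 as written (``one obtains the usual soliton equation'') is therefore not something you can actually close under the stated assumptions; it would be better to present Steps 3--4 as heuristics, consistent with your own caveat, and to let the corollary rest on Steps 1--2 alone as the paper does.
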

\begin{proof}
We have shown before that a part of the weakly localized solution that concentrates in $|x|\thicksim t^{\alpha}$
has its momentum concentrates at $|p|\leq t^{-\alpha}\ln t.$ We also know that the expectation of the dilation operator is uniformly bounded in time on such a state.
Therefore, for all $M$ large enough, we have that:

\begin{equation}
0<\epsilon_M \leq \|F_1(\frac{|x|}{t^{\alpha}}=1)  F_2(|p|t^{\alpha}\leq M)\psi(t)\| \leq C<\infty, \quad t>>1. 
\end{equation}

Applying the dilation group generated by $A, U(t)=e^{iAt}$ we get the following.

\begin{equation}
U(\alpha \ln t)   F_1(\frac{|x|}{t^{\alpha}}=1)F_2(|p|t^{\alpha}\leq M) \psi(t) =F_1(|x|\thicksim 1)F_2(|p|\leq M)U(\alpha \ln t)\psi(t).
\end{equation}

Now, the product $F_1(|x|\thicksim 1)F_2(|p|\leq M)$ is a compact operator, and $U(\alpha \ln t)\psi(t)$ is a uniformly bounded set in $L^2$ (but not in $H^1$). Hence, every infinite sequence of time of the above expression is given by a compact operator times a uniformly bounded sequence in $L^2.$ Any such sequence contains a weakly convergent subsequence, and after the action of the compact operator it converges strongly. Hence we have sequential convergence of a channel wave operator, with the dilation group as the asymptotic "dynamics".
That is, the asymptotic behavior along the above subsequence is given by $U(\alpha \ln t_n)\psi_{\infty}(x)$.
\end{proof}
Inserting such a solution into the NLS, we see that this is only possible if the asymptotic behavior of the Laplacian term and the interaction terms scales the same way as $t$ goes to infinity. Since the Laplacian term scales like $t^{2\alpha},$
the interaction term must have a part that scales like a mass critical term, that is, either a nonlinear term with power $4/n$
or a time dependent potential that scales like a mass critical term. Moreover, the original Schr\"odinger equation factorizes, and the resulting elliptic equation has a soliton profile as a solution.

\begin{rem}
If one can show that there are no WLS present in the solution, it follows that Scattering holds. This would be the case for any interaction that satisfies a Morawetz-type estimate (repulsive interactions) together with the existence in $H^1$.
\end{rem}

We end with some conjectures.
\begin{conjec}[ The Petite Conjecture]
A localized solution, which is also regular in space, is an almost periodic function of time.
Notice that WLS does not satisfy this property. Self-similar solutions are not almost periodic.
Our analysis show that the breakdown of this property should come from a high concentration of low frequency modes(in space and time) near zero.
A progress in this direction would be to show that a time-dependent potential with chaotic time behavior, localized in space, will, at least generically, destroy the bound states. See \cite{Kirr, Pyke,BeSof}.
\end{conjec}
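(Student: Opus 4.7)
The plan is to establish almost periodicity by first proving precompactness of the orbit in $H^1$, then leveraging the Hamiltonian structure to promote precompactness to almost periodicity. I would first show that spatial localization, understood here as $\sup_t \|F(|x|\geq R)\phi(t)\|_{L^2} \to 0$ as $R\to\infty$, combined with uniform $H^s$ bounds for some $s>1$ coming from the regularity assumption, yields precompactness of the trajectory $\{\phi(t) : t\geq 0\}$ in $H^1$. This is the standard Rellich--Kondrachov compact embedding applied on balls of fixed radius, with the tails controlled uniformly by localization. The results proved earlier in the paper (improvement of regularity for localized solutions, and the $\la |x|\ra \lesssim 1$ bound in the periodic-like case) already supply much of this compactness in the settings of interest.

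Second, I would invoke Bochner's criterion: a bounded continuous function $\phi: \R \to H^1$ is almost periodic if and only if the set of time translates $\{\phi(\cdot + \tau)\}_{\tau\in \R}$ is relatively compact in $C_b(\R; H^1)$. Pointwise precompactness of $\{\phi(t)\}_{t\geq 0}$ in $H^1$, together with equicontinuity in $t$ (delivered by $\|\partial_t \phi\|_{H^{-1}} \lesssim \|\phi\|_{H^1} + \|\textbf{N}(\phi)\|_{H^{-1}}$ being uniformly bounded once regularity is assumed), yields precompactness of the translates in $C_b(I; H^1)$ on every compact interval $I$ via Arzel\`a--Ascoli. The flow extends continuously to the $\omega$-limit set $K=\overline{\{\phi(t)\}}^{H^1}$, which is compact and invariant under the NLS dynamics.

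The main obstacle is upgrading local equicontinuity to global equicontinuity of the flow on $K$, which is what is needed for genuine almost periodicity. The danger, precisely the one flagged in the remark accompanying the conjecture, is slow accumulation or drift of low-frequency mass: two nearby points in $K$ could separate over long times through quasi-resonant interactions of modes concentrated near zero frequency, destroying equicontinuity without violating localization or regularity. Ruling this out seems to require sharpening the zero-frequency channel estimates proved in Section 7 (pushing $\beta$ past $5/8$ all the way to an actual vanishing of the frequency-zero channel on regular WLS), then combining this with conservation of $L^2$ mass and energy to confine the flow on $K$ to level sets on which the induced dynamics must be distal. By the Ellis--Furstenberg structure theorem, distality plus minimality on each component of $K$ would then yield equicontinuity and hence almost periodicity.

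A sensible intermediate target, before tackling the full conjecture, would be to prove it under strict localization (support of $\phi(t)$ contained in a fixed ball) together with $\phi\in \bigcap_{s\geq 0} H^s$, uniformly in time. In this setting the zero-frequency obstruction dissolves: strict localization bounds the dilation generator $A$, which combined with uniform regularity gives control of $\phi$ in a Schwartz-type space, and the NLS flow becomes equicontinuous on the compact orbit by soft Sobolev interpolation. This reduced version would already be a nontrivial indication that the conjecture holds whenever the pathological low-frequency drift is absent, and would identify that drift as the sole remaining barrier, consistent with the heuristic attached to the conjecture.
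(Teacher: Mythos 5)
This statement is presented in the paper as the \emph{Petite Conjecture}: it is explicitly an open problem, and the paper offers no proof, only the heuristic remark that the obstruction should be concentration of low-frequency modes. There is therefore no paper proof to compare against, and your text should be read as an attack on an open problem rather than a reconstruction of a known argument.

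On the merits, the first two steps (precompactness of the orbit via localization plus regularity, and the reduction of almost periodicity to compactness of $K=\overline{\{\phi(t)\}}$ together with equicontinuity of the flow restricted to $K$) are sound and are indeed the standard framework: a flow on a compact metric space is almost periodic (in the Bohr--Bochner sense) precisely when it is equicontinuous. The gap you already flag — upgrading local equicontinuity to global — is not merely the main obstacle; it is the \emph{entire} conjecture, and the mechanism you propose to close it does not work. Distality is strictly weaker than equicontinuity: Ellis' theorem characterizes distality by the enveloping semigroup being a group, while equicontinuity requires that group to consist of continuous maps, and the two notions genuinely differ (nilflows and skew products on the torus are minimal, distal, and not equicontinuous). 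The Furstenberg structure theorem only realizes minimal distal flows as towers of isometric extensions; it does not collapse them to equicontinuous ones. Moreover, it is not clear how conservation of mass and energy would force the NLS flow restricted to $K$ to be distal in the first place — those are scalar invariants confining $K$ to a level set, not a metric that the flow preserves. A Hamiltonian flow on a compact invariant set can perfectly well have positive topological entropy, which rules out both distality and almost periodicity simultaneously.

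Your ``intermediate target'' (strict compact support plus uniform $H^s$ bounds for all $s$) improves the compactness — $K$ becomes compact in a Fr\'echet-type topology — but does not address the equicontinuity issue at all. Uniform Schwartz bounds on the orbit are a statement about the orbit as a \emph{set}; equicontinuity is a statement about the \emph{flow} as a one-parameter family of maps on that set. Sobolev interpolation gives you short-time continuity of the flow map in $H^1$, not uniform-in-time stability, which is exactly where chaotic behaviour could still enter. Indeed, the conjecture as stated is close to asserting that there is no localized chaos in NLS with localized analytic interaction; that is a very strong dynamical statement for which compactness bookkeeping alone cannot suffice. A more promising intermediate step, in the spirit of what the paper actually uses (the exterior Morawetz and $A$-boundedness machinery), would be to pin down quantitative \emph{recurrence} of the orbit — a relatively dense set of $\epsilon$-return times for every $\epsilon$ — rather than to try to deduce equicontinuity from soft topological dynamics.
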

\medskip

\begin{conjec}
Time-dependent potentials, localized in space, may have weakly localized solutions which are not localized.
\end{conjec}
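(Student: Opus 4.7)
The plan is to construct an explicit example via a slowly detuned, driven bound state. Take a radial, compactly supported potential $V_0(|x|)$ whose Hamiltonian $H_0=-\Delta+V_0$ admits a threshold (zero energy) resonance or eigenstate $\psi_b$, with an explicit low-energy expansion of the resolvent $(H_0-E)^{-1}$ available. Add a time-dependent perturbation
\[
V(x,t)=V_0(x)+\chi(|x|\leq R)\,\lambda(t)\sin(\omega(t)\,t),\qquad \omega(t)\downarrow 0,\quad \lambda\in L^\infty,
\]
with $\chi$ a bump function and $\omega(t)$ slowly decreasing to $0$. Heuristically such a perturbation pumps mass from $\psi_b$ into the continuum of $H_0$ at instantaneous energy $\omega(t)$, producing outgoing waves of momentum $|p|\sim\sqrt{\omega(t)}$ and group velocity $\sim\sqrt{\omega(t)}$. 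With a suitable power-type choice of $\omega(t)$ the emitted mass can be arranged to accumulate at typical distance $\sim t^\alpha$ at time $t$, for any chosen $\alpha\in(1/3,1/2)$, producing exactly the profile of a non-localized WLS.

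First I would set up a Dyson expansion for $\phi(t)$, with initial datum $\psi_b$, in the interaction picture of $H_0$, and use the threshold resolvent expansion to compute the leading continuum amplitude explicitly. Second, I would choose $\lambda(t)$ and $\omega(t)$ so that $\|\phi(t)\|_{H^1}$ remains uniformly bounded (securing H1); H2 is automatic for this linear, compactly supported interaction. Third, I would verify the three properties characterizing a non-localized WLS: (a) $\lim_{t\to\infty}\langle F(\jx/t^{\alpha'}\geq 1)\gamma F(\jx/t^{\alpha'}\geq 1)\rangle=0$ for some $\alpha'\in(1/3,1)$, since the released mass carries momentum $\sim\sqrt{\omega(t)}\to 0$ and hence vanishing $\gamma$-expectation, so by Theorem~\ref{unique-defin} $\phi$ is a WLS for the whole range; (b) $\langle|x|\rangle_t\to\infty$, by construction of $\omega(t)$; (c) the free channel wave operator $\Omega^*_F\phi_0$ vanishes, by the zero frequency channel proposition, since the released flux is supported at arbitrarily low frequency and no free wave can concentrate there.

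The main obstacle will be the long-time control of the Dyson expansion in the threshold regime, where $\omega(t)$ descends through the continuous spectral edge and the resolvent of $H_0$ develops a square-root singularity. A Fermi golden rule argument must be adapted to the slowly varying frequency, combined with threshold resolvent bounds, and reflected contributions back into the bound state channel must be shown to be subleading so that the uniform $H^1$ bound survives. If $\psi_b$ is a genuine threshold eigenstate rather than a resonance, the singularity is stronger and an adiabatic Floquet or Gamow state framework may be required. As a fallback, one can attempt an inverse construction: prescribe $\phi(t,x)=t^{-3\alpha/2}S(x/t^\alpha)e^{-iE(t)}+r(t,x)$ with $S$ a fixed localized profile and $r$ a small correction, then \emph{define} $V(x,t)$ as the potential for which this $\phi$ solves the equation, and verify a posteriori that $V$ is uniformly bounded and localized in $x$.
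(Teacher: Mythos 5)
This statement is a \emph{conjecture}; the paper offers no proof, so there is nothing to compare your proposal against. What I can do is assess whether your construction would go through, and I see two genuine gaps.

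Your primary mechanism --- driving a threshold bound state so it emits continuum radiation at slowly decreasing frequencies $\sqrt{\omega(s)}$ --- does not produce a non--localized weakly localized state. Once a packet of mass is emitted with momentum $\sqrt{\omega(s)}$, it leaves the support of the localized potential and propagates ballistically: at time $t$ it sits near $|x|\sim \sqrt{\omega(s)}(t-s)$, which eventually exceeds $t^{1/2}$ (indeed grows linearly in $t$) for every fixed emission time $s$. That mass is therefore picked up by the free channel wave operator $\Omega^*_F$ and belongs to $\phi_L=e^{i\Delta t}\Omega^*_F\phi_0$, not to $\pwl=\phi-\phi_L$. The only emitted mass that stays in the region $|x|\lesssim t^{1/2}$ at time $t$ is mass emitted in a short recent window $(s_*(t),t)$; if $\omega(s)\sim s^{-2\beta}$, that window has length $\sim t^{1/2+\beta}$, so the recent mass there is $\sim m(t)\,t^{1/2+\beta}$. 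But conservation of $L^2$ forces $\int m(s)\,ds<\infty$, so $m(t)\,t^{1/2+\beta}\to 0$, and the ``halo'' you hoped to build vanishes in $L^2$. What remains of $\pwl$ is just the residual bound state of the frozen potential, which is localized. Thus your solution has a nonzero free channel and a localized $\pwl$ --- it does not certify the conjecture.

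Your fallback, the inverse construction $\phi=t^{-3\alpha/2}S(x/t^\alpha)e^{i\theta(t)}+r$, is closer to the right mechanism but as written yields a non-localized $V$. For the pure self--similar ansatz the imaginary part of $(i\partial_t+\Delta)\phi/\phi$ is $-\tfrac{3\alpha}{2t}-\tfrac{\alpha}{t}\,y\cdot\nabla\ln S$, which forces $S(y)=c|y|^{-3/2}$ (not $H^1$). Adding the quadratic phase $e^{i\alpha|x|^2/4t}$ kills the imaginary part, but the real part of the derived potential then contains $\tfrac{\alpha(1-\alpha)}{4}\,\tfrac{|x|^2}{t^2}$, a time--decaying harmonic trap, which is not localized in $x$ for $\alpha\in(0,1)$. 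That term is precisely what confines a slower--than--ballistic spread, and a potential satisfying $(1+|x|)^{-q}$ decay cannot provide it. A more plausible construction, closer to the spirit of your second idea but avoiding both difficulties, is adiabatic detuning: take a compactly supported potential $V_0$ with a simple ground state and slowly reduce its depth so the binding energy satisfies $|E(t)|\sim t^{-2\alpha}$, $\alpha<\tfrac12$. The instantaneous ground state then has spatial width $\sim|E(t)|^{-1/2}\sim t^\alpha$ and characteristic momentum $\sim t^{-\alpha}$, exactly the WLS signature; the adiabatic parameter $|E'|/|E|^2\sim t^{2\alpha-1}\to 0$; and no mass is emitted in leading order, so $\Omega^*_F\phi_0=0$ rather than being made small by hand. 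The hard part is then controlling the nonadiabatic corrections near the vanishing spectral gap, and verifying the paper's $\gamma$--limit criterion for the resulting $\phi$ --- but that is a sharper and more promising target than the two constructions you currently propose.
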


\begin{conjec}
Study the stability of weakly localized solutions, by extending the method of Modulation Equations to include the Ansatz that the Halo part is self similar.
\end{conjec}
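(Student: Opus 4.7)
The plan is to implement a nonlinear modulation analysis around a composite ansatz that captures both the soliton-like core and the self-similar corona identified in the corollary on emergence of self-similarity. Concretely, I would look for solutions of the form
\begin{equation*}
\phi(t,x) \;=\; e^{i\theta(t)} Q_{c(t)}(x-q(t)) \;+\; t^{-\tfrac{d\alpha}{2}} e^{iE(t)t}\, S\!\Bigl(\tfrac{x}{t^{\alpha}},\,t\Bigr) \;+\; \eta(t,x),
\end{equation*}
where $Q_c$ is a soliton profile of the leading energy-subcritical nonlinearity, $S$ is a self-similar halo profile solving an approximate mass-critical elliptic problem (as forced by the scaling match between the Laplacian and the interaction term), and $\eta$ is a dispersive remainder. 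The modulation parameters $(\theta,q,c,E)$ together with a scale parameter $\lambda(t) \sim t^{\alpha}$ for $S$ are to be determined by imposing orthogonality of $\eta$ against the symplectically normalized null directions of the linearizations at $Q_{c}$ and at $S$.

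First I would derive the modulation ODEs by differentiating the orthogonality conditions and projecting the Schr\"odinger equation onto the tangent space of the approximate manifold. The core parameters satisfy the familiar system used in soliton stability theory, while the halo parameters $(\lambda,E)$ should satisfy a \emph{slow} self-similar system of the form $\dot\lambda/\lambda = \alpha/t + O(\langle \eta\rangle)$ and $\dot E = O(t^{-1})$, consistent with the self-similar scaling identified in the corollary. Second I would establish coercivity of the linearized operator on the symplectic orthogonal complement; for the core this is standard, but for the halo one must work in the moving frame $y=x/t^{\alpha}$ and exploit the monotonicity PROBs $F_1(|x|/t^{\alpha}\geq 1)\gamma F_1$ and the bounded-$\A$ estimate from Theorem~\ref{PA-J}.B to trade the missing spectral gap for phase-space localization. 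Third, using the propagation estimates of Section~\ref{sec:PE} together with the new exterior estimates \eqref{PE-JVG}--\eqref{PE-JVA}, I would run a bootstrap on a Morawetz-type functional of $\eta$ to conclude that $\|\eta(t)\|_{H^1} \to 0$ and that the modulation parameters converge along the self-similar flow generated by $A$.

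The hard part, and where most of the novel work lies, will be step two: coercivity of the linearization around the composite profile $Q_c + t^{-d\alpha/2} S(\cdot/t^\alpha)$. The two pieces live on incompatible scales ($|x|\lesssim 1$ versus $|x|\sim t^{\alpha}$), so their interaction appears only through far tails, but precisely in the intermediate regime $1 \ll |x|\ll t^\alpha$ the null directions of the two linearizations can near-resonate. Handling this requires a second microlocalization in $(x,\gamma)$ adapted to the two scales, together with the high/low vector-field decompositions developed at the end of the previous section. A subsidiary obstruction is that $S$ itself is only determined up to a slow drift by the leading elliptic equation, so one must show that the residual time-dependence of $S$ produces forcing terms that are integrable in $t$ in the energy space of $\eta$; this should follow from $(r\partial_r)^n\mathcal{N}\in L^\infty$ and the regularity of weakly localized states proved earlier, but verifying it uniformly in $\alpha\in(1/3,1/2]$ is where the analytic estimates will be most delicate. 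Finally, uniqueness of the decomposition (hence genuine stability rather than mere existence) would be obtained by a standard contraction argument on the modulation map, provided the coercivity constants above are quantitative and uniform in time.
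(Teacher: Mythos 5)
The statement you were asked to prove is one of the open \emph{conjectures} posed in the Epilogue; the paper offers no proof of it, only the directional hint that the self-similar Halo should be built into the modulation Ansatz. So there is no "paper argument" against which to compare your text, and your submission should itself be read as a research program rather than a proof. On that basis, the outline you give is well aligned with the authors' framework: the composite ansatz $\phi = e^{i\theta}Q_{c}(\cdot - q) + t^{-d\alpha/2}e^{iEt}S(\cdot/t^{\alpha},t) + \eta$ is exactly the kind of extension the conjecture invites; using the exterior PROBs, the $A$-boundedness of Theorem~\ref{PA-J}, and the high/low vector-field decomposition to close a bootstrap for $\eta$ is consistent with the paper's machinery; and you correctly identify the two-scale coercivity and the slow drift of $S$ as the central difficulties.

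The genuine gap is that the load-bearing steps remain entirely aspirational. First, you assume coercivity of the linearization around a profile $Q_c + t^{-d\alpha/2}S(\cdot/t^{\alpha})$, but the second piece is a slowly time-dependent, spatially spreading object tied to a mass-critical elliptic problem whose linearization has no known spectral gap (mass-critical linearizations typically have a resonance at the threshold), so the standard orthogonality-plus-coercivity scheme does not go through by analogy; you flag the intermediate-region near-resonance but supply no mechanism to kill it, only the hope that a second microlocalization will do so. Second, $S$ is only determined up to a slow modulation by the leading elliptic equation, and you assert that the resulting forcing is integrable in time for $\eta$ in $H^1$; nothing in the paper's propagation estimates directly yields this, and it is precisely the uniformity in $\alpha\in(1/3,1/2]$ that you note is delicate that is missing. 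Third, the contraction argument you invoke for uniqueness of the decomposition presupposes the quantitative coercivity you have not established. In short, what you have is a plausible plan with the right ingredients, but no new idea is supplied at the point where the conjecture is actually hard — the spectral/coercivity analysis of the self-similar halo on the moving scale $|x|\sim t^{\alpha}$ — so this cannot be accepted as a resolution of the conjecture, only as a reasonable statement of what a resolution would have to contain.
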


\section{Appendix A:  Discussion of special cases}
In this section, we work with concrete examples to clarify the validity of our theorems for each of them. 
\begin{enumerate}
\item Example 1: Consider $\textbf{N}(\phi)=-|\phi|^2\phi$ with $\alpha =\frac12+\epsilon$. Now assumption (H2') is fulfilled. 

The $\gamma$ limit theorem~\ref{thm:r-limit} holds. 
\item Example 2: Consider $\textbf{N}(\phi)=- |\phi|^p\phi$, with $p\in (\frac43,4)$.  We take $\alpha>\frac{1}{p}\in (\frac14, \frac34)$, the two endpoints can be reached. 
The $\gamma$ limit theorem~\ref{thm:r-limit} holds with $\alpha=\max{(\frac13+,1/p)}$. 

\item Example 3: Consider  $\textbf{N}(\phi)=- \frac{|\phi|^m}{1+|\phi|^{m-n}}\phi$, with $m>2+\frac43,  n\in(1, \frac43)$. Here we take $\alpha>\frac{1}{m}$, which is $\frac{3}{10}$ or smaller.  The $\gamma$ limit theorem~\ref{thm:r-limit} holds. 
\item Example 4: Consider $\textbf{N}(\phi)=  |\phi|^p\phi +V(x)\phi$, with $|V(x)|\lesssim \frac{1}{(1+|x|)^q}$ such that $p\in (\frac43,4)$ and $q>1$.  We take $\alpha>\max{(\frac{1}{p}, \frac{1}{q})}.$  If $p=q=2$, we take $\alpha=\frac12+.$ If $q\geq3, p\geq 3, $ we take $\alpha=\frac13+$
The $\gamma$ limit theorem~\ref{thm:r-limit} holds for $\alpha=\max{(\frac{1}{p}, \frac{1}{q}, \frac13)}+$
\end{enumerate}

\section{Appendix B: Miscellaneous calculations}
\subsection{commutators}
For the operator 
$$
\gamma_A=-\frac{i}{2}(A(x) \cdot \nabla+\nabla \cdot A(x))
$$
under the condition $A(x)\in\R^3$ and $\partial_j A_i(x)=\partial_i A_j(x)$.  We have the formula 
\[[-i\Delta, \gamma_A]= =-\frac{1}{2}\left(4 \nabla_i \partial_iA_j(x) \nabla_j+\partial_{iij}(A_j(x))\right) \]
In particular, we consider few cases
\begin{enumerate}
\item If $A(x)=\nabla g$, then \[\partial_iA_j=g_{ij},\quad  \partial_{i i j}\left(A_j(x)\right)=\partial_{i i j j} g(x)=\Delta^2 g\]  
\item If $A(x)=xf(r)$, then by direct computation we have 
\end{enumerate}
$$
\begin{aligned}
\partial_i\left(A_j(x)\right)=\left(x_j f(r)\right)_i & =\delta_{i j} f(r)+x_5 f^{\prime}(r) \frac{x_i}{r} \\
& =f(r) \delta_{i j}+\frac{f^{\prime}(r)}{r} x_i x_j
\end{aligned}
$$

Then $$
 \quad \sum_{i} \partial_i\left(A_i(x)\right)=3f(r) +f^{\prime}(r)r
$$
And  
\[\partial_j(\sum_{i} \partial_i\left(A_i(x)\right))= (4 f^{\prime}(r) \frac{1}{r}+f^{\prime \prime}(r) )x_j
\]

Hence 
$$
\begin{aligned}
\sum_{i,j=1}^3(A_i(x))_{i j j}&  =3(\frac{4 f^{\prime}(r)}{r}+f^{\prime \prime}(r)) 
+r \frac{d}{dr}\left(\frac{4 f^{\prime}(r)}{r}+f^{\prime \prime}(r)\right)\\
& =3\left(\frac{4 f^{\prime}(r)}{r}+f^{\prime \prime}(r)\right) \\
& +r\left(\frac{4 f^{\prime \prime}(r)}{r}+\frac{4 f^{\prime}(r)}{r^2}(-1)+f^{\prime \prime\prime}(r)\right)  \\
& =\frac{8 f^{\prime}(r)}{r}+7 f^{\prime \prime}(r)+f^{\prime \prime \prime}(r) r\\
\end{aligned}
$$

 Take special  $ g(x)=\langle x\rangle=\sqrt{1+|x|^2}$, then $g_i=\frac{x_i}{\langle x\rangle}$
$$
\begin{aligned}
& g_{i j}=\frac{\delta_{i j}}{\langle x\rangle}-\frac{x_i x_j}{\langle x\rangle^3} \\
& \Delta g=\frac{3}{\langle x\rangle}-\frac{|x|^2}{\langle x\rangle^3}=\frac{2}{\langle x\rangle}+\frac{1}{\langle x\rangle^3}
\end{aligned}
$$
$$
(\Delta g)_i=\frac{-2}{\langle x\rangle^2} \frac{x_i}{\langle x\rangle}-\frac{3}{\langle x\rangle^4} \frac{x_i}{\langle x\rangle} =\left(-\frac{2}{\langle x\rangle^3}-\frac{3}{\langle x\rangle^5}\right) x_i
$$
$$
\begin{aligned}
& (\Delta g)_{i j}=\left(-\frac{2}{\langle x\rangle^3}-\frac{3}{\langle x\rangle^5}\right) \delta_{i j}+\left(\frac{6}{\langle x\rangle^4} \frac{x_i x_j}{\langle x\rangle}+15 \frac{x_i x_j}{\langle x\rangle^7}\right) \end{aligned}$$
Finally we get 
\[\Delta^2g=  
 =-\frac{6}{\langle x\rangle^3}-\frac{9}{\langle x\rangle^5}+\frac{6|x|^2}{\langle x\rangle^5}+\frac{15|x|^2}{\langle x\rangle^7}= -\frac{15}{\langle x\rangle^7} \]

Remark on radial functions, we can check the leading term is 
\[<\phi, -\partial_i g_{ij} \partial_j \phi>=\int \frac{|\nabla\phi|^2}{\langle x\rangle^3}dx\]
so the decay is too fast. 
 We modify the choice by considering 
$A(x)=xf(r)$ with 
 $f(r)=\left(|x|^{1-\varepsilon}\langle
x\rangle^\varepsilon \right)^{-1}.$
now
$$
\begin{aligned}
& \partial_i A_j(x)
 =\frac{\delta_{i j}}{|x|^{1-\varepsilon}\langle x\rangle^{\varepsilon}}+\left(\frac{(\varepsilon-1)}{|x|^{3-\varepsilon}\langle x\rangle^{\varepsilon}}+\frac{(-\varepsilon)}{|x|^{1-\varepsilon}(x)^{\varepsilon+2}}\right) x_i x_j \\
&
\end{aligned}
$$

check the leading term $<\phi, -\partial_i g_{ij} \partial_j \phi>$
$$
\int \frac{|\nabla \phi|^2}{|x|^{1-\varepsilon}\langle x\rangle^\varepsilon}+\left(\frac{(\varepsilon-1)}{|x|^{3-\varepsilon}\langle x\rangle^\varepsilon}+\frac{(-\varepsilon)}{|x|^{1-\varepsilon}
\langle x\rangle)^{\varepsilon+2}}\right)|x \cdot \nabla \phi|^2
$$
In particular for radial functions
for radial we get

for radial we get
$$
\begin{aligned}
& \int \frac{\left|\nabla \phi\right|^2}{|x|^{1-\varepsilon}\langle x\rangle^{\varepsilon}}+\frac{(\varepsilon-1)}{|x|^{1-\varepsilon}\langle x\rangle)^{\varepsilon}}|\nabla \phi|^2-\frac{\varepsilon|x|^{1+\varepsilon}}{\langle x\rangle^{2+\varepsilon}}|\nabla \phi|^2 \\
= & \int\left( \frac{\varepsilon}{|x|^{1-\varepsilon}\langle x\rangle^{\varepsilon}}-\frac{\varepsilon|x|^{1+\varepsilon}}{\langle x\rangle^{2+\varepsilon}}\right)|\nabla \phi|^2 \\
= & \int \frac{\varepsilon\left(\langle x\rangle^2-|x|^2\right)}{|x|^{1-\varepsilon}\langle x\rangle^{2+\varepsilon}}|\nabla \phi|^2=\int \frac{\varepsilon |\nabla\phi|^2}{|x|^{1-\varepsilon}\langle x\rangle^{2+\varepsilon}  }
\end{aligned}
$$
this gives $\frac{\left.\nabla \phi\right|^2}{|x|^{1-\varepsilon}}$ at the origin.  And it gives $\frac{1}{|x|^3}$ weight at infinity.

A long and tedious calculation gives $(A_j(x))_{iij}\approx \frac{1}{|x|^{3-\varepsilon}\langle x\rangle^{2+\varepsilon}}$, 

 Now we with $\theta=2-\epsilon$, we consider
$$
\begin{aligned}
& g(r)=\int_0^r \frac{s}{\sqrt{s^2+s^ \theta}} d s \\
& \nabla g=\frac{r}{\sqrt{r^2+r^ \theta}} \frac{x}{r}=\frac{x}{\sqrt{r^2+r^\theta}}
\end{aligned}
$$
Now we have
$$
\begin{aligned}
g_{i j} & =\frac{\delta_{i j}}{\sqrt{r^2+r^{2-\epsilon}}}+\frac{x_i\left(-\frac{1}{2}\right)}{\left(r^2+r^\theta\right)^{3 / 2}}\left(2 x_j+\theta r^{\theta-1} \frac{x_j}{r}\right) \\
& =\frac{\delta_{i j}}{\sqrt{r^2+r^{2-\epsilon}}}+\left(-\frac{1}{2}\right) \frac{\left(2+\theta r^{\theta-2}\right)}{\left(r^2+r^\theta\right)^{3 / 2}} x_i x_j
\end{aligned}
$$
 The leading term $<\phi, -\partial_i g_{ij} \partial_j \phi>$ is 
$$
\int \frac{|\nabla \phi|^2}{\sqrt{r^2+r^{2-\epsilon}}}-\frac{1}{2} \frac{\left(2+\theta r^{\theta-2}\right)|x \cdot \nabla \phi|^2}{\left(r^2+r^\theta\right)^{3 / 2}}
$$
For radial function we have 
radial
$$
\begin{aligned}
& \int|\nabla \phi|^2 \left( \frac{1}{\sqrt{r^2+r^{2-\varepsilon}}}-\frac{1}{2} \frac{\left(2 r^2+\theta r^\theta\right)}{\left(r^2+r^\theta\right)^{3 / 2}}\right) \\
= & \int|\nabla \phi|^2 \frac{r^2+r^{2-\varepsilon}-r^2-\theta r^\theta}{\left(r^2+r^{2-\varepsilon}\right)^{3 / 2}}  \\
= & \int \frac{|\nabla \phi|^2 \frac{\varepsilon}{2} r^{2-\varepsilon}}{\left(\sqrt{\left.r^2+r^{2-\varepsilon}\right)^3}\right.} \\
= & \int \frac{\varepsilon}{2}|\nabla \phi|^2 \frac{r^{2-\varepsilon}}{r^{(1-\varepsilon / 2)^3}\left(\sqrt{r^\epsilon+1}\right)^3} \\
= & \int \frac{\varepsilon}{2}|\nabla \phi|^2 \frac{1}{r^{1-\varepsilon / 2}\left(r^\epsilon+1\right)^{3 / 2}}
\end{aligned}
$$
This has better behavior compared to previous one. Near the origin they are the same, while at infinity this one has weight $\frac{1}{|x|^{1+\epsilon}}$

 Computing $\Delta^2 g$ we have that
the leading term is \[-\frac{15}{4}\frac{r^2}{(r^2+r^\theta)^{\frac52}}\].

Now we will use the last choice directly to prove local smoothing estimate. 

\subsection{smoothing estimate}
To prove the optimal smoothing estimate, let's consider $\left\langle\nabla^{\frac{1}{2}} \gamma_g \nabla^{\frac{1}{2}}\right\rangle$
$$
\begin{aligned}
& \left.\left\langle\nabla^{\frac{1}{2}} \gamma_g \nabla^{\frac{1}{2}}\right\rangle\right|_{T_0} ^{\top} \\
& =\int_{T_0}^T \int_{\mathbb{R}^3} \frac{\left|\nabla^{3 / 2} \phi\right|^2}{r^{1-\epsilon / 2}\langle r\rangle^{\frac{3}{2} \varepsilon}}+\frac{\left|\nabla^{\frac{1}{2}} \phi\right|^2}{r^{3-\frac{5}{2} \varepsilon}\langle r\rangle^{5 \epsilon}} d x d t \\
& +2\Im\int_{T_0}^T \int_{\mathbb{R}^3} \nabla^{\frac{1}{2}} \phi \quad \gamma_g \nabla^{\frac{1}{2}}(N(\phi) \phi) d x d t \\
&
\end{aligned}
$$
now nonlinear term is of the from
$$
\begin{aligned}
&\iint \nabla^{3 / 2} \phi \frac{r^\epsilon}{\langle r\rangle^\epsilon} \quad \nabla^{\frac{1}{2}} \phi \quad N(\phi) d x d t\\
\leq &  \frac{\epsilon}{2}\left(\iint \frac{\mid\nabla^{3 / 2} \phi \mid^2}{\left|r^{\frac{1}{2}-\frac{\epsilon}{4}}\right|^2} d x d t+\iint\left|\frac{\nabla^{\frac{1}{2} \phi}}{r^{3 / 2-\frac{5}{4} \epsilon}}\right|^2 d x d r\right) \\
&+\frac{1}{\epsilon} \iint_{r \leqslant 1}\left|N(\phi) r^{2-\frac{1}{2} \epsilon}\right|_{L^{\infty}} d x d t
\end{aligned}
$$

$$
\begin{gathered}
N(\phi)=|\phi|^m \\
|\sqrt{r} \phi|<\|\phi\|_{H^{1}}
\end{gathered}
$$
Another option is to take
$\int_{r \leqslant \delta}$ by take $\delta$ small enough. 

Now  
$$
\begin{aligned}
&\iint_{r \geqslant \delta} \nabla^{3 / 2} \phi \nabla^{1 / 2} \phi \quad N(\phi) d x d t\\
\leq &  \iint \frac{\nabla^{3 / 2} \phi}{r^{1 / 2}} \quad r^{\frac{1}{2}} N(\phi) \nabla^{\frac{1}{2}} \phi \\
& \leq \epsilon_1 \iint\left| \frac{\nabla^{3 / 2} \phi}{r^\frac12}\right|^2 d x d t +\frac{1}{\epsilon_1}\iint\left|r^{\frac{1}{2}} N(\phi)\right|_{L^{\infty}} \left\|\nabla^{\frac{1}{2}} \phi\right\|_{L^2} d xdt . 
\end{aligned}
$$
hence
 this gives us the optimal Smoothing estimate
$$\begin{aligned}
&\int_{T_0}^T \int_{\mathbb{R}^3} \frac{\left|\nabla^{3 / 2} \phi\right|^2}{r^{1-\epsilon / 2}\langle r\rangle^{\frac{3}{2} \varepsilon}}+\frac{\left|\nabla^{\frac{1}{2}} \phi\right|^2}{r^{3-\frac{5}{2} \varepsilon}\langle r\rangle^{5 \epsilon}} d x d t \\
\lesssim&  C(\|\phi\|_{H^1},  |T-T_0|)
\end{aligned}
$$

\end{document}